\pgfplotsset{compat=1.10}
\pgfplotsset{soldot/.style={color=black,only marks,mark=*}} \pgfplotsset{holdot/.style={color=black,fill=white,only marks,mark=*}}
\newtheorem{thm}{Theorem}[section]
\newtheorem{lem}[thm]{Lemma}
\newtheorem{prop}[thm]{Proposition}
\newtheorem{cor}[thm]{Corollary}
\newtheorem{conj}[thm]{Conjecture}
\theoremstyle{definition}
\newtheorem{defn}[thm]{Definition}
\theoremstyle{remark}
\newtheorem{remark}[thm]{Remark}
\newtheorem{remarks}[thm]{Remarks}
\newtheorem{example}[thm]{Example}
\newtheorem{examples}[thm]{Examples}
\newtheorem{prob}[thm]{Problem}
\numberwithin{equation}{section}
\numberwithin{figure}{section}
 \newcommand{\R}{{\mathbb R}}
 \newcommand{\C}{{\mathbb C}}
\newcommand{\sph}{{\mathbb S}} 
 \newcommand{\PP}{{\mathbb P}}
 \newcommand{\Cont}{{\mathcal C}}
\newcommand{\Ff}{{\EuScript F}}
\newcommand{\Pp}{{\EuScript P}}
\newcommand{\Ss}{{\EuScript S}}
\newcommand{\Tt}{{\EuScript T}}
\newcommand{\Bb}{{\EuScript B}}
\newcommand{\Cc}{{\EuScript C}}
\newcommand{\pol}{{\EuScript K}}
\newcommand{\Qq}{{\EuScript Q}}
\newcommand{\Uu}{{\EuScript U}}
\newcommand{\Reg}{\operatorname{Reg}}
\newcommand{\Sing}{\operatorname{Sing}}
\newcommand{\Int}{\operatorname{Int}}
\newcommand{\cl}{\operatorname{Cl}}
\newcommand{\dist}{\operatorname{dist}}
\newcommand{\id}{\operatorname{id}}
\newcommand{\zar}{\operatorname{zar}}
\newcommand{\x}{{\tt x}} \newcommand{\y}{{\tt y}} 
\newcommand{\z}{{\tt z}} \renewcommand{\t}{{\tt t}}
 \newcommand{\e}{{\tt e}}
\newcommand{\veps}{\varepsilon}
\newcommand{\ol}{\overline}
\DeclareMathOperator*{\Bigast}{\scalerel*{\ast}{\Theta}}
\begin{document}

\title[Surjective Nash maps between semialgebraic sets]{Surjective Nash maps between semialgebraic sets}

\author{Antonio Carbone}
\address{Dipartimento di Matematica, Via Sommarive, 14, Universit\`a di Trento, 38123 Povo (ITALY)}
\email{antonio.carbone@unitn.it}
\thanks{The first author is supported by GNSAGA of INDAM}

\author{Jos\'e F. Fernando}
\address{Departamento de \'Algebra, Geometr\'\i a y Topolog\'\i a, Facultad de Ciencias Matem\'aticas, Universidad Complutense de Madrid, Plaza de Ciencias 3, 28040 MADRID (SPAIN)}
\email{josefer@mat.ucm.es}
\thanks{The second author is supported by Spanish STRANO PID2021-122752NB-I00 and Grupos UCM 910444. This article has been developed during several one month research stays of the second author in the Dipartimento di Matematica of the Universit\`a di Trento. The second author would like to thank the department for the invitations and the very pleasant working conditions.
}


\date{01/06/2023}
\subjclass[2010]{Primary: 14P10, 14P20, 58A07; Secondary: 14E15, 57R12.}
\keywords{Nash maps and images, semialgebraic sets connected by analytic paths, analytic-path connected components, closed balls, polynomial paths inside semialgebraic sets.}

\begin{abstract}
In this work we study the existence of surjective Nash maps between two given semialgebraic sets $\Ss$ and $\Tt$. Some key ingredients are: the irreducible components $\Ss_i^*$ of $\Ss$ (and their intersections), the analytic-path connected components $\Tt_j$ of $\Tt$ (and their intersections) and the relations between dimensions of the semialgebraic sets $\Ss_i^*$ and $\Tt_j$. A first step to approach the previous problem is the former characterization done by the second author of the images of affine spaces under Nash maps.

The core result of this article to obtain a criterion to decide about the existence of surjective Nash maps between two semialgebraic sets is: {\em a full characterization of the semialgebraic subsets $\Ss\subset\R^n$ that are the image of the closed unit ball $\ol{\Bb}_m$ of $\R^m$ centered at the origin under a Nash map $f:\R^m\to\R^n$}. The necessary and sufficient conditions that must satisfy such a semialgebraic set $\Ss$ are: {\em it is compact, connected by analytic paths and has dimension $d\leq m$}. 

Two remarkable consequences of the latter result are the following: (1) {\em pure dimensional compact irreducible arc-symmetric semialgebraic sets of dimension $d$ are Nash images of $\ol{\Bb}_d$}, and (2) {\em compact semialgebraic sets of dimension $d$ are projections of non-singular algebraic sets of dimension $d$ whose connected components are Nash diffeomorphic to spheres (maybe of different dimensions)}. 
\end{abstract}

\maketitle


\section{Introduction}\label{s1}

Although it is usually said that the first work in Real Geometry is due to Harnack \cite{hr}, who obtained an upper bound for the number of connected components of a non-singular real algebraic curve in terms of its genus, modern Real Algebraic Geometry was born with Tarski's article \cite{ta}, where it is proved that a projection of a semialgebraic set is a semialgebraic set. 

A subset $\Ss\subset\R^n$ is \em semialgebraic \em when it admits a description in terms of a finite boolean combination of polynomial equalities and inequalities, which we will call a {\em semialgebraic description}. A {\em Nash manifold} $M\subset\R^n$ is a semialgebraic set that is a smooth submanifold of $\R^n$. The category of semialgebraic sets is closed under basic boolean operations but also under usual topological operations: taking closures (denoted by $\cl(\cdot)$), interiors (denoted by $\Int(\cdot)$), connected components, etc. If $\Ss\subset\R^m$ and $\Tt\subset\R^n$ are semialgebraic sets, a map $f:\Ss\to\Tt$ is \em semialgebraic \em if its graph is a semialgebraic set. 

A relevant class of (continuous) semialgebraic maps is that of {\em Nash maps}, that is, smooth maps on a semialgebraic neighborhood of $\Ss$ that are in addition semialgebraic. More precisely, a \em Nash map on an open semialgebraic set \em $U\subset\R^m$ is a semialgebraic smooth map on $f:U\to\R^n$. Given a semialgebraic set $\Ss\subset\R^m$, a \em Nash map on $\Ss$ \em is the restriction to $\Ss$ of a Nash map on an open semialgebraic neighborhood $U\subset\R^m$ of $\Ss$. These Nash maps include: polynomial and regular maps. A map $f:=(f_1,\ldots,f_n):\R^m\to\R^n$ is \em polynomial \em if its components $f_k\in\R[{\tt x}]:=\R[{\tt x}_1,\ldots,{\tt x}_m]$ are polynomials. Analogously, $f$ is \em regular \em if its components can be represented as quotients $f_k=\frac{g_k}{h_k}$ of two polynomials $g_k,h_k\in\R[{\tt x}]$ such that $h_k$ never vanishes on $\R^m$.

By Tarski's Theorem the image of a semialgebraic set under a semialgebraic map is a semialgebraic set, because it is a projection of a semialgebraic set. We are interested in studying `inverse type problems' to Tarski's result. A first enlightening example is the following: 

\begin{prob}[B\"archen-Sch\"afchen Problem]\label{probbs}
Is it possible to transform a semialgebraic Teddy bear $\Tt\subset\R^3$ onto a semialgebraic sheep $\Ss\subset\R^3$ by means of a Nash map and/or viceversa? 
\end{prob}

\begin{figure}[ht]
\centering
\begin{minipage}{0.3\textwidth}
\begin{flushright}
\includegraphics[width=3cm]{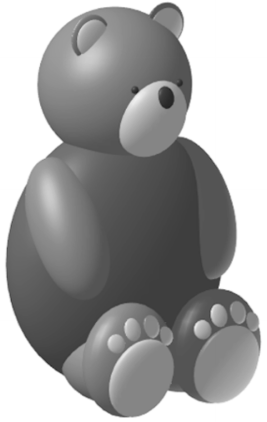}
\end{flushright}
\end{minipage}
\hfil
\begin{minipage}{0.10\textwidth}
\begin{center}
\begin{tikzpicture}[scale=1]
\draw[<->,ultra thick] (-1,0)--(1,0);
\end{tikzpicture}
\end{center}
\end{minipage}
\hfil
\begin{minipage}{0.4\textwidth}
\begin{center}
\hspace*{-1cm}\includegraphics[width=4cm]{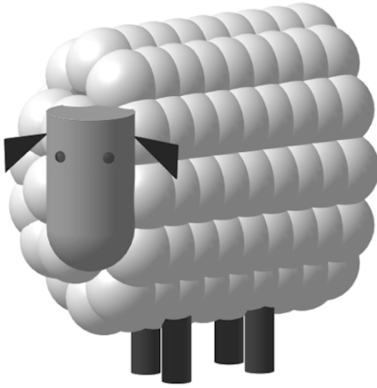}
\end{center}
\end{minipage}
\caption{\small{Semialgebraic Teddy bear and semialgebraic Sheep (figures borrowed from \cite[Fig.1.3]{fu6}).}\label{bear}}
\end{figure} 

The general problem is the following:

\begin{prob}[Tarski's inverse problem]\label{probst}
Let $\Ss\subset\R^m$ and $\Tt\subset\R^n$ be semialgebraic sets. Under which conditions does a surjective Nash map $f:\Ss\to\Tt$ exist? 
\end{prob} 

\subsection{State of the art}
In the 1990 \em Oberwolfach reelle algebraische Geometrie \em week \cite{g} Gamboa proposed (see also \cite[\S3.IV, p.69]{ei}) the following problem: 

\begin{prob}\label{prob0}
Characterize the (semialgebraic) subsets of $\R^n$ that are either polynomial or regular images of $\R^m$. 
\end{prob}
The previous problem pretends to characterize the semialgebraic sets that admit affine spaces as a kind of `algebraic models'. During the last two decades we have attempted to better understand polynomial and regular images of $\R^m$. Our main objectives have been the following: 
\begin{itemize}
\item To find obstructions to be either polynomial or regular images. 
\item To (constructively) prove that large families of semialgebraic sets with piecewise linear boundary (convex polyhedra, their interiors, complements and the interiors of their complements) are either polynomial or regular images of affine spaces. 
\end{itemize}

In \cite{fg1,fg2,fu1,fgu2} we presented first steps to approach Problem \ref{prob0}. The most relevant one \cite{fu1} shows that \em the set of points at infinite of $\Ss$ is a connected set\em. In \cite{fe1} a complete solution for Problem \ref{prob0} appears for the $1$-dimensional case, whereas in \cite{fgu1,fgu3,fgu5,fu2,fu3,fu4,fu5,fu6,u1,u2} a constructive full answer is provided for the representation as either polynomial or regular images of the semialgebraic sets with piecewise linear boundary commented above \cite[Table 1]{fu4}. A survey concerning these topics, which provides the reader a global idea of the state of the art, can be found in \cite{fgu4}. 

\subsubsection{First alternative approach.}
The rigidity of polynomial and regular maps on $\R^m$ makes it difficult to approach Problem \ref{prob0} in its full generality. A first possibility to overcome this is to change the domain of definition (using, for instance, closed unit balls or unit spheres). When considering compact domains (and of course compact images), one has more tools and Weierstrass' polynomial approximation plays an important role. In \cite[\S5.Prob.1]{kps} the following concrete related problem was proposed:

\begin{prob}\label{probl2}
Let $\Pp$ be an arbitrary (compact) convex polygon in $\R^2$. Construct explicit polynomials $f$ and $g$ in $\R[{\tt u},{\tt v},{\tt w}]$ such that $\Pp=(f,g)(\ol{\Bb}_3)$.
\end{prob}

A first main result in \cite{fu6} provides a positive answer to a natural strong generalization to arbitrary dimension of Problem \ref{probl2}.

\begin{thm}[{\cite[Thm.1.2]{fu6}}]\label{main1-i2}
Let $\Ss\subset\R^n$ be the union of a finite family of $n$-dimensional convex (compact) polyhedra. The following assertions are equivalent:
\begin{itemize}
\item[(i)] $\Ss$ is connected by analytic paths.
\item[(ii)] There exists a polynomial map $f:\R^n\to\R^n$ such that $f(\ol{\Bb}_n)=\Ss$.
\end{itemize}
\end{thm}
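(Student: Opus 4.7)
The plan is to prove the two implications separately, the reverse one being essentially immediate and the forward one carrying the weight of the theorem.

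For (ii) $\Rightarrow$ (i), I would just observe that $\ol{\Bb}_n$ is connected by polynomial paths (segments work), and the image of a polynomial path under a polynomial map $f$ is again a polynomial path, hence in particular an analytic path. Therefore $f(\ol{\Bb}_n)=\Ss$ is connected by analytic paths.

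For (i) $\Rightarrow$ (ii), I would first dispose of the single-piece case: a compact convex $n$-dimensional polyhedron $\Pp\subset\R^n$ is the polynomial image of $\ol{\Bb}_n$. A natural route is to write $\ol{\Bb}_n$ as a polynomial image of itself that covers the cube $[-1,1]^n$ (using Chebyshev-type surjections coordinatewise, after precomposing with a suitable polynomial that maps $\ol{\Bb}_n$ onto $[-1,1]^n$), and then to send $[-1,1]^n$ onto $\Pp$ by a suitable polynomial map built from the facet description of $\Pp$; this is the type of construction carried out in the authors' earlier work on polynomial images of affine spaces, adapted to a compact source. Call this the base case.

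The substantive step is the general case, which I would attack by induction on the number $r$ of polyhedra $\Pp_1,\dots,\Pp_r$ that make up $\Ss$. For the inductive step, pick $\Pp_r$ so that $\Ss':=\Pp_1\cup\dots\cup\Pp_{r-1}$ is still analytic-path connected and is joined to $\Pp_r$ by an analytic path $\gamma:[0,1]\to\Ss$ with $\gamma(0)\in\Ss'$ and $\gamma(1)\in\Pp_r$ (such a path exists by hypothesis). By the inductive hypothesis we have a polynomial $g$ with $g(\ol{\Bb}_n)=\Ss'$ and, by the base case, a polynomial $h$ with $h(\ol{\Bb}_n)=\Pp_r$. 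The idea is to splice $g$ and $h$ by means of a one-parameter polynomial deformation that sweeps from a point of $\Ss'$ along a polynomial approximation of $\gamma$ into $\Pp_r$: one introduces a polynomial selector $\alpha:\ol{\Bb}_n\to[0,1]$ vanishing on a sub-ball $\ol{\Bb}_n'\subset\ol{\Bb}_n$ and equal to $1$ on another sub-ball $\ol{\Bb}_n''$, so that on $\ol{\Bb}_n'$ the combined map reproduces $g$, on $\ol{\Bb}_n''$ it reproduces $h$, and in between it tracks a polynomial path in $\Ss$ connecting the two.

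The main obstacle is precisely the control of this connecting polynomial path: the in-between region must still be mapped into $\Ss$, but $\Ss$ has piecewise-linear (non-smooth) boundary, so a generic Weierstrass approximation of $\gamma$ may escape $\Ss$. This is exactly where the hypothesis of analytic-path (as opposed to topological-path) connectedness is decisive: an analytic arc inside $\Ss$ which is not eventually contained in a single $\Pp_i$ must lie along common boundaries with enough regularity to guarantee a neighborhood of $\gamma$ in $\Ss$, which in turn permits a polynomial approximation staying in $\Ss$. Once this polynomial bridge is produced, assembling $g$, $h$ and the bridge into a single polynomial map via $\alpha$ and a reparametrization of $\ol{\Bb}_n$ is routine, and completes the induction.
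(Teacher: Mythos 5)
Your direction (ii) $\Rightarrow$ (i) and the base case (a single compact convex $n$-dimensional polyhedron is a polynomial image of $\ol{\Bb}_n$) are both fine. The inductive splicing step, however, has a genuine gap, and it sits exactly on the part of the theorem that carries the weight. A polynomial $\alpha$ that vanishes on a sub-ball $\ol{\Bb}_n'$ (a set with nonempty interior) must vanish identically, by the identity theorem; the proposed selector therefore does not exist. The same rigidity kills the splice outright: if the assembled polynomial map $F$ agrees with $g$ (or with a polynomial reparametrization $g\circ\phi$) on a set with interior, then $F=g\circ\phi$ everywhere, and there is no freedom left to "reproduce $h$" elsewhere. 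One simply cannot glue two polynomial (or even Nash) maps on overlapping pieces of the domain by a bump-function mechanism the way one would for $\Cont^\infty$ maps, and describing the assembly of $g$, $h$ and a bridge into one polynomial as "routine" skips over the entire difficulty.

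The strategy of \cite{fu6} -- which Section~\ref{s3} of the present paper generalizes to the Nash setting -- is not an induction-plus-splice but a \emph{single sweep}. One covers each polyhedron by a one-parameter family of lower-dimensional polynomial images (concretely, simplices whose vertices move along controlled paths, as in Lemma~\ref{simplex2}), concatenates these families over all the polyhedra into a continuous semialgebraic path in a suitable parameter space, and then replaces that path by a genuine polynomial path using the smart curve selection lemma (Lemma~\ref{icsl}), whose control on several orders of derivatives is what keeps the sweep inside $\Ss$ across the common faces. This produces a single polynomial map $\ol{\Bb}_{n-1}\times[-1,1]\to\Ss$, and since the cylinder $\Cc_n=\ol{\Bb}_{n-1}\times[-1,1]$ is a polynomial image of $\ol{\Bb}_n$, the dimension of the source is not raised (contrast the extra dimension in Theorem~\ref{main2-i}). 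Your intuition that analytic-path connectedness -- rather than mere topological connectedness -- is what lets a polynomial approximation stay inside $\Ss$ is exactly right, and it is precisely what Lemma~\ref{icsl} makes precise; but the mechanism you propose for exploiting it is ruled out by the identity theorem, and the actual proof uses the sweep, not an induction on the number of polyhedra.
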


The techniques involved to prove Theorem \ref{main1-i2} are generalized in \cite{fu4} to show Theorem \ref{main2-i} below. A set $\Ss\subset\R^n$ is \em strictly radially convex (with respect to a point $p\in\Int(\Ss)$) \em if for each ray $\ell$ with origin at $p$, the intersection $\ell\cap\Ss$ is a segment whose relative interior is contained in $\Int(\Ss)$. Convex sets are particular examples of strictly radially convex sets (with respect to any of its interior points \cite[Lem.11.2.4]{ber1}).

\begin{thm}[{\cite[Thm.1.3]{fu6}}]\label{main2-i}
Let $\Ss\subset\R^n$ be the union of a finite family of strictly radially convex semialgebraic sets that are polynomial images of the closed unit ball $\ol{\Bb}_m$. The following assertions are equivalent:
\begin{itemize}
\item[(i)] $\Ss$ is connected by analytic paths.
\item[(ii)] There exists a polynomial map $f:\R^{m+1}\to\R^n$ such that $f(\ol{\Bb}_{m+1})=\Ss$
\end{itemize}
\end{thm}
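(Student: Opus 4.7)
The implication (ii) $\Rightarrow$ (i) is immediate: $\ol{\Bb}_{m+1}$ is convex, so any two of its points are joined by a line segment, which is an analytic path; since polynomial maps send analytic paths to analytic paths, $\Ss = f(\ol{\Bb}_{m+1})$ is connected by analytic paths.

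For (i) $\Rightarrow$ (ii), write $\Ss = \bigcup_{i=1}^{k} \Ss_i$ with each $\Ss_i$ strictly radially convex about a center $p_i \in \Int(\Ss_i)$, and fix polynomial maps $g_i : \R^m \to \R^n$ with $g_i(\ol{\Bb}_m) = \Ss_i$. The plan is to use the extra variable $t = x_{m+1} \in [-1,1]$ in two coordinated ways: (a) a polynomial \emph{spine} $\alpha(t)$ moves through $\Ss$ and visits each center $p_i$ at a designated time $t_i$; (b) at the slice of $\ol{\Bb}_{m+1}$ at height $t_i$, a concentrated polynomial \emph{pulse} triggers the full copy $\Ss_i$, while strict radial convexity confines nearby slices to $\Ss_i$.

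\emph{Step 1 (polynomial spine).} Analytic-path connectivity of $\Ss$ yields an analytic curve that hits every $p_i$ and lies in $\Ss$. Fix ordered interpolation times $0 \le t_1 < \cdots < t_k < 1$. Combining Weierstrass approximation with Hermite-type interpolation forcing $\alpha(t_i)=p_i$, construct a polynomial $\alpha : \R \to \R^n$ with $\alpha(t_i) = p_i$ for each $i$ and $\alpha([-1,1]) \subset \Ss$; the approximation error is absorbed because the $p_i$ are interior points of the $\Ss_i$.

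\emph{Step 2 (the candidate map).} Set $r_i := \sqrt{1-t_i^2}$ and $c_i := 1/r_i$, and choose polynomial bumps $\lambda_i \in \R[t]$ satisfying $\lambda_i(t_i)=1$, $\lambda_i(t_j)=0$ for $j\ne i$, and $\sup_{[-1,1]\setminus U_i}|\lambda_i| \le \delta$, for a prescribed small $\delta>0$ and small one-sided neighborhoods $U_i := [t_i,t_i+\eta]$. Such $\lambda_i$ are built from Lagrange interpolants at $\{t_1,\ldots,t_k\}$ times sufficiently high powers of a localizing polynomial. Define
\begin{equation*}
f(x,t) \;:=\; \alpha(t) \;+\; \sum_{i=1}^{k} \lambda_i(t)\,\bigl(g_i(c_i x) - p_i\bigr), \qquad (x,t) \in \R^{m+1}.
\end{equation*}

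\emph{Step 3 (image identification on $\ol{\Bb}_{m+1}$).} At $t=t_j$ the slice of $\ol{\Bb}_{m+1}$ is the ball $\ol{\Bb}_m^{\,r_j}$; the dilation $c_j$ sends it onto $\ol{\Bb}_m$, and using $\lambda_i(t_j)=\delta_{ij}$ and $\alpha(t_j)=p_j$ gives $f(x,t_j) = g_j(c_j x)$, so this slice covers exactly $\Ss_j$. For $t \in U_j$, the slice radius $\sqrt{1-t^2}$ does not exceed $r_j$, hence $c_j x \in \ol{\Bb}_m$ and $g_j(c_jx) \in \Ss_j$; strict radial convexity of $\Ss_j$ about $p_j$ then ensures that the remaining contributions ($i\ne j$), each of norm at most $\delta$ times the $C^0$-norm of $g_i(c_i x)-p_i$ on the relevant slice, do not push the point out of $\Ss_j$ for $\delta$ small enough. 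For $t\in[-1,1]\setminus\bigcup_j U_j$, every $\lambda_i(t)\le\delta$, so the slice is a $\delta$-perturbation of $\alpha(t)\in\Ss$, which stays in $\Ss$ once $\delta$ is small relative to the interior margin of $\alpha$. Therefore $f(\ol{\Bb}_{m+1})\subset\Ss$; the $t_j$-slices already cover $\bigcup_j\Ss_j=\Ss$, so equality holds.

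The main obstacle is Step 3: for $t$ on the ``wrong side'' of $t_i$, the scaling $c_i$ may push $c_i x$ outside $\ol{\Bb}_m$, where $g_i(c_i x)$ has no a priori bound. The ordering $0\le t_1<\cdots<t_k<1$ and the choice of \emph{one-sided} bump supports $U_i=[t_i,t_i+\eta]$ — consistent with shrinking slice radii as $t$ moves away from $0$ — is what aligns these directions. Pushing the concentration of $\lambda_i$ (multiplying the Lagrange interpolant by a high enough power of a polynomial vanishing outside $U_i$) makes the non-active contributions uniformly small, and strict radial convexity — not mere convexity — then supplies the room inside each $\Ss_j$ to accommodate them.
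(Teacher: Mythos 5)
The overall architecture — using the extra variable $t$ as a ``time'' that drives a polynomial spine $\alpha(t)$ through $\Ss$, together with polynomial bumps $\lambda_i(t)$ that switch on the covering of each piece $\Ss_i$ near $t=t_i$ — is indeed the same skeleton as in \cite{fu6} and as in the analogous Nash-map construction of Lemma \ref{simplex2} in the present article. However, the execution has two concrete errors that make the argument fail as written.

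First, the ``one-sided bump supports'' are impossible. You take $U_i:=[t_i,t_i+\eta]$ and require $\lambda_i(t_i)=1$ together with $\sup_{[-1,1]\setminus U_i}|\lambda_i|\le\delta$. Since $\lambda_i$ is a polynomial, it is continuous, so $\lambda_i(t_i-\epsilon)\to 1$ as $\epsilon\to 0^+$; but $t_i-\epsilon\notin U_i$, so $\sup_{[-1,1]\setminus U_i}|\lambda_i|=1>\delta$. No polynomial bump can be concentrated on a half-open interval on one side of its peak. Consequently, the very feature you invoke to neutralize the scaling problem (that for $t<t_i$ the factor $c_i x$ may leave $\ol{\Bb}_m$) does not exist, and the scaling problem resurfaces: for $t$ just to the left of $t_i$ you have $\lambda_i(t)\approx 1$, the slice radius $\sqrt{1-t^2}>r_i$, hence $c_ix\notin\ol{\Bb}_m$ is possible and $g_i(c_ix)\notin\Ss_i$. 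This is the reason the known constructions work on a prism or cylinder model $\ol{\Bb}_m\times[-1,1]$ (resp.\ $\Delta_{m-1}\times[0,1]$), where the slices have constant radius and no $c_i$-scaling is needed, and only afterwards transport the result to $\ol{\Bb}_{m+1}$ via the polynomial equivalences of \S\ref{acm}.

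Second, even after fixing the domain, the $\Cont^0$-perturbation argument in Step~3 is insufficient. Near $t_j$ one wants to compare $f(x,t)$ with $p_j+\lambda_j(t)\bigl(g_j(x)-p_j\bigr)$, which by strict radial convexity lies in $\Int(\Ss_j)$ for $0<\lambda_j(t)<1$. But the margin to $\partial\Ss_j$ shrinks to $0$ as $\lambda_j(t)\to 1$ or as $g_j(x)$ approaches $\partial\Ss_j$, exactly in the same regime where the competing error $\alpha(t)-p_j+\sum_{i\ne j}\lambda_i(t)(\cdots)$ is also small. A uniform bound ``$\delta$ small enough'' does not resolve this competition: one must control the rate at which the two quantities vanish. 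This is precisely why the proof of the analogous covering statement (Lemma \ref{simplex2} here; \cite[Thm.1.3]{fu6} for the polynomial case) matches Taylor coefficients up to order $3$ at the interpolation times and then uses Rolle's theorem on the compositions $h_i\circ G$, $g_k\circ G$ to force the sign of the appropriate third derivative — a genuinely higher-order argument, not a $\Cont^0$ one. A related unaddressed point is that the spine $\alpha$ may have to cross $\partial\Ss$ between pieces (two strictly radially convex bodies can meet only along boundary, yet the union can still be connected by analytic paths), in which case ``$\delta$ small relative to the interior margin of $\alpha$'' has no content at the crossing times.

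So: the idea is right, and the role you assign to strict radial convexity is the correct heuristic, but you need (a) a domain with constant-radius slices instead of direct scaling on $\ol{\Bb}_{m+1}$, and (b) higher-order (third-derivative) interpolation plus a Rolle-type argument to turn the soft ``small perturbation stays inside'' into a correct proof.
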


\subsubsection{Second alternative approach.}
Another possibility is to change the polynomial and regular maps by more flexible maps like {\em Nash maps} (smooth semialgebraic maps) \cite{fe3} or {\em regulous maps} (continuous rational maps) \cite{ffqu}. Gamboa and Shiota proposed in 1990 to approach the following variant of Problem \ref{prob0} in the same line as Problem \ref{probbs}. 

\begin{prob}\label{prob1}
Characterize the (semialgebraic) subsets of $\R^n$ that are Nash images of $\R^m$. 
\end{prob}

The set $\Reg(\Ss)$ of \em regular points of a semialgebraic set $\Ss\subset\R^n$ \em is defined as follows. Let $X$ be the Zariski closure of $\Ss$ in $\R^n$ and $\widetilde{X}$ the complexification of $X$, that is, the smallest complex algebraic subset of $\C^n$ that contains $X$. Define $\Reg(X):=X\setminus\Sing(\widetilde{X})$ and let $\Reg(\Ss)$ be the interior of $\Ss\setminus\Sing(\widetilde{X})$ in $\Reg(X)$. Observe that $\Reg(\Ss)$ is a finite union of disjoint {\em Nash manifolds} maybe of different dimensions. We refer the reader to \cite[\S2.A]{fe3} for further details concerning the set of regular points of a semialgebraic set. In 1990 Shiota proposed the following conjecture in order to provide a satisfactory answer to Problem \ref{prob1}. 

\begin{conj}[Shiota]\label{conj0}
Let $\Ss\subset\R^n$ be a semialgebraic set of dimension $d$. Then $\Ss$ is a Nash image of $\R^d$ if and only if $\Ss$ is pure dimensional and there exists an analytic path $\alpha:[0,1]\to\Ss$ whose image meets all connected components of the set of regular points of $\Ss$.
\end{conj}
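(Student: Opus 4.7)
Assume $\Ss=f(\R^d)$ for a Nash map $f:\R^d\to\R^n$. The semialgebraic set $U\subset\R^d$ where $Df$ has rank $d$ is non-empty (since $\dim\Ss=d$ forces the generic rank of $f$ to be $d$), open, and has complement of dimension strictly less than $d$, so $U$ is dense. The rank theorem gives that $f(U)$ is pure $d$-dimensional; by curve selection every point of $f(\R^d\setminus U)$ is the endpoint of an analytic arc originating in $f(U)$, so $\Ss\subseteq\cl(f(U))$ and hence $\Ss$ is pure $d$-dimensional. Next, since $\Reg(\Ss)$ is semialgebraic, it has only finitely many connected components $T_1,\ldots,T_r$. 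Pick $x_i\in U$ with $f(x_i)\in T_i$ and let $\gamma:[0,1]\to\R^d$ be any polynomial path through $x_1,\ldots,x_r$ (e.g.\ a Bezier interpolation). Then $f\circ\gamma$ is an analytic path in $\Ss$ whose image meets every $T_i$.

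\textbf{Sufficiency: strategy.} This is the substantive direction. First I would show that the hypotheses force $\Ss$ itself to be connected by analytic paths: for every $p\in\Ss$, pure dimensionality together with a Puiseux-type / arc-symmetric argument (or a resolution-of-singularities argument on the Zariski closure of $\Ss$) produces an analytic arc in $\Ss$ from $p$ to some regular point, and the given path $\alpha$ then bridges the regular components. Second, I would exhaust $\Ss$ by an increasing sequence of compact, analytically path-connected, pure $d$-dimensional semialgebraic subsets
$$\Ss_1\subset\Ss_2\subset\cdots,\qquad \bigcup_k\Ss_k=\Ss,$$
using $\alpha$ and a semialgebraic triangulation of the set of points at infinity of $\Ss$ to control the growth of the $\Ss_k$. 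The paper's core theorem then supplies Nash surjections $g_k:\ol{\Bb}_d\to\Ss_k$.

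\textbf{Main obstacle.} The essential difficulty is to amalgamate the $g_k$ into a single Nash surjection $F:\R^d\to\Ss$, given that no Nash partition of unity is available. The natural attack is an annular scheme: fix the Nash diffeomorphism $\R^d\to\Bb_d$ defined by $x\mapsto x/\sqrt{1+|x|^2}$, partition $\Bb_d$ into concentric Nash annuli $A_k$, and arrange $F$ on each $A_k$ to cover the increment $\Ss_k\setminus\Ss_{k-1}$ while matching its neighbors to infinite order along $\partial A_k$. For this to produce a genuine Nash (and not merely semialgebraic $C^\infty$) map, one needs a jet-prescribing refinement of the paper's core theorem allowing the Nash surjection $\ol{\Bb}_d\to\Ss_k$ to be chosen with prescribed boundary data on $\partial\ol{\Bb}_d$. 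Reconciling that boundary control with the geometry of $\Ss$ at infinity, and ensuring that every point of $\Ss$ is actually hit (no point is pushed out to the limit of the radii), is, in my view, the conceptual heart of the conjecture.
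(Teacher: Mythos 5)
First, a clarifying remark about context: this paper does not prove Shiota's Conjecture. It is stated only as a special case of Theorem~\ref{main0} ($=\,$\cite[Main~Thm.~1.4]{fe3}), namely the equivalence (i)$\Leftrightarrow$(vi), and the proof lives entirely in \cite{fe3}; there is no ``paper's own proof'' here to compare against. Your necessity direction (generic rank $d$, density of the immersion locus $U$, $\Ss\subset\cl(f(U))$ by curve selection gives pure dimensionality, then interpolating preimages of points of each component of $\Reg(\Ss)$ by a polynomial path and composing with $f$) is the expected elementary reasoning and is essentially sound, using only that $\Reg(\Ss)$ is open and $f(U)$ is dense in $\Ss$. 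For sufficiency, however, you do not offer a proof but a strategy, and the gap you flag at the end is more serious than you acknowledge.

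You phrase the obstacle as ``get a Nash and not merely semialgebraic $C^\infty$ map,'' but those are the same thing by definition: a Nash map \emph{is} a $C^\infty$ semialgebraic map. The real obstruction to the annular scheme is that when $\Ss$ is non-compact you are gluing \emph{infinitely many} pieces $A_k$, and a map assembled from countably many distinct Nash recipes, however well the jets are matched at the seams, has no reason to have a semialgebraic graph: semialgebraicity is a finiteness condition that an infinite piecewise construction does not deliver, so at best you get a $C^\infty$ map, not a Nash one. Jet-prescribing the boundary data of the compact-ball theorem does not touch this problem. This is precisely why (vi)$\Rightarrow$(i) is the deep content of \cite{fe3}, established there by a single global construction rather than a compactness exhaustion. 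Two further weak points: (a) your proposed iterate, the compact-ball Theorem~\ref{main1}, is itself proved here via Theorem~\ref{ridwell} of \cite{fe3}, which belongs to the same circle of results as Theorem~\ref{main0}, so the route is not a shortcut and risks circularity; and (b) your Step~1 of sufficiency — that the hypotheses force $\Ss$ to be connected by analytic paths — is itself the nontrivial implication (vi)$\Rightarrow$(iv) of Theorem~\ref{main0}, not something dismissible as ``a Puiseux-type / arc-symmetric argument.''
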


In \cite{fe3} we provided a proof for Shiota's conjecture as a particular case of the following characterization of the semialgebraic sets $\Ss\subset\R^n$ of dimension $d$ that are images of affine spaces under Nash maps. 

\begin{thm}[Nash images {\cite[Main Thm.1.4]{fe3}}]\label{main0}
Let $\Ss\subset\R^n$ be a semialgebraic set of dimension $d$. The following assertions are equivalent:
\begin{itemize}
\item[(i)] $\Ss$ is a Nash image of $\R^d$.
\item[(ii)] $\Ss$ is a Nash image of $\R^m$ for some $m\geq d$.
\item[(iii)] $\Ss$ is connected by Nash paths.
\item[(iv)] $\Ss$ is connected by analytic paths.
\item[(v)] $\Ss$ is pure dimensional and there exists a Nash path $\alpha:[0,1]\to\Ss$ whose image meets all the connected components of the set of regular points of $\Ss$.
\item[(vi)] $\Ss$ is pure dimensional and there exists an analytic path $\alpha:[0,1]\to\Ss$ whose image meets all the connected components of the set of regular points of $\Ss$.
\end{itemize}
\end{thm}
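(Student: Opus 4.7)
The plan is to establish the equivalences via the cyclic chain
\[
\text{(i)}\Rightarrow\text{(ii)}\Rightarrow\text{(iii)}\Rightarrow\text{(iv)}\Rightarrow\text{(vi)}\Rightarrow\text{(v)}\Rightarrow\text{(i)}
\]
together with the trivial (v)$\Rightarrow$(vi). The implications (i)$\Rightarrow$(ii), (iii)$\Rightarrow$(iv) and (v)$\Rightarrow$(vi) are immediate: a Nash image of $\R^d$ is a Nash image of every $\R^m$ with $m\geq d$, and Nash maps are in particular real-analytic. For (ii)$\Rightarrow$(iii), given a surjective Nash map $f\colon\R^m\to\Ss$ and $p,q\in\Ss$, any affine segment joining preimages of $p$ and $q$ composes with $f$ to a Nash path in $\Ss$, so $\Ss$ is Nash-path connected.

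The step (iv)$\Rightarrow$(vi) is the first subtle point. I would first show that analytic-path connectivity forces $\Ss$ to be pure dimensional of dimension $d$: if some $p\in\Ss$ had local dimension $<d$, then near $p$ the set $\Ss$ would be contained in a proper semialgebraic stratum, and any analytic path leaving $p$ would, by the identity principle applied to the coordinates that define the lower-dimensional stratum, remain trapped inside that stratum and never reach the top-dimensional part. Next, one enumerates the finitely many connected components $R_1,\ldots,R_r$ of $\Reg(\Ss)$, picks $a_j\in R_j$, and uses (iv) to obtain analytic paths joining $a_j$ to $a_{j+1}$; the pieces are then welded into a \emph{single} analytic map $[0,1]\to\Ss$ by reparametrizing each piece with an analytic function that is infinitely tangent to a constant at both endpoints, which promotes $C^\infty$ flat gluing at the junctions to genuine real-analytic gluing. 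The implication (vi)$\Rightarrow$(v) is then a standard Nash approximation argument: approximate the analytic path uniformly on the compact interval $[0,1]$ by a polynomial map and project onto $\Ss$ via a Nash retraction of a tubular neighborhood of the image of $\alpha$; for sufficiently small approximation error, the resulting Nash path still meets every $R_j$.

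The decisive and hardest implication is the constructive step (v)$\Rightarrow$(i): from a pure $d$-dimensional $\Ss$ together with a Nash path $\alpha\colon[0,1]\to\Ss$ meeting every connected component of $\Reg(\Ss)$, build a surjective Nash map $F\colon\R^d\to\Ss$. The strategy is to treat $\alpha$ as a one-dimensional backbone and grow a $(d-1)$-dimensional family of curves transverse to it. Each $R_j$ is a connected Nash manifold of dimension $d$, hence on a Nash tubular neighborhood of $\alpha([0,1])\cap R_j$ inside $R_j$ it admits a Nash parametrization from a tube around an affine line in $\R^d$; the backbone $\alpha$ provides matching data so that these local parametrizations can be patched into a single Nash map from $\R^d$, viewed as the union of parallel tubular neighborhoods of disjoint affine lines, one per component of $\Reg(\Ss)$. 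Pushing the parametrization outward from a neighborhood of $\alpha$ to the whole of $R_j$ requires a Nash isotopy/drilling argument that propagates the cover along $R_j$, and sweeping the image out to the singular locus uses that $\cl(\Reg(\Ss))=\Ss$ by pure dimensionality. The main obstacle is ensuring that these gluings remain globally Nash and jointly surjective onto $\Ss$: this is where one must combine Nash normalization, Nash tubular-neighborhood theorems, and a careful enumeration of the components of $\Reg(\Ss)$ along the backbone $\alpha$ so that the local pieces interlock into a single Nash map defined on all of $\R^d$.
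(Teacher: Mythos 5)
First, a point of comparison: the paper does not prove this theorem at all. It is quoted verbatim from \cite[Main Thm.1.4]{fe3} (``On Nash images of Euclidean spaces'') and used as a black box, so there is no in-paper proof to measure your argument against; what follows is an assessment of your proposal on its own terms against what is actually needed.

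There is a genuine gap, and it already occurs at the step (iv)$\Rightarrow$(vi). You propose to join the base points $a_j\in R_j$ by analytic paths and then weld the pieces into a single analytic map $[0,1]\to\Ss$ ``by reparametrizing each piece with an analytic function that is infinitely tangent to a constant at both endpoints.'' No such nonconstant analytic reparametrization exists: if $\phi:[0,1]\to\R$ is real-analytic and all of its derivatives vanish at an endpoint, then $\phi$ is constant on a neighborhood of that point and hence, by the identity principle, constant on all of $[0,1]$. Flat gluing is a $\Cont^\infty$ device and does not survive analyticity; a concatenation of analytic arcs that is flat at the junctions is $\Cont^\infty$ but not analytic unless each arc is constant near the junction. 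Producing a \emph{single} analytic (or Nash) path through all components of $\Reg(\Ss)$ is precisely the hard content of \cite{fe3}: it requires the notion of well-welded semialgebraic sets, resolution of singularities, and the drilling blow-up machinery, not a reparametrization trick. The same difficulty contaminates (vi)$\Rightarrow$(v): a polynomial approximation of $\alpha$ followed by a ``Nash retraction of a tubular neighborhood'' has nowhere to retract to, since $\Ss$ is singular along $\Ss\setminus\Reg(\Ss)$ and admits no Nash retraction from a neighborhood in $\R^n$; keeping the approximating path inside $\Ss$ is exactly what the smart curve selection results (Lemma \ref{icsl} in this paper, \cite{fe4}) are designed for. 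Finally, your sketch of (v)$\Rightarrow$(i) names the right ingredients in spirit (tubular neighborhoods along a backbone path, patching local parametrizations) but omits every mechanism that makes the patching globally Nash and surjective; as written it is a restatement of the goal rather than a proof. The correct route, followed in \cite{fe3} and mirrored in the compact case by the present paper's proof of Theorem \ref{main1}, is to reduce to a checkerboard-type model via a proper regular map, parametrize a space of Nash maps by a semialgebraic set, and approximate a control path in that parameter space by a polynomial path.
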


\subsubsection{Analytic path-connected components of a semialgebraic set}
In order to present the main results of this article, we recall the concept of {\em analytic path-connected components of a semialgebraic set} introduced in \cite[\S9]{fe3}.

\begin{defn}
A semialgebraic set $\Ss\subset\R^n$ admits a decomposition into \textit{analytic path-connected components} if there exist semialgebraic sets $\Ss_1,\ldots,\Ss_r\subset\Ss$ such that:
\begin{itemize}
\item[\rm{(i)}] Each $\Ss_i$ is connected by analytic paths.
\item[\rm{(ii)}] If $\Tt\subset\Ss$ is a semialgebraic set connected by analytic paths that contains $\Ss_i$, then $\Ss_i=\Tt$.
\item[\rm{(iii)}] $\Ss_i\not\subset\bigcup_{j\neq i} S_j$.
\item[\rm{(iv)}] $\Ss=\bigcup_{i=1}^r\Ss_i$.
\end{itemize}
\end{defn}

In \cite[Thm.9.2]{fe3} the existence and uniqueness of the analytic path-connected components of a semialgebraic set is shown.

\begin{thm}[{\cite[Thm.9.2]{fe3}}]
Let $\Ss\subset\R^n$ be a semialgebraic set. Then $\Ss$ admits a decomposition into analytic path-connected components and this decomposition is unique. In addition, the analytic path-connected components of a semialgebraic set are closed in $\Ss$. 
\end{thm}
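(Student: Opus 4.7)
The proof has three parts: existence, uniqueness, and closedness of the components.

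For existence, consider the family $\mathcal{F}$ of semialgebraic subsets of $\Ss$ that are connected by analytic paths. The family is nonempty (since $\{x\}\in\mathcal{F}$ for every $x\in\Ss$) and covers $\Ss$. The plan is to extract the decomposition from the maximal elements of $\mathcal{F}$. A semialgebraic stratification of $\Ss$ reduces the combinatorial data to a finite problem: one shows that the maximal elements of $\mathcal{F}$ form a finite family $\{M_1,\ldots,M_r\}$ and that each $\Tt\in\mathcal{F}$ is contained in some $M_j$, obtained by extending $\Tt$ as far as analytic-path-connectedness allows. Covering is immediate, and iteratively discarding any $M_i$ contained in $\bigcup_{j\neq i}M_j$ enforces condition (iii) while preserving (i), (ii), and (iv).

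For uniqueness, observe that condition (ii) forces each component of any valid decomposition to be a maximal element of $\mathcal{F}$, and conditions (iii)-(iv) single out the essential such maximal elements (those needed to irredundantly cover $\Ss$). Since this collection is intrinsic to $\Ss$, any two valid decompositions must coincide.

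For closedness, take $p\in\cl(\Ss_i)\cap\Ss$. The Nash curve selection lemma produces a Nash arc $\gamma\colon[0,1]\to\R^n$ with image in $\Ss$, $\gamma(0)=p$, and $\gamma((0,1])\subset\Ss_i$. The crucial step is showing that $\Ss_i':=\Ss_i\cup\gamma([0,1])$ lies in $\mathcal{F}$: by Theorem \ref{main0} applied to $\Ss_i$, it is pure-dimensional of some dimension $d$ and admits an analytic path meeting every connected component of $\Reg(\Ss_i)$; since $\gamma$ has dimension $1\leq d$ and lands inside $\Ss_i$ in positive time, the connected components of $\Reg(\Ss_i')$ coincide with those of $\Reg(\Ss_i)$, and one extends the existing analytic path along $\gamma$ to verify condition (vi) of Theorem \ref{main0} for $\Ss_i'$. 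Hence $\Ss_i'\in\mathcal{F}$, and the maximality of $\Ss_i$ forces $\Ss_i'=\Ss_i$, whence $p\in\Ss_i$.

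The main obstacle is producing $\Ss_i'\in\mathcal{F}$, which also underlies the finiteness of maximal elements in the existence argument. A naive concatenation of two analytic paths at a shared endpoint is generally not analytic, as exhibited by $\{y=0\}\cup\{y=x^2\}\subset\R^2$, whose analytic-path components are the two branches individually despite meeting at the origin: an analytic path satisfying $y(y-x^2)\equiv 0$ must lie in one branch by analytic continuation. Hence the argument cannot rely on elementary path gluing and must invoke the deep characterization in Theorem \ref{main0} to produce the required global analytic (equivalently Nash) path.
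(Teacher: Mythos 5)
This theorem is stated in the paper as a citation to \cite[Thm.9.2]{fe3}; the paper itself gives no proof, so there is nothing internal to compare against. Evaluating your proposal on its own terms, the closedness argument is essentially sound, but the existence and uniqueness parts have genuine gaps.

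The existence argument asserts that the maximal elements of $\mathcal{F}$ form a finite family and that every $\Tt\in\mathcal{F}$ lies in one of them, attributing this to ``a semialgebraic stratification reduces the combinatorial data to a finite problem'' and to ``extending $\Tt$ as far as analytic-path-connectedness allows.'' Neither phrase is a proof. The difficulty is real: $\mathcal{F}$ is not closed under unions of chains (an infinite union of semialgebraic sets need not be semialgebraic, and even when it is, analytic-path-connectedness is not obviously preserved in the limit), so a Zorn-type argument does not come for free, and nothing you wrote explains why a stratification controls $\mathcal{F}$. The way to make this work is to bring in Theorem \ref{main0}(v)--(vi): a semialgebraic set $\Tt\subset\Ss$ lies in $\mathcal{F}$ if and only if $\Tt$ is pure dimensional and a single analytic path meets every connected component of $\Reg(\Tt)$. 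Since $\Reg(\Ss)$ has finitely many connected components, one can classify candidate $\Tt$'s by which subset of these components they use; this is the actual finite combinatorics, and it yields both the existence of maximal elements and their finiteness. You gesture at this only in the final paragraph, where you correctly identify that naive path-gluing fails and that Theorem \ref{main0} must be invoked, but you do not feed that observation back into the existence argument where it is actually needed.

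The uniqueness step also has a gap. You propose to obtain the decomposition by iteratively discarding redundant maximal elements, and to deduce uniqueness because conditions (iii)--(iv) ``single out the essential maximal elements.'' But a priori the outcome of iterative discarding can depend on the order: if $M_i\subset\bigcup_{j\neq i}M_j$ and $M_k\subset\bigcup_{j\neq k}M_j$, dropping $M_i$ may cure the redundancy of $M_k$, so different discard orders could yield different subcollections. What closes this gap is that a semialgebraic set connected by analytic paths is irreducible (\cite[Lem.7.3]{fe3}), so once you know the maximal elements are closed in $\Ss$, an $M_i$ contained in the union of the others must be contained in a single $M_j$, contradicting maximality; hence no maximal element is ever redundant, (iii) holds automatically, and uniqueness is immediate. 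Your proposal never invokes irreducibility, and without it the uniqueness claim is unsupported. The closedness argument you give for $\Ss_i'=\Ss_i\cup\gamma([0,1])$ is essentially correct (adding a point in $\cl(\Ss_i)$ changes neither the Zariski closure nor the pure-dimensionality, and alters $\Reg(\Ss_i)$ by at most the point $p$ itself), and it is worth noting that closedness is not just a bonus fact but a logical prerequisite for the irreducibility argument above, so it should be established before, not after, the uniqueness discussion.
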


\subsubsection{Irreducibility and irreducible components of a semialgebraic set}
The ring ${\mathcal N}(\Ss)$ of Nash functions on a semialgebraic set $\Ss\subset\R^n$ is a noetherian ring \cite[Thm.2.9]{fg3} and we say that $\Ss$ is \em irreducible \em if and only if ${\mathcal N}(\Ss)$ is an integral domain \cite{fg3}. We next recall the concept of irreducible components of a semi-algebraic set. 

\begin{defn}
A semi-algebraic set $\Ss\subset\R^n$ admits a decomposition into \textit{irreducible components} if there exist semi-algebraic sets $\Ss_1,\ldots,\Ss_r\subset\Ss$ such that:
\begin{itemize}
\item[\rm{(i)}] Each $\Ss_i$ is irreducible.
\item[\rm{(ii)}] If $\Tt\subset\Ss$ is an irreducible semi-algebraic set that contains $\Ss_i$, then $\Ss_i=\Tt$.
\item[\rm{(iii)}] $\Ss_i\not\subset\bigcup_{j\neq i} S_j$.
\item[\rm{(iv)}] $\Ss=\bigcup_{i=1}^r\Ss_i$.
\end{itemize}
\end{defn}

In \cite[Thm.4.3, Rmk.4.4]{fg3} the following result is proposed concerning the irreducible components of a semi-algebraic set.

\begin{thm}
Let $\Ss\subset\R^n$ be a semi-algebraic set. Then $\Ss$ admits a decomposition into irreducible components and this decomposition is unique. In addition, the irreducible components of a semi-algebraic set are closed in $\Ss$. 
\end{thm}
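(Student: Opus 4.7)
The plan is to translate the classical decomposition of a noetherian reduced commutative ring into its minimal primes over to the geometric setting via the Nash-function ring $\mathcal{N}(\Ss)$. The key input is that $\mathcal{N}(\Ss)$ is noetherian \cite[Thm.2.9]{fg3} and obviously reduced (as a subring of real-valued functions on $\Ss$), so its zero ideal decomposes as
\[
(0)=\gtp_1\cap\cdots\cap\gtp_r,
\]
where $\gtp_1,\ldots,\gtp_r$ are the finitely many minimal prime ideals of $\mathcal{N}(\Ss)$.

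For the existence part I would define
\[
\Ss_i:=\{x\in\Ss : f(x)=0 \text{ for all } f\in\gtp_i\}.
\]
Since $\gtp_i$ is finitely generated, $\Ss_i$ is a semi-algebraic subset of $\Ss$ closed in $\Ss$. The covering $\Ss=\bigcup_{i=1}^r\Ss_i$ is immediate: for $x\in\Ss$ the maximal ideal $\gtm_x\subset\mathcal{N}(\Ss)$ of functions vanishing at $x$ is prime and contains $(0)=\bigcap_j\gtp_j$, hence contains some $\gtp_i$, which places $x$ in $\Ss_i$. By minimality of the $\gtp_i$, one can choose $g_i\in\bigcap_{j\neq i}\gtp_j\setminus\gtp_i$; this $g_i$ vanishes identically on $\bigcup_{j\neq i}\Ss_j$ but not on all of $\Ss_i$, which will give condition (iii) once the next step is in place.

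The irreducibility of each $\Ss_i$ and its maximality (condition (ii)) both reduce to the statement $\mathcal{N}(\Ss_i)\cong\mathcal{N}(\Ss)/\gtp_i$. Granting this, $\mathcal{N}(\Ss_i)$ is an integral domain because $\gtp_i$ is prime; and if $\Tt\subset\Ss$ is irreducible with $\Ss_i\subset\Tt$, then the kernel of the restriction $\mathcal{N}(\Ss)\to\mathcal{N}(\Tt)$ is a prime ideal contained in $\gtp_i$, hence equal to it by minimality, forcing $\Tt=\Ss_i$. Uniqueness then follows routinely: any other decomposition $\{\Ss'_k\}$ yields, via its vanishing ideals, a list of prime ideals of $\mathcal{N}(\Ss)$ which must coincide with the minimal primes $\gtp_1,\ldots,\gtp_r$ by conditions (i) and (iii) applied to $\{\Ss'_k\}$, and the maximality (ii) matches the components bijectively.

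The main obstacle is precisely the Nash Nullstellensatz step needed to establish $\mathcal{N}(\Ss_i)\cong\mathcal{N}(\Ss)/\gtp_i$, namely that the vanishing ideal of $\Ss_i$ in $\mathcal{N}(\Ss)$ is exactly $\gtp_i$ and that every Nash function on $\Ss_i$ extends to one on $\Ss$. In the complex algebraic world this is Hilbert's Nullstellensatz; for Nash rings on semi-algebraic sets one must instead invoke the real-Nullstellensatz-type results and extension theorems for Nash functions developed in \cite{fg3}, which are tailored for precisely this purpose.
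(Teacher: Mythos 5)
The paper itself gives no proof of this theorem: it is quoted as \cite[Thm.4.3, Rmk.4.4]{fg3}, so there is no ``paper's own proof'' to compare against. That said, your outline is, to the best of my reading, precisely the strategy of \cite{fg3}: use that $\mathcal{N}(\Ss)$ is noetherian and reduced to write $(0)=\gtp_1\cap\cdots\cap\gtp_r$ as an intersection of the finitely many minimal primes, set $\Ss_i:=V(\gtp_i)$ (a finite number of Nash equations, hence semialgebraic and closed in $\Ss$), observe that every $\gtm_x$ is prime and contains $(0)$, hence contains some $\gtp_i$, to get the covering (iv), and reduce the axioms (i)--(iii) together with uniqueness to the single algebraic statement $\mathcal{N}(\Ss_i)\cong\mathcal{N}(\Ss)/\gtp_i$. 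So the blueprint is right, and you correctly locate the real content.

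It is worth being explicit about why that last isomorphism cannot be dispatched formally. It bundles two independent hard facts: \emph{(a)} a Nash Nullstellensatz, $I(\Ss_i)=\gtp_i$ rather than merely $I(\Ss_i)\supset\gtp_i$, which over $\R$ is not a consequence of primality --- e.g.\ $(\x^2+\y^2)\subset\R[\x,\y]$ is prime yet $I(V(\x^2+\y^2))=(\x,\y)$, so one needs the minimal primes of $\mathcal{N}(\Ss)$ to be \emph{real} primes; and \emph{(b)} a Nash extension theorem, i.e.\ surjectivity of $\mathcal{N}(\Ss)\to\mathcal{N}(\Ss_i)$, which is nontrivial because a Nash function defined on an open neighborhood of $\Ss_i$ need not extend to a neighborhood of all of $\Ss$. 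Both are exactly the structural results \cite{fg3} is built to supply. Two minor remarks on the write-up: the existence of $g_i\in\bigcap_{j\neq i}\gtp_j\setminus\gtp_i$ holds because $\bigcap_{j\neq i}\gtp_j\subset\gtp_i$ would, by primality, force some $\gtp_j\subset\gtp_i$ and contradict minimality; and the claim that $\mathcal{N}(\Ss)$ is ``obviously reduced as a subring of real-valued functions'' is fine under the present paper's convention (restrictions to $\Ss$), but under the germ definition used in \cite{fg3} the ring is not literally a ring of functions on $\Ss$ (for $\Ss=\{0\}$ one gets algebraic power series) --- it is still reduced, but because a Nash germ whose square vanishes on a neighborhood of $\Ss$ is itself zero there, not because elements are determined by their values on $\Ss$.
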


\subsection{Main results}
Our purpose in this work is to approach Problem \ref{probst} (combining somehow the alternative approaches to Problem \ref{prob0} suggested above). The image of a semialgebraic set connected by analytic paths under a Nash map is connected by analytic paths as a consequence of Theorem \ref{main0}. In addition, the image of an irreducible semialgebraic set under a Nash map is an irreducible semialgebraic set \cite[\S3.1]{fg3}. Consequently, as we will explain in detail later, obstructions to construct a surjective Nash map $f:\Ss\to\Tt$ between semialgebraic sets $\Ss$ and $\Tt$ concentrate on the configuration of the intersections of pairwise different analytic path-connected components $\{\Ss_i\}_{i=1}^r$ (resp. irreducible components $\{\Ss_j^*\}_{j=1}^\ell$) of $\Ss$ and the configuration of their images, which are semialgebraic subsets $\Tt_i:=f(\Ss_i)$ of $\Tt$ connected by analytic paths (resp. irreducible semialgebraic subsets $\Tt_j^*:=f(\Ss_j^*)$ of $\Tt$). 

\subsubsection{General surjective Nash maps}
If $\Ss\subset\R^m$ is a semialgebraic set of dimension $d$, the set $\Ss^{(e)}$ of points of $\Ss$ of dimension $e$ is a semialgebraic subset of $\Ss$, which is closed (in $\Ss$) if $d=e$. In order to soften the quoted obstructions above, we assume that each irreducible component $\Ss_i^*$ of $\Ss$ is mapped onto a semialgebraic subset $\Tt_i$ of $\Tt$ connected by analytic paths and that $\bigcap_{i=1}^r\Tt_i\neq\varnothing$. Under these assumptions we propose the following solution to Problem \ref{probst}.

\begin{thm}[Surjective Nash maps]\label{snmbss}
Let $\Ss\subset\R^m$ and $\Tt\subset\R^n$ be semialgebraic sets, $\{\Ss_i^*\}_{i=1}^r$ the irreducible components of $\Ss$ and $\{\Tt_i\}_{i=1}^r$ a family of (non-necessarily distinct) semialgebraic subsets of $\Tt$ connected by analytic paths such that $\bigcap_{i=1}^r\Tt_i\neq\varnothing$ and $\Tt=\bigcup_{i=1}^r\Tt_i$. Denote $d_i:=\dim(\Ss_i^*)$ and assume that the set $\Ss_i^{*,(d_i)}$ of points of $\Ss_i^*$ of dimension $d_i$ is non-compact if $\Tt_i$ is non-compact for $i=1,\ldots,r$. Then there exists a surjective Nash map $f:\Ss\to\Tt$ such that $f(\Ss_i^*)=\Tt_i$ for $i=1,\ldots,r$ if and only if $e_i:=\dim(\Tt_i)\leq\dim(\Ss_i^*)=:d_i$ for $i=1,\ldots,r$.
\end{thm}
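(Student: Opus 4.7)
The necessity of $e_i\le d_i$ is immediate: since Nash maps do not increase dimension, $f(\Ss_i^*)=\Tt_i$ forces $e_i=\dim\Tt_i\le\dim\Ss_i^*=d_i$.

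For sufficiency, my plan is to assemble $f$ from $r$ component-by-component Nash maps, glued through a common base point. Fix once and for all a point $p\in\bigcap_{i=1}^r\Tt_i$; it will play the role of the common \emph{meeting value} into which every intersection among the $\Ss_i^*$'s is collapsed. The first step is to parametrize each target by a standard model: since $\Tt_i$ is connected by analytic paths and $e_i\le d_i$, Theorem~\ref{main0} yields a surjective Nash map $\psi_i:\R^{d_i}\to\Tt_i$ when $\Tt_i$ is non-compact, while the core result on Nash images of closed balls quoted in the abstract upgrades this to a surjective Nash map $\psi_i:\ol{\Bb}_{d_i}\to\Tt_i$ when $\Tt_i$ is compact. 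We may arrange $\psi_i(0)=p$ in either case. This compact/non-compact dichotomy on the target is exactly matched on the source by the hypothesis relating the compactness of $\Ss_i^{*,(d_i)}$ to that of $\Tt_i$.

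The second and crucial step is a local-to-global construction on $\Ss$ itself. Write $T_i:=\bigcup_{j\ne i}\Ss_j^*$, a closed semialgebraic proper subset of $\Ss_i^*$ (by the defining property of irreducible components). Using the noetherianity of $\mathcal{N}(\Ss)$ and standard results on Nash-closed subsets, one extracts a Nash function $h_i$ on a semialgebraic neighborhood of $\Ss$ in $\R^m$ that vanishes on $T_i$ and is strictly positive on $\Ss_i^*\setminus T_i$. Using $h_i$ as a cutoff, and exploiting the non-compactness of $\Ss_i^{*,(d_i)}$ when the target model is $\R^{d_i}$, I plan to construct a Nash map $\mu_i:\Ss\to\R^{d_i}$ (respectively $\mu_i:\Ss\to\ol{\Bb}_{d_i}$) such that $\mu_i(\Ss_i^*)$ covers all of $\R^{d_i}$ (resp.\ $\ol{\Bb}_{d_i}$) while $\mu_i(T_i)=\{0\}$. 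Setting $g_i:=\psi_i\circ\mu_i$ produces Nash maps $g_i:\Ss\to\Tt$ with $g_i(\Ss_i^*)=\Tt_i$ and $g_i|_{T_i}\equiv p$.

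The assembly is then essentially formal. Define
\[
f(x):=p+\sum_{i=1}^r\bigl(g_i(x)-p\bigr).
\]
For $x\in\Ss_i^*\setminus T_i$, each $g_j$ with $j\ne i$ returns $p$ (because $x\in T_j$), so $f(x)=g_i(x)$, and as $x$ varies over $\Ss_i^*\setminus T_i$ this traces out $\Tt_i\setminus\{p\}$; for $x\in\Ss_k^*\cap\Ss_\ell^*$ with $k\ne\ell$ every $g_i$ returns $p$, so $f(x)=p\in\Tt$. Together these contributions yield $f(\Ss_i^*)=\Tt_i$ for every $i$, producing the desired surjective Nash map. The main obstacle I anticipate lies entirely in the second step: producing a \emph{single} Nash map on a neighborhood of $\Ss$ in $\R^m$ that simultaneously surjects $\Ss_i^*$ onto the chosen model domain and collapses $T_i$ to a point requires combining the irreducibility of $\Ss_i^*$ (to guarantee $\Ss_i^*\not\subseteq T_i$ and to provide enough regular locus to parametrize), the explicit non-compactness dichotomy built into the hypothesis, and the heavy machinery developed for the core theorem on Nash images of closed balls; once this piece is in place, the gluing via $p$ is routine.
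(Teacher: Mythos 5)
Your proposal reproduces the architecture of the paper's proof almost exactly — fix a common base point $p\in\bigcap_i\Tt_i$, for each $i$ manufacture a Nash map from $\Ss$ into a model domain that surjects when restricted to $\Ss_i^*$ and collapses $\bigcup_{j\neq i}\Ss_j^*$ to a single value, and then sum the resulting component maps. The gluing formula $f=p+\sum_i(g_i-p)$ is, after translating $p$ to the origin, literally the paper's $F=\sum_i F_i\circ H_i\circ G_i$.

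The genuine gap is the step you yourself flag as ``the main obstacle,'' and it is not routine. You say you will ``construct a Nash map $\mu_i:\Ss\to\R^{d_i}$ (resp.\ $\ol{\Bb}_{d_i}$) such that $\mu_i(\Ss_i^*)$ covers all of $\R^{d_i}$ (resp.\ $\ol{\Bb}_{d_i}$) while $\mu_i(T_i)=\{0\}$,'' but no mechanism is given. The paper produces this by first embedding $\Ss$ via $G_i(x):=(x\,g_i(x),g_i(x))$, which is a Nash diffeomorphism off $\{g_i=0\}$ and collapses $\{g_i=0\}\supset T_i$ to the origin, and then feeding the intermediate image $\Ss_i':=G_i(\Ss)$ into Theorem~\ref{nice0} or Theorem~\ref{nice2}. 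For this to be legitimate one has to verify that $\Ss_i'$ is pure dimensional of dimension $d_i$ and — in the non-compact case — that $\cl(\Ss_i'^{(d_i)})\cap\Ss_i'$ is non-compact. The second point is the real difficulty: collapsing $\{g_i=0\}$ to a point can perfectly well compactify an unbounded end of $\Ss_i^{*,(d_i)}$ if $g_i$ tends to $0$ along that end, so non-compactness of $\Ss_i^{*,(d_i)}$ does \emph{not} automatically propagate through your cutoff construction. The paper's Lemma~\ref{immersion2} exists precisely to fix this, by composing with finitely many blow-ups so that the auxiliary function stays bounded away from zero along a suitable approach to the boundary, forcing the image to remain non-compact. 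Your proposal has no analogue of this step, and asserting that ``exploiting the non-compactness of $\Ss_i^{*,(d_i)}$'' will do the job is exactly the claim that needs proof.

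A smaller issue: you ask for a Nash function $h_i$ that vanishes on $T_i$ and is \emph{strictly positive} on $\Ss_i^*\setminus T_i$. That is generally not achievable, nor is it needed. What the paper uses is a Nash function vanishing on $T_i$ and possibly on an extra closed subset of $\Ss_i^*$ of dimension $<d_i$; the surjectivity of $\mu_i$ on $\Ss_i^*$ then follows from density of $\Ss_i^{*,(d_i)}\setminus\{g_i=0\}$ and properness/continuity, not from strict positivity off $T_i$.
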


\subsubsection{Key results}
The proof of Theorem \ref{snmbss} strongly relies on the following two results. The first of them provides a positive answer to Problem \ref{probbs}, whereas the second is the counterpart for the non-compact case.

\begin{thm}[B\"archen-Sch\"afchen's Theorem]\label{nice0}
Let $\Ss\subset\R^m$ be a semialgebraic set of dimension $d$ and $\Tt\subset\R^n$ a compact semialgebraic set connected by analytic paths of dimension $e\leq d$. Then there exists a Nash map $f:\R^m\to\R^n$ such that $f(\Ss)=\Tt$.
\end{thm}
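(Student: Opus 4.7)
The plan is to build $f$ as a composition
\[ \R^m \xrightarrow{\;\pi\;} \R^d \xrightarrow{\;\sigma\;} \R^d \xrightarrow{\;f_1\;} \R^n, \]
where $\pi$ sends $\Ss$ onto a subset of $\R^d$ containing the closed unit ball $\ol{\Bb}_d$, the Nash self-map $\sigma$ ``squashes'' any such subset back onto $\ol{\Bb}_d$, and $f_1$ is the Nash map produced by the core characterization result of the paper, satisfying $f_1(\ol{\Bb}_d) = \Tt$. Existence of $f_1$ is immediate from that core result applied to $\Tt$, which is compact, connected by analytic paths, and of dimension $e\leq d$.

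For $\sigma$ I would take $\sigma(x) := 2x/(1+\|x\|^2)$: this is Nash (rational, denominator $\geq 1$), radial, and if $\phi(r) := 2r/(1+r^2)$ then $\phi$ is an increasing bijection $[0,1]\to[0,1]$ and a decreasing bijection $[1,+\infty)\to(0,1]$. Hence $\sigma(\R^d) = \sigma(\ol{\Bb}_d) = \ol{\Bb}_d$, so $\sigma(K) = \ol{\Bb}_d$ for every $K\subset\R^d$ with $\ol{\Bb}_d\subset K$. For $\pi$, the hypothesis $\dim\Ss = d$ gives that the top-dimensional stratum of $\Reg(\Ss)$ contains a Nash submanifold $M\subset\R^m$ of dimension $d$. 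Fix $p\in M$ and pick a linear projection $\pi_0:\R^m\to\R^d$ whose kernel is transverse to $T_pM$ (a generic linear projection works). Then $\pi_0|_M$ is a local Nash diffeomorphism at $p$, so $\pi_0(M)$ contains some closed ball around $\pi_0(p)$; an affine translation and rescaling of $\pi_0$ then yields a Nash map $\pi:\R^m\to\R^d$ with $\pi(M)\supset\ol{\Bb}_d$, and therefore $\pi(\Ss)\supset\ol{\Bb}_d$.

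Setting $f := f_1\circ\sigma\circ\pi:\R^m\to\R^n$ produces a Nash map with
\[
f(\Ss) = f_1\bigl(\sigma(\pi(\Ss))\bigr) = f_1(\ol{\Bb}_d) = \Tt,
\]
as required. The serious obstacle is entirely encapsulated in the core characterization of Nash images of $\ol{\Bb}_d$, which is proved elsewhere in the paper; once that tool is available, the auxiliary ingredients---the radial squashing map $\sigma$ and the generic linear projection $\pi$---are short and essentially elementary. The hypothesis $e\leq d$ enters precisely in the two places where it is needed: it makes the core result applicable to $\Tt$ with domain $\R^d$, and it ensures that the image of the $d$-dimensional set $\Ss$ under a projection to $\R^d$ is large enough to contain $\ol{\Bb}_d$.
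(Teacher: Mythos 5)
Your proposal is correct and follows essentially the same strategy as the paper: factor the desired map as a Nash map squashing $\Ss$ onto $\ol{\Bb}_d$ (projection near a $d$-dimensional regular point followed by the radial contraction $x\mapsto 2x/(1+\|x\|^2)$, which is exactly the paper's $\pi'\circ\varphi$), composed with a Nash map $\ol{\Bb}_d\to\Tt$ supplied by the core characterization Theorem \ref{main1}. The only cosmetic difference is that the paper projects orthogonally onto the affine tangent space at a regular point while you use a generic linear projection; both give a local diffeomorphism onto a neighborhood in $\R^d$, and the rest of the argument is identical.
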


The previous result shows that any semialgebraic set $\Ss\subset\R^m$ of dimension $d\geq e$ is a model to represent any compact semialgebraic set $\Tt\subset\R^n$ connected by analytic paths of dimension $e$ as a Nash image. In particular, {\em a semialgebraic sheep is a Nash image of a semialgebraic Teddy bear and viceversa} (see Problem \ref{probbs}).

\begin{thm}[Non-compact case]\label{nice2}
Let $\Ss\subset\R^m$ be a semialgebraic set such that $\cl(\Ss^{(d)})\cap\Ss$ is non-compact for some $d\geq 2$ and let $\Tt\subset\R^n$ be semialgebraic set connected by analytic paths. If $\Tt$ has dimension $e\leq d$, there exists a Nash map $f:\R^m\to\R^n$ such that $f(\Ss)=\Tt$.
\end{thm}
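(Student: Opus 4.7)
The plan is to reduce Theorem~\ref{nice2} to the core statement that, under its hypotheses, there exists a Nash map $\psi:\R^m\to\R^d$ with $\psi(\Ss)=\R^d$. Granting this, the theorem follows immediately: by Theorem~\ref{main0}, the analytic-path-connected set $\Tt$ of dimension $e\leq d$ is the image of a Nash map $\phi:\R^e\to\R^n$. Composing with the coordinate projection $\pi:\R^d\to\R^e$ yields a Nash map $\Phi:=\phi\circ\pi:\R^d\to\R^n$ with $\Phi(\R^d)=\Tt$, and then $f:=\Phi\circ\psi:\R^m\to\R^n$ satisfies $f(\Ss)=\Phi(\psi(\Ss))=\Phi(\R^d)=\Tt$.

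To build $\psi$, I would first exploit the non-compactness hypothesis. By the semialgebraic curve-selection lemma at infinity applied to the unbounded semialgebraic set $\cl(\Ss^{(d)})\cap\Ss$, there is a Nash arc $\alpha:[0,+\infty)\to\Ss$ whose image is contained in $\cl(\Ss^{(d)})\cap\Ss$ and satisfies $\|\alpha(t)\|\to+\infty$ as $t\to+\infty$. After discarding a compact initial portion one may arrange that $\alpha(t)$ lies in a $d$-dimensional Nash stratum of $\Reg(\Ss)$ for all $t\geq t_0$. A Nash tubular neighborhood of this terminal half-ray then embeds a standard half-cylinder model $\Mm:=[t_0,+\infty)\times V$, with $V\subset\R^{d-1}$ an open Nash ball, as a closed Nash submanifold (with boundary) of a Nash open neighborhood of $\Ss^{(d)}$ inside $\Ss$.

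On the model $\Mm$, I would then construct an explicit Nash surjection onto $\R^d$ in the spirit of the constructions in~\cite{fe3} and of the compact counterpart Theorem~\ref{nice0}. A concrete template is a map of the form $(t,y)\mapsto\bigl(t\,\rho_1(y)-\sigma(y)/t,\ldots,t\,\rho_{d-1}(y)-\sigma(y)/t,\,t-1/t\bigr)$ with $\rho_i,\sigma$ suitable Nash functions on $V$ chosen so that every target point in $\R^d$ is attained for some $(t,y)\in[t_0,+\infty)\times V$. The hypothesis $d\geq 2$ is used essentially at this point: Nash functions on a half-line have one-sided limits at infinity, so a single unbounded direction cannot Nash-surject $\R$, whereas coupling the unbounded coordinate with the $d-1$ transverse ones provides enough room. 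One finally extends this Nash map on $\Mm$ to a global Nash map $\psi:\R^m\to\R^d$ using standard Nash extension theorems for closed Nash submanifolds; the behaviour on $\Ss\setminus\Mm$ is irrelevant for surjectivity since already $\psi(\Mm)=\R^d$.

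The main obstacle is the explicit construction of the Nash surjection $\Mm\to\R^d$ on the half-cylinder model, which is the non-compact analogue of the ``ball-to-target'' constructions underlying Theorems~\ref{main1-i2}, \ref{main2-i} and~\ref{nice0}, and where the condition $d\geq 2$ enters decisively. The remaining ingredients (the curve selection at infinity, the Nash tubular neighborhood along $\alpha$, and the Nash extension from $\Mm$ to $\R^m$) are technical but follow standard Nash-manifold techniques already exploited in~\cite{fe3}.
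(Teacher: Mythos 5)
Your reduction step is exactly the paper's: it suffices to produce a Nash map $\psi:\R^m\to\R^d$ with $\psi(\Ss)=\R^d$, since $\Tt$ is a Nash image of $\R^e$ (Theorem~\ref{main0}) and $e\leq d$. This is precisely Lemma~\ref{nice1}, and the subsequent composition is as you say. The differences appear in the proof of that lemma, where there are two genuine gaps.

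First, you read ``$\cl(\Ss^{(d)})\cap\Ss$ non-compact'' as ``unbounded'' and immediately invoke curve selection at infinity. But a non-compact semialgebraic set can be bounded and merely non-closed (e.g.\ an open interval). The paper handles this case by picking $p\in\cl(\Ss^{(d)})\setminus\Ss$ and applying the Nash embedding $x\mapsto(x,1/\|x-p\|)$ to make the set unbounded, and later again exchanges ``escape to infinity'' for ``escape to a finite boundary point'' via the inversion $\x\mapsto\x/\|\x\|^2$. Without this preprocessing your curve-selection step has no target.

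Second, and more seriously, the explicit template you offer for the Nash surjection $\Mm=[t_0,\infty)\times V\to\R^d$, namely $(t,y)\mapsto\bigl(t\rho_1(y)-\sigma(y)/t,\ldots,t\rho_{d-1}(y)-\sigma(y)/t,\,t-1/t\bigr)$, cannot be onto: its last coordinate $t-1/t$ is a Nash function of $t$ alone, strictly increasing on $[t_0,\infty)$, with range $[t_0-1/t_0,\infty)$. So the image lies in a closed half-space, never in all of $\R^d$. This is the same phenomenon the paper illustrates in the example after the lemma: a Nash function on a half-line never surjects onto $\R$. You correctly flag that $d\geq 2$ must enter ``decisively,'' but your template never uses it. The paper's actual mechanism is quite different: after reducing to $m=d$ and flattening a Nash arc to $(\t,0,\ldots,0)$, one uses the Nash involutions $f_\ell(x_1,x')=(1/x_1,x'/x_1^\ell)$ of $\{\x_1>0\}$ to show a paraboloid-type unbounded set $\Ff=\{N_1+N_2\|\x'\|^2\leq\x_1\}$ lies in the image of $\Ss$, then polynomially collapses $\Ff$ onto the half-space $\{\x_1\geq 0\}$, and finally --- this is where $d\geq 2$ is used --- applies the complex-squaring map $(x_1,x_2,x'')\mapsto(x_1^2-x_2^2,2x_1x_2,x'')$ which sends $\{\x_1\geq 0\}$ onto all of $\R^d$. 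That last polynomial ``angle-doubling'' step is the missing key idea; your half-cylinder-plus-extension plan, as written, has no analogue of it and so cannot produce a surjection. (The extension from $\Mm$ to $\R^m$ is also moot: the paper avoids it entirely by composing globally defined projections, inversions and polynomial maps.)
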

\begin{remark}
The converse of the previous result is also true. Suppose that $\cl(\Ss^{(d)})\cap\Ss$ is compact for each $d\geq e$. Define $\Ss_1:=\bigcup_{d\geq e}(\cl(\Ss^{(d)})\cap\Ss)$ and $\Ss_2:=\bigcup_{d<e}(\cl(\Ss^{(d)})\cap\Ss)$. Then $\Ss_1$ is compact, $\dim(\Ss_2)<e$ and $\Ss=\Ss_1\cup\Ss_2$. Suppose there exists a Nash map $f:\R^m\to\R^n$ such that $f(\Ss)=\Tt$. Then $\Tt=f(\Ss_1)\cup f(\Ss_2)$, where $f(\Ss_1)$ is compact and $f(\Ss_2)$ has dimension $e'<e$. This contradicts the fact that $\Tt$ is a non-compact semialgebraic set of dimension $d$.
\end{remark}

The previous result (together with the corresponding remark) determines the semialgebraic sets $\Ss\subset\R^m$ of dimension $d\geq e$ that are models to represent a non-compact semialgebraic set $\Tt\subset\R^n$ connected by analytic paths of dimension $e$ as a Nash image.

\subsubsection{Nash images of the closed unit ball}
The core results to prove Theorems \ref{nice0} and \ref{nice2} are Theorem \ref{main0} already proved in \cite{fe3} and Theorem \ref{main1} where we characterize the compact semialgebraic sets $\Ss\subset\R^n$ that are images of closed unit balls under Nash maps. This result provides a full answer to the natural counterpart to Problem \ref{prob1} for the compact case:

\begin{prob}\label{prob1b}
Characterize the (compact semialgebraic) subsets of $\R^n$ that are Nash images of the closed unit ball of $\R^m$. 
\end{prob}

The statement of Theorem \ref{main0} does not take into account whether $\Ss$ is compact or not and the involved Nash maps are rarely proper if $d\geq2$. As closed unit balls $\ol{\Bb}_d$ are compact, the restrictions to $\ol{\Bb}_d$ of the involved Nash maps are always proper maps.

\begin{thm}[Compact Nash images]\label{main1}
Let $\Ss\subset\R^n$ be a $d$-dimensional compact semialgebraic set. The following assertions are equivalent:
\begin{itemize}
\item[(i)] There exists a Nash map $f:\R^d\to\R^n$ such that $f(\ol{\Bb}_d)=\Ss$.
\item[(ii)] There exists a Nash map $f:\R^m\to\R^n$ such that $f(\ol{\Bb}_m)=\Ss$ for some $m\geq d$.
\item[(iii)] $\Ss$ is connected by Nash paths.
\item[(iv)] $\Ss$ is connected by analytic paths.
\item[(v)] $\Ss$ is pure dimensional and there exists a Nash path $\alpha:[0,1]\to\Ss$ whose image meets all the connected components of $\Reg(\Ss)$.
\item[(vi)] $\Ss$ is pure dimensional and there exists an analytic path $\alpha:[0,1]\to\Ss$ whose image meets all the connected components of $\Reg(\Ss)$.
\end{itemize}
\end{thm}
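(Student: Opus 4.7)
The equivalences (iii) $\Leftrightarrow$ (iv) $\Leftrightarrow$ (v) $\Leftrightarrow$ (vi) are immediate from Theorem~\ref{main0} applied to $\Ss$, since that statement does not require compactness of $\Ss$. The implication (i) $\Rightarrow$ (ii) is trivial (take $m=d$). For (ii) $\Rightarrow$ (iii): if $f:\R^m\to\R^n$ is a Nash map with $f(\ol{\Bb}_m)=\Ss$, then any two points $p,q\in\Ss$ admit preimages $x_p,x_q\in\ol{\Bb}_m$, and the straight segment $t\mapsto(1-t)x_p+tx_q$ lies in the convex set $\ol{\Bb}_m$, so its image under $f$ is a Nash path in $\Ss$ from $p$ to $q$. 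The only substantial implication is therefore (vi) $\Rightarrow$ (i), which is the new content of Theorem~\ref{main1} beyond Theorem~\ref{main0}.

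To prove (vi) $\Rightarrow$ (i), my plan is to start from the Nash surjection $h:\R^d\to\R^n$ with $h(\R^d)=\Ss$ provided by Theorem~\ref{main0}, and produce the desired map as a composition $f=h\circ\phi$ where $\phi:\R^d\to\R^d$ is a Nash map arranged so that $h(\phi(\ol{\Bb}_d))=\Ss$. The simplest possible choice of $\phi$ is the scaling $\phi(x)=Rx$ for some radius $R>0$, which reduces the problem to the following stabilization statement: \emph{there exists $R>0$ such that $h(R\ol{\Bb}_d)=\Ss$.} Once this is available, the map $f(x):=h(Rx)$ is Nash and satisfies $f(\ol{\Bb}_d)=\Ss$, which gives (i).

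The stabilization step is where I expect the essential difficulty to lie. The family $\{h(R\ol{\Bb}_d)\}_{R>0}$ is an increasing chain of compact semialgebraic subsets of the compact set $\Ss$ whose union is $\Ss$; heuristically, compactness of $\Ss$ combined with the semialgebraic triviality at infinity of the Nash map $h$ (via Puiseux-type asymptotics along the fibres of $h$) should force this chain to stabilize at some finite radius. Equivalently, the semialgebraic function $\rho:\Ss\to[0,+\infty]$ defined by $\rho(y):=\inf\{|x|:h(x)=y\}$ must be bounded. Verifying this boundedness for an arbitrary Nash surjection $h$ is delicate, so a safer route is to revisit the proof of Theorem~\ref{main0} itself: the surjection $h$ there is built by inductively gluing Nash parameterizations of a stratification of $\Ss$ using finitely many pieces, and by tracking these pieces one should be able to produce a Nash surjection $h$ together with an explicit compact set $K\subset\R^d$ such that $h(K)=\Ss$. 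Enclosing $K$ in a sufficiently large closed ball and composing with the corresponding scaling then yields the required $f$. The bulk of the technical work will consist in adapting the stratification and gluing techniques that underpin Theorems~\ref{main1-i2} and~\ref{main2-i} so that compactness of the domain is preserved throughout the construction.
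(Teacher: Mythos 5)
Your handling of the easy implications is sound: (iii)--(vi) are equivalent by Theorem~\ref{main0}, (i)\,$\Rightarrow$\,(ii) is trivial, and the straight-segment argument for (ii)\,$\Rightarrow$\,(iii) works. The gap is entirely in (vi)\,$\Rightarrow$\,(i), which you correctly flag as the real content but do not actually prove. Your primary route requires showing that, for a Nash surjection $h:\R^d\to\R^n$ with $h(\R^d)=\Ss$ furnished by Theorem~\ref{main0}, the chain $h(R\ol{\Bb}_d)$ stabilizes, equivalently that $\rho(y):=\min\{|x|:h(x)=y\}$ is bounded on $\Ss$. This is a genuine obstruction. The function $\rho$ is semialgebraic, lower semicontinuous and finite on the compact set $\Ss$, and none of those properties forces boundedness (for instance $\rho(y)=1/y$ for $y\in(0,1]$, $\rho(0)=0$ on $\Ss=[0,1]$ has all of them and is unbounded). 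The paper itself warns that the Nash maps produced by Theorem~\ref{main0} are ``rarely proper if $d\geq 2$''; nothing you have written bounds $\rho$ for the particular $h$ that comes out of that theorem, and the appeal to ``Puiseux-type asymptotics along the fibres'' is a hope rather than an argument.

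Your fallback --- ``revisit the proof of Theorem~\ref{main0} and track the pieces'' --- is indeed what the paper does, but it is the entire technical content of Sections~\ref{s2}--\ref{s3}, not an adaptation of the scaling idea. The actual construction does not start from a pre-existing surjection $h:\R^d\to\Ss$ at all: it replaces Theorem~\ref{main0} by the deeper ingredient Theorem~\ref{ridwell} to reduce to a \emph{compact checkerboard set} $\Tt$ (the reduction in \S\ref{ridchekcer7} uses that the regular map $\ol{\Tt}^{\zar}\to\ol{\Ss}^{\zar}$ is proper, so compactness survives), switches to the compact model $\Delta_{d-1}\times[0,1]$, covers $\Tt$ by finitely many Nash charts adapted to the normal-crossings divisor $\ol{\partial\Tt}^{\zar}$, parameterizes a suitable family of Nash maps by an open semialgebraic set $\Theta_0$ in a large Euclidean space, and then builds a continuous semialgebraic path in $\cl(\Theta_0)$ whose horizontal slices sweep out $\Tt$ simplex by simplex. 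Surjectivity is preserved only because this path is approximated via the smart polynomial curve selection lemma (Lemma~\ref{icsl}) with \emph{third-order} derivative matching at the control times, and the degree argument of Lemma~\ref{simplex} together with the quantitative estimates of Lemmas~\ref{simplex2}, \ref{order} and~\ref{bound} keep the image from leaking out of each simplex. The $1$-dimensional case is treated separately (Proposition~\ref{curves}) because the checkerboard reduction needs $d\geq 2$. None of this machinery is visible from the scaling viewpoint, so the proposal as written does not close the implication.
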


In Subsection \ref{acm} we will show that we can take `equivalent' compact models to the closed unit ball to represent compact semialgebraic sets as their images under Nash maps. The technicalities of the constructions we develop in this article make the simplicial prism the suitable model to squeeze. In Subsection \ref{dimensione1} we separately treat the $1$-dimensional case and we characterize $1$-dimensional Nash images of affine spaces in terms of their irreducibility. 

\begin{prop}[The $1$-dimensional case]\label{curves}
Let $\Ss\subset\R^n$ be a $1$-dimensional compact semialgebraic set. Then $\Ss$ is a Nash image of some $\ol{\Bb}_m$ if and only if $\Ss$ is irreducible. In addition, if such is the case, $\Ss$ is a Nash image of the compact interval $[-1,1]$.
\end{prop}

\subsection{Two consequences}
We next present two additional consequences of Theorem \ref{main1}. 

\subsubsection{Representation of arc-symmetric compact semialgebraic sets} 
Arc-symmetric semialgebraic sets were introduced by Kurdyka in \cite{k} and subsequently studied by many authors. Recall that a semialgebraic set $\Ss\subset\R^n$ is \em arc-symmetric \em if for each analytic arc $\gamma:(-1,1)\to\R^n$ with $\gamma((-1,0))\subset\Ss$ it holds that $\gamma((-1,1))\subset\Ss$. In particular arc-symmetric semialgebraic sets are closed subsets of $\R^n$. An arc-symmetric semialgebraic set $\Ss\subset\R^n$ is \em irreducible \em if it cannot be written as the union of two proper arc-symmetric semialgebraic subsets \cite[\S2]{k}. It follows from Theorem \ref{main1} and \cite[Cor.2.8]{k} that a pure dimensional irreducible compact arc-symmetric semialgebraic set is a Nash image of $\ol{\Bb}_d$ where $d:=\dim(\Ss)$. 

\begin{cor}\label{consq1}
Let $\Ss\subset\R^n$ be a pure dimensional irreducible compact arc-symmetric semialgebraic set of dimension $d$. Then $\Ss$ is a Nash image of $\ol{\Bb}_d$. 
\end{cor}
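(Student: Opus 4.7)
The plan is to reduce Corollary~\ref{consq1} to a direct application of Theorem~\ref{main1}, verifying its criterion~(vi): that $\Ss$ is pure dimensional and that there exists an analytic path $\alpha:[0,1]\to\Ss$ whose image meets every connected component of $\Reg(\Ss)$. Pure dimensionality and compactness are built into the hypotheses, so the only substantive task is to describe the connected components of $\Reg(\Ss)$ under the assumption that $\Ss$ is irreducible arc-symmetric.

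First, I would unfold the definition of $\Reg(\Ss)$ employed in this article: writing $X$ for the Zariski closure of $\Ss$ in $\R^n$ and $\widetilde{X}$ for its complexification, one sets $\Reg(X):=X\setminus\Sing(\widetilde{X})$ and defines $\Reg(\Ss)$ as the interior, taken in $\Reg(X)$, of $\Ss\setminus\Sing(\widetilde{X})$. Since $\Ss$ is arc-symmetric, closed in $\R^n$, of pure dimension $d$, and shares its dimension with $X$, the set $\Reg(\Ss)$ is a non-empty Nash manifold of pure dimension $d$, dense in $\Ss$.

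Next, I would invoke Kurdyka's \cite[Cor.~2.8]{k}, which asserts that an arc-symmetric semialgebraic set is irreducible (in his sense, that it cannot be written as the union of two proper arc-symmetric subsets) if and only if its Nash-smooth locus is connected. Matching this smooth locus with the $\Reg(\Ss)$ above up to a lower-dimensional semialgebraic subset --- across which density together with the arc-symmetry of $\Ss$ preserves connectivity --- one concludes that $\Reg(\Ss)$ has exactly one connected component. Choosing any point $p\in\Reg(\Ss)$, the constant analytic path $\alpha\equiv p$ has image $\{p\}$ meeting this unique component, so criterion~(vi) of Theorem~\ref{main1} holds, and Theorem~\ref{main1} then produces a Nash map $f:\R^d\to\R^n$ with $f(\ol{\Bb}_d)=\Ss$.

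The main obstacle will be the clean translation between Kurdyka's notion of the smooth locus of an arc-symmetric set and the set $\Reg(\Ss)$ defined here via the complexification of the Zariski closure of $\Ss$; one must make sure that the connectivity statement provided by \cite[Cor.~2.8]{k} actually transfers to $\Reg(\Ss)$ rather than only to some dense open Nash submanifold thereof. Once this identification is in place the remainder is immediate: a constant path at any regular point already satisfies condition~(vi), so Theorem~\ref{main1} closes the argument.
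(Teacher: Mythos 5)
Your proposal contains a genuine gap at the step where you conclude that $\Reg(\Ss)$ has exactly one connected component. This is false in general for pure dimensional irreducible compact arc-symmetric semialgebraic sets, and the justification you offer (``density together with the arc-symmetry of $\Ss$ preserves connectivity'') does not hold: a set dense in a connected set can easily be disconnected. Concretely, take the lemniscate $\Ss:=\{(\x^2+\y^2)^2=\x^2-\y^2\}\subset\R^2$. It is a compact, pure $1$-dimensional algebraic (hence arc-symmetric) curve, and it is irreducible as an arc-symmetric set: an analytic arc running into the node at the origin along one branch continues analytically into the other loop, so no proper arc-symmetric semialgebraic subset can contain one loop without containing the whole curve. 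Yet the complexification of $\ol{\Ss}^{\zar}=\Ss$ has an ordinary node at the origin, so $\Reg(\Ss)=\Ss\setminus\{(0,0)\}$, which has \emph{two} connected components. A constant path at a regular point therefore meets only one of the two components of $\Reg(\Ss)$ and fails to verify criterion~(vi) of Theorem~\ref{main1}. Whatever the exact formulation of \cite[Cor.\,2.8]{k}, it cannot be read as asserting connectedness of the complexification-based $\Reg(\Ss)$, precisely because of nodal examples like this one; that is why the translation you flag as ``the main obstacle'' is not merely a technical irritation but is where the argument breaks.

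The paper circumvents this by not attempting to verify condition~(vi) on $\Ss$ itself. Instead it invokes \cite[Thm.\,2.6]{k} together with a resolution of singularities $\pi:\widetilde{X}\to X:=\ol{\Ss}^{\zar}$ (with $\widetilde{X}\subset\R^p$ non-singular and $\pi$ the restriction of a polynomial map): for an irreducible arc-symmetric $\Ss$ there is a \emph{single} connected component $E$ of $\widetilde{X}$ with $\pi(E)=\cl(\Reg(\Ss))=\Ss$. Properness of $\pi$ and compactness of $\Ss$ force $E$ to be a connected compact Nash manifold of dimension $d$, which is trivially connected by analytic paths; Theorem~\ref{main1} (condition~(iv), not~(vi)) gives a Nash map $f_0:\R^d\to\R^p$ with $f_0(\ol{\Bb}_d)=E$, and $f:=\pi\circ f_0$ finishes. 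In the lemniscate example this amounts to passing to the circle that normalizes the nodal curve, applying Theorem~\ref{main1} to the circle, and composing with the blow-down. If you want to salvage your strategy of checking condition~(vi) directly on $\Ss$, you would still need to exhibit a single analytic path meeting every component of $\Reg(\Ss)$; in the nodal case that path must pass through the singular set, and producing it amounts essentially to the same resolution argument the paper uses, so nothing is gained.
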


\subsubsection{Elimination of inequalities} 
Another converse problem to Tarski's Theorem is to find an algebraic set in $\R^{n+k}$ whose projection is a given semialgebraic subset of $\R^n$. This is known as the \em problem of eliminating inequalities\em. Motzkin proved in \cite{m2} that this problem always has a solution for $k=1$. However, his solution is rather complicated and in general is a reducible algebraic set. Andradas--Gamboa proved in \cite{ag1,ag2} that if $\Ss\subset\R^n$ is a closed semialgebraic set whose Zariski closure is irreducible, then $\Ss$ is the projection of an irreducible algebraic set in some $\R^{n+k}$. Pecker \cite{pe} provides some improvements on both results: for the first one by finding a construction of an algebraic set in $\R^{n+1}$ that projects onto the given semialgebraic subset of $\R^n$, far simpler than the original construction of Motzkin; for the second one by proving that if $\Ss$ is a locally closed semialgebraic subset of $\R^n$ with an interior point, then $\Ss$ is the projection of an irreducible algebraic subset of $\R^{n+1}$. 

In \cite{fe3} it is proved that each semialgebraic set $\Ss\subset\R^n$ is the projection of a non-singular algebraic set $X\subset\R^{n+k}$ whose connected components are Nash diffeomorphic to affine spaces (maybe of different dimensions). In this article we improve the previous result when $\Ss$ is compact and we prove that there exists a non-singular compact algebraic set $X\subset\R^{n+k}$ that is Nash diffeomorphic to a finite pairwise disjoint union of spheres (maybe of different dimensions) and projects onto $\Ss$.

\begin{cor}\label{consq2}
Let $\Ss\subset\R^n$ be a compact semialgebraic set of dimension $d$. We have:
\begin{itemize}
\item[(i)] If $\Ss$ is connected by analytic paths, it is the projection of an irreducible compact non-singular algebraic set $X\subset\R^{n+k}$ (for some $k\geq0$) that has at most two connected components Nash diffeomorphic to the sphere $\sph^d$. In addition,
\begin{itemize}
\item[(1)] Both connected component of $X$ projects onto $\Ss$.
\item[(2)] There exists an automorphism of $X$ that swaps the connected components of $X$. 
\end{itemize}
\item[(ii)] In general $\Ss$ is the projection of an algebraic set $X\subset\R^{n+k}$ (for some $k\geq0$) of dimension $d$ that is Nash diffeomorphic to a finite pairwise disjoint union (of dimension $d$) of spheres (maybe of different dimensions).
\end{itemize}
\end{cor}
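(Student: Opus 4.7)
The plan is to build on Theorem~\ref{main1} and adapt the graph-construction from~\cite{fe3} (used there to realize an arbitrary semialgebraic set as the projection of a non-singular algebraic set whose components are Nash diffeomorphic to affine spaces) to the compact setting, where the natural compact algebraic models are spheres rather than affine spaces.

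\textbf{Part (i).} Since $\Ss$ is compact, connected by analytic paths and of dimension $d$, Theorem~\ref{main1} yields a Nash map $f:\R^d\to\R^n$ with $f(\ol{\Bb}_d)=\Ss$. The $d$-sphere $\sph^d\subset\R^{d+1}$ carries the algebraic projection $\pi(x_1,\dots,x_{d+1}):=(x_1,\dots,x_d)$ with $\pi(\sph^d)=\ol{\Bb}_d$ and the algebraic involution $\sigma(x_1,\dots,x_{d+1}):=(x_1,\dots,x_d,-x_{d+1})$ of $\sph^d$, whose topological quotient is $\ol{\Bb}_d$. Setting $g:=f\circ\pi:\sph^d\to\R^n$ produces a Nash surjection onto $\Ss$ satisfying $g\circ\sigma=g$; it remains to algebraize the graph of $g$ preserving this $\sigma$-symmetry.

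\textbf{Algebraization step.} Each coordinate $g_i$ of $g$ is a Nash function on the compact non-singular algebraic set $\sph^d$, so it satisfies a non-trivial polynomial identity $P_i(x,g_i(x))\equiv 0$ with $(\partial P_i/\partial y_i)(x,g_i(x))\not\equiv 0$. Following the graph-with-slack-variables technique used in~\cite{fe3}, I would introduce auxiliary variables $z_i$ together with equations $z_i\cdot(\partial P_i/\partial y_i)(x,y_i)=1$ in order to isolate the graph of $g_i$ as a connected component of a smooth real algebraic set and to force irreducibility. The resulting $X\subset\R^{n+k}$ is a compact non-singular irreducible real algebraic set on which the involution $\sigma$ lifts; $X$ then has either one or two real connected components, each Nash diffeomorphic to $\sph^d$ and projecting onto $\Ss$ via the first $n$ coordinates. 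When two components appear they are interchanged by the lifted $\sigma$, supplying the automorphism of~(2).

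\textbf{Part (ii) and main obstacle.} For (ii), decompose $\Ss=\Ss_1\cup\cdots\cup\Ss_r$ into its analytic-path-connected components, apply~(i) to each $\Ss_j$ (of dimension $d_j\leq d$) to obtain a compact non-singular $X_j\subset\R^{n+k_j}$ Nash diffeomorphic to (at most two copies of) $\sph^{d_j}$ with $\pi(X_j)=\Ss_j$, and assemble the $X_j$ disjointly in a common $\R^{n+k}$ by padding and translating along extra coordinates to separate them; the resulting $X=\bigsqcup_j X_j$ satisfies the conclusion. The main obstacle is the algebraization step in~(i): the analogue in~\cite{fe3} for $\R^m$ exploits the freedom to absorb the slack variables into an affine space, whereas here one must keep the final variety compact and Nash diffeomorphic to $\sph^d$. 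This requires a careful choice of the polynomials $P_i$ (minimal degree, controlled behaviour along the equator $\{x_{d+1}=0\}$) and of the auxiliary equations, so that the construction is compatible with $\sigma$ and introduces no spurious real connected components.
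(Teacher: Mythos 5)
Your high-level strategy for part~(i) --- realize $\Ss$ as a Nash image of a sphere, then algebraize the resulting Nash map, keeping the ambient variety compact and non-singular --- is indeed the right idea, and part~(ii) (decompose into analytic path-connected components and assemble a disjoint union) matches the paper's proof. However, you yourself flag the algebraization step as ``the main obstacle'' and leave it unresolved; this is precisely the step that carries all the content, so as written the argument has a genuine gap. The paper closes it cleanly by invoking the Artin--Mazur description of Nash maps \cite[Thm.8.4.4]{bcr}: applied to the Nash map $\sph^d\to\Ss$ (obtained from Theorem \ref{main1} after projecting $\sph^d\to\ol{\Bb}_d$), it directly produces a non-singular \emph{irreducible} algebraic set $Z$ of dimension $d$ and a connected component $M$ of $Z$ that is Nash diffeomorphic to $\sph^d$, with $M$ projecting onto $\Ss$. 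Your ad hoc attempt to rederive this from polynomial identities $P_i(x,g_i(x))\equiv 0$ and slack variables $z_i\cdot(\partial P_i/\partial y_i)=1$ would have to reprove (a piece of) Artin--Mazur; it is not clear that the set so defined is non-singular, irreducible, or compact, and your text does not establish any of this.

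There is a second, more structural divergence. You try to carry the reflection $\sigma$ of $\sph^d$ through the construction in order to produce the automorphism of item~(2), but this is the wrong source of the two components. In the paper, the two components appear \emph{after} Artin--Mazur, when one cuts the single component $M$ out of $Z$: choose (via the separation Lemma \ref{separation}) a polynomial $h$ with $M=Z\cap\{h>0\}$ and set $Y:=\{(z,t)\in Z\times\R : h(z)t^2-1=0\}$. The two sheets $M_\pm=Y\cap\{\pm t>0\}$ are both Nash diffeomorphic to $M\cong\sph^d$ via $z\mapsto(z,\pm h(z)^{-1/2})$, they both project onto $M$ and hence onto $\Ss$, and they are swapped by the polynomial involution $(z,t)\mapsto(z,-t)$; one then takes $X$ to be the irreducible component of $Y$ containing $M_+$, which is either $M_+$ or $Y$. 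This is what delivers properties~(1) and~(2), and it explains why ``at most two'' components is unavoidable (cf.\ Lemma \ref{nint} and Example \ref{nint2}). The equatorial involution of $\sph^d$ plays no role, and pursuing $\sigma$-equivariance is an unnecessary complication that does not by itself yield the required double cover.
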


Even for dimension $1$, it is not possible to impose the connectedness of $X$ (see Lemma \ref{nint} and Example \ref{nint2}). Contrast the previous result with \cite[Cor.1.8]{fe3}.

\subsection{Structure of the article}

The article is organized as follows. In Section \ref{s2} we recall some simple models (`equivalent' to the closed unit ball) to represent in Section \ref{s3} $d$-dimensional compact semialgebraic sets $\Ss\subset\R^n$ connected by analytic paths as images of such simple models under Nash maps $\R^d\to\R^n$. We also recall a procedure to construct polynomial paths inside $d$-dimensional semialgebraic sets $\Ss\subset\R^d$ connected by analytic paths that passes through certain control points at certain control times (Lemma \ref{icsl}). In Section \ref{s3} we prove Proposition \ref{curves} and Theorem \ref{main1}, which is the core result of this article and requires the development of our most sophisticated techniques. Such techniques are strongly inspired by those proposed in \cite[Thm.1.3]{fu6} to prove Theorem \ref{main2-i}. In Section \ref{s4} we prove Theorems \ref{nice0} and \ref{nice2}, whereas Theorem \ref{snmbss} is proved in Section \ref{s5}. Finally in Section \ref{s6} we approach Corollaries \ref{consq1} and \ref{consq2}. We also show that Corollary \ref{consq2} is somehow sharp (Lemma \ref{nint} and Example \ref{nint2}). 

\subsection*{Acknowledgements}
The authors are very grateful to S. Schramm for a careful reading of the final version and for the suggestions to refine its redaction.

\section{Compact models and preliminary results}\label{s2}

In this section we present (most times without proofs, but with the corresponding references) some objects, tools and results that will be useful in the development of this article.

\subsection{Compact models}\label{acm}

Our first purpose is to present simple compact models to represent compact semialgebraic sets connected by analytic paths as their images under Nash maps. We analyze some relationships between these models, which will allow us to choose the most suitable one in each case. More precisely, in \cite{fu6} we found surjective polynomial and regular maps between the following compact models: 
\begin{itemize}
\item the \textit{standard sphere} $\sph^d:=\{x\in\R^{d+1}\, :\, \|x\|^2=1\}$, 
\item the \textit{closed unit ball} $\ol{\Bb}_d:=\{x\in \R^d\, :\, \|x\|^2\leq 1\}$, 
\item the \textit{cylinder} $\Cc_d:=\ol{\Bb}_{d-1}\times[-1,1]$, 
\item the \textit{hypercube} $\Qq_d:=[-1,1]^d$,
\item the \textit{standard simplex} $\Delta_{d}:=\{x\in \R^d\, :\, \x_1\geq 0,\ldots,\x_{d}\geq 0,\, \x_1+\cdots+\x_{d}\leq 1\}$,
\item the \textit{simplicial prism} $\Delta_{d-1}\times[-1,1]$.
\end{itemize}

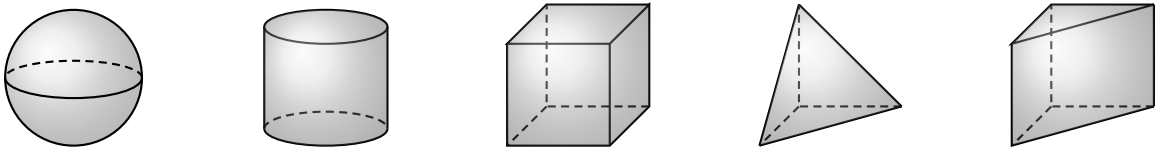
\begin{figure}[ht]
\centering
\begin{subfigure}[b]{0.17\textwidth}
\centering
\begin{tikzpicture}[scale=0.45]
\shade[ball color = gray!40, opacity = 0.4] (0,0) circle (2cm);
\draw[thick] (0,0) circle (2cm);
\draw[thick] (-2,0) arc (180:360:2 and 0.6);
\draw[densely dashed, thick] (2,0) arc (0:180:2 and 0.5);
\end{tikzpicture}
\end{subfigure}
\hfill
\begin{subfigure}[b]{0.17\textwidth}
\centering
\begin{tikzpicture}[scale=0.45]
\draw[thick] (0,0) ellipse (1.8 and 0.5);
\draw[thick] (-1.8,0) -- (-1.8,-3);
\draw[thick] (-1.8,-3) arc (180:360:1.8 and 0.5);
\draw[densely dashed, thick] (-1.8,-3) arc (180:360:1.8 and -0.5);
\draw[thick] (1.8,-3) -- (1.8,0);
\shade[ball color = gray!40, opacity = 0.4] (-1.8,0) -- (-1.8,-3) arc (180:360:1.8 and 0.5) -- (1.8,0) arc (0:180:1.8 and -0.5) -- (0,0) ellipse (1.8 and 0.5) ;
\end{tikzpicture}
\end{subfigure}
\hfill
\begin{subfigure}[b]{0.17\textwidth}
\centering
\begin{tikzpicture}[scale=0.45]
\draw[thick](3,3,0)--(0,3,0)--(0,3,3)--(3,3,3)--(3,3,0)--(3,0,0)--(3,0,3)--(0,0,3)--(0,3,3);
\draw[thick](3,3,3)--(3,0,3);
\draw[densely dashed, thick](3,0,0)--(0,0,0)--(0,3,0);
\draw[densely dashed, thick](0,0,0)--(0,0,3);
\shade[ball color = gray!40, opacity = 0.4] (0,3,0) -- (0,3,3) -- (0,0,3) -- (3,0,3) -- (3,0,0) -- (3,3,0);
\end{tikzpicture}
\end{subfigure}
\hfill
\begin{subfigure}[b]{0.17\textwidth}
\centering
\begin{tikzpicture}[scale=0.45]
\draw[thick](3,0,0) -- (0,3,0);
\draw[thick](3,0,0)--(0,0,3);
\draw[thick](0,0,3) -- (0,3,0);
\draw[densely dashed, thick](3,0,0)--(0,0,0)--(0,3,0);
\draw[densely dashed, thick](0,0,0)--(0,0,3);
\shade [ball color = gray!40, opacity = 0.4] (0,3,0) -- (0,0,3) -- (3,0,0);
\end{tikzpicture}
\end{subfigure}
\hfill
\begin{subfigure}[b]{0.17\textwidth}
\centering
\begin{tikzpicture}[scale=0.45]
\draw[densely dashed, thick](3,0,0)--(0,0,0)--(0,3,0);
\draw[densely dashed, thick](0,0,0)--(0,0,3);
\draw[thick] (3,0,0)--(3,3,0);
\draw[thick] (3,3,0)--(0,3,0) -- (0,3,3) -- (3,3,0);
\draw[thick] (0,3,3)--(0,0,3)--(3,0,0);
\shade [ball color = gray!40, opacity = 0.4] (3,3,0) -- (0,3,0) -- (0,3,3) -- (0,0,3)-- (3,0,0)--(3,3,0);
\end{tikzpicture}
\end{subfigure}
\caption{\small{Compact models to represent semialgebraic sets as their Nash images.}}
\end{figure}

The cylinder $\Cc_d$ is by \cite[Lem.2.1]{fu6} a polynomial image of $\ol{\Bb}_d$, whereas the standard $d$-dimensional simplex $\Delta_{d}$ is by \cite[Lem.2.5]{fu6} a polynomial image of $\ol{\Bb}_d$. The simplicial prism $\Delta_{d-1}\times[-1,1]$ is by \cite[Cor.2.8]{fu6} a polynomial image of $\ol{\Bb}_d$, whereas the hypercube $\Qq_d:=[-1,1]^d$ is by \cite[Cor.2.9]{fu6} a polynomial image of $\ol{\Bb}_d$. Conversely, the $d$-dimensional closed ball $\ol{\Bb}_d$ is by \cite[Lem.2.10]{fu6} a polynomial image of the $d$-dimensional hypercube $\Qq_d$. The $d$-sphere is by \cite[Lem.A.4]{fu6} a regular image of the hypercube $\Qq_d$ (although it is not a polynomial image \cite[\S.1.2]{fu6}) and the closed ball $\ol{\Bb}_d$ is the projection of the $d$-sphere. We complete below some missing relations between the previous models for the sake of completeness. As a consequence, we can choose up to our convenience any of the previous models to represent a semialgebraic set $\Ss\subset\R^n$ connected by analytic paths as a Nash image and immediately we know that $\Ss$ is a Nash image of each of them. 

\begin{cor}\label{cyl}
The $d$-dimensional closed ball $\ol{\Bb}_d$ is a polynomial image of the $d$-dimensional cylinder $\Cc_d$.
\end{cor}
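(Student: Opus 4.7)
The plan is to obtain the desired polynomial surjection $\Cc_d\to\ol{\Bb}_d$ as a composition of polynomial maps that have already been cited in this subsection, using the hypercube $\Qq_d$ as an intermediate model. Assume first $d\geq 2$. By \cite[Cor.2.9]{fu6} applied in dimension $d-1$, the hypercube $\Qq_{d-1}$ is a polynomial image of $\ol{\Bb}_{d-1}$, so there exists a polynomial map $G:\R^{d-1}\to\R^{d-1}$ with $G(\ol{\Bb}_{d-1})=\Qq_{d-1}$. The natural way to upgrade $G$ to a map whose source is the cylinder is to act by the identity on the $[-1,1]$ factor, so I would set
\[
F:\R^{d-1}\times\R\longrightarrow\R^{d-1}\times\R,\qquad F(y,t):=(G(y),t),
\]
which is polynomial and acts factorwise on the product $\Cc_d=\ol{\Bb}_{d-1}\times[-1,1]$. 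Consequently
\[
F(\Cc_d)=G(\ol{\Bb}_{d-1})\times[-1,1]=\Qq_{d-1}\times[-1,1]=\Qq_d.
\]

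Next I would invoke \cite[Lem.2.10]{fu6} in dimension $d$ to obtain a polynomial map $H:\R^d\to\R^d$ with $H(\Qq_d)=\ol{\Bb}_d$. The composition $H\circ F:\R^d\to\R^d$ is then polynomial and satisfies
\[
(H\circ F)(\Cc_d)=H(\Qq_d)=\ol{\Bb}_d,
\]
which is exactly the desired surjection. The case $d=1$ is handled separately and trivially, since $\Cc_1=\ol{\Bb}_0\times[-1,1]=[-1,1]=\ol{\Bb}_1$, so $\id_\R$ does the job.

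The substantive content of the statement is not in this corollary at all but in \cite[Cor.2.9]{fu6} and \cite[Lem.2.10]{fu6}; consequently I do not expect any obstacle. The only point requiring a word of care is to extend $G$ polynomially from $\R^{d-1}$ to the whole of $\R^d$ in a way that preserves the cylinder structure, and this is resolved by the trivial device of acting by $\id$ on the last coordinate.
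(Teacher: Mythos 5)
Your proposal is correct and follows exactly the paper's own argument: factor through the hypercube $\Qq_d$ using \cite[Cor.2.9]{fu6} applied in dimension $d-1$ to pass from $\Cc_d=\ol{\Bb}_{d-1}\times[-1,1]$ to $\Qq_d=\Qq_{d-1}\times[-1,1]$, then \cite[Lem.2.10]{fu6} to pass from $\Qq_d$ to $\ol{\Bb}_d$, with the trivial $d=1$ case handled separately. You merely make the factorwise map $F(y,t)=(G(y),t)$ explicit, which the paper leaves implicit.
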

\begin{proof}
If $d=1$, we have $\ol{\Bb}_0\times[-1,1]=\ol{\Bb}_1=[-1,1]$, so we can consider the case $d\geq 2$. The hypercube $\Qq_{d-1}:=[-1,1]^d$ is by \cite[Cor.2.9]{fu6} a polynomial image of $\ol{\Bb}_{d-1}$, so the hypercube $\Qq_d$ is a polynomial image of the $d$-dimensional cylinder $\Cc_d=\ol{\Bb}_{d-1}\times[-1,1]$. The $d$-dimensional closed ball $\ol{\Bb}_d$ is by \cite[Lem.2.10]{fu6} a polynomial image of the $d$-dimensional hypercube $\Qq_d$, so it is also a polynomial image of the $d$-dimensional cylinder $\Cc_d$, as required. 
\end{proof}

\begin{lem}\label{triangolo}
The $d$-dimensional closed ball $\ol{\Bb}_d$ is a polynomial image of the $d$-dimensional simplex $\Delta_d$.
\end{lem}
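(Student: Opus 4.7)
The plan is to produce the required polynomial surjection as the composition
\[
\Delta_d\xrightarrow{\phi}\Qq_d\xrightarrow{\psi}\ol{\Bb}_d,
\]
where $\psi:\Qq_d\to\ol{\Bb}_d$ is the polynomial surjection from \cite[Lem.2.10]{fu6} recalled above; so it suffices to construct a polynomial map $\phi:\R^d\to\R^d$ with $\phi(\Delta_d)=\Qq_d$. I would take $\phi$ to be the coordinate-wise tensor product of a single ``oscillating'' one-variable polynomial built from a Chebyshev polynomial.

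Concretely, let $T_k$ denote the Chebyshev polynomial of the first kind of degree $k$, and set $p_k(\x):=T_k(2\x-1)\in\R[\x]$. The identity $T_k(\cos\theta)=\cos(k\theta)$ immediately gives $p_k([0,1])=[-1,1]$, and an elementary trigonometric analysis of the oscillation of $p_k$ on $[0,1]$ shows that for every $u\in[-1,1]$ the set $p_k^{-1}(u)\cap[0,1]$ contains a point $\leq\veps_k:=(1-\cos(\pi/k))/2$, the worst case being $u=-1$ when $k$ is even and $u=1$ when $k$ is odd; note that $\veps_k\to 0$ as $k\to\infty$.

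Now choose $k$ so that $d\veps_k\leq 1$; the choice $k:=d$ works for every $d\geq 1$, because $\cos(\pi/d)\geq 1-2/d$ (with equality for $d=1,2$, and strictly for $d\geq 3$, where $\cos(\pi/d)\geq 1-\pi^2/(2d^2)>1-2/d$ since $\pi^2/(2d^2)<2/d$). Define $\phi:\R^d\to\R^d$ by
\[
\phi(\x_1,\ldots,\x_d):=(p_k(\x_1),\ldots,p_k(\x_d)).
\]
Since $\Delta_d\subset[0,1]^d$ we have $\phi(\Delta_d)\subset\Qq_d$. Conversely, given $(u_1,\ldots,u_d)\in\Qq_d$, for each $i$ pick a preimage $x_i\in p_k^{-1}(u_i)\cap[0,\veps_k]$; then $x_i\geq 0$ and $\sum_{i=1}^d x_i\leq d\veps_k\leq 1$, so $(x_1,\ldots,x_d)\in\Delta_d$ and $\phi(x_1,\ldots,x_d)=(u_1,\ldots,u_d)$. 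Composing with $\psi$ then yields the desired polynomial map with $(\psi\circ\phi)(\Delta_d)=\ol{\Bb}_d$.

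The main obstacle is precisely the preimage bound for $p_k$: one must confirm that the Chebyshev polynomial oscillates rapidly enough on $[0,1]$ that the smallest preimage of any value in $[-1,1]$ does not exceed $\veps_k$. Once this is in place, surjectivity of $\phi$ onto $\Qq_d$ is automatic, and the final step merely composes with the already available polynomial surjection $\Qq_d\to\ol{\Bb}_d$ of \cite[Lem.2.10]{fu6}.
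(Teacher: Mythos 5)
Your proposal is correct, and it takes a genuinely different route from the paper's proof. The paper shows directly that $\Delta_d$ surjects onto $\ol{\Bb}_d$: it builds a univariate polynomial $h$ with $h(1)=1$, $0\leq h\leq 1$ on $[0,2d^2]$, and attaining its maximum at $1$, then uses the radial map $g(x)=h(\|x\|^2)x$ together with the sandwich $\ol{\Bb}_d\subset\Delta'_d\subset\ol{\Bb}_d(0,\sqrt{2}d)$ for a carefully placed simplex $\Delta'_d$; composing with the affine map $\Delta_d\to\Delta'_d$ finishes the proof. You instead factor through the hypercube: you only need a polynomial surjection $\Delta_d\to\Qq_d$, and then compose with the already-quoted surjection $\Qq_d\to\ol{\Bb}_d$ of \cite[Lem.2.10]{fu6}. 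The coordinatewise Chebyshev construction $\phi=(p_k,\ldots,p_k)$ with $p_k(\x)=T_k(2\x-1)$ does this: parameterizing $x=(1+\cos\theta)/2$ with $\theta\in[\pi-\pi/k,\pi]$ shows that every $u\in[-1,1]$ has a preimage in $[0,\veps_k]$ with $\veps_k=(1-\cos(\pi/k))/2$, and your estimate $\cos(\pi/d)\geq 1-2/d$ (with the three cases $d=1,2$ by equality and $d\geq 3$ via $\cos t\geq 1-t^2/2$ and $\pi^2/(2d)<2$) gives $d\veps_d\leq 1$, so the coordinatewise preimages land in $\Delta_d$. Both proofs are complete. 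Your argument has the merit of reducing a genuinely $d$-dimensional statement to a one-variable oscillation estimate and of reusing an existing lemma, at the cost of needing the explicit Chebyshev bound; the paper's proof is self-contained and stylistically uniform with the other radial-squish constructions in \cite{fu6} (it mirrors \cite[Lem.2.10]{fu6}), at the cost of the geometric bookkeeping around $\Delta'_d$.
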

\begin{proof}
As $\Delta_1=[0,1]$ and $\ol{\Bb}_1=[-1,1]$, the polynomial function $h(\t)=2\t-1$ satisfies $h(\Delta_1)=[-1,1]$, so we assume $d\geq 2$. We proceed similarly to the proof of \cite[Lem.2.10]{fu6} and we consider the univariate polynomial 
$$
h(\t):=\t^2\frac{(\t-2d^2)^{2(2d^2-1)}}{(2d^2-1)^{2(2d^2-1)}}\in \R[\t]\quad\leadsto\quad h'(\t)=\frac{4d^2\t(\t-2d^2)^{2(2d^2-1)-1}}{(2d^2-1)^{2(2d^2-1)}}(\t-1).
$$
It satisfies $h(0)=h(2d^2)=0$ and $h(1)=1$. In addition, $h'$ is positive on $(0,1)$ and negative on $(1,2d^2)$, so $h$ has a global maximum at $t=1$ and it satisfies $0\leq h(\t)\leq 1$ on the interval $[0,2d^2]$. 

Consider the simplex $\Delta'_d:=\{\x_1\geq-1,\ldots,\x_d\geq -1,\x_1+\cdots+\x_d\leq\sqrt{d}\}$. A tangent hyperplane to $\sph^{d-1}$ (which is the boundary of the closed unit ball $\ol{\Bb}_d$) is parallel to $\x_1+\cdots+\x_d=0$ if and only if the tangent point $p\in\sph^{d-1}$ has all its coordinates equal. As $\Delta'_d\subset\{\x_1\geq-1,\ldots,\x_d\geq-1\}$, we pick the point $p=(\frac{1}{\sqrt{d}},\ldots,\frac{1}{\sqrt{d}})$ and the tangent hyperplane $\{\x_1+\cdots+\x_d=\sqrt{d}\}$, so $\ol{\Bb}_d\subset\Delta'_d$. In addition, we claim: $\Delta'_d\subset \ol{\Bb}_d(0,\sqrt{2}d)$. 

As $\Delta'_d$ is the convex hull of its vertices and $\ol{\Bb}_d(0,\sqrt{2}d)$ is convex, it is enough to check that the vertices of $\Delta'_d$ belong to $\ol{\Bb}_d(0,\sqrt{2}d)$. The vertices of $\Delta'_d$ are 
$$
v_i:=(-1,\ldots,-1,\sqrt{d}+d-1,-1,\ldots,-1). 
$$
We have 
$$
\|v_i\|^2=d-1+(\sqrt{d}+d-1)^2=d^2+2\sqrt{d}(d-1)<2d^2,
$$ 
so $\ol{\Bb}_d\subset\Delta'_d\subset \ol{\Bb}_d(0,\sqrt{2}d)$. Consider the polynomial map 
$$
g:\R^d\to\R^d, \, x\mapsto h(\|x\|^2)x.
$$
Observe that $g(\ol{\Bb}_d)=g(\ol{\Bb}_d(0,\sqrt{2}d))=\ol{\Bb}_d$, so $g(\Delta'_d)=\ol{\Bb}_d$. If $h:\R^d\to\R^d$ is an affine map such that $h(\Delta_d)=\Delta'_d$ and the polynomial map $f:=g\circ h$ satisfies $f(\Delta_d)=\ol{\Bb}_d$, as required.
\end{proof}

\begin{cor}\label{prismball}
The $d$-dimensional closed ball $\ol{\Bb}_d$ is a polynomial image of the $d$-dimensional prism $\Delta_{d-1}\times[-1,1]$. 
\end{cor}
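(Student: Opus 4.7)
The plan is to factor the desired surjection as a composition
$$\Delta_{d-1}\times[-1,1]\xrightarrow{\;g\;}\Delta_d\xrightarrow{\;f\;}\ol{\Bb}_d,$$
where $f$ is supplied by Lemma \ref{triangolo} and $g$ is an explicit polynomial surjection that I would construct. For $d=1$ the statement is trivial since $\Delta_0\times[-1,1]=[-1,1]=\ol{\Bb}_1$, so I would assume $d\geq 2$.

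The key step is constructing $g$. I would take the polynomial map
$$g(\x_1,\ldots,\x_{d-1},\t):=\Bigl(\x_1,\ldots,\x_{d-1},\tfrac{1+\t}{2}\bigl(1-\x_1-\cdots-\x_{d-1}\bigr)\Bigr).$$
Writing $s:=\x_1+\cdots+\x_{d-1}$, the sum of all $d$ coordinates of $g(\x,\t)$ equals
$$s+\tfrac{1+\t}{2}(1-s)=\tfrac{1+\t}{2}\cdot 1+\tfrac{1-\t}{2}\cdot s,$$
which is a convex combination of $1$ and $s$ (the coefficients $\tfrac{1\pm\t}{2}$ lie in $[0,1]$ and sum to $1$). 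Hence, on $\Delta_{d-1}\times[-1,1]$, all components of $g$ are non-negative and their sum lies in $[s,1]\subset[0,1]$, so $g(\Delta_{d-1}\times[-1,1])\subset\Delta_d$.

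For surjectivity of $g$ onto $\Delta_d$, given $(\y_1,\ldots,\y_d)\in\Delta_d$ set $\x_i:=\y_i$ for $i=1,\ldots,d-1$ and try to solve $\tfrac{(1+\t)(1-\y_1-\cdots-\y_{d-1})}{2}=\y_d$. If $\y_1+\cdots+\y_{d-1}<1$ then $\t:=\tfrac{2\y_d}{1-\y_1-\cdots-\y_{d-1}}-1$ lies in $[-1,1]$ since $0\leq\y_d\leq 1-\y_1-\cdots-\y_{d-1}$; and if $\y_1+\cdots+\y_{d-1}=1$, then $\y_d=0$ and any $\t\in[-1,1]$ works. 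Composing with $f$ from Lemma \ref{triangolo} yields a polynomial map from $\Delta_{d-1}\times[-1,1]$ onto $\ol{\Bb}_d$, as required.

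I do not anticipate a significant obstacle: the only non-routine part is guessing the explicit formula for $g$, and the verifications (containment and surjectivity) reduce to an elementary convex-combination argument in one parameter.
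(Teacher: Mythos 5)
Your proof is correct, and it takes a genuinely different route from the paper's. The paper factors the surjection as
$$\Delta_{d-1}\times[-1,1]\;\longrightarrow\;\ol{\Bb}_{d-1}\times[-1,1]=\Cc_d\;\longrightarrow\;\ol{\Bb}_d,$$
applying Lemma~\ref{triangolo} in dimension $d-1$ to the first factor of the prism (so $\ol{\Bb}_{d-1}$ is a polynomial image of $\Delta_{d-1}$, hence the cylinder $\Cc_d$ is a polynomial image of the prism), and then invoking Corollary~\ref{cyl} for $\Cc_d\to\ol{\Bb}_d$; Corollary~\ref{cyl} in turn chains through the hypercube $\Qq_d$ via two cited results from~\cite{fu6}. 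You instead factor through the simplex:
$$\Delta_{d-1}\times[-1,1]\;\xrightarrow{\;g\;}\;\Delta_d\;\xrightarrow{\;f\;}\;\ol{\Bb}_d,$$
constructing $g$ explicitly by hand (a degree-$2$ polynomial that at $\t=-1$ collapses onto the facet $\{\y_d=0\}$ and at $\t=1$ onto the facet $\{\y_1+\cdots+\y_d=1\}$), and then applying Lemma~\ref{triangolo} once in dimension $d$. Your verification of both $g(\Delta_{d-1}\times[-1,1])\subset\Delta_d$ (via the convex-combination identity $s+\frac{1+\t}{2}(1-s)=\frac{1-\t}{2}s+\frac{1+\t}{2}$) and surjectivity (solving for $\t$ in the last coordinate) is correct, including the boundary case $\y_1+\cdots+\y_{d-1}=1$. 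Your route is shorter and more self-contained, depending on only one prior result rather than the chain Lemma~\ref{triangolo} $\to$ Corollary~\ref{cyl} $\to$ \cite[Cor.2.9]{fu6}, \cite[Lem.2.10]{fu6}; the paper's route has the virtue of re-using the already-established web of relations between the compact models, which is the point of that subsection. Both proofs are valid.
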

\begin{proof}
If $d=1$, we have $\Delta_0\times[-1,1]=\ol{\Bb}_1=[-1,1]$, so we consider the case $d\geq 2$. By Lemma \ref{triangolo} $\ol{\Bb}_{d-1}$ is a polynomial image of $\Delta_{d-1}$, so $\ol{\Bb}_{d-1}\times[-1,1]$ is a polynomial image of $\Delta_{d-1}\times[-1,1]$. By Corollary \ref{cyl} we conclude that $\ol{\Bb}_d$ is a polynomial image of $\Delta_{d-1}\times[-1,1]$, as required.
\end{proof}

\subsection{Necessary conditions}
All the (equivalent) models quoted above are by Theorem \ref{main0} Nash images of $\R^n$. Thus, Nash images of the previous models are, apart from compact, connected by analytic paths \cite[Cor.6.3]{fe3}, pure dimensional \cite[Cor.6.3]{fe3} and irreducible \cite[Lem.7.3]{fe3}. The following example borrowed from \cite[Ex.7.12]{fe3} (studied in detail in \cite[Ex.1.2]{fu6}) confirms that `pure dimensionality' and `irreducibility' are not enough to guarantee `connection by Nash paths'.

\begin{example}[{\cite[Ex.7.12]{fe3}, \cite[Ex.1.2]{fu6}}]
The irreducible and pure dimensional semialgebraic set (see Figure \ref{notconnected}) $\Ss:=\{(4\x^2-\y^2)(4\y^2-\x^2)\geq0,\, \y\geq0\}\subset\R^2$ \textit{is not connected by Nash paths.}
\end{example}

In addition, recall that by Theorem \ref{main0} the following properties for a semialgebraic set $\Ss\subset\R^n$ are equivalent:
\begin{itemize}
\item[(i)] $\Ss$ is connected by Nash paths.
\item[(ii)] $\Ss$ is connected by analytic paths.
\item[(iii)] $\Ss$ is pure dimensional and there exists a Nash path $\alpha:[0,1]\to\Ss$ whose image meets all the connected components of the set of regular points of $\Ss$.
\item[(iv)] $\Ss$ is pure dimensional and there exists an analytic path $\alpha:[0,1]\to\Ss$ whose image meets all the connected components of the set of regular points of $\Ss$.
\end{itemize}

\begin{figure}[!ht]
\begin{center}
\begin{tikzpicture}[scale=0.75]

\draw[fill=gray!60,opacity=0.75,dashed,draw] (4.5,1) -- (0.5,3) arc (153.43494882292201:116.56505117707798:4.47213595499958cm) -- (4.5,1) -- (8.5,3) arc (26.56505117707799:63.43494882292202:4.47213595499958cm) -- (4.5,1);

\draw[->,thick=1.5pt] (4.5,1) -- (0.5,3);
\draw[->,thick=1.5pt] (4.5,1) -- (6.5,5);
\draw[->,thick=1.5pt] (4.5,1) -- (8.5,3);
\draw[->,thick=1.5pt] (4.5,1) -- (2.5,5);

\draw[->] (4.5,0) -- (4.5,6);
\draw[->] (0,1) -- (9,1);

\draw[fill=black] (4.5,1) circle (0.75mm);
\draw[fill=black] (5.5,2) circle (0.75mm);
\draw[fill=black] (3.5,2) circle (0.75mm);

\draw (6.1,2.4) node{\footnotesize$(1,1)$};
\draw (2.9,2.4) node{\footnotesize$(-1,1)$};

\draw (2,4.25) node{\small$\Cc_1$};
\draw (7,4.25) node{\small$\Cc_2$};
\draw (5,5) node{\small$\Ss$};
\end{tikzpicture}
\end{center}
\caption{\small{The semialgebraic set $\Ss:=\{(4\x^2-\y^2)(4\y^2-\x^2)\geq0,\, \y\geq0\}\subset\R^2$ (figure borrowed from \cite[Fig.1.1]{fu6})}\label{notconnected}}
\end{figure}

\subsection{Checkerboard sets}\label{checkerb}

Let $X\subset Y\subset\R^n$ be algebraic sets such that $Y$ is non-singular and has dimension $d$. Recall that $X$ is a \em normal-crossings divisor of $Y$ \em if for each point $x\in X$ there exists a regular system of parameters $\x_1,\ldots,\x_d$ for $Y$ at $x$ such that $X$ is given on an open Zariski neighborhood of $x$ in $Y$ by the equation $\x_1\cdots \x_k=0$ for some $k\leq d$. In particular, the irreducible components of $X$ are non-singular and have codimension $1$ in $Y$. If $\Ss\subset\R^m$ is a semialgebraic set, we write $\partial \Ss:=\cl(\Ss)\setminus\Reg(\Ss)$, which is in general different from the set $\Sing(\Ss):=\Ss\setminus\Reg(\Ss)$ presented in the Introduction. We denote $\ol{\ \cdot\ }^{\zar}$ the Zariski closures operator. 

A pure dimensional semialgebraic set $\Tt\subset\R^n$ is a \textit{checkerboard set} if it satisfies the following properties:

\begin{itemize}
\item $\ol{\Tt}^{\zar}$ is a non-singular algebraic set.
\item $ \ol{\partial\Tt}^{\zar}$ is a normal-crossings divisor of $\ol{\Tt}^{\zar}$.
\item $\Reg(\Tt)$ is connected.
\end{itemize}

Each checkerboard set is connected by analytic paths \cite[Main Thm.1.4, Lem.8.2]{fe3}. We will use in our proof of Theorem \ref{main1} the following result from \cite{fe3} in an essential way.

\begin{thm}[{\cite[Thm.8.4]{fe3}}]\label{ridwell}
Let $\Ss\subset\R^m$ be a semialgebraic set connected by analytic paths of dimension $d\geq 2$. Then there exists a checkerboard set $\Tt\subset\R^n$ of dimension $d$ and a proper regular map $f:\ol{\Tt}^{\zar}\to\ol{\Ss}^{\zar}$ such that $f(\Tt)=\Ss$.
\end{thm}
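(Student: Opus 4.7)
The plan is to prove the theorem by combining Hironaka's embedded resolution of singularities with the analytic-path connectedness hypothesis to build the checkerboard model $\Tt$ inside a suitable resolution of $\ol{\Ss}^{\zar}$, and then to use the hypothesis on $\Ss$ to repair the connectedness of the regular part.

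First, I would apply Hironaka's embedded desingularization to the pair consisting of $\ol{\Ss}^{\zar}$ together with the algebraic subset $\Sing(\ol{\Ss}^{\zar})\cup\ol{\partial\Ss}^{\zar}$. This produces a proper regular map $\pi:Y\to\ol{\Ss}^{\zar}$, obtained as a finite composition of blowups with smooth centers, such that $Y$ is a non-singular algebraic set of dimension $d$ and the preimage $E:=\pi^{-1}(\Sing(\ol{\Ss}^{\zar})\cup\ol{\partial\Ss}^{\zar})$ is a normal-crossings divisor of $Y$. Moreover, $\pi$ restricts to a Nash diffeomorphism between $Y\setminus E$ and $\Reg(\Ss)\setminus\ol{\partial\Ss}^{\zar}$.

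Second, I would construct $\Tt$ as a semialgebraic subset of $Y$ whose image under $\pi$ equals $\Ss$. The complement $Y\setminus E$ decomposes into finitely many open semialgebraic ``chambers'', and the $\pi$-preimage of $\Reg(\Ss)\cap\Ss$ is the union of a certain subfamily $\{U_j\}$ of these chambers. Let $\Tt_0$ be the closure in $Y$ of $\bigcup_j U_j$. Since $\Ss$ is pure dimensional (because it is connected by analytic paths) and $\Reg(\Ss)\cap\Ss$ is dense in $\Ss$, one obtains $\pi(\Tt_0)=\Ss$, and $\partial\Tt_0\subset E$ is contained in a normal-crossings divisor of a non-singular ambient. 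Thus $\Tt_0$ already satisfies the first two conditions of a checkerboard set; only connectedness of $\Reg(\Tt_0)$ may fail, because two distinct chambers $U_j,U_k$ may be separated in $Y$ despite projecting onto portions of $\Ss$ that meet at analytic arcs passing through singular points.

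The main obstacle is precisely this last point, and I expect it to absorb the bulk of the work. The remedy is to invoke the hypothesis: by Theorem \ref{main0} there exists a Nash path $\alpha:[0,1]\to\Ss$ whose image meets every connected component of $\Reg(\Ss)$. Using properness of $\pi$ and the analytic arc-lifting property available for real algebraic modifications, $\alpha$ admits (after a further sequence of blowups concentrated over the critical values of $\pi$ along $\alpha$) a Nash lift $\widetilde\alpha$ in the refined resolution $Y'$ whose image meets every chamber of the refined set $\Tt_0'$. One then extends $\Tt_0'$ by adjoining, near the finitely many transition points of $\widetilde\alpha$ between chambers, small smooth ``connecting strips'' that remain inside the normal-crossings stratification; the resulting set $\Tt$ has $\Reg(\Tt)$ connected while still satisfying the smoothness and normal-crossings requirements of a checkerboard set. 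Setting $f:=\pi\circ(\text{further blowups})$, restricted to $\ol{\Tt}^{\zar}$, yields a proper regular map $f:\ol{\Tt}^{\zar}\to\ol{\Ss}^{\zar}$ with $f(\Tt)=\Ss$, completing the construction.
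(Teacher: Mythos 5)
First, a point of order: this paper does not prove Theorem \ref{ridwell} at all --- it is imported verbatim from \cite[Thm.8.4]{fe3} (the reduction in \S\ref{ridchekcer7} only refers back to that proof), so there is no internal argument to compare yours with and I am judging the sketch on its own. Your skeleton (embedded resolution turning $\Sing(\ol{\Ss}^{\zar})\cup\ol{\partial\Ss}^{\zar}$ into a normal-crossings divisor of a non-singular model $Y$, selection of the chambers of $Y\setminus E$ lying over $\Ss$, and use of the analytic path guaranteed by Theorem \ref{main0} to handle connectedness of the regular locus) is indeed the right architecture, and it is the one underlying the cited proof. But two steps are genuine gaps rather than routine details. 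The smaller one: since $\pi$ is proper, $\pi\bigl(\cl(\textstyle\bigcup_jU_j)\bigr)=\cl(\Ss)$, not $\Ss$; as $\Ss$ is merely semialgebraic you must cut $\Tt_0$ down by intersecting with $\pi^{-1}(\Ss)$, and then the identification of $\partial\Tt_0$ inside $E$ --- hence the claim that $\ol{\partial\Tt_0}^{\zar}$ is still the normal-crossings divisor you produced --- is no longer automatic and has to be argued.

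The decisive gap is the connectedness of $\Reg(\Tt)$, which ``adjoining small smooth connecting strips'' does not achieve. If the lifted path passes from a chamber $U_j$ to a chamber $U_k$ across a wall of $E$ whose relative interior near the crossing point lies in $\pi^{-1}(\Ss)$, then that wall point is already an interior point of $\Tt$ in $Y$ and nothing needs to be added: the two chambers are already joined inside $\Reg(\Tt)$. The problematic case, which your remedy does not address, is when $U_j$ and $U_k$ meet only along a stratum of codimension at least $2$ (a corner of the divisor), or do not meet at all in $Y$ even though their images touch inside $\Ss$. Adding a piece of $E$ of dimension $<d$ cannot create interior points of $\Tt$ in $Y$, so it cannot connect $\Reg(\Tt)$; adding a $d$-dimensional ``strip'' means adding part of a non-selected chamber, whose image under $\pi$ in general leaves $\Ss$ and destroys the equality $f(\Tt)=\Ss$. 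What is actually needed is a further, carefully chosen sequence of blow-ups after which consecutive chambers visited by the lifted path become adjacent along walls that map into $\Ss$, together with a verification that the resulting wall points really lie in $\Reg(\Tt)$; this is precisely the technical heart of \cite[\S8]{fe3} (the drilling blow-ups and the well-welded machinery), and your sketch asserts rather than proves that it can be arranged.
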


\subsubsection{Reduction to the case of checkerboard sets.}\label{ridchekcer7}

In order to prove Theorem \ref{main1}, we `only' need to prove the following: \textit{If $\Ss\subset\R^m$ is a compact semialgebraic set connected by analytic paths of dimension $d$, there exists a Nash map $f:\R^d\to\R^m$ such that $f(\ol{\Bb}_d)=\Ss$.}

We will prove Theorem \ref{main1} for dimension 1 in Subsection \ref{dimensione1}, so let us assume $\dim(\Ss)\geq 2$. In this case Theorem \ref{ridwell} provides a checkerboard set $\Tt\subset\R^n$ and a proper regular map $f:\ol{\Tt}^{\zar}\to\ol{\Ss}^{\zar}$ such that $f(\Tt)=\Ss$. As the map $f$ is proper, if the semialgebraic set $\Ss$ is compact, we may assume that also the checkerboard set $\Tt$ is compact (see the proof of \cite[Thm.8.4]{fe3}). Consequently, we are reduced to prove the following:

\begin{thm}\label{riduzione0}
Let $\Tt\subset\R^n$ be a compact checkerboard set of dimension $d\geq 2$. Then there exists a Nash map $G:\R^d\to\R^n$ such that $G(\ol{\Bb}_d)=\Tt$.
\end{thm}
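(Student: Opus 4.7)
The plan is to reduce to constructing a Nash map $H$ defined on a semialgebraic neighborhood of the simplicial prism $\Pi_d:=\Delta_{d-1}\times[-1,1]$ with $H(\Pi_d)=\Tt$. Once this is done, Corollary \ref{prismball} supplies a polynomial surjection $P:\R^d\to\R^d$ with $P(\ol{\Bb}_d)=\Pi_d$, and $G:=H\circ P$ is then a Nash map on $\R^d$ with $G(\ol{\Bb}_d)=\Tt$. The prism is the suitable model because the factor $[-1,1]$ can play the role of a time parameter sweeping a Nash traversal of $\Tt$, while the transverse factor $\Delta_{d-1}$ paints each instantaneous $d$-dimensional slice of $\Tt$.

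Using the checkerboard hypotheses--$X:=\ol{\Tt}^{\zar}$ non-singular of dimension $d$ and $\ol{\partial\Tt}^{\zar}$ a normal-crossings divisor of $X$--a semialgebraic triangulation of $X$ compatible with $\ol{\partial\Tt}^{\zar}$ can be refined so that $\Tt$ is the union of finitely many closed cells $\Tt_1,\ldots,\Tt_s$ each Nash-diffeomorphic to $\Delta_d$, and hence (via Lemma \ref{triangolo} together with Corollary \ref{prismball}) a Nash image of $\Delta_{d-1}$. Connectedness of $\Reg(\Tt)$ forces the adjacency graph whose edges are pairs of top-dimensional cells sharing a $(d-1)$-face in $\Reg(\Tt)$ to be connected; an Euler-type walk on a spanning tree then yields an ordered sequence $\Tt_{i_1},\ldots,\Tt_{i_r}$ (with repetitions allowed) in which consecutive cells share a $(d-1)$-face $F_k:=\Tt_{i_k}\cap\Tt_{i_{k+1}}$. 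Choose control times $-1=t_0<t_1<\cdots<t_r=1$ and, via Lemma \ref{icsl} applied in a Nash chart of $\Tt$, a polynomial path $\alpha:[-1,1]\to\Tt$ with $\alpha(t_k)\in F_k$ and with prescribed velocities at the control times.

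On each subinterval $[t_{k-1},t_k]$ I would build a Nash map $H_k:\Delta_{d-1}\times[t_{k-1},t_k]\to\Tt_{i_k}$ that is surjective onto $\Tt_{i_k}$ and collapses each endpoint slice $\Delta_{d-1}\times\{t_{k-1}\}$, $\Delta_{d-1}\times\{t_k\}$ to the single points $\alpha(t_{k-1})$, $\alpha(t_k)$ respectively. The construction is best carried out in the preimage of a fixed Nash diffeomorphism $\Phi_k:\Delta_d\to\Tt_{i_k}$: one constructs a polynomial family of self-maps of the convex set $\Delta_d$ which at the control times degenerates to the constant map with value $\Phi_k^{-1}(\alpha(t_{k-1}))$, respectively $\Phi_k^{-1}(\alpha(t_k))$, and at the intermediate times covers $\Delta_d$ surjectively; this is a convexity-based deformation inside $\Delta_d$, after which one sets
\begin{equation*}
H_k:=\Phi_k\circ(\text{deformation}).
\end{equation*}
Polynomial Hermite conditions imposed on the deformation at $t_{k-1},t_k$ ensure that consecutive pieces $H_k$ and $H_{k+1}$ glue into a globally Nash map across the slice $\Delta_{d-1}\times\{t_k\}$, because both sides collapse to the same point $\alpha(t_k)$ to any prescribed order. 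Finally, $H$ extends to a Nash map on a semialgebraic neighborhood of $\Pi_d$ by applying the Nash tubular neighborhood theorem to the non-singular ambient $X$.

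The main obstacle will be the Nash (hence analytic) gluing. Nash functions cannot vanish to infinite order at an isolated point without being identically zero, so matching derivatives at the control slices must be achieved by explicit polynomial Hermite interpolation rather than by $C^\infty$ bump functions. The chosen interpolants must simultaneously (i) make $H_k$ collapse to a point at both endpoints to the required order, (ii) keep the image of $H_k$ inside $\Tt_{i_k}$, and (iii) recover full surjectivity onto $\Tt_{i_k}$ in the interior; these three demands are in tension, and controlling them near the normal-crossings locus $\partial\Tt$--where the local product form $\{x_1\geq0,\ldots,x_\ell\geq0\}$ must not be violated by the interpolant--will consume most of the technical effort. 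I expect this step to be carried out by adapting the deformation and interpolation techniques of \cite[Thm.1.3]{fu6} developed to prove Theorem \ref{main2-i}, now applied to top cells that are only Nash-diffeomorphic to simplices rather than being strictly radially convex polyhedra, with the non-singularity of $X$ and the normal-crossings property of $\partial\Tt$ providing the essential local charts that make the adaptation go through.
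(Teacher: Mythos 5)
Your high-level plan (use the simplicial prism, sweep $\Tt$ with moving slices, approximate a continuous path by a polynomial one via Lemma \ref{icsl}) is indeed the skeleton of the paper's proof of Theorem \ref{riduzione}. But two of your load-bearing steps do not work as stated, and their repair is precisely the substance of the paper's Sections \ref{s3}.2--\ref{s3}.7.

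First, the gluing. You build separate Nash maps $H_k=\Phi_k\circ(\text{deformation})$ on the subprisms $\Delta_{d-1}\times[t_{k-1},t_k]$, using different Nash charts $\Phi_k$, and then "glue" them across $\Delta_{d-1}\times\{t_k\}$ by matching derivatives up to some finite order. But two Nash maps that agree to a fixed finite order along a common hypersurface do \emph{not} concatenate to a Nash map; and agreement to infinite order forces them (by analyticity) to coincide, which is impossible since each $H_k$ must be surjective onto a different cell $\Tt_{i_k}$. So the output of your construction is only $\Cont^m$ semialgebraic, never Nash. The paper sidesteps this entirely: it never glues pieces. It parameterizes a subset of $\mathcal{N}(\R^d,\R^n)$ by the open semialgebraic set $\Theta_0\subset\R^r\times\Ff^r$ (Subsection \ref{spaziomappe}) via a \emph{fixed} Nash-valued map $\Gamma$, builds a continuous semialgebraic path $E$ in $\cl(\Theta)$ that passes through the control points $\zeta_i,\chi_i$ at prescribed times, approximates $E$ by a single \emph{polynomial} path $\gamma$ with Lemma \ref{icsl}, and then composes with the fixed Nash data $\nu\circ\Gamma$. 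The resulting map $\nu\circ\Gamma(\gamma)$ is Nash globally because $\gamma$ is a single polynomial; its restrictions to the subintervals $[t_i,s_i]$ are the analogue of your $H_k$, but they are not separately constructed objects. You would have to replace your cell-by-cell gluing with a parameter-space construction of this kind, or explain a different mechanism that produces a genuinely Nash map.

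Second, the endpoint structure and the surjectivity estimate. You collapse \emph{both} endpoint slices $\Delta_{d-1}\times\{t_{k-1}\}$ and $\Delta_{d-1}\times\{t_k\}$ to single points of a $(d-1)$-face, and you assert that the intermediate slices sweep $\Tt_{i_k}$. Two issues arise. (a) You also require the approximating map to remain surjective; the paper's mechanism for this is the topological degree argument in Lemma \ref{simplex}, whose hypotheses require one endpoint slice to map \emph{homeomorphically} onto a $(d-1)$-facet $\sigma$ of the simplex (not to collapse to a point), the other endpoint to collapse to the opposite vertex, and the lateral boundary $\partial\Delta_{n-1}\times(0,1)$ to avoid $\widehat{\sigma}$. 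With both ends collapsed to points your boundary map factors through a different quotient and the degree-1-onto-$\partial\widehat{\sigma}$ argument no longer applies as stated; you would have to set up and verify a different degree computation. (b) The nontrivial content is not that the explicit $F$ is surjective (which is easy) but that every $G$ within $\veps$ of $F$ in $\Cont^3$, agreeing with $F$ to order $3$ at the control slices, is still surjective onto the simplex and stays inside the ambient polyhedron $\Lambda_{k_i}$; this is Lemma \ref{simplex2} and takes five carefully ordered steps, using Rolle's theorem on the functions $g_k\circ G$ and $h_i\circ G$ together with the explicit cubic/quadratic Taylor data $\alpha_i(\t)=v_i+\t^2u_i+\t^3w+\cdots$. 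Your proposal states the goal of this estimate but does not supply it, and it is precisely here that the paper's effort is concentrated.

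Two lesser points. Your reduction to the prism cites Corollary \ref{prismball} backwards: that corollary gives $\ol{\Bb}_d$ as a polynomial image of the prism, whereas you need the prism as a polynomial image of $\ol{\Bb}_d$, which is \cite[Cor.2.8]{fu6} (the paper uses exactly this in Subsection \ref{ridchekcer7}). And the paper does not use a global Nash triangulation of $\Tt$ into cells Nash-diffeomorphic to $\Delta_d$ (such a triangulation would require separate justification); it uses a finite \emph{covering} $\Tt=\bigcup_i\phi_i(\tau_i)$ by overlapping Nash images of simplices, obtained from the normal-crossings chart structure of \cite[Thm.1.6]{fgr} plus the combinatorial decomposition of $\Lambda_k$ into cones in Subsection \ref{dus}. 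This is weaker than a triangulation and is all the construction requires.
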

 
By \cite[Cor.2.8]{fu6} there exists a polynomial map $f:\R^d\to \R^d$ such that $f(\ol{\Bb}_d)=\Delta_{d-1}\times [0,1]$. Consider the inverse of the stereographic projection
$$
\varphi:\R^d\to\sph^d\setminus\{(0,\ldots,1)\},\ 
x:=(x_1,\ldots,x_d)\mapsto\Big(\frac{2x_1}{1+\|x\|^2},\ldots,\frac{2x_d}{1+\|x\|^2},\frac{-1+\|x\|^2}{1+\|x\|^2}\Big)
$$
and let $\pi:\R^{d+1}\to \R^d$ be the projection onto the first $d$ coordinates. The regular map $g:=\pi\circ\varphi:\R^d\to\R^d$ satisfies $g(\R^d)=g(\ol{\Bb}_d)=\ol{\Bb}_d$. If there exists a Nash map $F:\Delta_{d-1}\times [0,1]\to\R^n$ such that $F(\Delta_{d-1}\times [0,1])=\Tt$, the composition $G:=F\circ f\circ g:\R^d\to\R^d$ is a well-defined Nash map such that $G(\ol{\Bb}_d)=\Tt$.

Thus, in order to show Theorem \ref{riduzione0}, we can use the (more convenient) compact model $\Delta_{d-1}\times[0,1]$ and we are reduced to show the following:

\begin{thm}\label{riduzione}
Let $\Tt\subset\R^n$ be a compact checkerboard set of dimension $d\geq 2$. Then there exists a Nash map $F:\Delta_{d-1}\times [0,1]\to\R^n$ such that $F(\Delta_{d-1}\times [0,1])=\Tt$.
\end{thm}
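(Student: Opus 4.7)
The plan is to prove Theorem \ref{riduzione} in the spirit of Theorem \ref{main2-i} from \cite[Thm.1.3]{fu6}: build a Nash ``backbone'' path through $\Tt$ parameterized by the interval factor $[0,1]$ of the simplicial prism $\Delta_{d-1}\times[0,1]$, and thicken it transversally in the simplex direction $\Delta_{d-1}$ so as to cover $\Tt$ entirely.

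First, I would exploit the checkerboard hypothesis: since $\Reg(\Tt)$ is connected, $\Tt$ is connected by analytic paths; and since $\ol{\partial\Tt}^{\zar}$ is a normal-crossings divisor of the non-singular $\ol{\Tt}^{\zar}$, at each boundary point of multiplicity $k$ there are local Nash coordinates in which $\Tt$ coincides with $\{y_1\geq 0,\ldots,y_k\geq 0\}\times\R^{d-k}$. Such local pieces are images of suitable polyboxes under the squaring map $(y_1,\ldots,y_d)\mapsto(y_1^2,\ldots,y_k^2,y_{k+1},\ldots,y_d)$, and any polybox is a polynomial image of $\ol{\Bb}_d$ by the compact-model relations of Subsection \ref{acm}. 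Combining these local boundary parameterizations with Nash tubular neighborhoods at interior points and exploiting the compactness of $\Tt$, I would obtain a finite family of control points $p_0,p_1,\ldots,p_N\in\Tt$ together with Nash maps $\phi_i:\ol{\Bb}_d\to\R^n$ having images $V_i\subset\Tt$ such that $\Tt=V_0\cup\cdots\cup V_N$, and each $\phi_i$ is arranged to collapse $\partial\ol{\Bb}_d$ to the single point $p_i$.

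Next, iterating the analytic/Nash path-connectedness of $\Tt$ supplied by Theorem \ref{main0} and sharpening it with Lemma \ref{icsl} to prescribe values and higher-order flatness of the derivative, I would produce a Nash path $\alpha:[0,1]\to\Tt$ with $\alpha(t_i)=p_i$ at prescribed times $0=t_0<t_1<\cdots<t_N=1$. Finally, I would assemble the pieces into a single Nash map $F:\Delta_{d-1}\times[0,1]\to\R^n$ with $F(\Delta_{d-1}\times[0,1])=\Tt$ by subdividing the prism into sub-prisms $\Delta_{d-1}\times[t_{i-1},t_i]$, each Nash-diffeomorphic to $\ol{\Bb}_d$ by Subsection \ref{acm}, and defining $F$ on the $i$-th sub-prism as a Nash interpolation between $\phi_{i-1}$ and $\phi_i$ steered by the backbone $\alpha$. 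The main obstacle is to carry out this interpolation so that the resulting map is globally Nash (smooth and semialgebraic) across the seams $\Delta_{d-1}\times\{t_i\}$ while keeping the image inside $\Tt$; the remedy, modeled on \cite[Thm.1.3]{fu6}, is to insert multiplicative Nash bump-like factors vanishing to sufficiently high order at the $t_i$, which the squaring-map/normal-crossings representation of $\Tt$ around each control point is able to absorb without overshooting the boundary.
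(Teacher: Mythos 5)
Your high-level plan (subdivide $[0,1]$, cover $\Tt$ by finitely many Nash pieces, assemble a single map by sliding along the time axis) resembles the paper's strategy, but the step that you yourself flag as ``the main obstacle'' is exactly where your remedy does not close the gap. The paper never glues piecewise-defined maps with bump factors: it instead constructs one global polynomial path $\gamma:[0,1]\to\R^r\times\Ff^r$ into the parameter space $\Theta_0$ of linear combinations $\sum_i\mu_i(\phi_i\circ\psi_i)$ (Subsection \ref{spaziomappe}), and composes with the tubular-neighborhood retraction $\nu$ onto $M=\ol{\Tt}^{\zar}$ to obtain a globally Nash map $\nu\circ\Gamma(\gamma):[0,1]\times\Delta_{d-1}\to\Tt$. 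There are no seams to smooth because $\gamma$ is a single polynomial; the role you assign to bump-like factors is played by Lemma \ref{icsl}, which prescribes the jets of $\gamma$ to order $3$ at each control time $t_i,s_i$ while keeping $\gamma$ inside $\Theta\cup\{\zeta_1,\ldots,\zeta_r\}$. Keeping the image inside $\Tt$ is then not an absorption argument but a \emph{stability} argument: the technical Lemma \ref{simplex2} quantifies, via Rolle-type estimates on third $\t$-derivatives of $h_i\circ G$ and $g_k\circ G$, that any $G$ third-order tangent and $\Cont^3_{I}$-close to the model $F$ still satisfies $\widehat\sigma\subset G(\Delta_{n-1}\times[0,1])\subset\pol$; Lemma \ref{order} together with the Lipschitz estimate \eqref{bound0} transfers the required jet equalities from $\gamma$ to $\phi_i^{-1}(\nu\circ\Gamma(\gamma))$. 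None of this structure is present or implied by a ``bump factors absorbed by the squaring map'' scheme, and without it there is no control that the retracted image does not fall short of the boundary strata of $\Tt$.

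Two further points where your sketch is at odds with a workable construction. The sub-prisms $\Delta_{d-1}\times[t_{i-1},t_i]$ are not Nash-diffeomorphic to $\ol{\Bb}_d$ (they have corners and $\ol{\Bb}_d$ does not); they are only polynomial images of the ball, which is precisely why the paper works with the prism model directly rather than transporting to balls. More seriously, the paper does \emph{not} collapse $\partial\ol{\Bb}_d$ to a point: on each sub-prism the surjectivity onto $\phi_i(\tau_i)$ comes from the degree-one argument in Lemma \ref{simplex}, which requires the map to restrict to a homeomorphism onto the facet $\sigma_i$ at one end, to be constant at the other end, and to send $\partial\Delta_{d-1}\times(t_i,s_i)$ off $\widehat\sigma_i$. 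This asymmetric boundary behaviour is essential for the degree computation; collapsing the whole boundary sphere destroys it. So while your decomposition of $\Tt$ by local normal-crossings charts is the right starting point (and matches Subsections \ref{localcharts}--\ref{dus}), the proposed covering maps $\phi_i$ and the gluing mechanism would both need to be replaced by the quantitative path-in-parameter-space construction of the paper.
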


\subsection{Polynomial paths inside semialgebraic sets}\label{paths}

We recall next a smart polynomial curve selection lemma (\cite[Thm.1.6]{fe4} and \cite[Lem.3.1]{fu5}). It allows to approximate continuous semialgebraic paths inside the closure of an open semialgebraic set by polynomial paths, with strong control on the derivatives. This lemma will be one of the main ingredients in our proof of Theorem \ref{main1}. In \cite[Thm.1.6]{fe4} and \cite[Lem.3.1]{fu5} an extended study of polynomial and Nash paths inside the closure of open semialgebraic sets is made. We only need a simplified version of the results obtained in \cite[Thm.1.6]{fe4} and \cite[Lem.3.1]{fu5} that we state in Lemma \ref{icsl}. 

We endow the space $\Cont^\nu([a,b],\R)$ of differentiable functions of class $\Cont^\nu$ on the interval $[a,b]$ with the $\Cont^\nu$ compact-open topology. Recall that a basis of open neighborhoods of $g\in\Cont^\nu([a,b],\R)$ in this topology is constituted by the sets of the type:
$$
{\mathcal U}^\nu_{g,\veps}:=\{f\in\Cont^\nu([a,b],\R):\ \|f^{(\ell)}-g^{(\ell)}\|_{[a,b]}<\veps:\ \ell=0,\ldots,\nu\}
$$
where $\veps>0$ and $\|h\|_{[a,b]}:=\max\{h(x):\, x\in[a,b]\}$. One has $\Cont^\nu([a,b],\R^n)=\Cont^\nu([a,b],\R)\times\cdots\times\Cont^\nu([a,b],\R)$ and we endow this space with the product topology. If $X\subset[a,b]$, one analogously defines the $\Cont^\nu$ compact-open topology of the space $\Cont^\nu(X,\R^n)$. 

If $\alpha:[a,b]\to\R^n$ is a continuous semialgebraic path, recall that by \cite[Prop.2.9.10]{bcr} there exists a finite set $\eta(\alpha)\subset[a,b]$ such that $\alpha$ is not Nash at the points of $\eta(\alpha)$, but $\alpha|_{[a,b]\setminus\eta(\alpha)}$ is a Nash map. We denote the Taylor expansion of degree $\ell\geq1$ of $\alpha$ at $t_0\in[a,b]\setminus\eta(\alpha)$ with $T_{t_0}^\ell\alpha:=\sum_{k=0}^\ell\frac{1}{\ell!}\alpha^{(k)}(t_0)(\t-t_0)^k$.

\begin{lem}[Smart polynomial curve selection lemma]\label{icsl}
Let $\Ss\subset\R^n$ be an open semialgebraic set and $\{p_1,\ldots,p_r\}\subset\cl(\Ss)$ a finite set of points, not necessarily distinct. Let $0<t_1<\cdots<t_r<1$ and $\alpha:[0,1]\to\Ss\cup\{p_1,\ldots,p_r\}$ be a continuous semialgebraic path such that $\alpha(t_i)=p_i$ for $i=1,\ldots, r$ that satisfies $\eta(\alpha)\cap\{t_1,\ldots,t_r\}=\varnothing$ and $\alpha([0,1]\setminus\{t_1,\ldots,t_r\})\subset\Ss$. For each $\veps>0$ and each $m\geq0$ there exists a polynomial path $\beta:[0,1]\to\Ss\cup\{p_1,\ldots,p_r\}$ such that: $\|\alpha^{(k)}-\beta^{(k)}\|<\veps$ for $k=0,\ldots,m$, $T^m_{t_i}\beta=T^m_{t_i}\alpha$ for $i=1,\ldots,r$ and $\beta([0,1]\setminus\{t_1,\ldots,t_r\})\subset\Ss$.
\end{lem}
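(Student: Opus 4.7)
The plan is to build $\beta$ in two stages and then verify the containment $\beta([0,1]\setminus\{t_1,\ldots,t_r\})\subset\Ss$ by combining uniform closeness away from the control points with a Taylor-remainder/\L ojasiewicz argument near each of them. The key free parameter in the construction will be an auxiliary integer $M\geq m$, fixed later, chosen large with respect to the tangential order of $\alpha$ against $\partial\Ss$ at the points $t_i$.

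First I would extend $\alpha$ to a Nash path on a slightly larger open interval and apply the Weierstrass approximation theorem componentwise to produce a polynomial path $\gamma:[0,1]\to\R^n$ with $\|\alpha^{(k)}-\gamma^{(k)}\|_{[0,1]}<\veps_0$ for $k=0,\ldots,M+1$, where $\veps_0>0$ will be pinned down at the end. In general $\gamma$ does not match the Taylor jets of $\alpha$ at the $t_i$, so next I would add a Hermite correction: by classical Hermite interpolation there exists a polynomial map $h:[0,1]\to\R^n$ of degree at most $r(M+1)-1$ satisfying $T^M_{t_i}h=T^M_{t_i}(\alpha-\gamma)$ for $i=1,\ldots,r$, whose $C^{M+1}$-norm is bounded by a constant $C=C(M,t_1,\ldots,t_r)$ times $\max_{i,\,k\leq M}|(\alpha-\gamma)^{(k)}(t_i)|\leq\veps_0$. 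Setting $\beta:=\gamma+h$ one gets a polynomial path with $T^M_{t_i}\beta=T^M_{t_i}\alpha$ and $\|\alpha^{(k)}-\beta^{(k)}\|_{[0,1]}\leq(1+C)\veps_0$ for $k=0,\ldots,M+1$. Choosing $\veps_0$ small enough then yields the $C^m$-approximation condition in the statement with room to spare.

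The remaining—and main—task is to ensure $\beta([0,1]\setminus\{t_1,\ldots,t_r\})\subset\Ss$. Split this set as $K_\delta\cup\bigcup_i\bigl((t_i-\delta,t_i+\delta)\setminus\{t_i\}\bigr)$, where $K_\delta:=[0,1]\setminus\bigcup_i(t_i-\delta,t_i+\delta)$. Then $\alpha(K_\delta)$ is a compact subset of the open set $\Ss$, so $d_\delta:=\dist(\alpha(K_\delta),\R^n\setminus\Ss)>0$ and any polynomial $\beta$ with $\|\alpha-\beta\|_{[0,1]}<d_\delta$ satisfies $\beta(K_\delta)\subset\Ss$. For the neighborhoods of the $t_i$ I would invoke \L ojasiewicz: the continuous semialgebraic function $t\mapsto\dist(\alpha(t),\R^n\setminus\Ss)$ on $[0,1]$ vanishes only at those $t_i$ with $p_i\in\partial\Ss$, hence there exist constants $c_i>0$ and integers $N_i\geq1$ with
$$
\dist(\alpha(t),\R^n\setminus\Ss)\geq c_i|t-t_i|^{N_i}\qquad\text{for }t\in(t_i-\delta,t_i+\delta),
$$
(a uniform positive lower bound if instead $p_i\in\Ss$). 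Fix $M\geq\max_iN_i$. Since $(\beta-\alpha)^{(k)}(t_i)=0$ for $k=0,\ldots,M$, Taylor's theorem with Lagrange remainder gives
$$
|\beta(t)-\alpha(t)|\leq\frac{\|(\beta-\alpha)^{(M+1)}\|_{[0,1]}}{(M+1)!}\,|t-t_i|^{M+1}\leq C'\veps_0\,|t-t_i|^{M+1},
$$
so if $\veps_0$ is small enough (using $M+1>N_i$), the right-hand side is strictly less than $c_i|t-t_i|^{N_i}\leq\dist(\alpha(t),\R^n\setminus\Ss)$ on $(t_i-\delta,t_i+\delta)\setminus\{t_i\}$, forcing $\beta(t)\in\Ss$ there. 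Together with $\beta(t_i)=p_i$ (which is just $T^0_{t_i}\beta=T^0_{t_i}\alpha$) this gives the required containment.

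The heart of the argument is this last step: the Taylor-matching condition up to the prescribed order $m$ is by itself insufficient to keep $\beta$ inside $\Ss$ when $\alpha$ is tangent to $\partial\Ss$ at some $p_i$ to very high order (a simple example is $\alpha(t)=(t,t^2)$ with $\Ss=\{\y>0\}$ and $m=0$). The fix is to overshoot the required jet-matching order, replacing $m$ by an $M$ dictated by the \L ojasiewicz exponents $N_i$ of the distance to $\R^n\setminus\Ss$ along $\alpha$, and simultaneously controlling the whole $C^{M+1}$-norm of $\beta-\alpha$—both quantifiable and manageable thanks to the $C^{M+1}$-Weierstrass approximation and the explicit bound on the Hermite corrector $h$.
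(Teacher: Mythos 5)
The paper does not actually prove Lemma~\ref{icsl}; it recalls it from \cite[Thm.1.6]{fe4} and \cite[Lem.3.1]{fu5}, so there is no in-paper argument to compare against. Evaluated on its own, your core strategy is the right one and almost certainly matches the cited sources in spirit: approximate $\alpha$ by a polynomial, correct it by a small Hermite interpolant so that the jets at the $t_i$ match to a \emph{large} order $M$, and then win near each $t_i$ by comparing the Taylor remainder $O(|t-t_i|^{M+1})$ against a \L ojasiewicz lower bound $\dist(\alpha(t),\R^n\setminus\Ss)\ge c_i|t-t_i|^{N_i}$, having chosen $M\ge\max_i N_i$. Your closing observation that matching only the prescribed order $m$ is genuinely insufficient (e.g. $\alpha(t)=(t,t^2)$, $\Ss=\{\y>0\}$, $m=0$) is exactly the point of the lemma, and the split $[0,1]=K_\delta\cup\bigcup_i(t_i-\delta,t_i+\delta)$ together with the compactness bound on $K_\delta$ is correct.

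There is, however, a genuine gap at the very first step, and it propagates. You write that you would ``extend $\alpha$ to a Nash path'' and then apply $\Cont^{M+1}$-Weierstrass on $[0,1]$; later you use $\|(\beta-\alpha)^{(M+1)}\|_{[0,1]}$ in the Taylor--Lagrange bound. But the hypothesis only gives that $\alpha$ is a \emph{continuous} semialgebraic path with $\eta(\alpha)\cap\{t_1,\ldots,t_r\}=\varnothing$; the finite set $\eta(\alpha)$ of non-Nash points can be nonempty (and in the paper's actual application to the concatenated path $E$ in the proof of Theorem~\ref{riduzione} it \emph{is} nonempty, coming from the concatenation points). At a point of $\eta(\alpha)$ the derivatives $\alpha^{(k)}$ for $k\ge1$ need not exist, or may be unbounded nearby (think $|t|^{3/2}$: $\alpha''$ blows up). Consequently: (a) $\Cont^{M+1}$-Weierstrass on $[0,1]$ is not applicable to $\alpha$; (b) the quantity $\|(\beta-\alpha)^{(M+1)}\|_{[0,1]}$ is not defined; (c) even the conclusion $\|\alpha^{(k)}-\beta^{(k)}\|<\veps$ for $k\ge1$ must be read over a compact set disjoint from $\eta(\alpha)$, not over all of $[0,1]$ (this is exactly how it is used in the paper, where the derivative estimates are taken over the set $I$). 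To repair this you need a preliminary regularization step: choose $\delta$ with $\bigcup_i[t_i-\delta,t_i+\delta]$ disjoint from $\eta(\alpha)$, replace $\alpha$ by a $\Cont^{M+1}$ path $\tilde\alpha$ that coincides with $\alpha$ on a neighborhood of each $t_i$ (so that the jets $T^M_{t_i}\tilde\alpha=T^M_{t_i}\alpha$ and the \L ojasiewicz exponents at the $t_i$ are unchanged), is $\Cont^0$-close to $\alpha$ on $[0,1]$ (hence still maps $K_\delta$ into $\Ss$ by the $d_\delta$-margin you already isolated), and then run your Weierstrass$+$Hermite$+$\L ojasiewicz argument on $\tilde\alpha$. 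Without this step the argument as written does not apply to the class of paths the lemma is stated for.
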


\section{Building Nash images of the simplicial prism}\label{s3}

The purpose of this section is to prove Theorem \ref{riduzione}, which provides a complete characterization of the Nash images of the closed ball. The proof is quite involved and intricate and we begin with some preliminary results to lighten the proof. 

We will start with the 1-dimensional case, that requires a different proof. Then we will focus on the $d$-dimensional case for $d\geq 2$. For the general case we will take advantage of the fact that each checkerboard set $\Tt\subset\R^n$ admits `nice' triangulations. Roughly speaking, we `build' $\Tt$ as Nash image of the prism $\Delta_{d-1}\times [0,1]$ `simplex by simplex'. 

We consider a suitable subset of the space of Nash maps $\mathcal{N}(\R^d,\R^n)$ and we will parameterize it (of course not in an injective way) using an open semialgebraic set $\Theta_0$ of a large affine space. In this space, a continuous semialgebraic path $\sigma:[0,1]\to\Theta_0$ provides a continuous semialgebraic map $\Delta_{d-1}\times[0,1]\to\R^n$ that is Nash on the horizontal slices $\Delta_{d-1}\times\{t\}$. Using Lemma \ref{icsl}, we approximate the path $\sigma$ by a polynomial path in order to obtain a Nash map $\Delta_{d-1}\times[0,1]\to\R^n$. A difficult point is to guarantee that the obtained Nash map has $\Tt$ as its target space and that it is surjective.

\subsection{The $1$-dimensional case.}\label{dimensione1}
Nash images of closed balls contained in the real line are its compact intervals and all of them are affinely equivalent to the interval $\ol{\Bb}_1:=[-1,1]$. Nash images of closed balls contained in a circumference are its connected compact subsets and all of them are Nash images of $\ol{\Bb}_1$.

\begin{examples}\label{circle}
(i) The circumference $\sph^1:=\{\x^2+\y^2=1\}$ is a Nash image of $\ol{\Bb}_1$. Consider the inverse of the stereographic projection from the point $(0,1)$, which is the map
$$
f:\R\to\sph^1\setminus\{(0,1)\},\ t\mapsto\Big(\frac{2t}{1+t^2},\frac{1-t^2}{1+t^2}\Big).
$$
Next, we identify $\R^2$ with $\C$ and the coordinates $(x,y)$ with $x+\sqrt{-1}y$. Consider the map 
$$
g:\C\to\C,\ z:=x+\sqrt{-1}y\mapsto z^2=(x^2-y^2)+\sqrt{-1}(2xy).
$$
The image of $\ol{\Bb}_1$ under $g\circ f$ is $\sph^1$. 

(ii) Any connected compact proper subset $\Ss$ of $\sph^1$ that is not a point is a Nash image of $\ol{\Bb}_1$ because it is Nash diffeomorphic to $[-1,1]$.
\end{examples}

We prove Proposition \ref{curves} next:

\begin{proof}[Proof of Proposition \em \ref{curves}]
Assume $\Ss$ is irreducible. Let $X$ be the Zariski closure of $\Ss$ in $\R^n$ and $\widetilde{X}$ its complexification in $\C^n$. Let $(\widetilde{Y},\pi)$ be the normalization of $\widetilde{X}$ and $\widehat{\sigma}$ the involution of $\widetilde{Y}$ induced by the involution $\sigma$ of $\widetilde{X}$ that arises from the restriction to $\widetilde{X}$ of the complex conjugation in $\C^n$. We may assume that $\widetilde{Y}\subset\C^m$ and that $\widehat{\sigma}$ is the restriction to $\widetilde{Y}$ of the complex conjugation of $\C^m$. By \cite[Thm.3.15]{fg3} and since $\Ss$ is irreducible, $\pi^{-1}(\Ss)$ has a $1$-dimensional connected component $\Tt$ such that $\pi(\Tt)=\Ss$. As $\pi$ is proper and $\Ss$ is compact, also $\Tt$ is compact. As $X$ has dimension $1$, it is a coherent analytic set, so $\Tt\subset Y:=\widetilde{Y}\cap\R^m$. As $\widetilde{Y}$ is a normal-curve, $Y$ is a non-singular real algebraic curve. We claim: \em the connected components of $Y$ are Nash diffeomorphic either to $\sph^1$ or to the real line $\R$\em.

By \cite[Thm.VI.2.1]{sh} there exist a compact affine non-singular real algebraic curve $Z$, a finite set $F$, which is empty if $Y$ is compact, and a union $Y'$ of some connected components of $Z\setminus F$ such that $Y$ is Nash diffeomorphic to $Y'$ and $\cl(Y')$ is a compact Nash curve with boundary $F$. As $Z$ is a compact affine non-singular real algebraic curve, its connected components are diffeomorphic to $\sph^1$, so by \cite[Thm.VI.2.2]{sh} the connected components of $Z$ are in fact Nash diffeomorphic to $\sph^1$. Now, each connected component of $Y$ is Nash diffeomorphic to an open connected subset of $\sph^1$, that is, Nash diffeomorphic either to $\sph^1$ or to the real line $\R$, as claimed.

As $\Tt$ is connected, compact and 1-dimensional, it is Nash diffeomorphic to a connected compact 1-dimensional semialgebraic subset of either $\sph^1$ or $\R$, so $\Tt$ is Nash diffeomorphic either to $\sph^1$ or the compact interval $[-1,1]$. By Examples \ref{circle} the semialgebraic set $\Tt$ is a Nash image of $\ol{\Bb}_1$, so $\Ss$ is also a Nash image of $\ol{\Bb}_1$. The converse follows from \cite[Lem.7.3]{fe3}, as required.
\end{proof}

\subsection{Covering simplices with Nash maps.} 

Given a convex polyhedron $\pol$, we denote its relative interior with $\Int(\pol)$ and its boundary $\pol\setminus\Int(\pol)$ with $\partial\pol$. We start the procedure to prove Theorem \ref{riduzione} with some lemmas that will allow us to cover simplices with the images of suitable family of Nash maps whose domains are simplicial prisms. Denote 
\begin{equation}\label{simplexn1}
\Delta_{n-1}:=\Big\{(\lambda_1,\ldots,\lambda_n)\in\R^n:\ \lambda_1\geq0,\ldots,\lambda_n\geq0,\ \sum_{k=1}^n\lambda_k=1\Big\}.
\end{equation}
The boundary $\partial\Delta_{n-1}=\bigcup_{i=1}^n(\Delta_{n-1}\cap\{\lambda_i=0\})$.

\begin{lem}\label{simplex}
Consider an $(n-1)$-dimensional simplex $\sigma\subset\R^n$ of vertices $v_1,\ldots,v_n$. Pick a point $p\in\R^n\setminus\sigma$ and consider the $n$-dimensional simplex $\widehat{\sigma}$ of vertices $\{p,v_1,\ldots,v_n\}$. Let $F:\Delta_{n-1}\times[0,1]\to\R^n$ be a continuous semialgebraic map such that $F|_{\Delta_{n-1}\times\{0\}}:\Delta_{n-1}\times\{0\}\to\sigma$ is a homeomorphism, $F(\partial\Delta_{n-1}\times(0,1))\subset\{h_0\geq0\}\setminus\widehat{\sigma}$ and $F(\Delta_{n-1}\times\{1\})=\{p\}$. Then $\Int(\widehat{\sigma})\subset F(\Int(\Delta_{n-1})\times(0,1))$ and $\widehat{\sigma}\subset F(\Delta_{n-1}\times[0,1])$.
\end{lem}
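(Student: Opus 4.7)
The plan is to reduce the surjectivity claim to a degree calculation by composing $F$ with the closest-point retraction onto the convex body $\widehat{\sigma}$. Set $P:=\Delta_{n-1}\times[0,1]$ and name its boundary faces $B:=\Delta_{n-1}\times\{0\}$, $T:=\Delta_{n-1}\times\{1\}$ and $L:=\partial\Delta_{n-1}\times[0,1]$, with relative interior $L^o:=\partial\Delta_{n-1}\times(0,1)$. Write $h_0$ for the affine form vanishing on the hyperplane spanned by $\sigma$ and normalized so that $h_0(p)>0$; thus $\widehat{\sigma}\subset\{h_0\geq0\}$, $\Int(\widehat{\sigma})\subset\{h_0>0\}$, and $\sigma\subset\{h_0=0\}$. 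Let $r:\R^n\to\widehat{\sigma}$ denote the continuous closest-point projection, which fixes $\widehat{\sigma}$ pointwise and maps $\R^n\setminus\widehat{\sigma}$ into $\partial\widehat{\sigma}$. Define $G:=r\circ F:P\to\widehat{\sigma}$.

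First I would verify that $G(\partial P)\subset\partial\widehat{\sigma}$: on $B$ we have $G=F|_B$, a homeomorphism onto $\sigma\subset\partial\widehat{\sigma}$; on $T$ we have $G\equiv p\in\partial\widehat{\sigma}$; on $L$ the values of $F$ lie in $\partial\sigma\cup\{p\}\cup(\{h_0\geq0\}\setminus\widehat{\sigma})$, which $r$ projects into $\partial\widehat{\sigma}$. Next I would compute the degree of the resulting map of topological $(n-1)$-spheres $G|_{\partial P}:\partial P\to\partial\widehat{\sigma}$ by selecting a regular value $y^*\in\Int(\sigma)$. The preimage inside $B$ is the single interior point $(F|_B)^{-1}(y^*)$; the preimage in $T$ is empty because $p\neq y^*$; and the preimage in $L$ is also empty. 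The last assertion is the key technical step: since $y^*$ is a relative interior point of the facet $\sigma$, the outward-normal ray to $\widehat{\sigma}$ at $y^*$ points in the direction $-h_0$, so $r^{-1}(y^*)\setminus\widehat{\sigma}\subset\{h_0<0\}$, which is disjoint from $F(L)\subset\{h_0\geq0\}$; meanwhile $r^{-1}(y^*)\cap\widehat{\sigma}=\{y^*\}$ is missed by $F(L^o)\subset\R^n\setminus\widehat{\sigma}$ and by $F(\partial L)\subset\partial\sigma\cup\{p\}$. Hence $\deg(G|_{\partial P}:\partial P\to\partial\widehat{\sigma})=\pm1$.

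The non-vanishing boundary degree forces $\Int(\widehat{\sigma})\subset G(P)$ through the standard no-retraction argument: if some $q\in\Int(\widehat{\sigma})$ were missed by $G(P)$, composing $G$ with the radial retraction $\rho_q:\widehat{\sigma}\setminus\{q\}\to\partial\widehat{\sigma}$ would produce a continuous extension $P\to\partial\widehat{\sigma}$ of $G|_{\partial P}$, making $G|_{\partial P}$ null-homotopic in $\partial\widehat{\sigma}$ and hence of degree zero, a contradiction.

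To transfer the conclusion back to $F$, fix $q\in\Int(\widehat{\sigma})$ and choose $x\in P$ with $r(F(x))=q$. Since $r$ sends $\R^n\setminus\widehat{\sigma}$ into $\partial\widehat{\sigma}$ while $q\in\Int(\widehat{\sigma})$, necessarily $F(x)\in\widehat{\sigma}$, whence $F(x)=r(F(x))=q$. Moreover $F(\partial P)\subset\sigma\cup\{p\}\cup(\{h_0\geq0\}\setminus\widehat{\sigma})$ is disjoint from $\Int(\widehat{\sigma})$, so $x\in\Int(\Delta_{n-1})\times(0,1)$. This proves the first inclusion $\Int(\widehat{\sigma})\subset F(\Int(\Delta_{n-1})\times(0,1))$; taking closures and using the compactness of $F(P)$ gives $\widehat{\sigma}=\cl(\Int(\widehat{\sigma}))\subset F(P)$. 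The main obstacle lies precisely in the degree calculation: the containment $F(L^o)\subset\{h_0\geq0\}$ is what prevents the closest-point projection from pulling any lateral-boundary image back onto the interior of $\sigma$, thereby securing the exact preimage count of one.
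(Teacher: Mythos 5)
Your proof is correct and rests on the same degree-theoretic engine as the paper's: compose $F$ with a retraction landing in $\partial\widehat{\sigma}$, count preimages of a point of $\Int(\sigma)$ under the boundary map $\partial(\Delta_{n-1}\times[0,1])\to\partial\widehat{\sigma}$ to get degree $\pm1$, and contradict with degree $0$ for a boundary map that extends over the prism. The variation is in the choice of retraction: you use the metric (closest-point) projection onto $\widehat{\sigma}$, defined unconditionally, whereas the paper uses the radial retraction centered at a hypothetical missed interior point $z$; this makes your degree computation cleaner (it is done once, independently of any contradiction hypothesis) at the cost of invoking a second, radial retraction for the final step, while the paper needs only one retraction but must verify a slightly more involved inclusion $\rho^{-1}(\Int(\sigma))\subset\Int(\widehat{\sigma})\cup\Int(\sigma)$ in place of your normal-cone observation that $r^{-1}(y^*)\setminus\widehat{\sigma}\subset\{h_0<0\}$. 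Your proposal also makes explicit the last localization step (passing from $F(\Delta_{n-1}\times[0,1])$ to $F(\Int(\Delta_{n-1})\times(0,1))$ by observing that $F(\partial(\Delta_{n-1}\times[0,1]))$ misses $\Int(\widehat{\sigma})$), which the paper leaves implicit.
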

\begin{proof}
As $\Delta_{n-1}\times[0,1]$ is compact and $\widehat{\sigma}=\cl(\Int(\widehat{\sigma}))$, it is enough to check: $\Int(\widehat{\sigma})\subset F(\Int(\Delta_{n-1})\times(0,1))$. 

Suppose there exists $z\in\Int(\widehat{\sigma})\setminus F(\Delta_{n-1}\times[0,1])$. Let us construct a (continuous) semialgebraic retraction $\rho:\R^n\setminus\{z\}\to\partial\widehat{\sigma}$. For each $x\in\R^n\setminus\{z\}$ let $\ell_x$ be the ray $\{z+t(x-z):\ t\in[0,+\infty)\}$. By \cite[11.1.2.3, 11.1.2.7]{ber1} $\ell_x\cap\partial\widehat{\sigma}=\{\rho(x)\}$ is a singleton and if $x\in\partial\widehat{\sigma}$, then $\rho(x)=x$. Define $\rho:\R^n\setminus\{z\}\to\partial\widehat{\sigma},\ x\mapsto\rho(x)$. Let $h_0,\ldots,h_n\in \R[\x]$ be polynomials of degree $1$ such that the hyperplanes $H_i:=\{h_i=0\}$ contain the facets of $\widehat{\sigma}$. Assume $\widehat{\sigma}\subset \{h_i\geq 0\}$ for $i=0,\ldots,n$ and $\sigma\subset H_0$. Note that $\rho(x)=z+\lambda(x-z)$, where $\lambda$ is the smallest value $\mu>0$ such that $h_i(z+\mu(x-z))=0$ for some $i=0,\ldots,n$. As $z\in\Int(\widehat{\sigma})$, we have $h_i(z)>0$ for $i=0,\ldots,n$. Thus, 
$$
\frac{1}{\lambda}=\max\Big\{\frac{h_i(z)-h_i(x)}{h_i(z)}:\ i=0,\ldots,n\Big\}>0.
$$
Consequently,
$$
\rho(x)=z+\frac{1}{\max\Big\{\frac{h_i(z)-h_i(x)}{h_i(z)}:\ i=0,\ldots,n\Big\}}(x-z),
$$
so $\rho:\R^n\setminus\{z\}\to\partial\widehat{\sigma}$ is a continuous semialgebraic map such that $\rho|_{\partial\widehat{\sigma}}=\id_{\partial\widehat{\sigma}}$, that is, $\rho$ is a semialgebraic retraction. 

Observe that
$$
\max\Big\{\frac{h_i(z)-h_i(x)}{h_i(z)}:\ i=0,\ldots,n\Big\}=\frac{h_j(z)-h_j(x)}{h_j(z)}
$$
for a given $j=0,\ldots,n$ if and only if $\rho(x)\in\{h_j=0\}$. In addition, if $x\in\{h_0\geq0\}\setminus(\Int(\widehat{\sigma})\cup\Int(\sigma))$, then $h_{i_0}(x)\leq 0$ for some $i_0=1,\ldots,n$. Thus, 
$$
\frac{h_0(z)-h_0(x)}{h_0(z)}\leq1\leq\frac{h_{i_0}(z)-h_{i_0}(x)}{h_{i_0}(z)}\leq\max\Big\{\frac{h_i(z)-h_i(x)}{h_i(z)}:\ i=0,\ldots,n\Big\},
$$
so $\rho(x)\not\in\{h_0=0,h_1>0,\ldots,h_n>0\}=\Int(\sigma)$. Consequently, $\rho^{-1}(\Int(\sigma))\subset\Int(\widehat{\sigma})\cup\Int(\sigma)$.

Consider the continuous semialgebraic map $F^*:=\rho\circ F:\Delta_{n-1}\times[0,1]\to\partial\widehat{\sigma}$. Let us prove: \em the restriction map $F^*|_{\partial(\Delta_{n-1}\times[0,1])}:\partial(\Delta_{n-1}\times[0,1])\to\partial\widehat{\sigma}$ has degree $1$ \em (as a continuous map between spheres of dimension $n-1$).

Pick a point $x\in\Int(\sigma)$. Then $(F^*)^{-1}(x)=F^{-1}(\rho^{-1}(x))\subset F^{-1}(\Int(\widehat{\sigma}))\cup F^{-1}(\Int(\sigma))$. As 
$$
\partial(\Delta_{n-1}\times[0,1])=(\partial\Delta_{n-1}\times(0,1))\cup(\Delta_{n-1}\times\{0\})\cup(\Delta_{n-1}\times\{1\}),
$$
$F|_{\Delta_{n-1}\times\{0\}}:\Delta_{n-1}\times\{0\}\to\sigma$ is a homeomorphism, $F(\partial\Delta_{n-1}\times(0,1))\cap\widehat{\sigma}=\varnothing$ and $F(\Delta_{n-1}\times\{1\})=\{p\}$, we deduce
\begin{align*}
&F^{-1}(\Int(\widehat{\sigma}))\cap\partial(\Delta_{n-1}\times[0,1])=\varnothing,\\
&F^{-1}(\Int(\sigma))\cap\partial(\Delta_{n-1}\times[0,1])\subset\Delta_{n-1}\times\{0\}.
\end{align*}
Consequently, the preimage 
$$
(F^*)^{-1}(x)\cap\partial(\Delta_{n-1}\times[0,1])=(F^*)^{-1}(x)\cap(\Delta_{n-1}\times\{0\}).
$$ 
As $F|_{\Delta_{n-1}\times\{0\}}:\Delta_{n-1}\times\{0\}\to\sigma$ is a homeomorphism and $\rho|_\sigma=\id_\sigma$, also $F^*|_{\Delta_{n-1}\times\{0\}}=F|_{\Delta_{n-1}\times\{0\}}:\Delta_{n-1}\times\{0\}\to\sigma$ is a homeomorphism, so the preimage $(F^*)^{-1}(x)\cap\partial(\Delta_{n-1}\times[0,1])=(F|_{\Delta_{n-1}\times\{0\}})^{-1}(x)$ is a singleton. As this happens for each $x\in\Int(\sigma)$, the restriction map $F^*|_{\partial(\Delta_{n-1}\times[0,1])}$ has degree $1$. As $F^*|_{\partial(\Delta_{n-1}\times[0,1])}$ extends continuously to $\Delta_{n-1}\times[0,1]$, we deduce by \cite[Thm.5.1.6(b)]{hir3} that $F^*|_{\partial(\Delta_{n-1}\times[0,1])}$ has degree $0$, which is a contradiction. 

Consequently, $\Int(\widehat{\sigma})\subset F(\Delta_{n-1}\times[0,1])$, as required.
\end{proof}

Given a polynomial $h\in\R[\x]$ of degree $1$ and the hyperplane $H:=\{h=0\}$ of $\R^n$, denote the two subspaces determined by $H$ with $H^+:=\{h\geq0\}$ and $H^-:=\{h\leq0\}$. Denote also $\vec{h}:=h-h(0)$. If $\pol:=\{g_1\geq0,\ldots,g_m\geq0\}\subset\R^n$ is an $n$-dimensional convex polyhedron, where each $g_i\in\R[\x]$ is a polynomial of degree $1$, then $\Int(\pol)=\{g_1>0,\ldots,g_m>0\}$. Thus, if $\pol_1,\ldots,\pol_s\subset\R^n$ are $n$-dimensional convex polyhedra, $\Int(\pol_1\cap\cdots\cap\pol_s)=\Int(\pol_1)\cap\cdots\cap\Int(\pol_s)$. The following construction will be useful for the proof of Lemma \ref{simplex2}, which is an application of Lemma \ref{simplex} and one of the keys to prove Theorem \ref{riduzione}.

\subsubsection{Covering of the exterior of a simplex}
Let $\pol\subset\R^n$ be an $n$-dimensional convex polyhedron and $\sigma\subset\partial\pol$ an $(n-1)$-dimensional simplex of vertices $v_1,\ldots,v_n$. Let $p\in\Int(\pol)$ and $\widehat{\sigma}$ be the $n$-simplex of vertices $\{p,v_1,\ldots,v_n\}$. Let $H_1,\ldots,H_n$ be the hyperplanes of $\R^n$ generated by the facets of $\widehat{\sigma}$ that contain $p$, which are those facets of $\widehat{\sigma}$ different from $\sigma$ and suppose $v_i\not\in H_i$. Assume that $\widehat{\sigma}\subset\bigcap_{j=1}^n H_j^+$ and consider the convex polyhedra $\pol_j:=\pol\cap\bigcap_{\ell\neq j}H_\ell^-$, (see Figure \ref{simplexfig}). Observe that $p\in\pol_j$ and $\dim(\Int(\pol_j))=n$, because $p\in\Int(\pol)$ and the hyperplanes $H_1,\ldots,H_n$ are affinely independent.

\begin{figure}[ht]
\centering
\begin{tikzpicture}[scale=0.85]
\centering
\draw[fill=gray!50,opacity=0.5,draw=none] (2,0) -- (4.5,0) -- (6,2) -- (5,4) -- (2.25,4.25) -- (0,2) -- (2,0);
\draw[fill=gray!90,opacity=0.5,draw=none] (2,0) -- (4.5,0) -- (3,2) -- (2,0);

\draw[fill=blue!25,opacity=0.5,draw=none] (2,0) -- (3,2) -- (4.05,4.1) -- (2.25,4.25) -- (0,2) -- (2,0);
\draw[fill=red!25,opacity=0.5,draw=none] (4.5,0) -- (6,2) -- (5,4) -- (2.25,4.25) -- (1.7,3.7) -- (3,2) -- (4.5,0);

\draw[thick] (2,0) -- (4.5,0) -- (6,2) -- (5,4) -- (2.25,4.25) -- (0,2) -- (2,0);

\draw[densely dotted, thick] (4.5,0) -- (1.7142,3.7142);
\draw[densely dotted, thick] (2,0) -- (4.05,4.1);

\draw (3,2) node{\tiny{\textbullet}};
\draw (2,-0.375) node{{\small $v_1$}};
\draw (4.5,-0.375) node{{\small $v_2$}};
\draw (3.25,-0.375) node{{\small $\sigma$}};
\draw (3.25,0.75) node{{\small $\widehat{\sigma}$}};
\draw (2.95,2.5) node{{\small $p$}};
\draw (1,4) node{{\small $\pol$}};
\draw (1.5,2) node{{\small $\pol_1$}};
\draw (4.5,2.3) node{{\small $\pol_2$}};
\end{tikzpicture}
\caption{\small{The polyhedra $\pol_j$ (figure inspired by \cite[Fig.4.2]{fu5}).\label{simplexfig}}}
\end{figure}
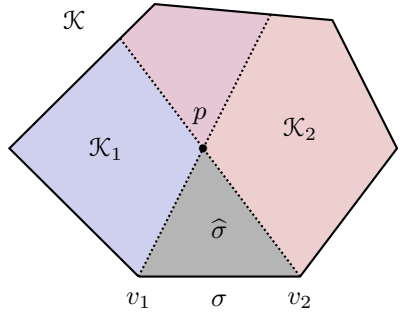

\subsubsection{$\Cont^k_{I'}$-topology}\label{ckt}
Let $I'\subset J\subset\R$ be compact intervals and define $\Cont^k_{I'}(\Delta_{n-1}\times J,\R^m)$ as the space of continuous functions on $\Delta_{n-1}\times J$ that are $\Cont^k$ differentiable functions with respect to $\t$ on $\Delta_{n-1}\times I'$. Observe that both $\Delta_{n-1}\times I'$ and $\Delta_{n-1}\times J$ are compact. For each $f\in\Cont^k_{I'}(\Delta_{n-1}\times J,\R)$ and $\veps>0$ define
$$
{\mathcal U}_{f,\veps}=\Big\{g\in\Cont^k_{I'}(\Delta_{n-1}\times J,\R):\ \|f-g\|<\veps,\ \Big\|\frac{\partial^\ell f}{\partial\t^\ell}-\frac{\partial^\ell g}{\partial\t^\ell}\Big\|_{\Delta_{n-1}\times I'}<\veps, \ell=1,\ldots,k\Big\}.
$$
The previous open sets are the basis of the {\em $\Cont^k_{I'}$-topology} of $\Cont^k_{I'}(\Delta_{n-1}\times J,\R^m)$.

\begin{lem}\label{simplex2}
Let $\pol:=\{g_1\geq0,\ldots,g_s\geq0\}\subset\R^n$ be an $n$-dimensional convex polyhedron and $\sigma\subset\pol$ an $(n-1)$-dimensional simplex of vertices $v_1,\ldots,v_n$. Fix $p\in\Int(\pol)$ and consider the simplex $\widehat{\sigma}$ of vertices $\{p,v_1,\ldots,v_n\}$. Let $H_i:=\{h_i=0\}$ be the hyperplanes of $\R^n$ generated by the facets of $\widehat{\sigma}$ that contain $p$ and assume $v_i\not\in H_i$ and $\widehat{\sigma}\subset\bigcap_{i=1}^nH_i^+$. Let $h_0\in\R[\t]$ be a polynomial of degree 1 such that $\sigma\subset\{h_0=0\}$ and $\widehat{\sigma}\subset\{h_0\geq0\}$. There exist continuous semialgebraic paths $\alpha_i:[-\delta,1+\delta]\to\pol$ (for some $\delta>0$) that are Nash on the compact neighborhood $I:=[-\delta,\delta]\cup[1-\delta,1+\delta]$ of $\{0,1\}$ and satisfy $\alpha_i(0)=v_i$ and $\alpha_i(1)=p$ for $i=1,\ldots,n$ and $\veps>0$ such that the continuous semialgebraic map 
$$
F:\Delta_{n-1}\times[-\delta,1+\delta]\to\pol,\ (\lambda_1,\ldots,\lambda_n,t)\mapsto\sum_{i=1}^n\lambda_i\alpha_i(t),
$$
(which is Nash on $\Delta_{n-1}\times I$), has the following property: 

If $G:\Delta_{n-1}\times[-\delta,1+\delta]\to\R^n$ is another continuous semialgebraic map that is a Nash map on $\Delta_{n-1}\times I'$ for a neighborhood $I'\subset I$ of $\{0,1\}$ and satisfies
$$
\frac{\partial^\ell G}{\partial\t^\ell}(\lambda,0)=\frac{\partial^\ell F}{\partial\t^\ell}(\lambda,0),\, \frac{\partial^\ell G}{\partial\t^\ell}(\lambda,1)=\frac{\partial^\ell F}{\partial\t^\ell}(\lambda,1)
$$ 
for each $\lambda\in\Delta_{n-1}$ and $\ell=0,1,2,3$, $\|G-F\|<\veps$ and $\|\frac{\partial^\ell G}{\partial\t^\ell}-\frac{\partial^\ell F}{\partial\t^\ell}\|_{\Delta_{n-1}\times I'}<\veps$ for $\ell=1,2,3$, then $\widehat{\sigma}\subset G(\Delta_{n-1}\times[0,1])$, $G(\Delta_{n-1}\times\{0\})=\sigma$, $G(\Delta_{n-1}\times\{1\})=\{p\}$, $G(\Delta_{n-1}\times(0,1))\subset\Int(\pol)$ and $G(\Delta_{n-1}\times([-\delta',0]\cup[1,1+\delta']))\subset\widehat{\sigma}$ for some $0<\delta'<\delta$ small enough.
\end{lem}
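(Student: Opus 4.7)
The plan is to construct the paths $\alpha_i$ as polynomials on the Nash intervals $[-\delta,\delta]$ and $[1-\delta,1+\delta]$, with carefully chosen first- and second-order velocities at $v_i$ and $p$, glued by arbitrary continuous semialgebraic curves inside $\Int(\pol_i)$ on the middle interval $[\delta,1-\delta]$. The velocity data will be arranged so that the convex combination $F(\lambda,t):=\sum_i\lambda_i\alpha_i(t)$ verifies quantitative versions of the hypotheses of Lemma~\ref{simplex}, and a Taylor-remainder estimate will then exploit the $\Cont^3$-matching to transfer these properties to any sufficiently close perturbation $G$ before Lemma~\ref{simplex} is applied.

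Concretely, at each $v_i$ I would pick a direction $w_i$ in the tangent cone of $\pol_i$ at $v_i$ satisfying $\vec{h}_0(w_i)=0$ and $\vec{h}_k(w_i)<0$ for every $k\neq i$; an explicit candidate is $w_i:=-\frac{1}{n-1}\sum_{j\neq i}(v_j-v_i)$, for which $\vec{h}_k(w_i)=-\frac{1}{n-1}h_k(v_k)<0$ when $k\neq i$ and $\vec{h}_0(w_i)=0$, adjusted if necessary to remain tangent to the other facets of $\pol$ through $v_i$. Setting $\alpha_i(t):=v_i+tw_i+t^2(p-v_i)$ on $[-\delta,\delta]$ produces a quadratic with $\vec{h}_0(\alpha_i''(0))=2h_0(p)>0$; a direct barycentric computation shows $\alpha_i(t)\in\widehat\sigma$ for $t\in[-\delta,0]$ and $\alpha_i(t)\in\Int(\pol_i)$ for $t\in(0,\delta]$, provided $\delta$ is small. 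A symmetric construction near $t=1$ with a direction $u_i$ at $p$ pointing into $\widehat\sigma$ and $-u_i$ into $\Int(\pol_i)$ handles the other endpoint; on $[\delta,1-\delta]$ I connect the prescribed endpoint values by any continuous semialgebraic curve inside $\pol_i$, which exists because $\pol_i$ is a path-connected $n$-dimensional polyhedron. A direct evaluation then yields, for $(\lambda,t)\in\Delta_{n-1}\times[-\delta,0]$, the uniform-in-$\lambda$ inequalities $h_0(F(\lambda,t))\geq h_0(p)\,t^2$ and $h_k(F(\lambda,t))\geq c_k(\lambda_k+|t|)$ for $k\neq 0$, with analogous bounds near $t=1$; compactness additionally provides $h_k(F(\lambda,t))\leq -c<0$ on $\{\lambda_k=0\}\times[\delta',1-\delta']$ for some $c>0$.

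For the stability conclusion, the Taylor matching of orders $\ell=0,1,2,3$ at $t=0,1$ together with the uniform bounds $\|\partial^\ell G/\partial\t^\ell-\partial^\ell F/\partial\t^\ell\|_{I'}<\veps$ for $\ell=1,2,3$ yields, by iterated integration using the vanishing of $\partial^\ell(G-F)/\partial\t^\ell$ at $t=0$ for $\ell=0,1,2$ and the sup bound on the third derivative, the estimate $|G(\lambda,t)-F(\lambda,t)|\leq\veps|t|^3/6$ uniformly in $\lambda$ near $t=0$, and analogously $\veps|t-1|^3/6$ near $t=1$. This error is dominated by the $t^2$-lower bound for $h_0(F)$ and the $|t|$-lower bound for $h_k(F)$ on sufficiently small intervals $[-\delta',0]$ and $[1,1+\delta']$ provided $\veps$ and $\delta'$ are chosen small enough, so $h_j(G)\geq 0$ for every $j$ there and $G$ maps these sets into $\widehat\sigma$. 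On the middle band the sup-norm bound $\|G-F\|<\veps$ preserves the strict negativity of $h_k(G)$ on $\partial\Delta_{n-1}$ and the strict positivity of the facet forms $g_\ell(G)$, giving $G(\Delta_{n-1}\times(0,1))\subset\Int(\pol)$ and keeping $G$ outside $\widehat\sigma$ on $\partial\Delta_{n-1}\times(0,1)$. Combined with $G(\lambda,0)=F(\lambda,0)$ and $G(\lambda,1)=p$ from zeroth-order matching, all hypotheses of Lemma~\ref{simplex} hold for $G|_{\Delta_{n-1}\times[0,1]}$, yielding $\widehat\sigma\subset G(\Delta_{n-1}\times[0,1])$. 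The main obstacle is the uniform-in-$\lambda$ Taylor analysis near the endpoints: the matching order three is chosen precisely to dominate the quadratic vanishing of $h_0\circ F$ at $t=0,1$ with a safely higher-order remainder, while simultaneously preserving the first-order crossing of $h_k\circ F$ ($k\neq 0$) on the faces of $\Delta_{n-1}$.
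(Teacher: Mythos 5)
Your construction of the paths $\alpha_i$ does not go through, and the obstruction is exactly at the point where you wave your hands. You want a velocity $w_i$ at $v_i$ satisfying simultaneously $\vec{h}_0(w_i)=0$, $\vec{h}_k(w_i)<0$ for $k\neq i$, and (after "adjustment") $\vec{g}_j(w_i)\geq0$ for every facet $\{g_j=0\}$ of $\pol$ through $v_i$, so that $\alpha_i(t)=v_i+tw_i+t^2(p-v_i)$ stays in $\pol$ and exits $\widehat\sigma$ for small $t>0$. Such a $w_i$ need not exist. Take $n=2$, $\pol=[0,1]^2$, $\sigma=[0,1]\times\{0\}$, $p=(1/2,1/2)$, $v_1=(0,0)$, $v_2=(1,0)$, $h_0=\y$, $h_2=\x-\y$. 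Then $\vec{h}_0(w_1)=0$ forces $w_1=(a,0)$, while $\vec{h}_2(w_1)=a<0$ contradicts the constraint $a\geq0$ imposed by the facet $\{\x=0\}$ of $\pol$ through $v_1$; your explicit candidate $w_1=v_1-v_2=(-1,0)$ gives $\alpha_1(t)=(-t+t^2/2,\,t^2/2)\notin\pol$ for small $t>0$. The difficulty is intrinsic: asking $\vec h_0(w_i)=0$ keeps $w_i$ tangent to the supporting hyperplane of $\sigma$, while asking $\vec h_k(w_i)<0$ forces it outside the tangent cone of $\sigma$ in that hyperplane, and when $v_i$ lies on additional facets of $\pol$ these two requirements are incompatible with staying in the tangent cone of $\pol$.

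The paper's design sidesteps this entirely by taking $\alpha_i'(0)=0$: their local model is $\alpha_{i}(t)=v_i+t^2u_i+t^3w+\cdots$ with $u_i:=\vec{v_ip}$. Since $u_i$ points from the boundary point $v_i$ to the interior point $p$, one has $\vec{g}_j(u_i)>0$ for every facet of $\pol$ through $v_i$ with no hypotheses on the local cone of $\pol$; and $\vec{h}_j(u_i)=0$ for $j\neq i$, so the escape from $\widehat\sigma$ is delegated to the cubic term $t^3w$ with $\vec{h}_j(w)<0$. This is not a cosmetic choice. It forces $h_j\circ\alpha_i$ (for $j\neq i$) to vanish to order exactly $3$ at $t=0$, which is the same order as the Taylor remainder controlled by the $\Cont^3$ contact with $G$. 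Your Taylor-remainder argument ("$|G-F|\leq\veps|t|^3/6$ dominated by a first-order lower bound for $h_k\circ F$") works precisely when $h_k\circ F$ vanishes to order $1$, which is exactly the situation you cannot achieve. With the genuine third-order vanishing, the leading term and the remainder are of the same order $|t|^3$, and one must instead argue as in the paper by repeated application of Rolle's theorem to push a hypothetical zero of $h_j\circ G$ down to a zero of the third derivative, which is then excluded by the uniform bound $\partial^3(h_j\circ F)/\partial\t^3\leq-\epsilon_0$ together with the $\veps$-closeness of third derivatives. The same issue recurs for the polyhedron inequalities $g_k$: with your velocity $w_i$ not pointing into $\pol$, you cannot even guarantee $\alpha_i((0,\delta])\subset\Int(\pol)$, whereas the paper's $u_i$ gives $g_k\circ\alpha_i=c_{ik}+d_{ik}t^2+\cdots$ with either $c_{ik}>0$ or $d_{ik}>0$, which is then stabilized by a two-case argument (large $\lambda_k$ versus small $\lambda_k$) again via Rolle rather than a crude remainder bound. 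In short: the lemma's hypothesis that $G$ and $F$ agree to order $3$ at $t=0,1$ is calibrated to the order of vanishing forced by a construction that works for arbitrary convex $\pol$, and your proposed first-order construction is not available in general.
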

\begin{proof}
We assume $\sigma\subset\partial\pol$ after changing $\pol$ by $\pol':=\pol\cap\{h_0\geq0\}$, which is a convex polyhedron of dimension $n$ (that contains $\sigma$ in its boundary), because $\widehat{\sigma}\subset\pol'$ is an $n$-dimensional simplex. We keep the notation $\pol:=\{g_1\geq0,\ldots,g_s\geq0\}$ and we assume that $h_0$ coincides with some $g_i$. The proof is now conducted in several steps:

\vspace{2mm}
\noindent{\sc Initial preparation.} Let us construct the continuous semialgebraic paths $\alpha_i:[-\delta,1+\delta]\to\pol$. We claim: \em There exist continuous semialgebraic paths $\alpha_i:\R\to\pol$ such that: 
\begin{itemize}
\item[{\rm(i)}] $\alpha_i$ is Nash on $I$,
\item[{\rm(ii)}] $\alpha_i(\t)=v_i+\t^2u_i+\t^3w+\cdots$ and $\alpha_i(1+\t)=p-\t^3w+\cdots$, 
\item[{\rm(iii)}] $\alpha_i([-\delta,0)\cup(1,1+\delta])\subset\Int(\widehat{\sigma})$, 
\item[{\rm(iv)}] $\alpha_i((0,1))\subset\Ss_i:=\Int(\pol\cap\bigcap_{j\neq i}H_j^-)$,
\item[{\rm(v)}] $h_i\circ\alpha_i(\t)=h_i(v_i)+\vec{h_i}(u_i)\t^2-a_i\t^3+\cdots$ where $h_i(v_i)>0$, $\vec{h_i}(u_i)<0$ and $a_i>0$,
\item[{\rm(vi)}] $h_j\circ\alpha_i(\t)=-a_j\t^3+\cdots$ if $i\neq j$ and $h_j\circ\alpha_i(1+\t)=a_j\t^3+\cdots$, where $a_j>0$ and $1\leq i,j\leq n$,
\item[{\rm(vii)}] $h_0\circ\alpha_i(\t)=b_{i0}\t^2+\cdots$ where $b_{i0}>0$ and $1\leq i\leq n$,
\item[{\rm(viii)}] $g_k\circ\alpha_i(\t)=c_{ik}+d_{ik}\t^2+\cdots$ where either $c_{ik}>0$ or $c_{ik}=0$ and $d_{ik}>0$,
\item[{\rm(ix)}] $g_k\circ\alpha_i(1+\t)=e_{ik}+\cdots$ where $e_{ik}>0$.
\end{itemize}\em

We construct each continuous semialgebraic path $\alpha_i$ piecewise. The open semialgebraic set $\Ss_i$ defined in (iv) can be described as 
$$
\Ss_i=\{g_1>0,\ldots,g_s>0\}\cap\bigcap_{\substack{j\neq i \\ j\neq 0}}\{h_j<0\}.
$$
Define $u_i:=\vec{v_ip}$ and observe that $\vec{h}_j(u_i)=0$ if $1\leq i,j\leq n$ and $i\neq j$. This is because $h_j(p)=0$ and $h_j(v_i)=0$ if $1\leq i,j\leq n$ and $i\neq j$. Recall that $h_i(v_i)>0$ and $h_i(p)=0$, so $\vec{h_i}(u_i)<0$ for $1\leq i\leq n$. In addition, $b_{i0}:=\vec{h}_0(u_i)>0$, because $h_0(p)>0$ and $h_0(v_i)=0$ for $1\leq i\leq n$. As $g_k(v_i)\geq 0$ (because $\widehat{\sigma}\subset\pol$) and $g_k(v_i)+\vec{g}_k(u_i)=g_k(v_i+u_i)=g_k(p)>0$ (because $p\in \Int(\pol)$), we deduce that either $c_{ik}:=g_k(v_i)>0$ or $c_{ik}=0$ and $d_{ik}:=\vec{g}_k(u_i)>0$.

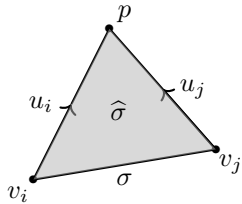
\begin{figure}[!ht]
\begin{center}
\begin{tikzpicture}[scale=0.4]
\draw (3,1) node{\tiny$\bullet$};
\draw (2.5,0.5) node{{\small $v_i$}};
\draw (9,2) node{\tiny$\bullet$};
\draw (9.5,1.5) node{\small $v_j$};
\draw (5.5,6) node{\tiny$\bullet$};
\draw (6,6.5) node{\small $p$};
\draw (3.25,3.5) node{\small $u_i$};
\draw (8.25,4) node{\small $u_j$};
\draw[thick] (9,2) -- (3,1);
\draw[->, thick] (3,1) -- (4.25,3.5);
\draw[thick] (4.25,3.5)--(5.5,6);
\draw[thick] (9,2)--(5.5,6);
\draw[->, thick] (9,2)--(7.25,4);
\draw[fill=gray!60,opacity=0.5,draw=none] (3,1)--(9,2)--(5.5,6);
\draw (6,1) node{\small $\sigma$};
\draw (5.8,3.3) node{\small $\widehat{\sigma}$};
\end{tikzpicture}
\end{center}
\vspace{-0.5cm}
\caption{\small{A picture of the situation.}}
\end{figure}

As $\{\vec{h}_1,\ldots,\vec{h}_n\}$ are independent linear forms, the open semialgebraic set $\bigcap_{j=1}^n\{\vec{h}_j<0\}\neq\varnothing$. Pick a non-zero vector $w\in\bigcap_{j=1}^n\{\vec{h}_j<0\}$ and write $a_j:=-\vec{h}_j(w)>0$ for $j=1,\ldots,n$. Consider the polynomial path
$$
\alpha_{i0}:\R\to\R^n,\ t\mapsto v_i+t^2u_i+t^3w.
$$
As $h_0(v_i)=0$ and $h_j(v_i)=0$ for $1\leq i,j\leq n$ if $i\neq j$, we deduce: 
\begin{align*}
(h_0\circ\alpha_{i0})(\t)&=h_0(v_i)+\vec{h}_0(u_i)\t^2+\vec{h}_0(w)\t^3=b_{i0}\t^2+\vec{h}_0(w)\t^3.\\
(h_j\circ\alpha_{i0})(\t)&=h_j(v_i)+\vec{h}_j(u_i)\t^2+\vec{h}_j(w)\t^3=\vec{h}_j(w)\t^3=-a_{j}\t^3\quad\text{if $i\neq j$},\\
(h_i\circ\alpha_{i0})(\t)&=h_i(v_i)+\vec{h_i}(u_i)\t^2+\vec{h_i}(w)\t^3=h_i(v_i)+\vec{h_i}(u_i)\t^2-a_i\t^3.
\end{align*}
In addition,
$$
(g_k\circ\alpha_{i0})(\t)=g_k(v_i)+\vec{g}_k(u_i)\t^2+\vec{g}_k(w)\t^3=c_{ik}+d_{ik}\t^2+\vec{g}_k(w)\t^3,
$$
where either $c_{ik}>0$ or both $c_{ik}=0$ and $d_{ik}>0$.

Consider the polynomial path
$$
\alpha_{i1}:\R\to\R^n,\ t\mapsto p-(t-1)^3w.
$$
Observe that
\begin{align*}
h_0\circ\alpha_{i1}(1+\t)&=h_0(p)-\vec{h}_0(w)\t^3,\\
h_j\circ\alpha_{i1}(1+\t)&=h_j(p)-\vec{h}_j(w)\t^3=a_j\t^3
\end{align*}
for $1\leq i,j\leq n$. As $a_j>0$, we have
\begin{equation}\label{123}
h_j\circ\alpha_{i0}(t)
\begin{cases}
<0&\text{if $t>0$,}\\
>0&\text{if $t<0$,}
\end{cases}\text{\, if\, } i\neq j
\text{\, and\, }
h_j\circ\alpha_{i1}(1+t)
\begin{cases}
>0&\text{if $t>0$,}\\
<0&\text{if $t<0$.}
\end{cases} 
\end{equation}
Denote $e_{ik}:=g_k(p)>0$ and observe that
$$
g_k\circ\alpha_{i1}(1+\t)=g_k(p)-\vec{g}_k(w)\t^3=e_{ik}-\vec{g}_k(w)\t^3.
$$
Let $0<\delta<\frac{1}{2}$ be such that $(h_0\circ\alpha_{i0})(t)>0$, $h_0\circ\alpha_{i1}(1+t)>0$, $(g_k\circ\alpha_{i0})(t)>0$ and $g_k\circ\alpha_{i1}(1+t)>0$ for $t\in[-\delta,\delta]\setminus\{0\}$ (recall that $h_0(p)>0$). Thus, by \eqref{123}, $\alpha_{i0}([-\delta,0)), \alpha_{i1}((1,1+\delta])\subset\Int(\widehat{\sigma})$ and $\alpha_{i0}((0,\delta]),\alpha_{i1}([1-\delta,1))\subset\Ss_i$. 

As $\Ss_i$ is a convex set and $\alpha_{i0}(\delta),\alpha_{i1}(1-\delta)\in\Ss_i$, the segment that connects both points is contained in $\Ss_i$. Let 
$$
\alpha_{i2}:[\delta,1-\delta]\to\Ss_i,\ t\mapsto\frac{(1-\delta)-t}{1-2\delta}\alpha_{i0}(\delta)+\frac{t-\delta}{1-2\delta}\alpha_{i1}(1-\delta)
$$ 
be a parameterization of such segment. Define the continuous semialgebraic path 
$$
\alpha_i:=\alpha_{i0}|_{[-\delta,\delta]}*\alpha_{i2}*\alpha_{i1}|_{[1-\delta,1+\delta]}:[-\delta,1+\delta]\to\pol,
$$ 
which satisfies $\alpha_i([-\delta,0)\cup(1,1+\delta])\subset\Int(\widehat{\sigma})$, $\alpha_i((0,1))\subset\Ss_i$ and in fact all the required conditions (i)-(ix).

\vspace{2mm}
\noindent{\sc Step 1.} {\em We have the following inclusions: $\widehat{\sigma}\subset F(\Delta_{n-1}\times[0,1])\subset\pol$ and $F(\Delta_{n-1}\times([-\delta,0)\cup(1,1+\delta]))\subset\Int(\widehat{\sigma})\subset\pol$}. 

Observe that $F(\Delta_{n-1}\times(0,1))\subset\pol$, because $\pol$ is convex and $\alpha_i((0,1))\subset\pol$. In addition, $\alpha_i(1)=p$ for each $i$, so $F(\Delta_{n-1}\times\{1\})=p$, and $F(\lambda_1,\ldots,\lambda_n,0)=\sum_{i=1}^n\lambda_iv_i$, so $F(\Delta_{n-1}\times\{0\})=\sigma\subset\pol$ and $F|_{\Delta_{n-1}\times\{0\}}$ is a homeomorphism. Thus, $F(\Delta_{n-1}\times[0,1])\subset\pol$. 

Let us analyze the restriction map $F|_{\partial\Delta_{n-1}\times(0,1)}:\partial\Delta_{n-1}\times(0,1)\to\pol$. Recall that $\partial\Delta_{n-1}=\bigcup_{i=1}^n(\Delta_{n-1}\cap\{\lambda_i=0\})$. 

Fix an index $i=1,\ldots,n$ and write $\lambda^{(i)}:=(\lambda_1,\ldots,\lambda_{i-1},0,\lambda_{i-1},\ldots,\lambda_n)$ where $\sum_{j\neq i}\lambda_j=1$ and each $\lambda_j\geq0$ if $j\neq i$. We have $\Int(H_i^-)\subset\R^n\setminus\widehat{\sigma}$ and 
\begin{equation}\label{fuori}
F(\lambda^{(i)},t)=\sum_{j\neq i}\lambda_j\alpha_j(t)\in\Int(\pol\cap H_i^-)=\Int(\pol)\cap\Int(H_i^-)\subset\pol\setminus\widehat{\sigma}
\end{equation}
for $t\in(0,1)$, because if $j\neq i$ each $\alpha_j(t)\in\Int(\pol)\cap\Int(H_i^-)$ and the latter is convex. Thus, $F(\partial\Delta_{n-1}\times(0,1))\subset\pol\setminus\widehat{\sigma}$. By Lemma \ref{simplex} $\widehat{\sigma}\subset F(\Delta_{n-1}\times[0,1])$. 

As $\alpha_i([-\delta,0)\cup(1,1+\delta])\subset\Int(\widehat{\sigma})$ and $\Int(\widehat{\sigma})$ is convex, one concludes that $F(\Delta_{n-1}\times([-\delta,0)\cup(1,1+\delta]))\subset\Int(\widehat{\sigma})$. 

\vspace{2mm}
Let us construct in the following steps $\veps>0$ such that: {\em if $G$ is under the hypothesis of the statement, then $\widehat{\sigma}\subset G(\Delta_{n-1}\times[0,1])\subset\pol$ and $G(\Delta_{n-1}\times([-\delta',0]\cup[1,1+\delta']))\subset\widehat{\sigma}$ for some $0<\delta'<\delta$ small enough}.

\vspace{2mm}
\noindent{\sc Step 2.} {\em Choice of $\veps>0$.} By \eqref{fuori} $F(\lambda,t)\in\pol\setminus\widehat{\sigma}\subset\R^n\setminus\widehat{\sigma}$ for each $(\lambda,t)\in\partial\Delta_{n-1}\times(0,1)$. For each $0<\rho<\frac{1}{2}$ define
$$
\veps_\rho:=\tfrac{1}{2}\min\Big\{\dist(F(\lambda,t),\widehat{\sigma}):\ (\lambda,t)\in\partial\Delta_{n-1}\times[\rho,1-\rho]\Big\}>0.
$$
If $G:\Delta_{n-1}\times[\rho,1-\rho]\to\R^n$ satisfies $\|F-G\|<\veps_\rho$, then $G((\partial\Delta_{n-1}\times[\rho,1-\rho])\subset\R^n\setminus\widehat{\sigma}$. 

See assertions (i)-(ix) above for the definition of $a_j,b_{i0}, c_{ik}, d_{ik}$ and $e_{ik}$. Consider
$$
c_{ik}^*:=\begin{cases}
c_{ik}&\text{if $c_{ik}>0$,}\\
d_{ik}&\text{if $c_{ik}=0$}
\end{cases}
$$
and define
\begin{equation}\label{vepsd}
\epsilon_0:=\tfrac{1}{2}\min\{a_j,b_{i0},c_{ik}^*,e_{ik}:\ \forall\,i,j,k\}>0.
\end{equation}
By hypothesis (v) and (vi): 
\begin{align*}
&h_i\circ\alpha_i(\t)=h_i(v_i)+\vec{h_i}(u_i)\t^2-a_i\t^3+\cdots,\\
&h_j\circ\alpha_i(\t)=-a_j\t^3+\cdots \text{ if } i\neq j,\\
&h_j\circ\alpha_i(1+\t)=a_j\t^3+\cdots.
\end{align*} 
As $a_j>0$ for each $j=1,\ldots,n$, there exists $0<\rho_0<\delta$ such that 
\begin{equation}\label{rhod}
-(h_j\circ\alpha_i)'''|_{[-\rho_0,\rho_0]}\geq\epsilon_0\quad\text{and}\quad (h_j\circ\alpha_i)'''|_{[1-\rho_0,1+\rho_0]}\geq\epsilon_0.
\end{equation}
As $h_i$ is a polynomial of degree $1$ for $i=1,\ldots,n$ and $\sum_{j=1}^n\lambda_j=1$,
\begin{align*}
\frac{\partial^\ell}{\partial\t^\ell}((h_i\circ F)(\lambda,\t))&=\sum_{j=1}^n\lambda_j(h_i\circ\alpha_j)^{(\ell)}(\t)
\end{align*}
for each $\ell\geq 0$. Consequently,
\begin{align}
&-\frac{\partial^3}{\partial\t^3}(h_i\circ F)|_{\Delta_{n-1}\times[-\rho_0,\rho_0]}\geq\epsilon_0,\label{form1}\\
&\frac{\partial^3}{\partial\t^3}(h_i\circ F)|_{\Delta_{n-1}\times[1-\rho_0,1+\rho_0]}\geq\epsilon_0\label{form2}
\end{align}
for $i=1,\ldots,n$.

For each $k=1,\ldots,s$ define ${\mathfrak F}_k:=\{i=1,\ldots,n:\ c_{ik}\neq0\}$. As $g_k$ is a polynomial of degree $1$ and $\sum_{j=1}^n\lambda_j=1$,
$$
(g_k\circ F)(\lambda,\t)=\sum_{i=1}^n\lambda_i(g_k\circ\alpha_i)(t)=\sum_{i\in{\mathfrak F}_k}\lambda_ic_{ik}+\sum_{i\in{\mathfrak F}_k}\lambda_id_{ik}\t^2+\sum_{i\not\in{\mathfrak F}_k}\lambda_id_{ik}\t^2+\cdots
$$
Define $\Gamma_k:=\{\lambda\in\Delta_{n-1}:\ \lambda_i=0,\ i\in{\mathfrak F}_k\}$. If $\mathfrak{F}_k\neq\{1,\ldots,n\}$, then $\Gamma_k\neq\varnothing$ and $\mu_{0k}:=\min\{d_{ik}\, :\, i\not\in\mathfrak{F}_k\}>0$. Otherwise, $\mathfrak{F}_k=\{1,\ldots,n\}$, so $\Gamma_k=\varnothing$, and define $\mu_{0k}:=1$. If $\mathfrak{F}_k\neq\{1,\ldots,n\}$ and $\lambda:=(\lambda_1,\ldots,\lambda_n)\in\Gamma_k$, then $\sum_{i\not\in{\mathfrak F}_k}\lambda_i=1$ and each $\lambda_i\geq0$, so $\sum_{i\not\in{\mathfrak F}_k}\lambda_id_{ik}\geq\mu_{0k}$. Define
$$
V_k:=\begin{cases}
\{\lambda\in\Delta_{n-1}:\ \sum_{i\in{\mathfrak F}_k}\lambda_i<\frac{1}{4},\ |\sum_{i\in{\mathfrak F}_k}\lambda_id_{ik}|<\frac{1}{4}\mu_{0k}\}&\text{if $\mathfrak{F}_k\neq\varnothing,\{1,\ldots,n\}$},\\
\Delta_{n-1}&\text{if $\mathfrak{F}_k=\varnothing$},\\
\varnothing&\text{if $\mathfrak{F}_k=\{1,\ldots,n\}$}.
\end{cases}
$$
If $\mathfrak{F}_k\neq\varnothing,\{1,\ldots,n\}$, then $V_k\neq\varnothing$ and if $\lambda\in V_k$, we have $\sum_{i\not\in{\mathfrak F}_k}\lambda_i>\frac{3}{4}$, so $\sum_{i\not\in{\mathfrak F}_k}\lambda_id_{ik}>\frac{3}{4}\mu_{0k}$ and
\begin{equation}\label{form4}
\frac{1}{2}\frac{\partial^2(g_k\circ F)}{\partial\t^2}(\lambda,0)=\sum_{i=1}^n\lambda_id_{ik}=\sum_{i\in{\mathfrak F}_k}\lambda_id_{ik}+\sum_{i\not\in{\mathfrak F}_k}\lambda_id_{ik}>-\frac{\mu_{0k}}{4}+\frac{3\mu_{0k}}{4}=\frac{1}{2}\mu_{0k}.
\end{equation}
If $\mathfrak{F}_k=\varnothing$, then $\sum_{i=1}^n\lambda_i=1$ and
\begin{equation}\label{nuova}
\frac{1}{2}\frac{\partial^2(g_k\circ F)}{\partial\t^2}(\lambda,0)=\sum_{i=1}^n\lambda_id_{ik}\geq \mu_{0k}>\frac{1}{2}\mu_{0k}
\end{equation}
for each $\lambda\in\Delta_{n-1}=V_k$. Consequently, if $\mathfrak{F}_k\neq\{1,\ldots,n\}$, we have
\begin{equation*}
\frac{\partial^2(g_k\circ F)}{\partial\t^2}(\lambda,0)>\mu_{0k}\quad\text{for each $\lambda\in V_k$.}
\end{equation*}

Define in addition
\begin{equation}\label{mu1}
\mu_{1k}:=\begin{cases}
\min\{(g_k\circ F)(\lambda,0)=\sum_{i\in{\mathfrak F}_k}\lambda_ic_{ik}:\ \lambda\in\Delta_{n-1}\setminus V_k\}>0&\text{if $\mathfrak{F}_k\neq\varnothing$},\\
1&\text{if $\mathfrak{F}_k=\varnothing$}.
\end{cases}
\end{equation}

Recall that $\rho_0>0$ was chosen in \eqref{rhod}. Let $0<\rho<\rho_0$ be such that 
\begin{equation}\label{vk}
\begin{cases}
|g_k(F(\lambda,t))-g_k(F(\lambda,0))|<\frac{\mu_{1k}}{2}&\text{if $(\lambda,t)\in(\Delta_{n-1}\setminus V_k)\times[-\rho,\rho]$,}\\
\frac{\partial^2(g_k\circ F)}{\partial\t^2}(\lambda,t)>\frac{\mu_{0k}}{2}&\text{if $(\lambda,t)\in\cl(V_k)\times[-\rho,\rho]$.}
\end{cases}
\end{equation}
Observe that $F(\Delta_{n-1}\times[\rho,1+\rho])\subset\Int(\pol)$ because $\Int(\pol)$ is convex and $\alpha_i([\rho,1+\rho])\subset\Int(\pol)$ for $i=1,\ldots,n$, so 
\begin{equation}\label{form5}
\veps'_\rho:=\tfrac{1}{2}\min\{\dist(F(\lambda,t),\R^n\setminus\Int(\pol)):\ (\lambda,t)\in\Delta_{n-1}\times[\rho,1+\rho]\}>0.
\end{equation}
Thus, if $G:\Delta_{n-1}\times [\rho,1-\rho]\to \R^n$ and $\|F-G\|<\veps_\rho'$, then $
G((\Delta_{n-1}\times[\rho,1+\rho])\subset\Int(\pol)$. In fact, if $\|F-G\|<\min\{\veps_\rho,\veps_\rho'\}$, then 
$$
G((\Delta_{n-1}\times[\rho,1+\rho])\subset\Int(\pol)\setminus\widehat{\sigma}.
$$

Denote $\epsilon_0':=\min\{\epsilon_0,\mu_{1k},\frac{\mu_{0k}}{2}:\, k=1,\ldots,s\}$ (recall that $\epsilon_0$ was defined in \eqref{vepsd}) and $I_\rho:=[-\rho,\rho]\cup[1-\rho,1+\rho]\subset I$. The maps
\begin{align*}
\Psi_k:\Cont^3_{I_\rho}(\Delta_{n-1}\times[-\rho,1+\rho],\R^n)\to\Cont^3_{I_\rho}(\Delta_{n-1}\times[-\rho,1+\rho],\R),&\ H\mapsto g_k\circ H,\\
\Theta_i:\Cont^3_{I_\rho}(\Delta_{n-1}\times[-\rho,1+\rho],\R^n)\to\Cont^3_{I_\rho}(\Delta_{n-1}\times[-\rho,1+\rho],\R),&\ H\mapsto h_i\circ H
\end{align*}
are continuous with respect to the $\mathcal{C}^3_{I_\rho}$-topology of the involved spaces (see \S\ref{ckt}). Let $0<\veps<\min\{\veps_\rho,\veps_\rho'\}$ be such that if $\|F-G\|<\veps$ and $\|\frac{\partial^\ell}{\partial\t^\ell}F-\frac{\partial^\ell}{\partial\t^\ell}G\|_{\Delta_{n-1}\times I_\rho}<\veps$ for $\ell=1,2,3$, then $|g_k\circ F-g_k\circ F|<\frac{\epsilon_0'}{2}$, $|h_i\circ F-h_i\circ F|<\frac{\epsilon_0'}{2}$ and
\begin{align}
\Big|\frac{\partial^\ell}{\partial\t^\ell}(g_k\circ F)-\frac{\partial^\ell}{\partial\t^\ell}(g_k\circ G)\Big|_{\Delta_{n-1}\times I_\rho}&=\Big|\frac{\partial^\ell}{\partial\t^\ell}(\Psi_k(F))-\frac{\partial^\ell}{\partial\t^\ell}(\Psi_k(G))\Big|_{\Delta_{n-1}\times I_\rho}<\frac{\epsilon_0'}{2},\label{form7}\\
\Big|\frac{\partial^\ell}{\partial\t^\ell}(h_i\circ F)-\frac{\partial^\ell}{\partial\t^\ell}(h_i\circ G)\Big|_{\Delta_{n-1}\times I_\rho}&=\Big|\frac{\partial^\ell}{\partial\t^\ell}(\Theta_i(F))-\frac{\partial^\ell}{\partial\t^\ell}(\Theta_i(G))\Big|_{\Delta_{n-1}\times I_\rho}<\frac{\epsilon_0'}{2}\label{form6}
\end{align}
for $\ell=1,2,3$ and $G\in \Cont^3_{I_\rho}(\Delta_{n-1}\times[-\rho,1+\rho],\R^n)$. The chosen value $\veps>0$ depends only on $\pol$, $\widehat{\sigma}$, $F$ and $\rho>0$.

\vspace{2mm}
Let us check in the following steps: \em $\veps>0$ satisfies the conditions in the statement\em. Let $G:\Delta_{n-1}\times[-\rho,1+\rho]\to\R^n$ be a continuous semialgebraic map satisfying the conditions in the statement. We have to prove: {\em $\widehat{\sigma}\subset G(\Delta_{n-1}\times[0,1])\subset\pol$ and $G(\Delta_{n-1}\times([-\delta',0]\cup[1,1+\delta']))\subset\widehat{\sigma}$ for some $0<\delta'<\delta$.}

\vspace{2mm}
\noindent{\sc Step 3.} We prove next: $G(\Delta_{n-1}\times([-\rho,1+\rho]\setminus\{0\}))\subset\Int(\pol)$. 

Since $\|F-G\|<\veps\leq\veps_\rho'$ (see \eqref{form5} for the definition of $\veps'_{\rho}$), we have 
$$
G(\Delta_{n-1}\times[\rho,1+\rho])\subset\Int(\pol).
$$
Fix $k=1,\ldots,s$ and let $\lambda\in\Delta_{n-1}$. Let us check: \em $G(\lambda,t)\in\Int(\pol)=\{g_1>0,\ldots,g_s>0\}$ for each $t\in[-\rho,\rho]\setminus\{0\}$\em. 

We distinguish two cases:

\noindent{\sc Case 1}. $\lambda\in\Delta_{n-1}\setminus V_k$. Observe that if $\mathfrak{F}_k=\varnothing$, then $\Delta_{n-1}\setminus V_k=\varnothing$. By \eqref{vk} and \eqref{form7}
$$
|g_k(F(\lambda,t))-g_k(F(\lambda,0))|<\frac{\mu_{1k}}{2} \text{ \, and \, } |g_k\circ G-g_k\circ F|<\frac{\mu_{1k}}{2},
$$
if $t\in[-\rho,\rho]$. By \eqref{mu1} we deduce
\begin{multline*}
(g_k\circ G)(\lambda,t)=(g_k\circ F)(\lambda,0)+(g_k\circ G)(\lambda,t)-(g_k\circ F)(\lambda,t)\\
+(g_k\circ F)(\lambda,t)-(g_k\circ F)(\lambda,0)
>\mu_{1k}-\frac{\mu_{1k}}{2}-\frac{\mu_{1k}}{2}=0
\end{multline*}
for each $t\in[-\rho,\rho]$. Thus, $g_k(G(\lambda,t))>0$ for $t\in[-\rho,\rho]$ and $k=1,\ldots,s$, that is, $G(\lambda,t)\in\Int(\pol)$ for $t\in[-\rho,\rho]$.

\vspace{1mm}
\noindent{\sc Case 2}. $V_k\neq\varnothing$ and $\lambda\in V_k$. Then $0\leq\sum_{i\in{\mathfrak F}_k}\lambda_ic_{ik}$ and $\sum_{i=1}^n\lambda_id_{ik}\geq\frac{1}{2}\mu_{0k}>0$ (see \eqref{form4} and \eqref{nuova}). As 
$$
g_k(F(\lambda,\t))=\sum_{i=1}^n\lambda_i(g_k\circ\alpha_i)(\t)=\sum_{i\in{\mathfrak F}_k}\lambda_ic_{ik}+\sum_{i=1}^n\lambda_i(d_{ik}\t^2+\cdots)
$$
and $F(\lambda,\t)-G(\lambda,\t)\in(\t)^4\R[[\t]]$, we have
$$
g_k(G(\lambda,\t))=\sum_{i\in{\mathfrak F}_k}\lambda_ic_{ik}+\sum_{i=1}^n\lambda_i(d_{ik}\t^2+\cdots).
$$

Define $\theta_k:=g_k(G(\lambda,\cdot))-\sum_{i\in{\mathfrak F}_k}\lambda_ic_{ik}$. Suppose there exists $t_0\in[-\rho,\rho]\setminus\{0\}$ such that $\theta_k(t_0)\leq0$. As $\theta_k(t)>0$ for $t$ close to $0$, we may assume $\theta_k(t_0)=0$. By Rolle's theorem there exists $t_1\in(0,t_0)$ (or $t_1\in(t_0,0)$) such that $\theta_k'(t_1)=0$. As $\theta_k'(0)=0$, there exists $t_2\in(0,t_1)$ (or $t_2\in(t_1,0)$) satisfying $\theta_k''(t_2)=0$. We have by \eqref{vk} and \eqref{form7}
\begin{multline*}
\frac{\mu_{0k}}{2}\leq\Big|\frac{\partial^2}{\partial\t^2}(g_k\circ F)(\lambda,t_2)\Big|=\Big|\frac{\partial^2}{\partial\t^2}(g_k\circ F)(\lambda,t_2)-\theta_k''(t_2)\Big|\\
=\Big|\frac{\partial^2}{\partial\t^2}(g_k\circ F)(\lambda,t_2)-\frac{\partial^2}{\partial\t^2}(g_k\circ G)(\lambda,t_2)\Big|<\frac{\epsilon_0'}{2}\leq\frac{\mu_{0k}}{4},
\end{multline*}
which is a contradiction. Consequently, $\theta_k(t)>0$ for each $t\in[-\rho,\rho]\setminus\{0\}$ and $k=1,\ldots,s$. Thus, $G(\lambda,t)\in \Int(\pol)$ for $t\in [-\rho,\rho]\setminus\{0\}$.

\vspace{2mm}
\noindent{\sc Step 4.} We prove $\widehat{\sigma}\subset G(\Delta_{n-1}\times[0,1])$ as an application of Lemma \ref{simplex}. Observe that $G|_{\Delta_{n-1}\times\{0\}}=F|_{\Delta_{n-1}\times\{0\}}$ is a homeomorphism and $G|_{\Delta_{n-1}\times\{1\}}=F|_{\Delta_{n-1}\times\{1\}}=p$. Let us show: $G(\partial\Delta_{n-1}\times(0,1))\subset\Int(\pol)\setminus\widehat{\sigma}\subset\{h_0\geq0\}\setminus\widehat{\sigma}$. 

As $\|F-G\|<\veps\leq\veps_\rho$, we have $G(\partial\Delta_{n-1}\times[\rho,1-\rho])\subset\R^n\setminus\widehat{\sigma}$. We fix $\lambda^{(i)}\in\Delta_{n-1}\cap\{\lambda_i=0\}$ and claim: \em $G(\lambda^{(i)},t)\in\Int(H_i^-)=\{h_i<0\}$ for each $t\in(0,\rho]\cup[1-\rho,1)$\em.

Denote $\varphi_i:=h_i(G(\lambda^{(i)},\cdot))$. Suppose there exists $t_0\in(0,\rho]$ such that $\varphi_i(t_0)\geq0$. As 
$$
h_i(F(\lambda^{(i)},\t))=\sum_{j\neq i}\lambda_j(h_i\circ\alpha_j)(\t)=\sum_{j\neq i}\lambda_j(-a_i\t^3+\cdots)
$$
and $F(\lambda^{(i)},\t)-G(\lambda^{(i)},\t)\in(\t)^4\R[[\t]]$, we have
$$
\varphi_i(\t)=h_i(G(\lambda^{(i)},\t))=\sum_{j\neq i}\lambda_j(-a_i\t^3+\cdots).
$$
Thus, $\varphi_i(t)<0$ for $t>0$ close to $0$, so we may assume $\varphi_i(t_0)=0$. Consequently, as $\varphi_i(0)=0$, there exists by Rolle's theorem $t_1\in(0,t_0)$ such that $\varphi_i'(t_1)=0$. As $\varphi_i'(0)=0$, there exists $t_2\in(0,t_1)$ satisfying $\varphi_i''(t_2)=0$. As $\varphi_i''(0)=0$, there exists $t_3\in(0,t_2)$ such that $\varphi_i'''(t_3)=0$. We have by \eqref{form1} and \eqref{form6}
\begin{multline*}
\epsilon_0\leq\Big|\frac{\partial^3}{\partial\t^3}(h_i\circ F)(\lambda^{(i)},t_3)\Big|=\Big|\frac{\partial^3}{\partial\t^3}(h_i\circ F)(\lambda^{(i)},t_3)-\varphi_i'''(t_3)\Big|\\
=\Big|\frac{\partial^3}{\partial\t^3}(h_i\circ F)(\lambda^{(i)},t_3)-\frac{\partial^3}{\partial\t^3}(h_i\circ G)(\lambda^{(i)},t_3)\Big|\leq\frac{\epsilon_0}{2},
\end{multline*}
which is a contradiction. Consequently, $\varphi_i(t)<0$ for each $t\in(0,\rho]$.

Analogously, one shows $\varphi_i(t)<0$ for each $t\in[1-\rho,1)$ and $i=1,\ldots,n$. Thus, 
$$
G(\partial\Delta_{n-1}\times(0,1))=\bigcup_{i=1}^nG((\Delta_{n-1}\cap\{\lambda_i=0\})\times(0,1))\subset\bigcup_{i=1}^n\{h_i<0\}=\R^n\setminus\widehat{\sigma}.
$$
In addition, by {\sc Step 3} we know $G(\Delta_{n-1}\times(0,1))\subset\Int(\pol)$, so 
$$
G(\partial\Delta_{n-1}\times(0,1))\subset\Int(\pol)\setminus\widehat{\sigma}\subset\{h_0\geq0\}\setminus\widehat{\sigma}.
$$ 
By Lemma \ref{simplex} we have $\widehat{\sigma}\subset G(\Delta_{n-1}\times[0,1])$.

\vspace{2mm}
\noindent{\sc Step 5.} Observe that 
\begin{align*}
&G(\Delta_{n-1}\times\{0\})=F(\Delta_{n-1}\times\{0\})=\sigma\subset\widehat{\sigma},\\
&G(\Delta_{n-1}\times\{1\})=F(\Delta_{n-1}\times\{1\})=\{p\}\subset\widehat{\sigma}.
\end{align*}
Finally, we show: \em $G(\Delta_{n-1}\times([-\delta',0)\cup(1,1+\delta']))\subset\widehat{\sigma}$ for some $0<\delta'\leq \rho<\delta$. \em 

For each $\lambda:=(\lambda_1,\ldots,\lambda_n)\in\Delta_{n-1}$ consider
$$
h_i(F(\lambda,\t))=\sum_{j=1}^n\lambda_j(h_i\circ\alpha_j)(\t)=(h_i\circ\alpha_i)(\t)+\sum_{j\neq i}\lambda_j(-a_{j}\t^3+\cdots).
$$
As $F(\lambda,\t)-G(\lambda,\t)\in(\t)^4\R[[\t]]$, we have
\begin{align*}
h_i(G(\lambda,\t))&=\lambda_i(h_i(v_i)+\vec{h}_i(u_i)\t^2-a_i\t^3+\cdots)+\sum_{j\neq i}\lambda_j(-a_{j}\t^3+\cdots)\\
&=\lambda_i(h_i(v_i)+\vec{h}_i(u_i)\t^2)-\sum_{j=1}^n \lambda_ja_{j}\t^3+\cdots.
\end{align*}
Pick $\lambda\in \Delta_{n-1}$ and define 
$$
\psi_i(\t):=h_i(G(\lambda,\t))-\lambda_i(h_i(v_i)+\vec{h}_i(u_i)\t^2)=-\sum_{j=1}^n \lambda_ja_{j}\t^3+\cdots.
$$
As $a_j>0$, $\lambda_j\geq0$ and $\sum_{j=1}^n\lambda_j=1$, we have $\psi_i(t)>0$ for $t<0$ close enough to $0$. Suppose that there exists $t_0\in [-\rho,0)$ such that $\psi_i(t_0)\leq 0$. As $\psi_i(0)=0$ and $\psi_i(t)>0$ for $t<0$ close enough to 0, we may assume $\psi_i(t_0)=0$. As $\psi_i(0)=0$, by Rolle's theorem there exists $t_1\in(t_0,0)$ such that $\psi_i'(t_1)=0$. As $\psi_i'(0)=0$, there exists $t_2\in(t_1,0)$ satisfying $\psi_i''(t_2)=0$. As $\psi_i''(0)=0$, there exists $t_3\in(t_2,0)$ such that $\psi_i'''(t_3)=0$. 
Recall that by \eqref{form1}
$$
\frac{\partial^3}{\partial\t^3}(h_i\circ F)(\lambda,\t)=-6\sum_{j=1}^n\lambda_ja_{j}+\cdots\leq-\epsilon_0.
$$
Thus, we have by \eqref{form6}
\begin{multline*}
\epsilon_0\leq\Big|\frac{\partial^3}{\partial\t^3}(h_i\circ F)(\lambda,t_3)\Big|=\Big|\frac{\partial^3}{\partial\t^3}(h_i\circ F)(\lambda,t_3)-\psi_i'''(t_3)\Big|\\
=\Big|\frac{\partial^3}{\partial\t^3}(h_i\circ F)(\lambda,t_3)-\frac{\partial^3}{\partial\t^3}(h_i\circ G)(\lambda,t_3)\Big|\leq\frac{\epsilon_0}{2},
\end{multline*}
which is a contradiction. Consequently, $\psi_i(t)>0$ for each $t\in[-\rho,0)$. As $h_i(v_i)>0$, there exists $0<\delta'\leq\rho<\delta$ such that $h_i(v_i)+\t^2\vec{h}_i(u_i)>0$ on $[-\delta',0)$ for $i=1,\ldots,n$. Thus, $h_i(G(\lambda,t))>0$ on $\Delta_{n-1}\times[-\delta',0)$ for $i=1,\ldots,n$. 

Let us show $h_i(G(\lambda,t))>0$ for $(\lambda,t)\in (1,1+\rho]$ and $i=1,\ldots,n$. We have
$$
h_i(F(\lambda,1+\t))=\sum_{j=1}^n\lambda_j(h_i\circ\alpha_j)=\sum_{j=1}^n\lambda_j a_j\t^3+\cdots
$$
and we can repeat the previous argument taking $\phi_i(\t):=h_i(G(\lambda,1+\t)$. As $F(\lambda,1+\t)-G(\lambda,1+\t)\in(\t)^4\R[[\t]]$,
$$
\phi_i(\t)=h_i(G(\lambda,1+\t)=\sum_{j=1}^n\lambda_j(h_i\circ\alpha_j)=\sum_{j=1}^n\lambda_j a_j\t^3+\cdots.
$$
As $a_j>0$, $\lambda_j\geq0$ and $\sum_{j=1}^n\lambda_j=1$, we have $\phi_i(t)>0$ for $t>0$ close enough to $0$. Suppose that there exists $t_0\in (0,\rho]$ such that $\phi_i(t_0)\leq 0$. As $\phi_i(0)=0$ and $\phi_i(t)>0$ for $t>0$ close enough to 0, we may assume $\phi_i(t_0)=0$. As $\phi_i(0)=0$, there exists by Rolle's theorem $t_1\in(0,t_0)$ such that $\phi_i'(t_1)=0$. As $\phi_i'(0)=0$, there exists $t_2\in(0,t_1)$ satisfying $\phi_i''(t_2)=0$. As $\phi_i''(0)=0$, there exists $t_3\in(0,t_2)$ such that $\phi_i'''(t_3)=0$. We have by \eqref{form2} and \eqref{form6}
\begin{multline*}
\epsilon_0\leq\Big|\frac{\partial^3}{\partial\t^3}(h_i\circ F)(\lambda,1+t_3)\Big|=\Big|\frac{\partial^3}{\partial\t^3}(h_i\circ F)(\lambda,1+t_3)-\psi_i'''(t_3)\Big|\\
=\Big|\frac{\partial^3}{\partial\t^3}(h_i\circ F)(\lambda,1+t_3)-\frac{\partial^3}{\partial\t^3}(h_i\circ G)(\lambda,1+t_3)\Big|\leq\frac{\epsilon_0}{2},
\end{multline*}
which is a contradiction. Thus $\phi_i(t)>0$ for each $t\in (0,\rho]$, so $h_i(G(\lambda,t))>0$ on $\Delta_{n-1}\times (1,1+\rho]$.

We conclude $G(\Delta_{n-1}\times([-\delta',0)\cup(1,1+\delta']))\subset\widehat{\sigma}$, as required. 
\end{proof}

We will use the technical Lemma \ref{simplex2} to `cover' simplices with Nash maps. For technical reasons, we will first approximate the continuous semialgebraic paths $\alpha_i$ by Nash paths. Let us check that for (close enough) approximations we obtain the desired result.

\begin{remark}\label{simplex2r}
For each $i=1,\ldots,n$ let $\alpha_i^*:[-\delta,1+\delta]\to\pol$ be a continuous semialgebraic path such that $\alpha_i^*|_I$ is a Nash map, $\alpha_i^*$ is close to $\alpha_i$, $(\alpha_i^*|_I)^{(\ell)}$ is close to $(\alpha_i|_I)^{(\ell)}$ for $\ell=1,2,3$, $(\alpha_i^*)^{(\ell)}(0)=(\alpha_i)^{(\ell)}(0)$ and $(\alpha_i^*)^{(\ell)}(1)=(\alpha_i)^{(\ell)}(1)$ for $\ell=0,1,2,3$ (recall that $I:=[-\delta,\delta]\cup[1-\delta,1+\delta]$).

(i) Then there exists $\veps^*>0$ and
$$
F^*:\Delta_{n-1}\times[-\delta,1+\delta]\to\pol,\ (\lambda,t)\mapsto\sum_{i=1}^n\lambda_i\alpha_i^*(t)
$$
that satisfy the same conditions as $\veps$ and $F$ in the statement of Theorem \ref{simplex2}.

Observe that
\begin{align*}
(F-F^*)(\lambda,t)&=\sum_{i=1}^n\lambda_i(\alpha_i(t)-\alpha_i^*(t)),\\
\Big(\frac{\partial^{\ell}F}{\partial\t^\ell}-\frac{\partial^{\ell}F^*}{\partial\t^\ell}\Big)(\lambda,t)&=\sum_{i=1}^n\lambda_i(\alpha_i^{(\ell)}(t)-(\alpha_i^*)^{(\ell)}(t))
\end{align*}
for $\ell=1,2,3$. In addition, for $\ell=0,1,2,3$,
\begin{align*}
\frac{\partial^{\ell}F}{\partial\t^\ell}(\lambda,0)&=\sum_{i=1}^n\lambda_i\alpha_i^{(\ell)}(0)=\sum_{i=1}^n\lambda_i(\alpha_i^*)^{(\ell)}(0)=\frac{\partial^{\ell}F^*}{\partial\t^\ell}(\lambda,0),\\
\frac{\partial^{\ell}F}{\partial\t^\ell}(\lambda,1)&=\sum_{i=1}^n\lambda_i\alpha_i^{(\ell)}(1)=\sum_{i=1}^n\lambda_i(\alpha_i^*)^{(\ell)}(1)=\frac{\partial^{\ell}F^*}{\partial\t^\ell}(\lambda,1).
\end{align*}
Take $\veps^*:=\frac{\veps}{2}>0$ and assume that $\|\alpha_i-\alpha_i^*\|<\veps^*$ and $\|\alpha_i^{(\ell)}-(\alpha_i^*)^{(\ell)}\|_I<\veps^*$ for $\ell=1,2,3$. Let $G:\Delta_{n-1}\times[-\delta,1+\delta]\to\R^n$ be a continuous semialgebraic map that is Nash on a neighborhood $\Delta_{n-1}\times I'\subset\Delta_{n-1}\times I$ of $\Delta_{n-1}\times\{0,1\}$ and satisfies $\frac{\partial^\ell G}{\partial\t^\ell}(\lambda,0)=\frac{\partial^\ell F^*}{\partial\t^\ell}(\lambda,0)$, $\frac{\partial^\ell G}{\partial\t^\ell}(\lambda,1)=\frac{\partial^\ell F^*}{\partial\t^\ell}(\lambda,1)$ for each $\lambda\in\Delta_{n-1}$ and $\ell=0,1,2,3$, $\|G-F^*\|<\veps^*$ and $\|\frac{\partial^\ell G}{\partial\t^\ell}-\frac{\partial^\ell F^*}{\partial\t^\ell}\|_{\Delta_{n-1}\times I'}<\veps^*$ for $\ell=1,2,3$. Then
\begin{align*}
\frac{\partial^\ell G}{\partial\t^\ell}(\lambda,0)&=\frac{\partial^\ell F^*}{\partial\t^\ell}(\lambda,0)=\frac{\partial^\ell F}{\partial\t^\ell}(\lambda,0),\\
\frac{\partial^\ell G}{\partial\t^\ell}(\lambda,1)&=\frac{\partial^\ell F^*}{\partial\t^\ell}(\lambda,1)=\frac{\partial^\ell F}{\partial\t^\ell}(\lambda,1)
\end{align*}
for each $\lambda\in\Delta_{n-1}$ and $\ell=0,1,2,3$, and
\begin{align*}
\|G-F\|&\leq\|G-F^*\|+\|F^*-F\|<\veps^*+\veps^*=\veps,\\
\Big\|\frac{\partial^\ell G}{\partial\t^\ell}-\frac{\partial^\ell F}{\partial\t^\ell}\Big\|_{\Delta_{n-1}\times I'}&\leq\Big\|\frac{\partial^\ell G}{\partial\t^\ell}-\frac{\partial^\ell F^*}{\partial\t^\ell}\Big\|_{\Delta_{n-1}\times I'}+\Big\|\frac{\partial^\ell F^*}{\partial\t^\ell}-\frac{\partial^\ell F}{\partial\t^\ell}\Big\|_{\Delta_{n-1}\times I'}<\veps^*+\veps^*=\veps
\end{align*}
for $\ell=1,2,3$. By Theorem \ref{simplex2} we have that $\widehat{\sigma}\subset G(\Delta_{n-1}\times[0,1])\subset\pol$ and $G(\Delta_{n-1}\times([-\rho,0]\cup[1,1+\rho]))\subset\widehat{\sigma}$ for some $0<\rho<\delta$ small enough, as required.

(ii) By (i) and Lemma \ref{icsl} we may assume that each path $\alpha_i:[-\delta,1+\delta]\to\pol$ in the statement of Theorem \ref{simplex2} is Nash on $[-\delta,1+\delta]$.
\end{remark}

The following result provides sufficient conditions to guarantee that the high order derivatives of two continuous semialgebraic maps on $\R^d\times[-1,1]$ that are Nash on a neighborhood of a semialgebraic set $\Ss\times\{0\}$ are equal at the points of $\Ss\times\{0\}$. This provides a sufficient condition to decide when the approximating maps satisfy the hypothesis of Lemma \ref{simplex2}.

\begin{lem}\label{order}
Let $\Ss\subset\R^d$ be a non-empty semialgebraic set. Let $F,G:\R^d\times[-1,1]\to\R^m$ be two continuous semialgebraic maps that are Nash on a neighborhood of $\Ss\times\{0\}$ and suppose that there exists a Nash function $\lambda:[-1,1]\to\R$ such that $\|F-G\|_{\Ss\times[-1,1]}<|\lambda|_{[-1,1]}$ and that $\lambda(\t)=a_{k+1}\t^{k+1}u^2(\t)$ where $a_{k+1}\neq0$ and $u\in\R[[\t]]_{\rm alg}$ is a Nash series such that $u(0)=1$. Then, for each $x\in\Ss$ we have
$$
\frac{\partial^\ell F}{\partial\t^\ell}(x,0)=\frac{\partial^\ell G}{\partial\t^\ell}(x,0)
$$
for $\ell=0,\ldots,k$.
\end{lem}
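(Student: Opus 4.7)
The plan is to reduce everything to a one-variable statement: fix $x\in\Ss$, restrict $F$ and $G$ to the vertical line $\{x\}\times[-1,1]$, and exploit the fact that both restrictions are Nash (hence real-analytic) near $\t=0$. Concretely, since $F$ and $G$ are Nash on a semialgebraic neighborhood $W$ of $\Ss\times\{0\}$, for fixed $x\in\Ss$ there exists $\delta>0$ such that the maps $\t\mapsto F(x,\t)$ and $\t\mapsto G(x,\t)$ are Nash on $(-\delta,\delta)$, so the difference
$$
h(\t):=F(x,\t)-G(x,\t)\in\R^m
$$
is a real-analytic vector-valued function of $\t$ on $(-\delta,\delta)$, and it admits a convergent Taylor expansion $h(\t)=\sum_{j\geq0}c_j\t^j$ with $c_j\in\R^m$.

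Next, I would translate the hypothesis $\|F-G\|_{\Ss\times[-1,1]}<|\lambda|_{[-1,1]}$ into a pointwise bound of the form $\|h(\t)\|\leq|\lambda(\t)|$ for $\t\in[-1,1]$. Since $u$ is a Nash series with $u(0)=1$, there is $0<\delta'\leq\delta$ such that $|u(\t)|\leq 2$ on $[-\delta',\delta']$; therefore
$$
\|h(\t)\|\leq|\lambda(\t)|=|a_{k+1}||\t|^{k+1}|u(\t)|^2\leq 4|a_{k+1}|\,|\t|^{k+1}
\quad\text{for all }\t\in[-\delta',\delta'].
$$

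Now the key step: from this estimate plus the analyticity of $h$, deduce that the coefficients $c_0,\ldots,c_k$ all vanish. Suppose, to the contrary, that some $c_j\neq 0$ with $j\leq k$, and let $j_0$ be the smallest such index. Then
$$
\lim_{\t\to 0}\frac{\|h(\t)\|}{|\t|^{j_0}}=\|c_{j_0}\|>0,
$$
so for $\t$ small enough, $\|h(\t)\|\geq\tfrac{1}{2}\|c_{j_0}\|\,|\t|^{j_0}$, which together with the bound above forces
$$
\tfrac12\|c_{j_0}\|\,|\t|^{j_0}\leq 4|a_{k+1}|\,|\t|^{k+1},
$$
and dividing by $|\t|^{j_0}$ (recall $j_0\leq k$) and letting $\t\to 0$ yields $\|c_{j_0}\|=0$, a contradiction. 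Hence $c_0=c_1=\cdots=c_k=0$.

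Finally, since $c_\ell=\frac{1}{\ell!}h^{(\ell)}(0)=\frac{1}{\ell!}\big(\frac{\partial^\ell F}{\partial\t^\ell}(x,0)-\frac{\partial^\ell G}{\partial\t^\ell}(x,0)\big)$ for $\ell=0,\ldots,k$, the vanishing of $c_\ell$ is exactly the desired equality of partial derivatives, and since $x\in\Ss$ was arbitrary this completes the proof. There is no real obstacle here: the argument is essentially the observation that an analytic function dominated by $C|\t|^{k+1}$ vanishes to order $\geq k+1$ at $0$; the only care required is to verify that the Nashness hypothesis genuinely gives real-analyticity of $h$ in $\t$ near $0$, which is standard, and that the factor $u^2$ (which is why the exponent on $u$ is deliberately even and the leading behavior is $a_{k+1}\t^{k+1}$) does not spoil the sign/size control near $\t=0$.
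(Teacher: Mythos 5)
Your argument is correct and follows essentially the same route as the paper's proof: fix $x\in\Ss$, restrict to the vertical line $\{x\}\times[-1,1]$, expand the Nash germ $h(\t)=F(x,\t)-G(x,\t)$ in powers of $\t$, and compare orders of vanishing against the bound $|\lambda(\t)|$. The only difference is cosmetic: the paper phrases the comparison directly in the ordered ring $\R[[\t]]_{\rm alg}$ (the bound $\|h\|\leq|a_{k+1}||\t|^{k+1}u^2$ with respect to either of the two orderings forces the order of $h$ to be $\geq k+1$), whereas you spell out the equivalent fact with an explicit limit argument on the lowest nonzero Taylor coefficient.
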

\begin{proof}
Pick $x\in\Ss$ and write 
\begin{align*}
F(x,\t)&:=\sum_{\ell\geq0}\frac{1}{\ell!}\frac{\partial^\ell F}{\partial\t^\ell}(x,0)\t^\ell,\\
G(x,\t)&:=\sum_{\ell\geq0}\frac{1}{\ell!}\frac{\partial^\ell G}{\partial\t^\ell}(x,0)\t^\ell.
\end{align*}
Thus, we have the following inequalities in the ring $\R[[\t]]_{\rm alg}$ of algebraic power series with respect to any of its two orders (one characterized by $\t>0$ and the other one by $\t<0$): 
$$
\Big\|\sum_{\ell\geq0}\frac{1}{\ell!}\Big(\frac{\partial F}{\partial\t^\ell}(x,0)-\frac{\partial G}{\partial\t^\ell}(x,0)\Big)\t^\ell\Big\|\leq\|F(x,\t)-G(x,\t)\|\leq|a_{k+1}||\t^{k+1}|u^2.
$$
Consequently, the series
$$
\sum_{\ell\geq0}\frac{1}{\ell!}\Big(\frac{\partial^\ell F}{\partial\t^\ell}(x,0)-\frac{\partial^\ell G}{\partial\t^\ell}(x,0)\Big)\t^\ell
$$
is a series of order $\geq k+1$, so
$$
\frac{\partial^\ell F}{\partial\t^\ell}(x,0)-\frac{\partial^\ell G}{\partial\t^\ell}(x,0)=0
$$
for $\ell=0,\ldots,k$, as required.
\end{proof}

\subsection{Local charts and tubular neighborhoods.}\label{localcharts}
Let $\Tt\subset\R^n$ be a compact checkerboard set of dimension $d\geq 2$ and $\partial\Tt:=\Tt\setminus\Reg(\Tt)$. The algebraic set $M:=\ol{\Tt}^{\zar}\subset\R^n$ is a Nash manifold and $\ol{\partial\Tt}^{\zar}\subset M$ is a Nash normal-crossings divisor of $M$. By \cite[Thm.1.6]{fgr} $M$ can be covered by finitely many open semialgebraic subsets $U\subset M$ endowed with Nash diffeomorphisms $u:=(u_1,\ldots,u_d):U\to\R^d$ such that $U\cap\ol{\partial\Tt}^{\zar}=\{u_1\cdots u_k=0\}$ for some $k$ depending on $U$. Denote $\Lambda_k:=\{\x_1\geq0,\ldots,\x_k\geq0\}\subset\R^d$ for $k=1,\ldots,d$ and $\Lambda_0=\R^d$. As $\Tt$ is compact, there exist finitely many Nash diffeomorphisms $\phi_i:\R^d\to U_i\subset M$ (with inverse maps $u_i:=(u_{i1},\ldots,u_{id}):U_i\to\R^d$) for $i=1,\ldots,r$ such that: 
\begin{itemize}
\item If $U_i\cap\ol{\partial\Tt}^{\zar}\neq\varnothing$, then $U_i\cap\ol{\partial\Tt}^{\zar}=\{u_{i1}\cdots u_{ik_i}=0\}$.
\item $\Tt=\bigcup_{i=1}^r\phi_i(\Lambda_{k_i})$ for some $0\leq k_i\leq d$.
\item $\Reg(\Tt)=\bigcup_{i=1}^r\phi_i(\Int(\Lambda_{k_i}))$.
\end{itemize}

Let $(\Omega,\nu)$ be a Nash tubular neighborhood for the Nash manifold $M:=\ol{\Tt}^{\zar}$ endowed with a Nash retraction $\nu$ such that $\dist(z,M)=\|\nu(z)-z\|$ for each $z\in\Omega$ (see \cite[Cor.8.9.5]{bcr}). When $\Tt$ is compact, shrinking $\Omega$ if necessary, we may assume $\cl(\nu^{-1}(\Tt))$ is compact and $\nu$ admits a Nash extension to $\cl(\nu^{-1}(\Tt))$.

\subsection{Some preliminary estimations.} We want to provide some estimations in order to apply Lemma \ref{order} later in our construction. Let $x\in M$ and $y\in\R^n$ be such that $x+y\in\Omega$, then 
\begin{multline*}
\|\nu(x+y)-x\|\leq\|\nu(x+y)-(x+y)\|+\|y\|\\
=\dist(x+y,M)+\|y\|\leq\|x+y-x\|+\|y\|=2\|y\|.
\end{multline*}
Let $\Ff:=\{\psi:\R^d\to\R^d\ \text{linear map}\}\equiv(\R^{d,*})^d$ and $\psi_1,\ldots,\psi_r\in\Ff$. If $w\in\R^d$ and $\lambda_1,\ldots,\lambda_r\in\R$ are such that $(\phi_1\circ\psi_1)(w)+\sum_{i=1}^r\lambda_i(\phi_i\circ\psi_i)(w)\in\Omega$, then
\begin{align}
\begin{split}\label{stima}
\Big\|\nu\Big((\phi_1\circ\psi_1)(w)&+\sum_{i=1}^r\lambda_i(\phi_i\circ\psi_i)(w)\Big)-(\phi_1\circ\psi_1)(w)\Big\|\\
&\leq 2\Big\|\sum_{i=1}^r\lambda_i(\phi_i\circ\psi_i)(w)\Big\|\leq2\sum_{i=1}^r|\lambda_i|\|(\phi_i\circ\psi_i)(w)\|.
\end{split}
\end{align}

Recall that $\ol{\Bb}_d(0,\veps)$ (resp. $\Bb_d(0,\veps)$) denotes the closed ball (resp. open ball) of $\R^d$ of centre the origin and radius $\veps>0$.

\begin{lem}[Lipschitz Nash charts]\label{bound}
Let $M\subset\R^n$ be a Nash manifold and consider a Nash chart $\theta:=(\theta_1,\ldots,\theta_n):\R^d\to M$. Let $\pi:\R^n\to\R^d$ be the projection onto the first $d$ coordinates and denote $W:=(\pi\circ\theta)(\R^d)$. Assume that $W$ is open and that the map $\theta':=\pi\circ\theta:\R^d\to W$ is a Nash diffeomorphism. For each $t>0$ there exists a constant $L_t>0$ such that $\|\theta^{-1}(x)-\theta^{-1}(y)\|\leq L_t\|x-y\|$ for each $x,y\in\theta(\ol{\Bb}_d(0,t))$.
\end{lem}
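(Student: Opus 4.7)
My plan is to leverage the factorization $\theta^{-1}=(\theta')^{-1}\circ\pi$, which holds on $\theta(\R^d)$ because $\pi\circ\theta=\theta'$. Since the orthogonal projection $\pi$ is $1$-Lipschitz, it suffices to produce a Lipschitz constant for the Nash diffeomorphism $(\theta')^{-1}:W\to\R^d$ on the compact subset $K:=\theta'(\ol{\Bb}_d(0,t))\subset W$. Once we have $\|(\theta')^{-1}(p)-(\theta')^{-1}(q)\|\leq L_t\|p-q\|$ for $p,q\in K$, the conclusion follows by substituting $p=\pi(x)$, $q=\pi(y)$ and using $\|\pi(x)-\pi(y)\|\leq\|x-y\|$.

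To obtain the Lipschitz estimate on $K$, the first step is to fatten $K$ slightly inside $W$. Since $K$ is compact and $W$ is open, the distance $\delta_0:=\dist(K,\R^d\setminus W)$ is strictly positive (or $+\infty$ when $W=\R^d$); choose $0<\delta<\delta_0$ and set $K_\delta:=\{q\in\R^d:\dist(q,K)\leq\delta\}$, a compact subset of $W$. As $(\theta')^{-1}$ is Nash, its differential $D((\theta')^{-1})$ is continuous on $W$, hence bounded on $K_\delta$ by some constant $C>0$.

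The key step is a dichotomy based on the distance between the two points $p,q\in K$. If $\|p-q\|\leq\delta$, then the straight segment from $p$ to $q$ lies inside $K_\delta\subset W$, so the mean value inequality yields $\|(\theta')^{-1}(p)-(\theta')^{-1}(q)\|\leq C\|p-q\|$. If instead $\|p-q\|>\delta$, one uses that $(\theta')^{-1}(K)=\ol{\Bb}_d(0,t)$ has diameter $2t$, whence $\|(\theta')^{-1}(p)-(\theta')^{-1}(q)\|\leq 2t\leq (2t/\delta)\|p-q\|$. Setting $L_t:=\max\{C,2t/\delta\}$ gives a uniform Lipschitz constant on $K$.

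I do not anticipate a genuine obstacle here; the only point that requires a moment of care is that $K$ itself need not be convex (and $W$ need not contain a convex neighborhood of $K$), which is precisely why the dichotomy above is needed rather than a direct mean value argument on $K$. Everything else is a formal consequence of $\theta'$ being a Nash diffeomorphism from $\R^d$ onto the open set $W$ and of the $1$-Lipschitz property of the projection $\pi$.
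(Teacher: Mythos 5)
Your proof is correct and follows essentially the same strategy as the paper: factor $\theta^{-1}=(\theta')^{-1}\circ\pi$, use that $\pi$ is $1$-Lipschitz, and reduce to a Lipschitz bound for the Nash diffeomorphism $(\theta')^{-1}$ on the compact set $K=\theta'(\ol{\Bb}_d(0,t))$. The one place where you add genuine precision is the dichotomy argument: the paper simply invokes ``the mean value theorem'' on $K$, which is not literally available since $K$ need not be convex; your fattening of $K$ inside $W$ together with the $\|p-q\|\leq\delta$ versus $\|p-q\|>\delta$ case split supplies the missing detail cleanly.
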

\begin{proof}
Define $f:=(\theta_{d+1},\ldots,\theta_n)\circ\theta'^{-1}:W\to\R^{n-d}$ and observe that we have $\theta\circ\theta'^{-1}:W\to\theta(\R^d),\ z\mapsto(z,f(z))$. Thus, 
$$
\|z-w\|\leq\|(z,f(z))-(w,f(w))\|=\|(\theta\circ\theta'^{-1})(z)-(\theta\circ\theta'^{-1})(w)\|
$$
for each $z,w\in W$. Consequently, writing $z=(\theta'\circ\theta^{-1})(x)$ and $w=(\theta'\circ\theta^{-1})(y)$, we deduce
$$
\|(\theta'\circ\theta^{-1})(x)-\theta'\circ\theta^{-1}(y)\|\leq\|x-y\| 
$$
for each $x,y\in\theta(\R^d)=(\theta\circ\theta'^{-1})(W)$. 

By the mean value theorem there exists a constant $L_t>0$ such that
$$
\|\theta'^{-1}(z)-\theta'^{-1}(w)\|\leq L_t\|z-w\|
$$
for each $z,w\in\theta'(\ol{\Bb}_d(0,t))$. Thus,
\begin{multline*}
\|\theta^{-1}(x)-\theta^{-1}(y)\|=\|\theta'^{-1}((\theta'\circ\theta^{-1})(x))-\theta'^{-1}((\theta'\circ\theta^{-1})(y))\|\\
\leq L_t\|(\theta'\circ\theta^{-1})(x)-\theta'\circ\theta^{-1}(y)\|\leq L_t\|x-y\| 
\end{multline*}
for each $x,y\in\theta(\ol{\Bb}_d(0,t))$, as required.
\end{proof}
\begin{remark}\label{boundr}
The previous result works the same if $\pi$ is any projection $\pi:\R^n\to\R^d, x:=(x_1,\ldots,x_n)\mapsto(x_{i_1},\ldots,x_{i_d})$ for some $1\leq i_1<\ldots<i_d\leq n$.
\end{remark}

As $\Tt$ is compact, we may assume $\Tt\subset\bigcup_{i=1}^r\phi_i(\Bb_d(0,1))=M$. We may also assume, using the compactness of $\Tt$, that each $\phi_i$ is under the hypothesis of Lemma \ref{bound} (see Remark \ref{boundr}). Define $K:=\max\{\|x\|:\ x\in\bigcup_{i=1}^r(\phi_i\circ\psi_i)(\ol{\Bb}_d(0,1))\}>0$. If $w\in\Bb_d(0,1)$ and $\lambda_1,\ldots,\lambda_r\in\R$ are such that $\nu((\phi_1\circ\psi_1)(w)+\sum_{i=1}^r\lambda_i(\phi_i\circ\psi_i)(w))\in\phi_1(\Bb_d(0,1))$, then by \eqref{stima} and Lemma \ref{bound} there exists $L>0$ such that
\begin{align}\label{bound0}
\Big\|\phi_1^{-1}\Big(\nu\Big((\phi_1\circ\psi_1)(w)&+\sum_{i=1}^r\lambda_i(\phi_i\circ\psi_i)(w)\Big)\Big)-\phi_1^{-1}(\phi_1\circ\psi_1)(w)\Big\|\nonumber\\
&\leq L\Big\|\nu\Big((\phi_1\circ\psi_1)(w)+\sum_{i=1}^r\lambda_i(\phi_i\circ\psi_i)(w)\Big)-(\phi_1\circ\psi_1)(w)\Big\|\\
&\leq2L\sum_{i=1}^r|\lambda_i|\|(\phi_i\circ\psi_i)(w)\|\leq 2LK\sum_{i=1}^r|\lambda_i|.\nonumber
\end{align}

\subsection{Decomposition as a finite union of `simplices'.}\label{dus}
Consider the vectors of the standard basis ${\tt e}_i:=(0,\ldots,0,1,0,\ldots,0)$ of $\R^d$ for $i=1,\ldots,d$. Fix $k=1,\ldots,d$ and consider the convex polyhedron $\pol_k$ that is the convex hull of the origin ${\bf 0}$ and the points 
$$
{\tt e}_1,\ldots,{\tt e}_k,{\tt e}_{k+1},-{\tt e}_{k+1},\ldots,{\tt e}_d,-{\tt e}_d.
$$
We have that the polyhedron $\pol_k$ is a compact neighborhood of the origin in $\Lambda_k=\{\x_1\geq 0,\ldots,\x_k\geq 0\}$, $\pol_k\cap\partial\Lambda_k=\partial\pol_k\cap\partial\Lambda_k$ and $\Int(\pol_k)=\Int(\pol_k\cap\Int(\Lambda_k))$. Observe that $\pol_k$ is the union of the simplices $\Delta(\veps_{k+1},\ldots,\veps_d)$ of vertices the origin ${\bf 0}$ and the points
$$
{\tt e}_1,\ldots,{\tt e}_k,\veps_{k+1}{\tt e}_{k+1},\ldots,\veps_d{\tt e}_d,
$$ 
where $\veps_{k+1},\ldots,\veps_d=\pm1$. Let ${\mathfrak T}_k$ be the collection of the proper faces of all the simplices $\Delta(\veps_{k+1},\ldots,\veps_d)$ that are contained in $\partial\pol_k$. Observe that ${\mathfrak T}_k$ provides a triangulation of $\partial\pol_k$. Define 
\begin{align*}
p_k:\!&=\sum_{j=1}^k\frac{1}{2d-k+1}{\tt e}_j\\
&=\sum_{j=1}^k\frac{1}{2d-k+1}{\tt e}_j+\sum_{j=k+1}^d\frac{1}{2d-k+1}{\tt e}_j+\sum_{j=k+1}^d\frac{1}{2d-k+1}(-{\tt e}_j)+\frac{1}{2d-k+1}{\bf 0}{,}\end{align*}
which belongs to $\Int(\pol_k)$. Observe that if $k=0$, then $p_0$ is the origin. For each $\sigma\in{\mathfrak T}_k$ define $\widehat{\sigma}$ as the convex hull of $\sigma\cup\{p_k\}$, which is a simplex. Observe that $\widehat{\mathfrak T}_k:=\{\widehat{\sigma}:\ \sigma\in{\mathfrak T}_k\}$ is a triangulation of $\pol_k$ such that 
$$
\widehat{\sigma}\cap\partial\Lambda_k=\widehat{\sigma}\cap\pol_k\cap\partial\Lambda_k=\widehat{\sigma}\cap\partial\pol_k\cap\partial\Lambda_k=\sigma\cap\partial\Lambda_k,
$$ 
which is either the empty set or a face of $\sigma$ (see Figure \ref{Triangolazione}). Let $\Ff_k$ be the collection of the simplices $\widehat{\sigma}\in\widehat{\mathfrak T}_k$ of dimension $d$. 

We retake here the Nash atlas $\{\phi_i\}_{i=1}^r$ of $M=\ol{\Tt}^{\zar}$ introduced in \ref{localcharts} and we keep all the hypothesis concerning $\{\phi_i\}_{i=1}^r$ already introduced there. We may assume that $\{\phi_i(\pol_{k_i})\}_{i=1}^r$ is a covering of the compact checkerboard set $\Tt$ introduced in \ref{localcharts}. For each $i$ consider the finite family $\Ff_{k_i}$ of simplices of dimension $d$. Note that we are considering the families $\Ff_{k_{i_1}}$ and $\Ff_{k_{i_2}}$ as different families of simplices when $i_1\neq i_2$, even if $\pol_{i_1}=\pol_{i_2}$ as subsets of $\R^d$. We consider all the pairs $(\phi_i,\tau)$ where $\tau\in \Ff_{k_i}$. Observe that $\tau$ is the convex hull of $\sigma\cup\{p_{k_i}\}$ for some $\sigma\in {\mathfrak T}_{k_i}$.

Repeating the diffeomorphisms $\phi_i$ as many times as needed and reordering the diffeomorphisms $\phi_i$, we may assume that $\{\phi_i(\tau_i)\}_{i=1}^r$ is a covering of $\Tt$ such that $\tau_i$ is a $d$-dimensional simplex of $\R^d$ and $\tau_i\cap\phi_i^{-1}(\partial\Tt)$ is either the empty set or a proper face of $\tau_i$. Let $\sigma_i$ be the $(d-1)$-dimensional face of $\tau_i$ that does not contain $p_i:=p_{k_i}$. Note that $p_i$ belongs to $\phi_i^{-1}(\Reg(\Tt))$ because $p_i\in \Int(\Lambda_{k_i})\subset\phi_i^{-1}(\Reg(\Tt))$. Consequently, $\tau_i\cap\phi_i^{-1}(\partial\Tt)\subset\sigma_i$.
 
\begin{figure}[htp]
\centering
\begin{subfigure}[htp]{0.32\textwidth}
\centering
\begin{tikzpicture}[scale=0.42]
\coordinate (O) at (0,0);
\coordinate (C) at (-3,0);
\coordinate [label=below left:{\small{$-e_2$}}](D) at (0,-3);
\coordinate (A) at (3,0);
\coordinate (C) at (-3,0);
\coordinate[label=below right:{\small{$e_1$}}] (H) at (2.9,-0.1);
\coordinate[label=below left:{\small{$-e_1$}}] (G) at (-2.9,0);
\coordinate[label=above left:{\small{$e_2$}}] (B) at (0,3);
\coordinate[label=below right:{\small{$e_1$}}] (H) at (2.9,-0.1);
\coordinate[label=below left:{\small{$-e_1$}}] (G) at (-2.9,0);
\draw[->] (-4,0) -- (4,0);
\draw[->] (0,-4) -- (0,4);

\coordinate[label=below right:{\small{$k=0$}}] (G) at (2,3.2);

\filldraw[draw=black,fill=brown,fill opacity=0.1,thick] (A) -- (B) -- (C) -- (D) -- (A);
\filldraw[draw=black,fill=brown,fill opacity=0.1,thick] (A) -- (O);
\filldraw[draw=black,fill=brown,fill opacity=0.1,thick] (O) -- (B);
\filldraw[draw=black,fill=brown,fill opacity=0.1,thick] (C) -- (O);
\filldraw[draw=black,fill=brown,fill opacity=0.1,thick] (D) -- (O);

\fill[fill=blue,fill opacity=0.1,thick] (A) -- (B) -- (O)-- cycle;
\fill[fill=blue,fill opacity=0.2,thick] (C) -- (B) -- (O)-- cycle;
\fill[fill=blue,fill opacity=0.3,thick] (C) -- (O) -- (D)-- cycle;
\fill[fill=blue,fill opacity=0.4,thick] (D) -- (O) -- (A)-- cycle;
\end{tikzpicture}
\end{subfigure} 
\begin{subfigure}[htp]{0.32\textwidth}
\centering
\begin{tikzpicture}[scale=0.42]
\coordinate (O) at (0,0);
\coordinate (C) at (0.75,0);
\coordinate [label=below left:{\small{$-e_2$}}](D) at (0,-3);

\coordinate (A) at (3,0);
\coordinate (E) at (-3,0);
\coordinate[label=below right:{\small{$e_1$}}] (H) at (2.9,-0.1);
\coordinate[label=below left:{\small{$-e_1$}}] (G) at (-2.9,0);

\coordinate[label=above left:{\small{$e_2$}}] (B) at (0,3);
\draw[->,densely dotted] (-4,0) -- (4,0);
\draw[->] (0,-4) -- (0,4);

\coordinate[label=below right:{\small{$k=1$}}] (G) at (2,3.2);

\filldraw[draw=black,fill=brown,fill opacity=0.1,thick] (A) -- (B) -- (D) -- (A);
\filldraw[draw=black,fill=brown,fill opacity=0.1,thick] (B) -- (C);
\filldraw[draw=black,fill=brown,fill opacity=0.1,thick] (D) -- (C);
\filldraw[draw=black,fill=brown,fill opacity=0.1,thick] (A) -- (C);
\filldraw[draw=black,fill=brown,fill opacity=0.1] (-4,0) -- (O);
\filldraw[draw=black,fill=brown,fill opacity=0.1] (C) -- (4,0);
 
\fill[fill=blue,fill opacity=0.2,thick] (C) -- (B) -- (D)-- cycle;
\fill[fill=blue,fill opacity=0.1,thick] (C) -- (B) -- (A)-- cycle;
\fill[fill=blue,fill opacity=0.3,thick] (C) -- (A) -- (D)-- cycle;
\end{tikzpicture}
\end{subfigure} 
\begin{subfigure}[htp]{0.32\textwidth}
\centering
\begin{tikzpicture}[scale=0.42]
\coordinate (O) at (0,0);
\coordinate [label=below left:{\small{$-e_2$}}](D) at (0,-3);
\coordinate (A) at (3,0);
\coordinate (E) at (-3,0);
\coordinate[label=below right:{\small{$e_1$}}] (H) at (2.9,-0.1);
\coordinate[label=below left:{\small{$-e_1$}}] (G) at (-2.9,0);
\coordinate[label=above left:{\small{$e_2$}}] (B) at (0,3);

\coordinate (C) at (1,1);

\draw[->] (-4,0) -- (4,0);
\draw[->] (0,-4) -- (0,4);

\coordinate[label=below right:{\small{$k=2$}}] (G) at (2,3.2);

\filldraw[draw=black,fill=brown,fill opacity=0.1,thick] (A) -- (B) ;
\filldraw[draw=black,fill=brown,fill opacity=0.1,thick] (A) -- (O);
\filldraw[draw=black,fill=brown,fill opacity=0.1,thick] (B) -- (O);
\filldraw[draw=black,fill=brown,fill opacity=0.1,thick] (O) -- (C) ;
\filldraw[draw=black,fill=brown,fill opacity=0.1,thick] (A) -- (C) ;
\filldraw[draw=black,fill=brown,fill opacity=0.1,thick] (B) -- (C) ;
 
\fill[fill=blue,fill opacity=0.3,thick] (C) -- (A) -- (O)-- cycle;
\fill[fill=blue,fill opacity=0.1,thick] (C) -- (A) -- (B)-- cycle;
\fill[fill=blue,fill opacity=0.2,thick] (C) -- (B) -- (O)-- cycle;
\end{tikzpicture}
\end{subfigure}
\caption{\small{Triangulations $\widehat{\mathfrak T}_k$ of the polyhedra $\pol_k$ for $d=2$.}}
\label{Triangolazione}
\end{figure}

\subsection{Smart set of maps.}\label{spaziomappe} 
Let $\Ff:=\{\psi:\R^d\to\R^d\ \text{linear}\}\equiv(\R^{d,*})^d$ and denote $\mu:=(\mu_1,\ldots,\mu_r)\in\R^r$ and $\psi:=(\psi_1,\ldots,\psi_r)\in\Ff^r$. Let
$$
\Gamma:\R^r\times\Ff^r\to\mathcal{N}(\R^d,\R^n),\ (\mu;\psi)\mapsto\sum_{i=1}^r\mu_i(\phi_i\circ\psi_i),
$$ 
which is a continuous map if both spaces $\Ff$ and $\mathcal{N}(\R^d,\R^n)$ are endowed with the compact-open topology. The compact-open topology of $\Ff$ coincides with the topology of $\Ff$ induced by the Euclidean topology of $(\R^{d,*})^d\equiv\R^{d^2}$. Recall that
$$
\Delta_{d-1}:=\{\lambda_1\geq0,\ldots,\lambda_d\geq0,\, \lambda_1+\cdots+\lambda_d=1\}\subset\R^d.
$$
Note that an element of $\R^{d,*}$ is determined by the images of the vertices of $\Delta_{d-1}$, which is a (compact) finite set. 

Define $\Theta_0:=\{(\mu;\psi)\in\R^r\times\Ff^r:\ \Gamma(\mu,\psi)(\Delta_{d-1})\subset\nu^{-1}(\Reg(\Tt))\}$ and let us prove that it is an open semialgebraic set. The objects $\Tt$ and $\nu$ were already introduced in \ref{localcharts}.

\begin{prop}
The set $\Theta_0\subset \R^r\times\Ff^r$ is open and semialgebraic. 
\end{prop}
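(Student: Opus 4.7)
The plan is to argue separately that $\Theta_0$ is semialgebraic (via Tarski--Seidenberg applied to a first-order description) and that it is open (via compactness of $\Delta_{d-1}$ together with continuity of the evaluation map). The two arguments share the same key object: the total evaluation
\[
E:\R^r\times\Ff^r\times\R^d\to\R^n,\qquad (\mu,\psi,w)\mapsto\Gamma(\mu,\psi)(w)=\sum_{i=1}^r\mu_i(\phi_i\circ\psi_i)(w).
\]
Because each $\phi_i$ is Nash, each $\psi_i$ is linear in $w$ with coefficients given by the entries of $\psi_i\in(\R^{d,*})^d$, and the scalars $\mu_i$ enter polynomially, the map $E$ is Nash (in particular continuous and semialgebraic) in all its variables jointly.

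First I would note that the target set $V:=\nu^{-1}(\Reg(\Tt))$ is an open semialgebraic subset of $\Omega\subset\R^n$: $\Reg(\Tt)$ is an open semialgebraic subset of the Nash manifold $M=\ol{\Tt}^{\zar}$ (recall that $\Tt$ is a checkerboard set, so $\partial\Tt=\Tt\setminus\Reg(\Tt)$ is closed in $\Tt$ and in fact $\Reg(\Tt)=\Tt\setminus\ol{\partial\Tt}^{\zar}$ is open in $M$), and $\nu$ is a continuous semialgebraic retraction defined on the open semialgebraic tubular neighborhood $\Omega$. Then
\[
\Theta_0=\{(\mu,\psi)\in\R^r\times\Ff^r:\ \forall w\in\Delta_{d-1},\ E(\mu,\psi,w)\in V\}.
\]

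For the semialgebraicity it suffices to observe that the complement
\[
(\R^r\times\Ff^r)\setminus\Theta_0=\pi\bigl(\{(\mu,\psi,w)\in\R^r\times\Ff^r\times\Delta_{d-1}:\ E(\mu,\psi,w)\notin V\}\bigr),
\]
where $\pi$ is the projection onto the first two factors, is the projection of a semialgebraic set and hence semialgebraic by Tarski--Seidenberg; therefore $\Theta_0$ itself is semialgebraic.

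For openness, fix $(\mu_0,\psi_0)\in\Theta_0$. The image $K:=E(\{(\mu_0,\psi_0)\}\times\Delta_{d-1})$ is a compact subset of the open set $V$, so there exists $\delta>0$ with $K+\ol{\Bb}_n(0,\delta)\subset V$. Because $E$ is continuous on the compact set $\{(\mu_0,\psi_0)\}\times\Delta_{d-1}$, it is uniformly continuous on a compact neighborhood of it in $\R^r\times\Ff^r\times\Delta_{d-1}$; hence there exists an open neighborhood $U$ of $(\mu_0,\psi_0)$ in $\R^r\times\Ff^r$ such that $\|E(\mu,\psi,w)-E(\mu_0,\psi_0,w)\|<\delta$ for every $(\mu,\psi)\in U$ and every $w\in\Delta_{d-1}$. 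Consequently $E(\mu,\psi,w)\in V$ for all such $(\mu,\psi,w)$, that is, $U\subset\Theta_0$, proving that $\Theta_0$ is open. No step here is a serious obstacle; the only point demanding care is the verification that $\Reg(\Tt)$ is open in $M$ (so that $V=\nu^{-1}(\Reg(\Tt))$ is open in $\R^n$), which uses the checkerboard hypothesis $\ol{\partial\Tt}^{\zar}$ being a normal-crossings divisor of the non-singular $\ol{\Tt}^{\zar}$.
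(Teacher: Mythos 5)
Your proposal is correct and follows essentially the same route as the paper: semialgebraicity comes from Tarski--Seidenberg applied to a first-order description, and openness from the fact that $\Delta_{d-1}$ is compact and $\nu^{-1}(\Reg(\Tt))$ is open. The only cosmetic difference is that you unpack the openness argument directly via the tube lemma and uniform continuity of the total evaluation $E$, whereas the paper packages the same reasoning as ``$\{F:\ F(\Delta_{d-1})\subset\nu^{-1}(\Reg(\Tt))\}$ is open in the compact-open topology of $\mathcal{N}(\R^d,\R^n)$ and $\Gamma$ is continuous.''
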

\begin{proof}
The fact that $\Theta_0$ is semialgebraic follows by the Tarski-Seidenberg principle \cite[Thm.2.6]{C}, because it can be described as $\Theta_0=\{x\in \R^r\times \Ff^r\, :\, \Psi(x)\}$, where $\Psi(\x)$ is a first order formula in the language of ordered fields.

Let us show now that $\Theta_0$ is open. Recall that $\Reg(\Tt)$ is an open semialgebraic subset of $\Tt$. As $\Tt$ is pure dimensional, $\Reg(\Tt)$ is open in the Nash manifold $M:=\ol{\Tt}^{\zar}$. Thus, $\nu^{-1}(\Reg(\Tt))$ is an open subset of $\R^n$. Consider now the set $\{F\in\mathcal{N}(\R^d,\R^n):\ F(\Delta_{d-1})\subset\nu^{-1}(\Reg(\Tt))\}$, which is an open subset of the open-compact topology of $\mathcal{N}(\R^d,\R^n)$. Thus, the set 
$$
\Theta_0=\Gamma^{-1}(\{F\in\mathcal{N}(\R^d,\R^n):\ F(\Delta_{n-1})\subset\nu^{-1}(\Reg(\Tt))\})
$$
is an open subset of $\R^r\times \Ff ^r$, because the map $\Gamma$ is continuous. 
\end{proof}

\subsection{Properties of $\Theta_0$.}\label{Theta0}

For each $w\in\R^d$, define the linear map 
$$
\psi_w:\R^d\to\R^d,\ (x_1,\ldots,x_d)\mapsto (x_1+\cdots+x_d)w.
$$
The restriction $\psi_w|_{\{\x_1+\cdots+\x_d=1\}}$ is the constant map $w$ and the simplex $\Delta_{d-1}\subset\{\x_1+\cdots+\x_d=1\}$. The vectors $\e_i$ were introduced in \S\ref{dus}. Let us analyze some properties of $\Theta_0$:

\noindent(1) {\em If $\phi_i(w_i)\in\Reg(\Tt)$, then $(\e_i;0,\ldots,0,\overset{(i)}{\psi_{w_i}},0,\ldots,0)\in\Theta_0$}. 

As $\Gamma(\e_i;0,\ldots,0,\overset{(i)}{\psi_{w_i}},0,\ldots,0)(\Delta_{d-1})=\{\phi_i(w_i)\}\subset\Reg(\Tt)\subset\nu^{-1}(\Reg(\Tt))$, we have 
$$
(\e_i;0,\ldots,0,\overset{(i)}{\psi_{w_i}},0,\ldots,0)\in\Theta_0.
$$

\noindent(2) {\em Given $1\leq i,j\leq r$, let $w_i\in\Int(\Lambda_{k_i})$ and $z_j\in\Int(\Lambda_{k_j})$ be such that $\phi_i(w_i)=\phi_j(z_j)$. Then 
$$
(\e_i;0,\ldots,0,\overset{(i)}{\psi_{w_i}},0,\ldots,0) \text{ and } (\e_j;0,\ldots,0,\overset{(j)}{\psi_{w_j}},0,\ldots,0)
$$
belong to the same connected component of $\Theta_0$}.

Observe that
$$
\phi_i(w_i)=(1-t)\phi_i(w_i)+t\phi_j(z_j)=\phi_j(z_j)\in\Reg(\Tt)\subset\nu^{-1}(\Reg(\Tt))
$$
for $t\in[0,1]$. Thus,
\begin{multline*}
\Gamma(((1-t)\e_i+t\e_j;0,\ldots,0,\overset{(i)}{\psi_{w_i}},0,\ldots,0,\overset{(j)}{\psi_{z_j}},0,\ldots,0))(\Delta_{d-1})\\
=((1-t)(\phi_i\circ\psi_{w_i})+t(\phi_j\circ\psi_{z_j}))(\Delta_{d-1})=\{(1-t)\phi_i(w_i)+t\phi_j(z_j)\}\subset\nu^{-1}(\Reg(\Tt)),
\end{multline*}
so $((1-t)\e_i+t\e_j;0,\ldots,0,\overset{(i)}{\psi_{w_i}},0,\ldots,0,\overset{(j)}{\psi_{z_j}},0,\ldots,0)\in\Theta_0$ for $t\in[0,1]$. Consequently, the connected set 
$$
\Cc_1:=\{((1-t)\e_i+t\e_j;0,\ldots,0,\overset{(i)}{\psi_{w_i}},0,\ldots,0,\overset{(j)}{\psi_{z_j}},0,\ldots,0)\, :\, t\in [0,1]\},
$$
is contained in one of the connected components of $\Theta_0$. In addition, for $t\in[0,1]$
\begin{align*}
\Gamma(\e_i;0,\ldots,0,\overset{(i)}{\psi_{w_i}},0,\ldots,0,\overset{(j)}{\psi_{tz_j}},0,\ldots,0)(\Delta_{d-1})=\{\phi_i(w_i)\}\subset\nu^{-1}(\Reg(\Tt))\\
\Gamma(\e_j;0,\ldots,0,\overset{(i)}{\psi_{tw_i}},0,\ldots,0,\overset{(j)}{\psi_{z_j}},0,\ldots,0)(\Delta_{d-1})=\{\phi_j(z_j)\}\subset\nu^{-1}(\Reg(\Tt))
\end{align*}.
As $\psi_{tz_j}=t\psi_{z_j}$ and $\psi_{tw_i}=t\psi_{w_i}$ for $t\in[0,1]$, we deduce
\begin{align*}
(\e_i;0,\ldots,0,\overset{(i)}{\psi_{w_i}},0,\ldots,0,\overset{(j)}{t\psi_{z_j}},0,\ldots,0)\in\Theta_0\quad\text{for $t\in[0,1]$},\\
(\e_j;0,\ldots,0,\overset{(i)}{t\psi_{w_i}},0,\ldots,0,\overset{(j)}{\psi_{z_j}},0,\ldots,0)\in\Theta_0\quad\text{for $t\in[0,1]$.}
\end{align*}
Thus, the connected sets
\begin{align*}
&\Cc_2:=\{(\e_i;0,\ldots,0,\overset{(i)}{\psi_{w_i}},0,\ldots,0,\overset{(j)}{t\psi_{z_j}},0,\ldots,0)\, :\, t\in [0,1]\},\\
&\Cc_3:=\{(\e_j;0,\ldots,0,\overset{(i)}{t\psi_{w_i}},0,\ldots,0,\overset{(j)}{\psi_{z_j}},0,\ldots,0)\, :\, t\in [0,1]\}
\end{align*}
are contained in a connected component of $\Theta_0$. As
\begin{align*}
&\Cc_1\cap \Cc_2=\{(\e_i;0,\ldots,0,\overset{(i)}{\psi_{w_i}},0,\ldots,0,\overset{(j)}{\psi_{z_j}},0,\ldots,0)\},\\
&\Cc_1\cap \Cc_3=\{(\e_j;0,\ldots,0,\overset{(i)}{\psi_{w_i}},0,\ldots,0,\overset{(j)}{\psi_{z_j}},0,\ldots,0)\},
\end{align*}
we deduce $\Cc_1\cup\Cc_2\cup\Cc_3$ is a connected subset of $\Theta_0$ contained in one of its connected components.

We conclude that $(\e_i;0,\ldots,0,\overset{(i)}{\psi_{w_i}},0,\ldots,0)\in\Cc_2$ and $
(\e_j;0,\ldots,0,\overset{(j)}{\psi_{w_j}},0,\ldots,0)\in\Cc_3$ belong to the same connected component of $\Theta_0$.

\noindent(3) {\em If $w_i\in\Int(\Lambda_{k_i})$ and $z_j\in\Int(\Lambda_{k_j})$, then 
$$
(\e_i;0,\ldots,0,\overset{(i)}{\psi_{w_i}},0,\ldots,0)\quad\text{and}\quad(\e_j;0,\ldots,0,\overset{(j)}{\psi_{z_j}},0,\ldots,0)
$$ 
belong to the same connected component of $\Theta_0$}.

As $\Reg(\Tt)$ is connected and $\{\Int(\phi_i(\Lambda_{k_i}))\}_{i=1}^r$ is an open semialgebraic covering of $\Reg(\Tt)$, given $1\leq i,j\leq r$, there exists a chain $\{\phi_{i_\ell}(\Lambda_{k_{i_\ell}})\}_{\ell=1}^s$ such that $i=i_1$, $j=i_s$ and $\phi_{i_\ell}(\Int(\Lambda_{k_{i_\ell}}))\cap\phi_{i_{\ell+1}}(\Int(\Lambda_{k_{i_{\ell+1}}}))\neq\varnothing$ for each $\ell$. Thus, we may assume $\phi_i(\Int(\Lambda_{k_i}))\cap\phi_j(\Int(\Lambda_{k_j}))\neq\varnothing$. Now, by (2) it is enough to prove the statement for $i=j$. Observe that 
$$
t\psi_{w_i}+(1-t)\psi_{z_i}=\psi_{tw_i+(1-t)z_i}
$$ 
for each $t\in[0,1]$. As $w_i,z_i\in\Int(\Lambda_{k_i})$ and the latter is convex, we have that $tw_i+(1-t)z_i\in\Int(\Lambda_{k_i})$ for each $t\in[0,1]$, so
$$
\Gamma((\e_i;0,\ldots,0,\overset{(i)}{t\psi_{w_i}+(1-t)\psi_{z_i}},0,\ldots,0))
=\phi_i(tw_i+(1-t)z_i)\in\Reg(\Tt)\subset\nu^{-1}(\Reg(\Tt)).
$$
Thus, 
$$
\{(\e_i;0,\ldots,0,\overset{(i)}{t\psi_{w_i}+(1-t)\psi_{z_i}},0,\ldots,0):\ t\in[0,1]\}\subset\Theta_0
$$
is connected, so $(\e_i;0,\ldots,0,\overset{(i)}{\psi_{w_i}},0,\ldots,0)$ and $(\e_i;0,\ldots,0,\overset{(i)}{\psi_{z_i}},0,\ldots,0)$ belong to the same connected component of $\Theta_0$. 

\noindent(4) {\em There exists a connected component $\Theta$ of $\Theta_0$ that contains all the connected sets $\Xi_i:=\{\e_i\}\times\Ff^{i-1}\times\{\psi_{w_i}:\ w_i\in\Int(\Lambda_{k_i})\}\times\Ff^{r-i}$ for $i=1,\ldots,r$.}

Observe that
\begin{multline*}
\Gamma((\e_i; \psi_1,\ldots,\psi_{i-1},\psi_{w_i},\psi_{i+1},\ldots,\psi_r))(\Delta_{d-1})\\=\phi_i(\psi_{w_i})(\Delta_{d-1})=\phi_i(w_i)\in \Reg(\Tt)\subset\nu^{-1}(\Reg(\Tt)),
\end{multline*}
so $\Xi_i\subset\Theta_0$ and it is a connected set, because it is a finite product of connected sets. Thus, there exists a connected component $\Theta_i$ of $\Theta_0$ that contains $\Xi_i$ for $i=1,\ldots,r$. By (3) we deduce $\Theta_i=\Theta_j$ if $i\neq j$, so there exists a connected component $\Theta$ of $\Theta_0$ that contains all the connected sets $\Xi_i$ for $i=1,\ldots,r$.

\noindent(5) Let $\Theta$ be the connected component of $\Theta_0$ introduced in (4). {\em If $\psi_i\in\Ff$ satisfies $\psi_i(\Delta_{d-1})\subset\Lambda_{k_i}$, then $(\e_i;0,\ldots,0,\overset{(i)}{\psi_i},0,\ldots,0)\in\cl(\Theta)$. If in addition $\psi_i(\Delta_{d-1})\subset\Int(\Lambda_{k_i})$, then 
$$
(\e_i;0,\ldots,0,\overset{(i)}{\psi_i},0,\ldots,0)\in\Theta.
$$
}
\indent Let $w_i\in\Int(\Lambda_{k_i})$. Recall that by (4) $(\e_i;0,\ldots,0,\overset{(i)}{\psi_{tw_i}},0,\ldots,0)\in\Theta$ for each $t\in(0,1]$, because $\Lambda_{k_i}$ is an open cone (so $tw_i\in\Int(\Lambda_{k_i})$ for each $t\in(0,1]$). Consider the Nash path 
$$
\alpha:(0,1]\to\R^r\times\Ff^r,\, t\mapsto(\e_i;0,\ldots,0,\overset{(i)}{(1-t)\psi_i+t\psi_{w_i}},0,\ldots,0).
$$ 
We claim: {\em$\alpha(t)\in\Theta_0$ for $t\in(0,1]$}.

This is because $w_i\in\Int(\Lambda_{k_i})$ and $\psi_i(\Delta_{d-1})\subset\Lambda_{k_i}$ for $t\in(0,1]$, so $((1-t)\psi_i+t\psi_{w_i})(\Delta_{d-1})\subset\Int(\Lambda_{k_i})$ for $t\in(0,1]$ (see \cite[Lemma 11.2.4]{ber1}). Thus, 
$$
\Gamma(\alpha(t))(\Delta_{d-1})\subset\phi_i((1-t)\psi_i+t\psi_{w_i})(\Delta_{d-1})\subset\phi_i(\Int(\Lambda_{k_i}))\subset\Reg(\Tt)
$$
for each $t\in(0,1]$, so $\alpha(t)\in\Theta_0$ for each $t\in(0,1]$. 

As $\alpha(1)=(\e_i;0,\ldots,0,\overset{(i)}{\psi_{w_i}},0,\ldots,0)\in\Theta$, we have $\alpha(0)=(\e_i;0,\ldots,0,\overset{(i)}{\psi_i},0,\ldots,0)\in\cl(\Theta)$.
 
If in addition $\psi_i(\Delta_{d-1})\subset\Int(\Lambda_{k_i})$, then $(\e_i;0,\ldots,0,\overset{(i)}{\psi_i},0,\ldots,0)\in\Theta_0$, so 
$$
(\e_i;0,\ldots,0,\overset{(i)}{\psi_i},0,\ldots,0)\in\Theta_0\cap\cl(\Theta)=\Theta.
$$ 

\noindent(6){\em If $(\mu;\psi)\in\cl(\Theta_0)$, then $\Gamma(\mu;\psi)(\Delta_{d-1})\subset\cl(\nu^{-1}(\Tt))$ and $\nu(\Gamma(\mu;\psi)(\Delta_{d-1}))\subset\Tt$.}

By the curve selection lemma \cite[Thm.2.5.5]{bcr} there exists a continuous semialgebraic path $\alpha:[0,1]\to\cl(\Theta_0)$ such that $\alpha(0)=(\mu;\psi)$ and $\alpha((0,1])\subset\Theta_0$. This means that for each $t\in(0,1]$ one has $
\Gamma(\alpha(t))(\Delta_{d-1})\subset\nu^{-1}(\Reg(\Tt))$. If $x\in\Delta_{d-1}$, then $\Gamma(\alpha(t))(x):[0,1]\to\R^n$ is a continuous semialgebraic path such that $\Gamma(\alpha(t))(x)\subset\nu^{-1}(\Reg(\Tt))$ for each $t\in(0,1]$, so 
$$
\Gamma(\alpha(0))(x)\in\cl(\nu^{-1}(\Reg(\Tt)))\subset\cl(\nu^{-1}(\Tt)).
$$ 
Consequently, $\Gamma(\mu;\psi)(\Delta_{d-1})\subset\cl(\nu^{-1}(\Tt))$.

As $\cl(\nu^{-1}(\Tt))$ is a compact set and $\nu$ admits a Nash extension to $\cl(\nu^{-1}(\Tt))$ (see Subsection \ref{localcharts}), we deduce $\nu:\cl(\nu^{-1}(\Tt))\to M=\ol{\Tt}^{\zar}$ is proper and
$$
\nu(\Gamma(\mu;\psi)(\Delta_{d-1}))\subset\nu(\cl(\nu^{-1}(\Tt)))=\cl(\Tt)=\Tt.
$$

\noindent(7) Let ${\tt j}:\mathcal{N}(\R^d,\R^n)\to\mathcal{N}(\Delta_{d-1},\R^n),\ f\mapsto f|_{\Delta_{n-1}}$, which is continuous if we endow both spaces with the compact-open topology. {\em Then the composition
$$
\nu_*\circ{\tt j}\circ\Gamma:\cl(\Theta)\to\mathcal{N}(\Delta_{d-1},\Tt),\ (\mu;\psi)\mapsto\nu\circ(\Gamma((\mu;\psi))|_{\Delta_{d-1}})
$$
is continuous.}

\noindent(8) {\em If $\beta:[0,1]\to\cl(\Theta_0)$ is a Nash path, then $B:[0,1]\times\Delta_{d-1}\to\Tt,\ (t,x)\mapsto\nu\circ(\Gamma(\beta(t)))(x)$ is a Nash map.}

\subsection{Nash images of the closed ball.} 
We are finally ready to finish the proof of Theorem \ref{main1}. As we have seen in Paragraph \ref{ridchekcer7}, it is enough to prove Theorem \ref{riduzione}. Let us prove: \em Given a compact checkerboard set $\Tt\subset\R^n$ of dimension $d\geq 2$, there exists a Nash map $F:\Delta_{d-1}\times[0,1]\to\R^n$ such that $F(\Delta_{d-1}\times[0,1])=\Tt$.\em

\begin{proof}[Proof of Theorem \em\ref{riduzione}]
We keep all the notations introduced in Subsections \ref{localcharts}, \ref{dus} and \ref{spaziomappe}. We also keep all the assumptions done along these subsections. Recall that $\Tt=\bigcup_{i=1}^r\phi_i(\tau_i)\subset\bigcup_{i=1}^r\phi_i(\Bb_d(0,1))$, where $\tau_i\subset\Lambda_{k_i}$ is a $d$-dimensional simplex such that $\tau_i\cap\phi_i^{-1}(\partial\Tt)$ is either empty or a proper face of $\tau_i$ contained in a $(d-1)$-dimensional face $\sigma_i$ of $\tau_i$ that does not contain a vertex $p_i\in\phi_i^{-1}(\Reg(\Tt))$ of $\tau_i$. If $\tau_i\cap\phi_i^{-1}(\partial\Tt)=\varnothing$, we denote with $\sigma_i$ the facet of $\tau_i$ that does not contain the origin of $\R^d$ (this situation corresponds to the case $k=0$ in Subsection \ref{dus} and the origin is the point $p_0$ introduced there). In both cases the remaining vertex $p_i$ of $\tau_i$ belongs to $\phi_i^{-1}(\Reg(\Tt))$ and $\tau_i$ is the convex hull of $\sigma_i\cup\{p_i\}$, see Subsection \ref{dus}. In addition, $\Tt=\bigcup_{i=1}^r\phi_i(\Lambda_{k_i})$ (see Subsection \ref{localcharts}).

Denote the vertices of $\sigma_i$ with $v_{ij}$ for $j=1,\ldots,d$. Let $H_{ik}$ be the hyperplanes generated by the facets of $\tau_i$ that contain the vertex $p_i$ and assume $v_{ij}\not\in H_{ij}$ and $\tau_i\subset\bigcap_{j=1}^dH_{ij}^+$.

Let $\alpha_{ij}:[-\delta,1+\delta]\to\R^d$ be Nash paths satisfying the conditions of Lemma \ref{simplex2} (see also Remark \ref{simplex2r}(ii)) applied to the pair $\tau_i\subset\Lambda_{k_i}$. Consider the Nash path
$$
A_i:[-\delta,1+\delta]\to\{\e_i\}\times\Ff^r,\ t\mapsto\bigg(\e_i;0,\ldots,0,\sum_{j=1}^d\alpha_{ij}(t)\x_j,0,\ldots,0\bigg),
$$
which satisfies in particular 
$$
\phi_i(\tau_i)\subset\bigcup_{t\in[0,1]}\Gamma(A_i(t))(\Delta_{d-1})\subset\phi_i(\Lambda_{k_i})\quad\text{ and }\quad\bigcup_{t\in[-\delta,0)\cup(1,1+\delta]}\Gamma(A_i(t))(\Delta_{d-1})\subset\phi_i(\Int(\tau_i)). 
$$
By Lemma \ref{simplex2} (and Remark \ref{simplex2r}(ii)) we have in addition
$$
\Gamma(A_i(t))(\Delta_{d-1})\begin{cases}
=\phi_i(\sigma_i)\subset\phi_i(\Lambda_{k_i})\subset\Tt&\text{if $t=0$,}\\
\subset\phi_i(\Int(\Lambda_{k_i}))\subset\Reg(\Tt)&\text{if $t\in[-\delta,1+\delta]\setminus\{0\}$.}
\end{cases}
$$
Thus, $A_i(t)\in\Theta$ if $t\in[-\delta,1+\delta]\setminus\{0\}$ and $\zeta_i:=A_i(0)\in\cl(\Theta)$, see Property \ref{Theta0}(5). Define the linear maps $\eta_i:=\sum_{j=1}^d\alpha_{ij}(-\delta)\x_j\in \Ff$ and $\xi_i:=\sum_{j=1}^d\alpha_{ij}(1+\delta)\x_j\in\Ff$. By Lemma \ref{simplex2} (and Remark \ref{simplex2r}(ii) $\eta_i(\Delta_{d-1}),\xi_i(\Delta_{d-1})\subset\Int(\tau_i)\subset\Int(\Lambda_{k_i})$ and we deduce by Property \ref{Theta0}(5)
$$
(\e_i;0,\ldots,0,\overset{(i)}{\eta_i},0,\ldots,0),\ (\e_i;0,\ldots,0,\overset{(i)}{\xi_i},0,\ldots,0)\in\Theta.
$$ 
Up to repeating the charts $\phi_i$ as many times as needed, we may assume 
$$
\phi_i(\Int(\Lambda_{k_i}))\cap\phi_{i+1}(\Int(\Lambda_{k_{i+1}}))\neq\varnothing
$$
and let $\phi_i(w_i)=\phi_{i+1}(z_{i+1})\in\phi_i(\Int(\Lambda_{k_i}))\cap\phi_{i+1}(\Int(\Lambda_{k_{i+1}}))$. Observe that
\begin{align}
&\phi_i((1-t)\xi_i+t\psi_{w_i})(\Delta_{d-1})\subset\phi_i(\Int(\Lambda_{k_i}))\subset\Reg(\Tt)\subset\nu^{-1}(\Reg(\Tt)),\label{bi}\\
&\phi_{i+1}((1-t)\psi_{z_{i+1}}+t\eta_{i+1})(\Delta_{d-1})\subset\phi_{i+1}(\Int(\Lambda_{k_{i+1}}))\subset\Reg(\Tt)\subset\nu^{-1}(\Reg(\Tt))\label{di}
\end{align}
for $t\in[0,1]$. Consider the Nash paths
\begin{align*}
&B_i:[0,1]\to\Theta_0,\ t\mapsto(\e_i;0,\ldots,0,(1-t)\xi_i+t\psi_{w_i},t\psi_{z_{i+1}},0,\ldots,0),\\
&C_i:[0,1]\to\Theta_0,\ t\mapsto((1-t)\e_i+t\e_{i+1};0,\ldots,0,\psi_{w_i},\psi_{z_{i+1}},0,\ldots,0),\\
&D_i:[0,1]\to\Theta_0,\ t\mapsto(\e_{i+1};0,\ldots,0,(1-t)\psi_{w_i},(1-t)\psi_{z_{i+1}}+t\eta_{i+1},0,\ldots,0).
\end{align*}
We have $B_i([0,1])\subset\Theta_0$ because
$$
\Gamma(B_i(t))(\Delta_{d-1})=\phi_i((1-t)\xi_i+t\psi_{w_i})(\Delta_{d-1})\subset\nu^{-1}(\Reg(\Tt))
$$
for $t\in[0,1]$ (see \eqref{bi}). In addition, $C_i([0,1])\subset\Theta_0$ by the proof of Property \ref{Theta0}(2) and $D_i([0,1])\subset\Theta_0$, because
$$
\Gamma(D_i(t))(\Delta_{d-1})=\phi_{i+1}((1-t)\psi_{z_{i+1}}+t\eta_{i+1})(\Delta_{d-1})\subset\nu^{-1}(\Reg(\Tt))
$$
for $t\in[0,1]$ (see \eqref{di}). By Property \ref{Theta0}(4) $,
B_i(1)=(\e_i;0,\ldots,0,\overset{(i)}{\psi_{w_i}},\psi_{z_{i+1}},0,\ldots,0)\in\Theta$ 
and, as $B_i([0,1])\subset\Theta_0$ is connected, we deduce $B_i([0,1])\subset\Theta$. Analogously, $C_i(0)=B_i(1)\in\Theta$ and $C_i([0,1])\subset\Theta_0$ is connected, so $C_i([0,1])\subset\Theta$. In addition, 
$$
D_i(0)=C_i(1)=(\e_{i+1};0,\ldots,0,\psi_{w_i},\overset{(i+1)}{\psi_{z_{i+1}}},0,\ldots,0)\in\Theta
$$ 
and $D_i([0,1])\subset\Theta_0$ is connected, so $D_i([0,1])\subset\Theta$.

Fix times $0<t_1<s_1<\cdots<t_r<s_r<1$ and denote 
$$
\chi_i:=A_i(1)=(\e_i,0,\ldots,0,\overset{(i)}{(\x_1+\cdots+\x_d)p_i},0,\ldots,0)\in\Theta.
$$
Observe that $(\x_1+\cdots+\x_d)p_i$ is the linear map $\R^d\to\R^d$ that takes the constant value $p_i\in\Int(\Lambda_{k_i})$ on the hyperplane $\{\x_1+\cdots+\x_d=1\}$. Consider the continuous semialgebraic path obtained concatenating the previous paths:
$$
E:=\Bigast_{i=1}^r(A_i*B_i*C_i*D_i):[0,1]\to\Theta\cup\{\zeta_1,\ldots,\zeta_r\}
$$
and assume (after reparameterizing the paths) $E(t_i)=\zeta_i$, $E_j(s_i)=\chi_i$ and $E|_{[t_i,s_i]}$ is an affine reparameterization of $A_i|_{[0,1]}$. Let $\rho>0$ be such that $E$ is Nash on 
$$
I:=\bigcup_{i=1}^r\Big([t_i-\rho,t_i+\rho]\cup[s_i-\rho,s_i+\rho]\Big).
$$
By Lemma \ref{icsl} we can approximate the continuous semialgebraic path $E$ by a polynomial path $\gamma:[0,1]\to\Theta\cup\{\zeta_1,\ldots,\zeta_r\}$, such that: 
\begin{itemize}
\item[{\rm (i)}] $\gamma(t_i)=E(t_i)=\zeta_i$, $\gamma'(t_i)=E'(t_i)$, $\gamma''(t_i)=E''(t_i)$ and $\gamma'''(t_i)=E'''(t_i)$ for each $i=1,\ldots,r$.
\item[{\rm (ii)}] $\gamma(s_i)=E(s_i)=\chi_i$, $\gamma'(s_i)=E'(s_i)$, $\gamma''(s_i)=E''(s_i)$ and $\gamma'''(s_i)=E'''(s_i)$ for each $i=1,\ldots,r$.
\item[{\rm (iii)}] $\|\gamma-E\|$, $\|\gamma'-E'\|_I$, $\|\gamma''-E''\|_I$ and $\|\gamma'''-E'''\|_I$ are small enough.
\end{itemize}

Write $\gamma:=(\mu;\psi_1,\ldots,\psi_r)$ and $\mu:=(\mu_1,\ldots,\mu_r)$. As 
$$
A_i(t)=\bigg(\e_i;0,\ldots,0,\sum_{j=1}^d\alpha_{ij}(t)\x_j,0,\ldots,0\bigg),
$$
we deduce by (i) and (ii) above that
$$
\mu_i(s_j)=\mu_i(t_j)=
\begin{cases}
1 \text{ if } i=j\\
0 \text{ if } i\neq j,
\end{cases}
$$
$\mu_i'(t_i)=\mu_i'(s_i)=0$, $\mu''_i(t_i)=\mu''_i(s_i)=0$ and $\mu'''_i(t_i)=\mu'''_i(s_i)=0$. Observe that 
$$
\psi_i:[t_i,s_i]\times\Delta_{d-1}\to\Lambda_{k_i},\ (t,\lambda)\mapsto\sum_{j=1}^d\alpha_{ij}(t)\lambda_j.
$$ 
By Lemma \ref{simplex2} $\tau_i\subset\psi_i([t_i,s_i]\times\Delta_{d-1})\subset\Lambda_{k_i}$. Consider
$$
\Gamma(\gamma)=\sum_{i=1}^r\mu_i(t)(\phi_i\circ\psi_i)(t,\x):[0,1]\times\R^d\to\R^n.
$$
We have $\Gamma(\gamma)(\{t\}\times\Delta_{d-1})\subset\nu^{-1}(\Reg(\Tt))$ if $t\in[0,1]\setminus\{t_1,\ldots,t_r\}$, whereas $\Gamma(\gamma)(\{t_i\}\times\Delta_{d-1})=\phi_i(\sigma_i)\in\Tt$ for $i=1,\ldots,r$. This means that 
\begin{equation}\label{234}
\nu(\Gamma(\gamma)([0,1]\times\Delta_{d-1}))\subset\Tt.
\end{equation}

Fix $i=1,\ldots,r$ and denote
$$
\lambda_{ij}:=\begin{cases}
\mu_i-1&\text{if $j=i$,}\\
\mu_j&\text{if $i\neq j$.}
\end{cases}
$$
Observe that $\lambda_{ij}^{(\ell)}(t_i)=\lambda_{ij}^{(\ell)}(s_i)=0$ for $1\leq i,j\leq r$, $\ell=0,1,2,3$ and each $\lambda_{ij}$ is as close to zero as needed. By \eqref{bound0} there exist $L,K>0$ (depending only on $\phi_j,\psi_j$) such that
$$
\Big\|\phi_i^{-1}\Big(\nu\Big((\phi_i\circ\psi_i)(t,x)+\sum_{j=1}^r\lambda_{ij}(t)(\phi_j\circ\psi_j)(t,x)\Big)\Big)-\psi_i(t,x)\Big\|\leq2LK\sum_{j=1}^r|\lambda_{ij}(t)|.
$$
As $\lambda_{ij}^{(\ell)}(t_i)=\lambda_{ij}^{(\ell)}(s_i)=0$ for $1\leq i,j\leq r$, $\ell=0,1,2,3$, we deduce by Lemma \ref{order} that the $\ell$th partial derivatives with respect to $\t$ of the Nash maps 
\begin{multline*}
\phi_i^{-1}(\nu(\Gamma(\gamma)))(t,x)=\phi_i^{-1}\Big(\nu\Big(\sum_{j=1}^r\mu_j(t)(\phi_j\circ\psi_j)(t,x)\Big)\Big)\\
=\phi_i^{-1}\Big(\nu\Big((\phi_i\circ\psi_i)(t,x)+\sum_{j=1}^r\lambda_{ij}(t)(\phi_j\circ\psi_j)(t,x)\Big)\Big)
\end{multline*}
and $\psi_i(t,x)$ coincide for $\ell=0,1,2,3$ at the points $(t_i,x)$ (resp. at the points $(s_i,x)$) for $x\in\Delta_{d-1}$ and $1\leq i\leq r$. By Lemma \ref{simplex2} we deduce $\tau_i\subset\phi_i^{-1}(\nu(\Gamma(\gamma)([t_i,s_i]\times\Delta_{d-1})))\subset\Lambda_{k_i}$ because $\tau_i\subset\psi_i([t_i,s_i]\times\Delta_{d-1})\subset\Lambda_{k_i}$. Thus,
$$
\phi_i(\tau_i)\subset\nu(\Gamma(\gamma))([t_i,s_i]\times\Delta_{d-1})\subset\phi_i(\Lambda_{k_i})\subset\Tt
$$ 
for $i=1,\ldots,r$, so by \eqref{234}
$$
\Tt=\bigcup_{i=1}^r\phi_i(\tau_i)\subset\nu(\Gamma(\gamma))\Big(\Big(\bigcup_{i=1}^r[t_i,s_i]\Big)\times\Delta_{d-1}\Big)\subset\nu(\Gamma(\gamma)([0,1]\times\Delta_{d-1}))\subset\Tt.
$$
Consequently, $\nu(\Gamma(\gamma)([0,1]\times\Delta_{d-1}))=\Tt$, as required.
\end{proof}

\section{Proofs of Theorems \ref{nice0} and \ref{nice2}}\label{s4}

In this section we prove Theorems \ref{nice0} and \ref{nice2}. To prove Theorem \ref{nice0}, by Theorem \ref{main1} it is enough to show the following.

\begin{lem}\label{nice01}
Let $\Ss\subset\R^m$ be any semialgebraic set of dimension $d$. Then there exists a regular map $f:\R^m\to\R^d$ such that $f(\Ss)=\ol{\Bb}_d$.
\end{lem}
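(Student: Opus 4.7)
The idea is to factor the desired regular map as a composition of three elementary regular maps: a coordinate projection that preserves dimension, an affine rescaling, and the regular surjection $\R^d\to\ol{\Bb}_d$ already used in Paragraph \ref{ridchekcer7}.

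First I would find a suitable coordinate projection. Since $\Ss\subset\R^m$ is semialgebraic of dimension $d$, a standard fact (see e.g.~\cite[Prop.2.8.5]{bcr}) gives an index set $I\subset\{1,\ldots,m\}$ with $|I|=d$ such that the coordinate projection $\pi_I:\R^m\to\R^d$ satisfies $\dim(\pi_I(\Ss))=d$. Equivalently, $\Ss$ has a $d$-dimensional Nash stratum $N$ whose tangent space at some point projects isomorphically onto some $d$-dimensional coordinate subspace, so by the inverse function theorem $\pi_I(\Ss)$ contains an open subset of $\R^d$. In particular $\pi_I(\Ss)$ contains a closed ball $\ol{\Bb}_d(p,r)$ for some $p\in\R^d$ and some $r>0$.

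Next I would rescale and then squeeze. The affine map $T:\R^d\to\R^d$, $x\mapsto(x-p)/r$, is regular and satisfies $T(\pi_I(\Ss))\supset T(\ol{\Bb}_d(p,r))=\ol{\Bb}_d$. Finally, the regular map
$$
g:\R^d\to\R^d,\qquad x\mapsto\frac{2x}{1+\|x\|^2},
$$
introduced in \S\ref{ridchekcer7} as $\pi\circ\varphi$ (projection composed with the inverse of stereographic projection), has $g(\R^d)=g(\ol{\Bb}_d)=\ol{\Bb}_d$. Indeed $\|g(x)\|=\tfrac{2\|x\|}{1+\|x\|^2}\leq1$, with equality on the unit sphere, and every $y\in\ol{\Bb}_d$ has a preimage $x=ty$ with $t=\frac{1-\sqrt{1-\|y\|^2}}{\|y\|^2}\in[0,1]$ (taking $x=0$ if $y=0$).

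Setting $f:=g\circ T\circ\pi_I:\R^m\to\R^d$, which is a composition of regular maps and hence regular, one has
$$
f(\Ss)=g(T(\pi_I(\Ss)))\supset g(\ol{\Bb}_d)=\ol{\Bb}_d
\qquad\text{and}\qquad
f(\Ss)\subset g(\R^d)=\ol{\Bb}_d,
$$
so $f(\Ss)=\ol{\Bb}_d$, as required. The only non-trivial step is the existence of the dimension-preserving coordinate projection $\pi_I$; everything else is bookkeeping with the explicit regular maps $T$ and $g$.
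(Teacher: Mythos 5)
Your proof is correct and follows essentially the same two-step strategy as the paper: project $\Ss$ onto $\R^d$ in a dimension-preserving way so that the image contains a small closed ball, then compose with the regular squeeze map $g$ (projection of inverse stereographic projection), which maps all of $\R^d$ onto $\ol{\Bb}_d$. The only cosmetic difference is that the paper projects orthogonally onto the affine tangent space $p+T_p\Ss$ at a regular point and composes with an isometry, whereas you use a coordinate projection $\pi_I$ followed by an affine rescaling; both choices produce the needed sandwiching $\ol{\Bb}_d(p,r)\subset(\text{image})\subset\R^d$ and then $g$ finishes the job identically.
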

\begin{proof}
Let $p\in\Ss$ be a regular point of $\Ss$ such that $\dim(\Ss_p)=d$, let $p+T_p\Ss$ be the affine tangent space to $\Reg(\Ss)$ at $p$ and $\pi:\R^m\to p+T_p\Ss$ the orthogonal projection of $\R^m$ onto $p+T_p\Ss$. There exist $\veps>0$ and a compact neighborhood $W^p\subset\Reg(\Ss)$ of $p$ such that $\pi|_{W^p}:W^p\to\ol{\Bb}_m(p,\veps)\cap(p+T_p\Ss)$ is a Nash diffeomorphism. For simplicity we assume that $p$ is the origin and $\veps=1$, so there exists an isometry $h:p+T_p\Ss\to\R^d$ that maps $\ol{\Bb}_m(p,\veps)\cap(p+T_p\Ss)$ onto the closed unit ball $\ol{\Bb}_d$. Consider the inverse of the stereographic projection
\begin{align*}
\varphi:\R^d&\to\sph^d\setminus\{(0,\ldots,1)\},\\
x:=(x_1,\ldots,x_d)&\mapsto\Big(\frac{2x_1}{1+\|x\|^2},\ldots,\frac{2x_d}{1+\|x\|^2},\frac{-1+\|x\|^2}{1+\|x\|^2}\Big).
\end{align*}
Let $\pi':\R^{d+1}\to\R^d,\ (x_1,\ldots,x_{d+1})\mapsto(x_1,\ldots,x_d)$ be the projection onto the first $d$ coordinates and observe that $\pi'\circ\varphi:\R^d\to\R^d$ satisfies $(\pi'\circ\varphi)(\R^d)=(\pi'\circ\varphi)(\ol{\Bb}_d)=\ol{\Bb}_d$. The surjective regular map $g:=\pi'\circ\varphi\circ h:p+T_p\Ss\to\ol{\Bb}_d$ fulfills 
$$
g(\ol{\Bb}_m(p,\veps)\cap(p+T_p\Tt))=g(p+T_p\Tt)=\ol{\Bb}_d\subset\R^d.
$$
In particular, $g(A)=\ol{\Bb}_d$ for each $A$ such that $\ol{\Bb}_m(p,\veps)\cap(p+T_p\Tt)\subset A\subset p+T_p\Tt$. Thus, the composition $g\circ\pi:\R^m\to\R^d$ is a regular map satisfying 
$$
(g\circ\pi)(\Tt)=g(\pi(\Tt))=g(\pi(W^p))=g(\ol{\Bb}_m(p,\veps)\cap(p+T_p\Tt))=\ol{\Bb}_d,
$$
as required.
\end{proof}

We prove next Theorem \ref{nice2}. By Theorem \ref{main0} it is enough to consider the case $\Tt=\R^d$ for $d\geq2$.

\begin{lem}\label{nice1}
Let $\Ss\subset\R^m$ be a semialgebraic set and let $d\geq 2$. Assume that $\cl(\Ss^{(d)})\cap\Ss$ is not compact. Then there exists a Nash map $f:\R^m\to\R^d$ such that $f(\Ss)=\R^d$.
\end{lem}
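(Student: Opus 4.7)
Plan: The proof adapts the approach of Lemma~\ref{nice01}, which uses the local structure at a regular point to produce a Nash surjection onto $\ol{\Bb}_d$; the non-compactness of $\cl(\Ss^{(d)})\cap\Ss$ is what enables us to replace $\ol{\Bb}_d$ by all of $\R^d$ in the image.

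First I would verify the basic consequences of the hypothesis. The assumption that $\cl(\Ss^{(d)})\cap\Ss$ is non-compact forces $\Ss^{(d)}$ itself to be unbounded, because otherwise $\Ss^{(d)}$ would be compact, hence equal to its closure, and then $\cl(\Ss^{(d)})\cap\Ss=\Ss^{(d)}$ would be compact too. Consequently, the dense open Nash submanifold $\Reg(\Ss)\cap\Ss^{(d)}\subset\Ss$ is unbounded and contains an unbounded connected component $M$, which is a pure $d$-dimensional Nash submanifold of $\R^m$ and is connected by Nash paths. By Theorem~\ref{main0} there exists a Nash map $\phi:\R^d\to\R^m$ with $\phi(\R^d)=M\subseteq\Ss$. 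It therefore suffices to construct a Nash map $f:\R^m\to\R^d$ such that $f\circ\phi:\R^d\to\R^d$ is surjective, since then
$$
\R^d=(f\circ\phi)(\R^d)=f(M)\subseteq f(\Ss)\subseteq\R^d.
$$

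To produce such an $f$, I would use Lemma~\ref{icsl} to select a Nash path $\gamma:[0,+\infty)\to M$ with $\|\gamma(t)\|\to\infty$. A Nash tubular neighborhood of (a subarc of) $\gamma$ inside $M$ is Nash-diffeomorphic to $(0,+\infty)\times\Bb_{d-1}$, which in turn is Nash-diffeomorphic to $\R^d$ (using $(0,+\infty)\cong\R$ via $t\mapsto t-1/t$ and $\Bb_{d-1}\cong\R^{d-1}$ via the inverse of stereographic projection). Composing the inverse of this Nash diffeomorphism with a Nash retraction onto $M$ coming from an ambient Nash tubular neighborhood of $M$ in $\R^m$, one obtains a Nash map defined on an open semialgebraic neighborhood of $M$ whose restriction to $M$ surjects onto $\R^d$.

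The main obstacle is globalization: the Nash retraction onto $M$ is only defined on a (possibly small) ambient Nash tubular neighborhood of $M$, not on all of $\R^m$, and Nash ``bump'' functions are unavailable for naive extension. To resolve this I would construct $f$ by polynomial/Nash approximation in the spirit of Section~\ref{s3}, invoking Lemma~\ref{icsl} to prescribe jet conditions along $\gamma$ so that the final Nash map $\R^m\to\R^d$ agrees with the desired construction to sufficiently high order along the tube (hence surjects $M$ onto $\R^d$) while remaining globally Nash. The assumption $d\geq 2$ is essential here: a Nash function $\R\to\R$ has well-defined limits at $\pm\infty$, so it cannot surject a half-line onto $\R$; the extra coordinate directions are therefore needed to ``turn around'' and reach every direction in $\R^d$.
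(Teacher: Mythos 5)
Your high-level instinct — exploit the unboundedness to "spread" the image out to all of $\R^d$, and note that $d\geq 2$ is needed because a Nash self-map of $\R$ cannot cover $\R$ from a half-line — is sound, and agrees in spirit with what the paper does. However, the proof as written has two genuine gaps, one small and one fatal.

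The small gap is at the very beginning: from "$\cl(\Ss^{(d)})\cap\Ss$ is non-compact" you cannot conclude that $\Ss^{(d)}$ is unbounded by arguing "otherwise $\Ss^{(d)}$ would be compact." Bounded does not imply compact. If $\Ss^{(d)}$ is bounded but not closed in $\R^m$, then $\cl(\Ss^{(d)})$ is compact yet $\cl(\Ss^{(d)})\cap\Ss$ can fail to be closed in $\R^m$, hence fail to be compact. The paper handles precisely this case: when $\Ss^{(d)}$ is bounded, it is not closed, so there is $p\in\cl(\Ss^{(d)})\setminus\Ss$, and one first applies the Nash embedding $x\mapsto(x,1/\|x-p\|)$ of $\R^m\setminus\{p\}$ into $\R^{m+1}$ to send $p$ to infinity, after which $\Ss^{(d)}$ becomes unbounded. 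A similar but more delicate care is needed for your claim that $\Reg(\Ss)\cap\Ss^{(d)}$ is unbounded and has an unbounded component; since $\Reg(\Ss)$ is computed from the Zariski closure of all of $\Ss$ (which can have dimension larger than $d$), it is not automatic that its $d$-dimensional part is unbounded, nor even nonempty. The paper sidesteps this by working with $\Reg(\Ss^{(d)})$ — the regular locus of $\Ss^{(d)}$ as a semialgebraic set in its own right — rather than $\Reg(\Ss)\cap\Ss^{(d)}$.

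The fatal gap is the globalization step, which you yourself flag. Your tube construction produces a Nash map only on $\rho^{-1}(\text{tube})$, an open semialgebraic set which need not contain $\Ss$, and certainly not all of $\R^m$; on that domain the map surjects the tube onto $\R^d$, but you have no map on the rest of $\Ss$, and no mechanism to extend. Your proposed remedy — "invoke Lemma~\ref{icsl} to prescribe jet conditions along $\gamma$" — does not apply: Lemma~\ref{icsl} approximates continuous semialgebraic \emph{paths} $[0,1]\to\R^n$ by polynomial paths with controlled derivatives; it is not an extension or gluing tool for Nash maps $\R^m\to\R^d$, and controlling the $k$-jet of a map along a $1$-dimensional curve $\gamma$ gives no control whatsoever over the image of a $d$-dimensional tube around $\gamma$, so surjectivity onto $\R^d$ cannot be inferred. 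In short, the step from "a Nash surjection on a neighborhood of a tube" to "a Nash map on all of $\R^m$ whose restriction to $\Ss$ is onto" is precisely the hard part of the lemma, and the sketch does not supply it. The paper instead works entirely with globally meaningful maps: it reduces to $\Ss\subset\R^d$ with $0\in\cl(\Ss^{(d)})\setminus\Ss$, straightens a Nash arc approaching $0$, builds a Nash horn neighborhood $\Uu$ inside $\Ss$, and then uses the Nash involution $f_\ell(x_1,x')=(1/x_1,x'/x_1^\ell)$ of $\{\x_1>0\}$ to show that a paraboloidal region $\Ff=\{N_1+N_2\|\x'\|^2\leq\x_1\}$ is contained in $(f_\ell\circ h)(\Ss)$; the explicit polynomial maps $P_1,P_2,P_3$ (the last being the complex squaring map, whence $d\geq 2$) then carry $\Ff$ onto $\R^d$. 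The crucial trick — using the involution to turn the thin horn at the origin "inside out" so that it covers a full paraboloid — is absent from your sketch and is exactly what replaces the impossible extension argument.
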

\begin{proof}
If $m=d$ and $\Ss=\R^d$, there is nothing to prove. Thus, let us assume $\R^m\setminus\Ss\neq\varnothing$. We may assume that $\Ss^{(d)}$ is unbounded. Otherwise, $\Ss^{(d)}$ is bounded and not closed (because $\cl(\Ss^{(d)})\cap\Ss$ is not compact), so there exists $p\in\cl(\Ss^{(d)})\setminus\Ss$. Consider the Nash map 
$$
h:\R^m\setminus\{p\}\to\R^{m+1},\ x\mapsto\Big(x,\frac{1}{\|x-p\|}\Big),
$$
which is a Nash diffeomorphism onto its image. Observe that $h(\Ss^{(d)})\subset h(\Ss)\subset\R^{m+1}$ is unbounded. We identify $h(\Ss)$ with $\Ss$ and $h(\Ss^{(d)})$ with $\Ss^{(d)}$.

Consider the immersion $\psi_1:\R^m\to\R\PP^m$, $x\mapsto[1:x]$. As $\Ss^{(d)}$ is unbounded, we may assume 
$$
[0:\cdots:0:\overset{(d+1)}1:0:\cdots:0]\in \cl_{\R\PP^m}(\Ss^{(d)}).
$$
Consider the projection 
$$
\widehat{\pi}:\R\PP^m\to\R\PP^d,\, [x_0:x_1:\cdots:x_m]\mapsto[x_0:x_1:\cdots:x_d]
$$ 
whose restriction to $\R^m$ is the projection $\pi:\R^m\to\R^d$, $(x_1,\ldots,x_m)\mapsto(x_1,\ldots,x_d)$. As 
$$
\widehat{\pi}([0:\cdots:0:1:0:\cdots:0])=[0:\cdots:0:1]\in \cl_{\R\PP^d}(\pi(\Ss^{(d)})),
$$
we deduce $\pi(\Ss)$ is not bounded. Thus, taking $\pi(\Ss)$ instead of $\Ss$, we may assume $m=d$, $\dim(\Ss)=d$ and $\Ss^{(d)}$ unbounded. 

If $\Ss=\R^d$, we are done, otherwise, we may assume after a translation that $0\not\in\Ss$. Consider the inversion ${\tt i}:\R^d\setminus\{0\}\to\R^d\setminus\{0\},\ x\mapsto\frac{x}{\|x\|}$, which is a Nash involution of $\R^d\setminus\{0\}$. Thus, at this point $\Ss\subset\R^d$ is a semialgebraic set of dimension $d$ such that $0\in \cl(\Ss^{(d)})\setminus\Ss$. Observe that $\Reg(\Ss^{(d)})$ is an open subset of $\R^d$ adherent to the origin.

By the Nash curve selection lemma \cite[Prop.8.1.13]{bcr} there exists a Nash arc 
$$
\alpha:=(\alpha_1,\ldots,\alpha_d):[0,1]\to \Reg(\Ss^{(d)})\cup\{0\}
$$ 
such that $\alpha((0,1])\subset \Reg(\Ss^{(d)})$ and $\alpha(0)=0$. After a linear change of coordinates we may assume that $\alpha((0,1])\cap\{\x_1=0\}=\varnothing$. Consider now the Nash map 
$$
g:\R^d\setminus\{0\}\to\{\x_1>0\},\ (x_1,\ldots,x_d)\mapsto(\|x\|,x_2,\ldots,x_d).
$$ 
As $g|_{\R^d\setminus\{\x_1=0\}}$ is a local diffeomorphism and in particular is open, $g(\Reg(\Ss^{(d)}))$ contains an open semialgebraic set $U\subset\{\x_1>0\}$ adherent to the origin such that $g(\alpha((0,1]))\subset U$. After substituting $\alpha$ by $g\circ\alpha$ and reparameterizing, we may assume $\alpha_1=\t^p$, each $\alpha_i$ is an algebraic series in the variable $\t$ and the order of $\alpha_1$ is smaller than or equal to the order of $\alpha_i$ for $i=2,\ldots,d$. The previous conditions hold because the $\alpha_i$ are algebraic Puiseux series at the origin and the first component of $g\circ\alpha$ is $\sqrt{\alpha_1^2+\cdots+\alpha_d^2}$ . By Lemma \ref{icsl} we may assume that $\alpha_i\in\R[\t]$ for $i=2,\ldots,d$. We substitute $\Ss\subset\R^d\setminus\{0\}$ by $g(\Ss)\subset\{\x_1>0\}$, which is a semialgebraic subset of $\R^d$ of dimension $d$ such that $0\in \cl(g(\Ss)^{(d)})\setminus g(\Ss)$.

Consider the Nash diffeomorphism 
\begin{align*} 
h:\{\x_1>0\}&\to\{\x_1>0\},\\
(x_1,x_2,\ldots,x_d)&\mapsto(\sqrt[p]{x_1},x_2-\alpha_2(\sqrt[p]{x_1}),\ldots,x_d-\alpha_d(\sqrt[p]{x_1}))
\end{align*} 
and observe that $h\circ\alpha=(t,0,\ldots,0)$. 

Let us fix an $\veps>0$ such that $(0,\veps]\times\{(0,\ldots,0)\}\subset h(U)$. Observe that $h(\Ss)\subset\{\x_1>0\}$ because $\Ss\subset\{\x_1>0\}$. Let 
$$
\delta:(0,\veps]\to(0,+\infty),\ t\mapsto\dist((t,0,\ldots,0),\R^d\setminus h(U))
$$ 
and let $\xi:(0,\veps]\to(0,+\infty)$ be a Nash function such that $|\frac{\delta}{2}-\xi|<\frac{\delta}{4}$, so $\frac{\delta}{4}<\xi<\frac{3\delta}{4}$. Write $x':=(x_2,\ldots,x_d)$ and consider the open semialgebraic set
$$
\Uu:=\{(x_1,x')\in(0,\veps)\times\R^{d-1}:\ \|x'\|^2<\xi^2(x_1)\}\subset h(\Ss).
$$
Observe that $\xi$ is a Puiseux series at the origin. The map 
$$
f_\ell:\{\x_1>0\}\to\{\x_1>0\},\ (x_1,x')\mapsto\Big(\frac{1}{x_1},\frac{x'}{x_1^\ell}\Big)
$$ 
is a Nash involution for each $\ell\geq1$. Fix two positive numbers $N_1,N_2>0$ and consider the semialgebraic set $\Ff:=\{N_1+N_2\|\x'\|^2\leq\x_1\}$. Observe that $(y_1,y')\in f_{\ell}(\Ff)$ if and only if $f_{\ell}(y_1,y')\in \Ff$, so 
\begin{equation*}
f_\ell(\Ff)=\Big\{\|\x'\|^2\leq\frac{1}{N_2}\x_1^{2\ell-1}(1-N_1\x_1)\Big\}\subset\Big\{(x_1,x')\in \Big[0,\frac{1}{N_1}\Big)\times\R^{d-1}\, :\, \|x'\|^2\leq \frac{1}{N_2}x_1^{2\ell-1}\Big\}.
\end{equation*}
If $N_1,N_2,\ell$ are large enough, $f_\ell(\Ff)\setminus\{0\}\subset\Uu\subset h(\Ss)$, so $\Ff\subset(f_\ell\circ h)(\Ss)$ (recall that $f_{\ell}$ is an involution). Consider the polynomial map
$$
P_1:\R^d\to\R^d,\ (x_1,x')\mapsto(x_1-(N_1+N_2\|x'\|^2),x'),
$$
which maps $\Ff$ onto $\{\x_1\geq0\}$. Let $P_2:\R^d\to\{\x_1\geq0\},\ (x_1,x')\mapsto(x_1^2,x')$. Observe that 
$$
\{\x_1\geq0\}=P_2(\{\x_1\geq0\})=(P_2\circ P_1)(\Ff)\subset(P_2\circ P_1\circ f_\ell\circ h)(\Ss)\subset P_2(\R^d)=\{\x_1\geq0\},
$$
so $(P_2\circ P_1\circ f_\ell\circ h)(\Ss)=\{\x_1\geq0\}$. Denote $x'':=(x_3,\ldots,x_d)$ and consider the polynomial map
$$
P_3:\R^d\to\R^d,\ (x_1,x_2,x'')\mapsto(x_1^2-x_2^2,2x_1x_2,x'')
$$
that maps $\{\x_1\geq0\}$ to $\R^d$ (here is the exact point where we use $d\geq 2$). Consequently, $(P_3\circ P_2\circ P_1\circ f_\ell\circ h)(\Ss)=\R^d,$ as required.
\end{proof}

The following example shows that Theorem \ref{nice2} is no longer true if $d=1$.

\begin{example}
Let $f:[0,+\infty)\to\R$ be a non-constant Nash function and consider its derivative $f':[0,+\infty)\to\R$. Observe that $\{f'=0\}$ is a finite set. Define $a:=\max\{f'=0\}$ and assume that $f'$ is strictly positive on $(a,+\infty)$, so $f$ is strictly increasing on $(a,+\infty)$. This means that $f([a,+\infty))=[f(a),b)$ for some $b\in\R\cup\{+\infty\}$. As $f([0,a])$ is a connected compact set, $f([0,a])=[c,d]$, so $f([0,+\infty))=[c,d]\cup[f(a),b)$, which is not an open interval. Thus, $f([0,+\infty))$ is a proper subset of $\R$.
\end{example}

\section{Proof of Theorem \ref{snmbss}}\label{s5}

As commented in the introduction, some obstructions to construct a surjective Nash map $f:\Ss\to\Tt$ between a semialgebraic set $\Ss\subset\R^m$ and $\Tt\subset\R^n$ concentrate on the configuration of the intersections of pairwise different analytic path-connected components $\{\Ss_i\}_{i=1}^r$ (resp. irreducible components $\{\Ss_j^*\}_{j=1}^\ell$) of $\Ss$ and the configuration of their images, which are semialgebraic subsets $\Tt_i:=f(\Ss_i)$ of $\Tt$ connected by analytic paths (resp. irreducible semialgebraic subsets $\Tt_j^*:=f(\Ss_j^*)$ of $\Tt$). Namely, if the intersection $\Ss_{i_1}\cap\cdots\cap\Ss_{i_k}$ (for $1\leq i_1<\cdots<i_k\leq r$) is non-empty, then
$$
f(\Ss_{i_1}\cap\cdots\cap\Ss_{i_k})\subset f(\Ss_{i_1})\cap\cdots\cap f(\Ss_{i_k})\subset\Tt_{i_1}\cap\cdots\cap\Tt_{i_k}
$$
and the analytic path-connected components of $\Ss_{i_1}\cap\cdots\cap\Ss_{i_k}$ are mapped into analytic path-connected components of $\Tt_{i_1}\cap\cdots\cap\Tt_{i_k}$. Analogously, if the intersection $\Ss_{j_1}^*\cap\cdots\cap\Ss_{j_p}^*$ (for $1\leq j_1<\cdots<j_p\leq \ell$) is non-empty, then
$$
f(\Ss_{j_1}^*\cap\cdots\cap\Ss_{j_p}^*)\subset f(\Ss_{j_1}^*)\cap\cdots\cap f(\Ss_{j_p}^*)\subset\Tt_{j_1}^*\cap\cdots\cap\Tt_{j_p}^*
$$
and the irreducible components of $\Ss_{j_1}^*\cap\cdots\cap\Ss_{j_p}^*$ are mapped into irreducible components of $\Tt_{j_1}^*\cap\cdots\cap\Tt_{j_p}^*$.

Let us see with some examples how the previous obstructions influence.

\begin{examples}
(i) Let $\Ss:=\{\z=0\}\cup\{\x=0,\y=0\}\cup\{\x-\z=0,\y=0\}\subset\R^3$ and $\Tt:=\{\z=0\}\cup\{\x=0,\y=0\}\cup\{\x=1,\y=0\}\subset\R^3$ (Figure \ref{stn}). We claim: {\em There exists no surjective Nash map $f:\Ss\to\Tt$.} 

The analytic path-connected components of $\Ss$ are 
$$
\Ss_1:=\{\z=0\},\ \Ss_2:=\{\x=0,\y=0\} \text{\ and\ } \Ss_3:=\{\x-\z=0,\y=0\},
$$ 
whereas the analytic path-connected components of $\Tt$ are $\Tt_1:=\{\z=0\}$, $\Tt_2:=\{\x=0,\y=0\}$ and $\Tt_3:=\{\x=1,\y=0\}$. Suppose there exists a surjective Nash map $f:\Ss\to\Tt$. Using straightforward dimensional arguments $f(\Ss_1)\subset\Tt_1$ and either $f(\Ss_2)\subset\Tt_2$ and $f(\Ss_3)\subset\Tt_3$ or $f(\Ss_2)\subset\Tt_3$ and $f(\Ss_3)\subset\Tt_2$. However, this is not possible because $\Ss_1\cap\Ss_2\cap\Ss_3=\{(0,0,0)\}$, whereas $\Tt_1\cap\Tt_2\cap\Tt_3=\varnothing$ and $f(\Ss_1\cap\Ss_2\cap\Ss_3)\subset\Tt_1\cap\Tt_2\cap\Tt_3$.

\begin{figure}[ht]
\begin{center}
\begin{tikzpicture}[scale=0.7]

\draw[fill=gray!60,opacity=0.75] (0,1.5) -- (1.5,3) -- (6.5,3) -- (5,1.5) -- (0,1.5);

\draw[fill=gray!60,opacity=0.75] (8,1.5) -- (9.5,3) -- (14.5,3) -- (13,1.5) -- (8,1.5);

\draw[line width=0.8pt,draw] (3.25,2.25) -- (3.25, 4.5);
\draw[line width=0.8pt,draw, densely dotted] (3.25,2.25) -- (3.25, 1.5);
\draw[line width=0.8pt,draw] (3.25,0) -- (3.25, 1.5);
\draw[line width=0.8pt,draw] (3.25,2.25) -- (5,4);
\draw[line width=0.8pt,draw, densely dotted] (3.25,2.25) -- (2.5, 1.5);
\draw[line width=0.8pt,draw] (1.5,0.5) -- (2.5, 1.5);

\draw[line width=0.8pt,draw] (11.25,2.25) -- (11.25, 4.5);
\draw[line width=0.8pt,draw, densely dotted] (11.25,2.25) -- (11.25, 1.5);
\draw[line width=0.8pt,draw] (11.25,0) -- (11.25, 1.5);

\draw[line width=0.8pt,draw] (12.25,2.25) -- (12.25, 4.5);
\draw[line width=0.8pt,draw, densely dotted] (12.25,2.25) -- (12.25, 1.5);
\draw[line width=0.8pt,draw] (12.25,0) -- (12.25, 1.5);

\draw (10.5,4) node{\small$\Tt_2$};
\draw (13,4) node{\small$\Tt_3$};
\draw (9.5,2) node{\small$\Tt_1$};
\draw (8.5,1) node{\small$\Tt$};

\draw (2.5,4) node{\small$\Ss_2$};
\draw (4,3.75) node{\small$\Ss_3$};
\draw (1.5,2) node{\small$\Ss_1$};
\draw (0.5,1) node{\small$\Ss$};

\end{tikzpicture}
\end{center}
\caption{\small{Semialgebraic sets $\Ss$ and $\Tt$.}\label{stn}}
\end{figure}

(ii) Let $\Ss:=\{\y\geq0\}\cup\{\x=0\}\subset\R^2$ and $\Tt:=\{\x^2-\z\y^2=0\}\subset\R^3$, which are both irreducible. We claim: {\em There exists no surjective Nash map $f:\Ss\to\Tt$.} 

The analytic path-connected components of $\Ss$ are $\Ss_1:=\{\y\geq0\}$ and $\Ss_2:=\{\x=0\}$, whereas the analytic path-connected components of $\Tt$ are 
$$
\Tt_1:=\{\x^2-\z\y^2=0,\z\geq0\} \text{\ and\ } \Tt_2:=\{\x=0,\y=0\}.
$$
Suppose there exists a surjective Nash map $f:\Ss\to\Tt$. Using straightforward arguments, $\Tt_1\setminus\Tt_2\subset f(\Ss_1)\subset\Tt_1$ and $\{\x=0,\y=0,\z<0\}\subset f(\Ss_2)\subset\Tt_2$. As $f$ is Nash, there exist a connected open semialgebraic neighborhood $U\subset\R^2$ and a Nash extension $F:U\to\R^3$. As $U$ is an open connected semialgebraic subset of $\R^2$, it is an irreducible semialgebraic set of dimension $2$. Thus, $F(U)$ is an irreducible semialgebraic subset of $\R^3$ of dimension $\leq 2$. In particular, its Zariski closure is an irreducible algebraic set of dimension $\leq 2$. As $f(\Ss_1)\subset\Tt_1$ has dimension $2$ and the Zariski closure of $\Tt_1$ is $\Tt$ (which is irreducible), we conclude that the Zariski closure of $F(U)$ is $\Tt$. As connected open semialgebraic sets are connected by analytic paths (because they are connected Nash manifolds), we deduce that $F(U)\subset\Tt_1$, so 
$$
\{\x=0,\y=0,\z<0\}\subset f(\Ss_2)\subset F(U)=\Tt_1,
$$
which is a contradiction.

(iii) However, there exists a surjective Nash map $f:\Tt\to\Ss$ where $\Tt:=\{\x^2-\z\y^2=0\}\subset\R^3$ and $\Ss:=\{\y\geq0\}\cup\{\x=0\}\subset\R^2$. It is enough to take $f(\x,\y,\z)=(\y,\z)$.
\end{examples}

Before proving Theorem \ref{snmbss}, we need the following preliminary result.

\begin{lem}\label{immersion2}
Let $\Ss\subset\R^m$ be a semialgebraic set and $\{\Ss_i^*\}_{i=1}^s$ the family of the irreducible components of $\Ss$ that are non-compact. Denote $d_i:=\dim(\Ss_i^*)$ and with $\Ss_i^{*,(d_i)}$ the set of points of $\Ss_i^*$ of dimension $d_i$, which we assume to be non-compact for each $i=1,\ldots,s$. Let $U$ be an open semialgebraic subset of $\R^m$ that contains $\Ss$ and let $X_1,\ldots,X_s$ be Nash subsets of $U$ such that $\Ss_i^*\setminus X_i\neq\varnothing$ for each $i$. Up to shrinking $U$ if necessary, there exist: 
\begin{itemize}
\item a Nash manifold $M\subset\R^p$, 
\item a Nash diffeomorphism $\varphi:M\to U$ and 
\item a Nash function $g_i:M\to\R$ whose zero set contains $\varphi^{-1}(X_i)$
\end{itemize}
such that the corresponding Nash map $G_i:M\to\R^{p+1},\ x\mapsto(x\cdot g_i(x),g_i(x))$ satisfies $0\in G_i(\varphi^{-1}(\Ss_i^*))=G_i(\varphi^{-1}(\Ss_i^{*,(d_i)}))$ and $G_i(\varphi^{-1}(\Ss_i^{*,(d_i)}))$ is pure dimensional of dimension $d_i$ and non-compact for $i=1,\ldots,s$.
\end{lem}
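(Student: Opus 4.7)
My strategy is to realize $U$ as a closed Nash submanifold of an affine space, so that every Nash subset of $U$ is cut out by a global Nash equation, and then define each $g_i$ as a product of a Nash function vanishing on an appropriate Nash subset of $M$ with a polynomial factor vanishing at a chosen base point of $\varphi^{-1}(\Ss_i^{*,(d_i)})$.

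\emph{Step 1 (closed embedding).} Using the standard fact that any open semialgebraic subset of $\R^m$ is Nash diffeomorphic to a closed Nash submanifold of some affine space (after possibly shrinking $U$, take the graph of a positive Nash function $\rho:U\to\R$ that tends to $+\infty$ at $\partial U$), I set $M:=\{(x,\rho(x)):x\in U\}\subset\R^{m+1}$, a closed Nash submanifold, and let $\varphi:M\to U$ be the projection onto the first $m$ coordinates, with $p:=m+1$.

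\emph{Step 2 (construction of $g_i$).} For each $i$, both $\varphi^{-1}(X_i)$ and the Nash closure $Y_i$ in $M$ of $\varphi^{-1}(\Ss_i^*\setminus\Ss_i^{*,(d_i)})$ are Nash subsets of $M$; the latter has dimension strictly less than $d_i$ because $\dim(\Ss_i^*\setminus\Ss_i^{*,(d_i)})<d_i$. Since $M$ is affine Nash, I pick a Nash function $h_i:M\to\R$ whose zero set contains $\varphi^{-1}(X_i)\cup Y_i$ (use a sum of squares of Nash generators of the corresponding ideal). Choose $p_i\in\varphi^{-1}(\Ss_i^{*,(d_i)})$ and define
\begin{equation*}
g_i(x):=h_i(x)\,\|x-p_i\|^2.
\end{equation*}
This is Nash on $M$, vanishes on $\varphi^{-1}(X_i)$, and satisfies $g_i(p_i)=0$.

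\emph{Step 3 (verification).} The map $G_i(x)=(x\,g_i(x),g_i(x))$ is Nash; it sends $\{g_i=0\}$ to $0$, and on $\{g_i\neq 0\}$ it is a Nash homeomorphism onto its image, with inverse $(y,t)\mapsto y/t$. Since $p_i\in\varphi^{-1}(\Ss_i^{*,(d_i)})$ and $g_i(p_i)=0$, the origin lies in $G_i(\varphi^{-1}(\Ss_i^{*,(d_i)}))$. If $y\in\varphi^{-1}(\Ss_i^*\setminus\Ss_i^{*,(d_i)})\subset Y_i$, then $g_i(y)=0$ and $G_i(y)=0$, hence $G_i(\varphi^{-1}(\Ss_i^*))=G_i(\varphi^{-1}(\Ss_i^{*,(d_i)}))$. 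The irreducibility of $\Ss_i^*$ together with $\Ss_i^*\setminus X_i\neq\varnothing$ forces the Nash closure $V_i$ of $\varphi^{-1}(\Ss_i^*)$ (irreducible of dimension $d_i$) to be contained neither in $\varphi^{-1}(X_i)$ nor in $Y_i$, so $\{g_i=0\}\cap V_i$ is a proper Nash subset of $V_i$ of dimension $<d_i$. Pure $d_i$-dimensionality of $G_i(\varphi^{-1}(\Ss_i^{*,(d_i)}))$ then follows: the part away from $0$ is Nash diffeomorphic to a pure $d_i$-dimensional piece of $\varphi^{-1}(\Ss_i^{*,(d_i)})$, and approaching $p_i$ along points with $g_i\neq 0$ places $0$ in its closure.

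\emph{Main obstacle.} The delicate point is showing that the image stays non-compact. This reduces to checking that $\varphi^{-1}(\Ss_i^{*,(d_i)})\setminus\{g_i=0\}$ is non-compact: since $\varphi^{-1}(\Ss_i^{*,(d_i)})$ is pure $d_i$-dimensional and non-compact in the closed submanifold $M\subset\R^p$, while $\{g_i=0\}\cap V_i$ has dimension strictly less than $d_i$, a dimension argument at infinity (if the difference were compact, then outside a sufficiently large ball the set $\varphi^{-1}(\Ss_i^{*,(d_i)})$ would be contained in a lower-dimensional Nash set, contradicting purity; a parallel accumulation argument handles the bounded-but-not-closed case) does the job. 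Non-compactness is then preserved by the Nash homeomorphism $G_i|_{\{g_i\neq 0\}}$ and by adjoining the single point~$0$.
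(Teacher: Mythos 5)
Your proposal has a genuine gap precisely in the step you flagged as the ``main obstacle,'' and the dimension argument you sketch there does not close it. You reduce the non-compactness of $G_i(\varphi^{-1}(\Ss_i^{*,(d_i)}))$ to the non-compactness of $A:=\varphi^{-1}(\Ss_i^{*,(d_i)})\setminus\{g_i=0\}$, but this reduction is invalid: the map $G_i$ collapses all of $\{g_i=0\}$ to the single point $0$, so the image equals $G_i(A)\cup\{0\}$, and adjoining that one point may \emph{compactify} the image even though $A$ is non-compact. This happens exactly when every non-compact ``end'' of $\varphi^{-1}(\Ss_i^{*,(d_i)})$ accumulates inside (or at) $\{g_i=0\}$, in which case $G_i(A)$ accumulates only at $0$, which is already included. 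Since the $X_i$ in the hypothesis are arbitrary Nash subsets of $U$ with $\Ss_i^*\setminus X_i\neq\varnothing$, they may very well cut off every escape direction of $\Ss_i^{*,(d_i)}$, and then your $g_i(x)=h_i(x)\|x-p_i\|^2$ vanishes along all such ends. Your argument ``$A$ non-compact $\Rightarrow$ image non-compact'' needs an additional mechanism to force $G_i$ to take values away from $0$ near at least one end, and your proposal supplies none.

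The paper's proof is organized around precisely this difficulty, and the Nash diffeomorphism $\varphi:M\to U$ in the conclusion is not a convenience but the whole point of the lemma. After first contracting $\R^m$ to a bounded set (so that bounded Nash functions have limits along curves), the paper selects an escape point $p_i\in\cl(\Ss_i^{*,(d_i)})\setminus\Ss$, removes $p_i$ from $U$, and picks a Nash curve $\alpha_i$ tending to $p_i$ through $\Ss_i^{*,(d_i)}\setminus Z_i$, where $Z_i$ collects the Zariski closures of the ``bad'' sets. It then arranges, by choice of $g_i$, that $g_i\circ\alpha_i$ has a non-zero limit; the image then accumulates at a point $(p_i c,c)$ with $c\neq 0$, which cannot be in $G_i(\Ss_i^{*,(d_i)})$ because $p_i\notin\Ss$, giving non-compactness. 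When the escape point $p_i$ lands on the Zariski closure $Z_i$ (so the above fails), the paper blows up at $p_i$ and iterates -- each blow-up strictly decreases the order of the strict transform along the curve -- and after finitely many steps reaches the good situation. Composing all the blow-ups yields the non-trivial $\varphi:M\to U$. Your graph embedding $M=\{(x,\rho(x))\}$ produces no such resolution and cannot avoid the bad configuration. Without the escape-point analysis (and the blow-ups when the escape point is Zariski-degenerate), the non-compactness claim is unproven.
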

\begin{proof}
We may apply the Nash diffeomorphism 
$$
\psi_0:\R^m\to\Bb_m(0,1),\ x\mapsto\frac{x}{\sqrt{1+\|x\|^2}}
$$
to $\Ss$ and assume that $\Ss$ is bounded. As $\Ss_i^*\setminus X_i\neq\varnothing$, $\Ss_i^*$ is irreducible and $X_i$ is the zero-set of a Nash function on $U$, we deduce by \cite[Lem.3.6]{fg3} that $\dim(\Ss_i^*\cap X_i)<\dim(\Ss_i^*)=\dim(\Ss_i^{*,(d_i)})$ for each $i=1,\ldots,r$. Pick a point $q_i\in\Ss_i^{*,(d_i)}$. Let $Z_i$ be the Zariski closure of $(\cl(\Ss_i^*)\cap\cl(X_i))\cup\cl(\Ss_i^*\setminus\Ss_i^{*,(d_i)})\cup\{q_i\}$. We claim: {\em $Z_i$ has dimension strictly smaller than $\dim(\Ss_i^*)$.} 

As $\Ss_i^*$ is closed in $\Ss$, $X_i$ is closed in $U$ and $\Ss\subset U$, we deduce 
\begin{align*}
&\cl(\Ss_i^*)\cap\cl(X_i)\cap\Ss=\Ss_i^*\cap X_i,\\
&\cl(\Ss_i^*)\cap\cl(X_i)\cap(\cl(\Ss)\setminus\Ss)=\cl(\Ss_i^*)\cap\cl(X_i)\setminus\Ss_i^*\subset\cl(\Ss_i^*)\setminus\Ss_i^*.
\end{align*}
Both semialgebraic sets have dimensions strictly smaller than $\dim(\Ss_i^*)$, so $\cl(\Ss_i^*)\cap\cl(X_i^*)$ has dimension strictly smaller than $\dim(\Ss_i^*)$. In addition, $\cl(\Ss_i^*\setminus\Ss_i^{*,(d_i)})$ has dimension strictly smaller than $\dim(\Ss^*_i)$, because $\dim(\Ss_i^*\setminus\Ss_i^{*,(d_i)})<d_i=\dim(\Ss_i^*)$. Thus, $Z_i$ is a real algebraic set of dimension strictly smaller than $\dim(\Ss_i^*)$.

As $\Ss_i^{*,(d_i)}$ is bounded and non-compact and $\Ss_i^{*,(d_i)}$ is closed in $\Ss$ (because it is a closed subset of $\Ss_i^*$, which is a closed subset of $\Ss$), there exists $p_i\in\cl(\Ss_i^{*,(d_i)})\setminus\Ss$ (because otherwise $\cl(\Ss_i^{*,(d_i)})\subset\Ss$ and $\Ss_i^{*,(d_i)}=\cl(\Ss_i^{*,(d_i)})\cap\Ss=\cl(\Ss_i^{*,(d_i)})$ would be compact). As $p_i\not\in\Ss$, up to replace $U$ by $U':=U\setminus\{p_1,\ldots,p_r\}$ and $X_i$ by $U'\cap X_i$ if necessary, we may assume $p_i\not\in X_i$. As $\Ss_i^{*,(d_i)}\setminus Z_i$ is dense in $\Ss_i^{*,(d_i)}$ (because $\Ss_i^{*,(d_i)}$ is pure dimensional and $Z_i$ has strictly smaller dimension), there exists by the Nash curve selection lemma \cite[8.1.13]{bcr} a Nash curve $\alpha_i:(-1,1)\to\R^m$ such that $\alpha_i((0,1))\subset\Ss_i^{*,(d_i)}\setminus Z_i$ and $\alpha_i(0)=p_i$. Let $Q_i\in\R[\x_1,\ldots,\x_n]$ be a polynomial whose zero set is $Z_i$. 

\noindent{\sc Case 1.} If $Q_i(p_i)\neq0$, we take a bounded Nash function $g_i$ on $U$ whose zero set is the union of $X_i$ and the smallest Nash subset of $U$ that contains $\cl(\Ss_i^*\setminus\Ss_i^{*,(d_i)})$. Observe that the limit $\lim_{t\to0^+}g_i\circ\alpha_i(t)$ exists and it is non-zero, because otherwise either $p_i$ belongs to the Zariski closure of $\cl(\Ss_i^*\setminus\Ss_i^{*,(d_i)})\subset Z_i=\{Q_i=0\}$ or $p_i\in\cl(\Ss_i^{*,(d_i)})\cap\cl(X_i)\subset Z_i=\{Q_i=0\}$, which is a contradiction.

Consider the Nash map $G_i:U\to\R^{m+1},\ x\mapsto(x\cdot g_i(x),g_i(x))$, whose restriction to $U\setminus\{g_i=0\}$ is a Nash diffeomorphism between $U\setminus\{g_i=0\}$ and $G_i(U)\setminus\{0\}$, whose inverse is $H_i:G_i(U)\setminus\{0\}\to U\setminus\{g_i=0\},\ (y,t)\mapsto\frac{y}{t}$. If $G_i(\Ss_i^{*,(d_i)})$ is compact, then $\lim_{t\to0^+}(\alpha_i(t)\cdot(g_i\circ\alpha_i)(t),g_i\circ\alpha_i(t))\in G_i(\Ss_i^{*,(d_i)})$. As
$$
G_i|_{\Ss_i^{*,(d_i)}\setminus\{g_i=0\}}:\Ss_i^{*,(d_i)}\setminus\{g_i=0\}\to G_i(\Ss_i^{*,(d_i)})\setminus\{0\}
$$
is a Nash diffeomorphism, we conclude that $\lim_{t\to0^+}\alpha_i(t)\cdot (g_i\circ\alpha_i)(t)=0$ (because $p_i\not\in\Ss_i^{*,(d_i)}$), which is a contradiction because $\lim_{t\to0^+}(g_i\circ\alpha_i)(t)$ exists and it is non-zero. Consequently, $G_i(\Ss_i^{*,(d_i)})$ is non-compact. Again, as the restriction $G_i|_{\Ss_i^{*,(d_i)}\setminus\{g_i=0\}}$ is a Nash diffeomorphism, 
$$
G_i(\Ss_i^{*,(d_i)})\setminus\{0\}=G_i(\Ss_i^{*,(d_i)}\setminus\{g_i=0\})
$$ 
is pure dimensional of dimension $d_i$. As $\Ss_i^{*,(d_i)}$ is pure dimensional of dimension $d_i$ and $\dim(\{g_i=0\})\leq d_i-1$, we deduce $q_i\in\cl(\Ss_i^{*,(d_i)}\setminus\{g_i=0\})$. As $q_i\in\Ss_i^{*,(d_i)}\cap\{g_i=0\}$, we conclude $0\in G_i(\cl(\Ss_i^{*,(d_i)}\setminus\{g_i=0\}))\subset\cl(G_i(\Ss_i^{*,(d_i)}\setminus\{g_i=0\}))$, so $G_i(\Ss_i^{*,(d_i)})$ is pure dimensional of dimension $d_i$. In addition, 
$$
0\in G_i(\Ss_i^{*,(d_i)})=G_i(\Ss_i^{*,(d_i)})\cup\{0\}=G_i(\Ss_i^{*,(d_i)}\cup(\Ss_i^*\cap\{g_i=0\}))=G_i(\Ss_i^*).
$$

\noindent{\sc Case 2.} If $Q_i(p_i)=0$, we have $Q_i\circ\alpha_i\in\R[[\t]]_{\rm alg}$ is a non-zero series. Let $(Y_i,\phi_i)$ be the blow-up of $\R^m$ at $p_i$. The restriction $\phi_i:Y_i\setminus\{\phi_i^{-1}(p_i)\}\to\R^m\setminus\{p_i\}$ is a Nash diffeomorphism and $p_i\not\in\Ss\cup X_i$, so $\phi_i^{-1}(\Ss)$ is Nash diffeomorphic to $\Ss$ and $\phi_i^{-1}(X_i)$ is Nash diffeomorphic to $X_i$. The series $Q_i\circ\alpha_i=(Q_i\circ\phi_i)\circ(\phi_i^{-1}\circ\alpha_i)$ and let $(Q_i\circ\phi_i)^*$ be the strict transform of $(Q_i\circ\phi_i)$. The order of the series $(Q_i\circ\phi_i)^*\circ(\phi_i^{-1}\circ\alpha_i)$ is strictly smaller than the order of $Q_i\circ\alpha_i$, because we have eliminated from $(Q_i\circ\phi_i)$ a power of an equation of the exceptional divisor. Let $p_i':=\lim_{t\to0^+}(\phi_i^{-1}\circ\alpha_i)(t)$. If $(Q_i\circ\phi_i)^*(p_i')\neq0$, we have finished with this index $i$. Otherwise, we repeat the previous process with the point $p_i'$. In each step the order of the strict transform of the corresponding polynomial substituted in the corresponding curve has strictly smaller order, so in finitely many steps we achieve order $0$ and the corresponding polynomial does not vanish at the limit point. This allows, after finitely many steps, to reduce {\sc Case 2} to {\sc Case 1}.

After composing all the involved blow-ups (corresponding to all the indices $i=1,\ldots,s$ that are under the assumptions of {\sc Case 2}) and taking the corresponding strict transforms, we find a Nash manifold $M\subset\R^p$, a Nash diffeomorphism $\varphi:M\to U$ and Nash functions $g_i:M\to\R$ such that $\varphi^{-1}(X_i)\subset\{g_i=0\}$ and the corresponding Nash map 
$$
G_i:M\to\R^{p+1},\ x\mapsto(x\cdot g_i(x),g_i(x))
$$ 
satisfies $0\in G_i(\varphi^{-1}(\Ss_i^*))=G_i(\varphi^{-1}(\Ss_i^{*,(d_i)}))$ and $G_i(\varphi^{-1}(\Ss_i^{*,(d_i)}))$ is pure dimensional of dimension $d_i$ and non-compact for $i=1,\ldots,s$, as required.
\end{proof}

We are ready to prove Theorem \ref{snmbss}.

\begin{proof}[Proof of Theorem \em\ref{snmbss}]
The only if conditions are obtained straightforwardly. The proof of the converse is conducted in several steps:

\noindent{\sc Step 1.} Suppose $\Ss_i^*$ is non-compact for $i=1,\ldots,s$ and $\Ss_i^*$ is compact for $i=s+1,\ldots,r$. For each $i=1,\ldots,r$ let $f_i:\Ss\to\R$ be a Nash function on $\Ss$ such that $\Ss_i^*=\{f_i=0\}$ (see \cite[Lem.2.4, Thm. 4.3]{fg3}). Let $U$ be an open semialgebraic neighborhood of $\Ss$ in $\R^m$ to which all the Nash functions $f_i$ extend as Nash functions $F_i:U\to\R$. Define $X_i:=\bigcup_{j\neq i}\{F_j=0\}$ and observe that $\Ss_i^*\cap X_i=\Ss_i^*\cap\bigcup_{j\neq i}\Ss_j^*$ is a semialgebraic subset of $\Ss_i^*$ of dimension strictly smaller than $d_i$. We distinguish two cases:

\vspace{1mm}
\noindent{\sc Case 1. Non-compact irreducible components.} By Lemma \ref{immersion2} we may assume (up to a suitable Nash diffeomorphism) that for each $i=1,\ldots,s$ there exist a Nash function $g_i:U\to\R$ whose respective zero set $\{g_i=0\}$ contains $X_i$ and the corresponding Nash map 
$$
G_i:U\to\R^{m+1},\ x\mapsto(x\cdot g_i(x),g_i(x))
$$ 
satisfies $0\in G_i(\Ss_i^*)=G_i(\Ss_i^{*,(d_i)})$ and $G_i(\Ss_i^{*,(d_i)})$ is non-compact and pure dimensional of dimension $d_i$ for $i=1,\ldots,s$. In addition, 
$$
G_i(\Ss)=G_i(\Ss_i^*\cup\bigcup_{j\neq i}\Ss_j^*)=G_i(\Ss_i^*)\cup\bigcup_{j\neq i}G_i(\Ss_j^*)=G_i(\Ss_i^*)\cup\{0\}=G_i(\Ss_i^{*,(d_i)}).
$$

\noindent{\sc Case 2. Compact irreducible components.} For each $i=s+1,\ldots,r$ let $q_i\in\Ss_i^{*,(d_i)}$ and $h_i$ be a polynomial whose zero set is the union of $\{q_i\}$ and the Zariski closure $Y_i$ of $\cl(\Ss_i^*\setminus\Ss_i^{*,(d_i)})$. Define $g_i:=h_i\prod_{j\neq i}F_j:U\to\R$ and observe that $\{g_i=0\}=\{q_i\}\cup(Y_i\cap\Ss_i^*)\cup\bigcup_{j\neq i}\Ss_j^*$. As $\Ss_i^*$ is irreducible and $g_i$ does not vanish identically on $\Ss_i^*$, the intersection $\{g_i=0\}\cap\Ss_i^*$ has dimension $<d_i:=\dim(\Ss_i^*)$. Consider the Nash map
$$
G_i:U\to\R^{m+1},\ x\mapsto (x\cdot g_i(x),g_i(x)),
$$
whose restriction to $U\setminus\{g_i=0\}$ is a Nash diffeomorphism between $U\setminus\{g_i=0\}$ and $G_i(U)\setminus\{0\}$. Observe that $G_i(\Ss_j^*)=\{0\}$ if $i\neq j$ and let us check: {\em$\Ss_i':=G_i(\Ss)=G_i(\Ss_i^{*,(d_i)})$ is pure dimensional of dimension $d_i$.}

As $G_i|_{\Ss_i^{*,(d_i)}\setminus\{g_i=0\}}:\Ss_i^{*,(d_i)}\setminus\{g_i=0\}\to G_i(\Ss_i^{*,(d_i)})\setminus\{0\}$ is a Nash diffeomorphism, $G_i(\Ss_i^{*,(d_i)})\setminus\{0\}=G_i(\Ss_i^{*,(d_i)}\setminus\{g_i=0\})$ is pure dimensional of dimension $d_i$. As $\Ss_i^{*,(d_i)}$ is pure dimensional of dimension $d_i$ and $\dim(\{g_i=0\})\leq d_i-1$, we deduce $q_i\in\cl(\Ss_i^{*,(d_i)}\setminus\{g_i=0\})$. As $q_i\in\Ss_i^{*,(d_i)}\cap\{g_i=0\}$, we conclude $0\in G_i(\cl(\Ss_i^{*,(d_i)}\setminus\{g_i=0\}))\subset\cl(G_i(\Ss_i^{*,(d_i)}\setminus\{g_i=0\}))$, so $G_i(\Ss_i^{*,(d_i)})$ is pure dimensional of dimension $d_i$. In addition, 
$$
0\in G(\Ss_i^{*,(d_i)})=G(\Ss_i^{*,(d_i)})\cup\{0\}=G(\Ss_i^{*,(d_i)}\cup(\Ss_i^*\cap\{g_i=0\}))=G(\Ss_i^*).
$$
Moreover, 
$$
G_i(\Ss)=G_i(\Ss_i^*\cup\bigcup_{j\neq i}\Ss_j^*)=G_i(\Ss_i^*)\cup\bigcup_{j\neq i}G_i(\Ss_j^*)=G_i(\Ss_i^*)\cup\{0\}=G_i(\Ss_i^{*,(d_i)})
$$
for $i=s+1,\ldots,r$.

\vspace{1mm}
\noindent{\sc Step 2.} Define $\Ss_i':=G_i(\Ss)$ for $i=1,\ldots,r$ and 
$$
G:\Ss\to\R^{(m+1)r},\ x\mapsto(G_1(x),\ldots,G_r(x)).
$$ 
Observe that
$$
G(\Ss_i^*)=\{0\}\times\cdots\times\{0\}\times\overset{(i)}{\Ss_i'}\times\{0\}\times\cdots\times\{0\}
$$
and $G(\Ss)=\bigcup_{i=1}^sG(\Ss_i^*)$. In addition, $G(\Ss_i^*)\cap G(\Ss_j^*)=\{(0,\ldots,0)\}$ if $i\neq j$.

We distinguish two cases:

\vspace{1mm}
\noindent{\sc Case 1.} If $\Ss_i^*$ is non-compact, $\Ss_i'$ is non-compact. By Theorem \ref{nice2} there exists a Nash map $H_i:\R^{m+1}\to\R^{d_i}$ such that $H_i(\Ss_i')=\R^{d_i}$. We may assume in addition $H_i(0)=0$.

\vspace{1mm}
\noindent{\sc Case 2.} If $\Ss_i$ is compact, also $\Ss_i'$ is compact and there exists by Theorem \ref{nice0} a Nash map $H_i:\R^{m+1}\to\R^{d_i}$ such that $H_i(\Ss_i')=\ol{\Bb}_{d_i}$. Following the proof of Theorem \ref{nice0}, the reader can check that we may assume $H_i(0)=0$.

\vspace{1mm}
\noindent{\sc Step 3.} Let $q\in\bigcap_{i=1}^r\Tt_i$ and assume $q$ is the origin of $\R^n$. Let $F_i:\R^{d_i}\to\R^n$ be a Nash map such that $F_i(\R^{d_i})=\Tt_i$ for $i=1,\ldots,s$ and $F_i(\ol{\Bb}_{d_i})=\Tt_i$ for $i=s+1,\ldots,r$. Define $E_i=\R^{d_i}$ if $\Ss_i'$ is non-compact ($i=1,\ldots,s$) and $E_i=\ol{\Bb}_{d_i}$ if $\Ss_i'$ is compact ($i=s+1,\ldots,r$). 

We may assume in addition $F_i(0)=0$ for $i=1,\ldots,r$. We have
$$
(F_i\circ H_i\circ G_i)(\Ss_j^*)=\begin{cases}
F_i(E_i)=\Tt_i&\text{if $j=i$},\\
F_i(\{0\})=\{0\}&\text{if $j\neq i$,}
\end{cases}
$$
so $(F_i\circ H_i\circ G_i)(\Ss)=\Tt_i$. Observe that
\begin{equation*}
\begin{split}
((F_1\circ H_1,\ldots,F_r\circ H_r)&\circ G)(\Ss_i^*)=(F_1\circ H_1\circ G_1,\ldots,F_r\circ H_r\circ G_r)(\Ss_i^*)\\
&=(F_1\circ H_1)(\{0\})\times\cdots\times(F_{i-1}\circ H_{i-1})(\{0\})\times(F_i\circ H_i)(\Ss_i')\\
&\hspace{3.842cm}\times(F_{i+1}\circ H_{i+1})(\{0\})\times\cdots\times(F_r\circ H_r)(\{0\})\\
&=F_1(\{0\})\times\cdots\times F_{i-1}(\{0\})\times F_i(E_i)\times F_{i+1}(\{0\})\times\cdots\times F_r(\{0\})\\
&=\{0\}\times\cdots\times\{0\}\times\overset{(i)}{\Tt_i}\times\{0\}\times\cdots\times\{0\}.
\end{split}
\end{equation*}
Thus, if
$$
F:=\sum_{i=1}^r(F_i\circ H_i\circ G_i):\Ss\to\Tt,
$$
we have $F(\Ss_i^*)=\Tt_i$ for $i=1,\ldots,r$, so 
$$
F(\Ss)=F\Big(\bigcup_{i=1}^r\Ss_i^*\Big)=\bigcup_{i=1}^rF(\Ss_i^*)=\bigcup_{i=1}^r\Tt_i=\Tt,
$$
as required.
\end{proof}

Recall that the analytic path-connected components of $\Ss$ are irreducible semialgebraic sets. Thus, each of them is contained in an irreducible component of $\Ss$. If $\Ss_i^*$ is the irreducible component of $\Ss$ that contains $\Ss_i$ for $i=1,\ldots,r$, it may happen that $\Ss_i^*=\Ss_j^*$ for some $i\neq j$ or $\Ss_i^*\neq\Ss_j^*$, whereas $\Ss_i\subsetneq\Ss_i^*$ and $\Ss_j\subsetneq\Ss_j^*$.

\begin{examples}
(i) Define $\Ss:=\Ss_1\cup\Ss_2\cup\Ss_3\subset\R^2$ where $\Ss_1:=\{\x\geq1\}$, $\Ss_2:=\{\y=0\}$ and $\Ss_3:=\{\x\leq-1\}$. Observe that $\Ss_1$, $\Ss_2$ and $\Ss_3$ are the analytic path-connected components of $\Ss$, whereas $\Ss$ is irreducible. Thus, $\Ss_1^*=\Ss_2^*=\Ss_3^*$.

(ii) Define $\Ss:=\Ss_1\cup\Ss_2\subset\R^3$ where $\Ss_1:=\{\x=0,\y\geq0\}$ and $\Ss_2:=\{\y\leq0,\z=0\}$. Observe that $\Ss_1$ and $\Ss_2$ are the analytic path-connected components of $\Ss$, whereas $\Ss_1^*=\Ss_1\cup\{\x=0,\z=0\}$ and $\Ss_2^*=\Ss_2\cup\{\x=0,\z=0\}$ are the irreducible components of $\Ss$.
\end{examples}

\begin{remarks}\label{snmbssr}
(i) Let $\Ss\subset\R^m$ be a semialgebraic set and $\{\Ss_i\}_{i=1}^r$ the analytic path-connected components of $\Ss$. Let $\Ss_i^*$ be the irreducible component of $\Ss$ that contains $\Ss_i$ for $i=1,\ldots,r$ and assume $\Ss_i^*\neq\Ss_j^*$ for $1\leq i<j\leq r$. Denote $d_i:=\dim(\Ss_i^*)$. We claim:
\begin{itemize}
\item[(1)] $\{\Ss_i^*\}_{i=1}^r$ is the collection of the irreducible components of $\Ss$.
\item[(2)] $\Ss_i^{*,(d_i)}=\Ss_i$ for $i=1,\ldots,r$.
\end{itemize}

As $\Ss=\bigcup_{i=1}^r\Ss_i\subset\bigcup_{i=1}^r\Ss_i^*\subset\Ss$, we deduce that $\{\Ss_i^*\}_{i=1}^r$ is the collection of the irreducible components of $\Ss$, because $\Ss_i^*\neq\Ss_j^*$ if $i\neq j$. Thus, (1) holds.

Let us check (2). To that end, we prove first: $\dim(\Ss_j\cap\Ss_i^*)<\dim(\Ss_i^*)$ if $j\neq i$.

Otherwise, there exists $\Ss_j$ with $j\neq i$ such that $\dim(\Ss_j\cap\Ss_i^*)=\dim(\Ss_i^*)$, so $\dim(\Ss_j^*\cap\Ss_i^*)=\dim(\Ss_i^*)$ and each Nash function that vanishes identically on $\Ss_j^*$ vanishes also identically on $\Ss_i^*$. Thus, $\Ss_i^*\subset\Ss_j^*$ and $i=j$, which is a contradiction. 

Consequently, 
$$
\Ss_i\setminus\bigcup_{j\neq i}\Ss_j\subset\Ss_i^*\setminus\bigcup_{j\neq i}\Ss_j\subset\Ss\setminus\bigcup_{j\neq i}\Ss_j=\Ss_i\setminus\bigcup_{j\neq i}\Ss_j,
$$ 
so $\Ss_i\setminus\bigcup_{j\neq i}\Ss_j=\Ss_i^*\setminus\bigcup_{j\neq i}\Ss_j$ is non-empty and has dimension $d_i$. As $\Ss_i$ is pure dimensional of dimension $d_i$ and $\bigcup_{j\neq i}\Ss_i^*\cap\Ss_j$ has dimension $<d_i$, we deduce that $\Ss_i\setminus\bigcup_{j\neq i}\Ss_j$ is dense in $\Ss_i$. In addition, $\Ss_i\subset\Ss_i^{*,(d_i)}\subset\Ss_i^*$ (because $\Ss_i$ is pure dimensional of dimension $d_i$), so $\Ss_i\setminus\bigcup_{j\neq i}\Ss_j=\Ss_i^{*,(d_i)}\setminus\bigcup_{j\neq i}\Ss_j$, which is dense in $\Ss_i^{*,(d_i)}$. As 
$$
\Ss_i^{*,(d_i)}\setminus\bigcup_{j\neq i}\Ss_j\subset\Ss_i^*\setminus\bigcup_{j\neq i}\Ss_j=\Ss_i\setminus\bigcup_{j\neq i}\Ss_j,
$$
we conclude taking closures in $\Ss$ that $\Ss_i^{*,(d_i)}=\Ss_i$ (because both $\Ss_i^{*,(d_i)}$ and $\Ss_i$ are closed in $\Ss$).

(ii) Observe that Theorems \ref{nice0} and \ref{nice2} are particular cases of Theorem \ref{snmbss} when $\Tt$ is connected by analytic paths.
\end{remarks}

As a straightforward consequence of Theorem \ref{snmbss} and Remark \ref{snmbssr}(i), we have the following:

\begin{cor}
Let $\Ss\subset\R^m$ and $\Tt\subset\R^n$ be semialgebraic sets, let $\{\Ss_i\}_{i=1}^r$ be the family of analytic path-connected components of $\Ss$ and let $\Ss_i^*$ be the irreducible component of $\Ss$ that contains $\Ss_i$ for $i=1,\ldots,r$. Assume $\Ss_i^*\neq\Ss_j^*$ for $1\leq i<j\leq r$. Let $\{\Tt_i\}_{i=1}^r$ be a family of (non-necessarily distinct) semialgebraic subsets of $\Tt$ connected by analytic paths and assume $\bigcap_{i=1}^r\Tt_i\neq\varnothing$ and $\bigcup_{i=1}^r\Tt_i=\Tt$. Then there exists a surjective Nash map $f:\Ss\to\Tt$ such that $f(\Ss_i)=\Tt_i$ for $i=1,\ldots,r$ if and only if $e_i:=\dim(\Tt_i)\leq\dim(\Ss_i)=:d_i$ and $\Tt_i$ is compact in case $\Ss_i$ is compact for $i=1,\ldots,r$.
\end{cor}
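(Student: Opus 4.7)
The plan is to reduce to Theorem \ref{snmbss} via Remark \ref{snmbssr}(i), which serves as the dictionary between the two setups. Under the standing assumption $\Ss_i^*\neq\Ss_j^*$ for $i<j$, that remark guarantees that $\{\Ss_i^*\}_{i=1}^r$ is the full collection of irreducible components of $\Ss$ and that $\Ss_i^{*,(d_i)}=\Ss_i$, where $d_i:=\dim(\Ss_i^*)$; since $\Ss_i$ is pure-dimensional, this in particular gives $\dim(\Ss_i)=d_i$. With this identification in hand, the dimensional inequality $e_i\leq d_i$ reads literally the same on both sides, and the hypothesis "$\Tt_i$ compact whenever $\Ss_i$ is compact" is exactly the contrapositive of "$\Ss_i^{*,(d_i)}$ non-compact whenever $\Tt_i$ non-compact" appearing in Theorem \ref{snmbss}. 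The "only if" direction of the corollary is then immediate: a Nash map is continuous and cannot raise dimension, so any surjective $f:\Ss\to\Tt$ with $f(\Ss_i)=\Tt_i$ must satisfy both conditions.

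For the "if" direction, the translation above shows that all hypotheses of Theorem \ref{snmbss} are fulfilled, producing a surjective Nash map $f:\Ss\to\Tt$ with $f(\Ss_i^*)=\Tt_i$. The only point that demands attention --- and which I expect to be the main (minor) obstacle --- is that the corollary requires the stronger equality $f(\Ss_i)=\Tt_i$, not merely $f(\Ss_i^*)=\Tt_i$. I would address this by inspecting the construction in the proof of Theorem \ref{snmbss}: the map there is $f=\sum_{i=1}^r F_i\circ H_i\circ G_i$, and on $\Ss_i^*$ every summand with $j\neq i$ vanishes, since $G_j(\Ss_i^*)=\{0\}$ and $F_j(0)=H_j(0)=0$ by construction. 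Moreover, both Case~1 and Case~2 of that proof establish the sharper equality $G_i(\Ss_i^*)=G_i(\Ss_i^{*,(d_i)})=\Ss_i'$, together with $H_i(\Ss_i')=E_i$ and $F_i(E_i)=\Tt_i$. Combined with $\Ss_i^{*,(d_i)}=\Ss_i$ from the remark, this yields $f(\Ss_i)=F_i(H_i(G_i(\Ss_i)))=F_i(E_i)=\Tt_i$, which closes the argument.
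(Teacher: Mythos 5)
Your proposal is correct and follows the paper's intended route: translate the hypotheses via Remark \ref{snmbssr}(i) and invoke Theorem \ref{snmbss} (the paper itself offers no detailed proof, just that citation). You correctly identified the one gap the paper glosses over --- that the theorem's statement only yields $f(\Ss_i^*)=\Tt_i$ while the corollary demands $f(\Ss_i)=\Tt_i$ --- and resolved it properly by observing that the construction in the proof of Theorem \ref{snmbss} in fact gives $G_i(\Ss_i^{*,(d_i)})=\Ss_i'$ and hence the stronger conclusion $f(\Ss_i^{*,(d_i)})=\Tt_i$, which combined with the remark's identity $\Ss_i^{*,(d_i)}=\Ss_i$ closes the argument.
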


\section{Two applications of the main results}\label{s6}

In this section we present two remarkable consequences of Theorem \ref{main1}. The first one about representation of pure dimensional compact irreducible arc-symmetric semialgebraic sets as Nash images of closed balls. As a second consequence we show that a compact semialgebraic set is the projection of a non-singular compact algebraic set with the simplest possible topology (a disjoint union of spheres). 

\subsection{Representation of arc-symmetric compact semialgebraic sets.} \label{arcsym1}
It follows from Theorem \ref{main1} and \cite[Cor.2.8]{k} that a pure dimensional compact irreducible arc-symmetric semialgebraic set is a Nash image of $\ol{\Bb}_d$ where $d:=\dim(\Ss)$. 

\begin{proof}[Proof of Corollary \em \ref{consq1}]
Let $X$ be the Zariski closure of $\Ss$ and $\pi:\widetilde{X}\to X$ a resolution of the singularities of $X$ (see \cite{hi}). Assume $\widetilde{X}\subset\R^p$ and $\pi$ is the restriction to $\widetilde{X}$ of a polynomial map $\Pi:\R^p\to\R^n$. By \cite[Thm.2.6]{k} applied to the irreducible arc-symmetric set $\Ss$ there exists a connected component $E$ of $\widetilde{X}$ such that $\pi(E)=\cl(\Reg(\Ss))=\Ss$ (recall that $\Ss$ is pure dimensional and compact). As $\pi$ is proper and $\Ss$ is compact, also $E$ is compact (because it is a closed subset of the compact set $\pi^{-1}(\Ss)$). Thus, $E$ is a connected compact Nash manifold. By Theorem \ref{main1} there exists a Nash map $f_0:\R^{d}\to\R^p$ such that $f_0(\ol{\Bb}_d)=E$. Consequently, the Nash map $f:=\pi\circ f_0:\R^{d}\to\R^n$ satisfies $f(\ol{\Bb}_d)=\pi(f_0(\ol{\Bb}_d))=\pi(E)=\Ss$, as required.
\end{proof}

\subsection{Elimination of inequalities.}\label{Motzkin}
To prove Corollary \ref{consq2} we recall first the following well-known separation result, that we include here for the sake of completeness.

\begin{lem}[Separation]\label{separation}
Let $\Ss_1,\Ss_2\subset\R^n$ be semialgebraic sets such that $\Ss_1$ is compact, $\Ss_2$ is closed and $\Ss_1\cap\Ss_2=\varnothing$. Then there exists $f\in\R[\x]$ such that $\Ss_1\subset\{f<0\}$ and $\Ss_2\subset\{f>0\}$.
\end{lem}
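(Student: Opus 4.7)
The strategy I propose is to reduce the compact-versus-closed separation to a compact-versus-compact separation by one-point compactifying $\R^n$ inside $\sph^n$ via the inverse stereographic projection, and then to handle the symmetric compact-versus-compact case by Tietze extension followed by Weierstrass approximation.

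First I will prove the compact-versus-compact case: given disjoint compact $K_1,K_2\subset\R^N$, I pick $R>0$ with $K_1\cup K_2\subset K:=\ol{\Bb}_N(0,R)$; the function that equals $-1$ on $K_1$ and $+1$ on $K_2$ extends by Tietze's theorem to a continuous $\phi:K\to[-1,1]$, and the Weierstrass approximation theorem on the compact set $K$ yields a polynomial $p\in\R[\x_1,\ldots,\x_N]$ with $\|p-\phi\|_K<\tfrac{1}{2}$, so $p<-\tfrac{1}{2}$ on $K_1$ and $p>\tfrac{1}{2}$ on $K_2$.

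For the general case I will use the inverse stereographic projection
$$
\sigma:\R^n\to\sph^n\setminus\{(0,\ldots,0,1)\}\subset\R^{n+1},\quad x\mapsto\Big(\frac{2x_1}{1+\|x\|^2},\ldots,\frac{2x_n}{1+\|x\|^2},\frac{\|x\|^2-1}{1+\|x\|^2}\Big)
$$
already employed in the proof of Theorem \ref{riduzione0}, and set $L_1:=\sigma(\Ss_1)$ and $L_2:=\cl(\sigma(\Ss_2))$, with the latter closure taken in $\sph^n$; both are compact subsets of $\R^{n+1}$. The disjointness $L_1\cap L_2=\varnothing$ is the key point: since $\sigma$ is a homeomorphism onto the open subset $\sigma(\R^n)\subset\sph^n$ and $\Ss_2$ is closed in $\R^n$, the closure of $\sigma(\Ss_2)$ within $\sigma(\R^n)$ coincides with $\sigma(\Ss_2)$, so the closure in $\sph^n$ can add at most the north pole, i.e., $L_2\subset\sigma(\Ss_2)\cup\{(0,\ldots,0,1)\}$; because $L_1\subset\sigma(\R^n)$ omits the north pole, $L_1\cap L_2=\sigma(\Ss_1\cap\Ss_2)=\varnothing$. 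Applying the preliminary step to $L_1,L_2\subset\R^{n+1}$ yields a polynomial $P\in\R[\y_1,\ldots,\y_{n+1}]$ of some degree $d$ with $L_1\subset\{P<0\}$ and $L_2\subset\{P>0\}$, and I will define
$$
f(x):=(1+\|x\|^2)^d\,P(\sigma(x)).
$$
Since each component of $\sigma(x)$ has the form $q_i(x)/(1+\|x\|^2)$ with $q_i\in\R[\x_1,\ldots,\x_n]$ of degree at most $2$, multiplication by $(1+\|x\|^2)^d$ clears all denominators and $f\in\R[\x_1,\ldots,\x_n]$; because $(1+\|x\|^2)^d>0$ everywhere, the sign of $f$ coincides with that of $P\circ\sigma$, so $\Ss_1\subset\{f<0\}$ and $\Ss_2\subset\{f>0\}$, as required.

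The only subtlety I anticipate is the disjointness $L_1\cap L_2=\varnothing$, which uses essentially that $\Ss_2$ is closed in $\R^n$: closedness prevents the closure of $\sigma(\Ss_2)$ in $\sph^n$ from absorbing any point of $L_1$, leaving only the harmless possibility of picking up the point at infinity. Beyond this, the proof is a routine Weierstrass approximation followed by a straightforward clearing of denominators.
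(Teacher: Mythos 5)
Your proof is correct, but it takes a genuinely different route from the paper. The paper works directly in $\R^n$: after normalizing so that $\Ss_1\subset\Bb_n(0,\tfrac12)$, it Weierstrass-approximates a continuous separating function on the ball $\ol{\Bb}_n(0,1)$ by a polynomial $f_0$, and then handles the non-compact tail of $\Ss_2$ by adding a high-degree radial term $c'\|\x\|^{2k}$, using the growth bound $|f_0(x)|<c(1+\|x\|^2)^m$ from \cite[Prop.2.6.2]{bcr} to guarantee that $c'\|\x\|^{2k}$ dominates $f_0$ outside the unit ball while perturbing it negligibly on $\ol{\Bb}_n(0,\tfrac12)$. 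You instead compactify: push everything to $\sph^n$ via inverse stereographic projection, observe that closedness of $\Ss_2$ means the spherical closure of $\sigma(\Ss_2)$ can only pick up the north pole (which $L_1=\sigma(\Ss_1)$ misses because $\Ss_1$ is compact, hence bounded), apply a compact-versus-compact separation (Tietze plus Weierstrass), and then clear the $(1+\|x\|^2)$ denominators to get a genuine polynomial with the same sign. Both arguments are correct and elementary; yours trades the polynomial growth estimate and the ``dominating term'' trick for the one-point compactification and denominator-clearing, which is conceptually cleaner in that it reduces the whole problem to the symmetric compact case, whereas the paper's argument stays entirely inside $\R[\x]$ and never leaves dimension $n$. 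One tiny remark: you do not really need Tietze in the compact-versus-compact step, since $\phi(x)=\dist(x,K_1)-\dist(x,K_2)$ already serves as the continuous function to approximate, which is in fact closer to what the paper implicitly does with its unnamed $g$.
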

\begin{proof}
We may assume $\Ss_1\subset\Bb_n(0,\frac{1}{2})$. Let $g:\R^n\to\R$ be a continuous function such that $\Ss_1\subset\{g<0\}$ and $\Ss_2\subset\{g>0\}$. Let 
$$
\veps:=\dist(\Ss_1,\Ss_2):=\min\{\dist(x_1,x_2):\ x_1\in\Ss_1,x_2\in\Ss_2\}>0.
$$
By Weierstrass' approximation theorem there exists a polynomial $f_0\in\R[\x]$ such that
$$
\max\{|g(x)-f_0(x)|:\ x\in\ol{\Bb}_n(0,1)\}<\frac{\veps}{3}.
$$
By \cite[Prop.2.6.2]{bcr} there exists a constant $c>0$ and $m\geq1$ such that $|f_0(x)|<c(1+\|x\|^2)^m$ on $\R^n$. Thus, $|f_0(x)|<2^mc\|x\|^{2m}$ on $\R^n\setminus\ol{\Bb}_n(0,1)$. Denote $c':=2^mc$ and let $k\geq m$ be such that 
$\frac{c'}{2^{2k}}<\frac{\veps}{3}$. Define $f:=f_0+c'\|\x\|^{2k}\in\R[\x]$. The reader can check that $\Ss_1\subset\{f<0\}$ and $\Ss_2\subset\{f>0\}$, as required.
\end{proof}

\begin{proof}[Proof of Corollary \em \ref{consq2}]
(i) By Theorem \ref{main1} and using the fact that the closed unit ball $\ol{\Bb}_d$ is the projection of the sphere $\sph^d$, there exists a Nash map $f:\R^{d+1}\to\R^n$ such that $f(\sph^d)=\Ss$. By Artin-Mazur's description of Nash maps \cite[Thm.8.4.4]{bcr} there exist $s\geq1$ and a non-singular irreducible algebraic set $Z\subset\R^{d+1+n+s}$ of dimension $d$, a connected component $M$ of $Z$ and a Nash diffeomorphism $g:\sph^d\to M$ such that the following diagram is commutative.
$$
\xymatrix{
M_\epsilon\ar@{^{(}->}[r]\ar@<-0.5ex>[ddr]_{\pi|_{M_\epsilon}}&Y=M_+\sqcup M_-\ar@{^{(}->}[r]\ar[dd]^{\pi|_Y}&Z\times\R\ar@{^{(}->}[r]\ar[d]^{\pi|_{Z\times\R}}&\R^{m+1}\ar[d]^{\pi}\\
&&Z\ar@{^{(}->}[r]&\R^{d+1}\times\R^n\times\R^s\equiv\R^m\ar[dd]^{\pi_2}\ar[ddll]_{\pi_1}\\
&M\ar@{^{(}->}[ur]\ar@<-0.5ex>[uul]_{\varphi_\epsilon}&&\\
&\sph^d\ar@{<->}[u]_{\cong}^g\ar[rr]^{f}\ar[rd]^{f}&&\R^n\\
&&\Ss\ar@{^{(}->}[ru]&
}
$$
We denote the projection of $\R^{d+1}\times\R^n\times\R^s$ onto the first space $\R^{d+1}$ with $\pi_1$ and the projection of $\R^{d+1}\times\R^n\times\R^s$ onto the second space $\R^n$ with $\pi_2$. Write $m:=d+1+n+s$. As $M$ is compact, there exists by Lemma \ref{separation} a polynomial $f:\R^m\to\R$ such that $M=Z\cap\{f>0\}$. Observe that $M$ is the projection of the algebraic set
$$
Y:=\{(z,t)\in Z\times\R:\ f(z)t^2-1=0\}
$$
under the projection $\pi:\R^m\times\R\to\R^m,\ (z,t)\mapsto z$. Fix $\epsilon=\pm1$ and let $M_\epsilon:=Y\cap\{\epsilon t>0\}$. Consider the Nash diffeomorphism 
$$
\varphi_\epsilon:M\to M_\epsilon,\ x\mapsto\Big(x,\epsilon\frac{1}{\sqrt{f(x)}}\Big)
$$
whose inverse map is the restriction of the projection $\pi$ to $M_\epsilon$.

Observe that $\{M_\epsilon\}_{\epsilon\in\{-1,1\}}$ is the collection of the connected components of $Y$. As $\pi(M_\epsilon)=M$ and using the diagram above, we deduce
$$
(\pi_2\circ\pi)(M_{\epsilon})=\pi_2(M)=(f\circ\pi_1)(M)=f(\sph^d)=\Ss.
$$
In addition, each $M_\epsilon$ is Nash diffeomorphic to $\sph^d$ and for $\veps\neq\veps'$ the polynomial map 
$$
\phi:\R^m\times\R\to\R^m\times\R,\ (x,t)\mapsto(x,-t)
$$
induces an involution of $Y$ such that $\phi(M_\epsilon)=M_{\epsilon'}$. As $Z$ is non-singular, also $Y$ is non-singular. Let $X$ be the irreducible component of $Y$ that contains $M_{+1}$. Observe that either $X=M_{+1}$ or $X=Y$.

Then $k:=d+s+2$ and the non-singular algebraic set $X$ satisfies the requirements in the statement.

In addition, $X$ has at most two connected components and each of them is Nash diffeomorphic to $\sph^d$. Thus, $X$ is Nash diffeomorphic to $\sph^d\times\{1,s\}$, where $s=1,2$ is the number of connected components of $X$.

(ii) Let $\Ss_1,\ldots,\Ss_r$ be the (compact) analytic path-connected components of $\Ss$, which satisfy $\Ss=\bigcup_{i=1}^r\Ss_i$. By (i) there exist $m\geq1$ and for each $i=1,\ldots,r$ a non-singular algebraic set $X_i\subset\R^m$ that is Nash diffeomorphic to a disjoint union of at most two spheres of $\R^{d+1}$ (each of them isometric to $\sph^{d_i}$ where $d_i:=\dim(\Ss_i)\leq d=\dim(\Ss)$) and satisfies $\pi(X_i)=\Ss_i$, where 
$$
\pi:\R^n\times\R^{m-n}\to\R^n,\ (x,y)\mapsto x
$$
is the projection onto the first $n$ coordinates. Consider the pairwise disjoint union $X:=\bigsqcup_{i=1}^rX_i\times\{i\}\subset\R^{m+1}$ and the projection 
$$
\pi':\R^n\times\R^{m+1-n}\times\R\to\R^n,\ (x,y,t)\mapsto x.
$$
Then $X$ is a non-singular algebraic set, which is Nash diffeomorphic to a finite pairwise disjoint union of spheres of dimension $\leq d$ and satisfies $\pi(X)=\Ss$, as required.
\end{proof}

The following lemma together with Example \ref{nint2} shows that Corollary \ref{consq2} is sharp.

\begin{lem}\label{nint}
Let $Z\subset\R^m$ be a non-singular irreducible algebraic set and $M$ one of its connected components of maximal dimension $d$. Suppose there exists an irreducible algebraic set $Y\subset\R^p$ of dimension $d$ and a rational map $\varphi:\R^p\dashrightarrow\R^m$ such that $\varphi|_Y:Y\to M$ is bijective. Then $M$ is the unique connected component of $Z$ of dimension $d$.
\end{lem}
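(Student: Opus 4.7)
The plan is to complexify the setup and exploit a parity argument on the action of complex conjugation on the generic fiber of the associated dominant rational map. First I would observe that since $Z$ is irreducible of dimension $d$, every connected component of $Z$ of dimension $d$ (including $M$ and any putative extra component $M'$) is Zariski dense in $Z$, and hence complex Zariski dense in the complexification $\widetilde Z\subset\C^m$: indeed, any complex polynomial vanishing on such a component decomposes into real and imaginary parts both vanishing there, hence on $Z$, hence on $\widetilde Z$. The non-singularity and irreducibility of $Z$ ensure that $\widetilde Z$ is irreducible of complex dimension $d$ with $\widetilde Z\cap\R^m=Z$; analogously $\widetilde Y\subset\C^p$ may be taken irreducible of complex dimension $d$ with $\widetilde Y\cap\R^p=Y$.

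Next, the rational map $\varphi:\R^p\dashrightarrow\R^m$ extends to $\widetilde\varphi:\C^p\dashrightarrow\C^m$, and its restriction $\widetilde\psi:\widetilde Y\dashrightarrow\widetilde Z$ is well defined (its values on the Zariski-dense subset $Y\subset\widetilde Y$ lie in $Z\subset\widetilde Z$) and dominant (its image contains $M$, which is Zariski dense in $\widetilde Z$). As a dominant rational map between irreducible complex varieties of the same dimension, $\widetilde\psi$ is generically finite of some degree $n\geq 1$; thus there exists a Zariski open dense $\widetilde V\subset\widetilde Z$ such that $|\widetilde\psi^{-1}(z)|=n$ for every $z\in\widetilde V$.

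For the contradiction, suppose there is a second connected component $M'\neq M$ of $Z$ of dimension $d$. Both $M$ and $M'$ are complex Zariski dense in $\widetilde Z$, so $M\cap\widetilde V$ and $M'\cap\widetilde V$ are non-empty. For any real $z\in\widetilde V\cap Z$, complex conjugation (which makes sense because $\widetilde\psi$ is defined over $\R$) acts on the finite set $\widetilde\psi^{-1}(z)\subset\widetilde Y$, fixing the real preimages (which all lie in $\widetilde Y\cap\R^p=Y$) and permuting the remaining preimages in complex-conjugate pairs; hence $n\equiv|\widetilde\psi^{-1}(z)\cap Y|=|\psi^{-1}(z)|\pmod 2$, where $\psi:=\varphi|_Y$. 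Choosing $z\in M\cap\widetilde V$ yields $|\psi^{-1}(z)|=1$ by bijectivity of $\psi$, so $n$ is odd; choosing $z\in M'\cap\widetilde V$ yields $|\psi^{-1}(z)|=0$ since $\psi(Y)=M\not\ni z$, so $n$ is even -- the desired contradiction.

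The main obstacle will be handling the complex-algebraic technicalities cleanly: arranging irreducibility of the complexifications $\widetilde Y$ and $\widetilde Z$, controlling the indeterminacy locus of $\widetilde\psi$, and establishing the Zariski open $\widetilde V$ of points with fiber of exact cardinality $n$. These are standard facts in characteristic $0$, most cleanly achieved by replacing $\widetilde\psi$ with the projection to $\widetilde Z$ from the Zariski closure of its graph inside $\widetilde Y\times\widetilde Z$, which provides a birationally equivalent regular model on which generic smoothness applies.
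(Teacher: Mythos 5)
Your proposal is correct and follows essentially the same route as the paper's own proof: complexify $Y$ and $Z$, extend $\varphi$ to a dominant rational map $\widetilde\varphi:\widetilde Y\dashrightarrow\widetilde Z$ between irreducible complex varieties of dimension $d$, and use the fact that the general fiber has cardinality equal to the degree to derive, via the action of complex conjugation on fibers over real points, that this degree is simultaneously odd (over a general point of $M$) and even (over a general point of $M'$). The only cosmetic difference is that you invoke generic finiteness and a graph/birational model to justify the constancy of the generic fiber cardinality, whereas the paper cites the field-degree interpretation together with \cite[Prop.~7.16]{ha}.
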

\begin{proof}
Let $\widetilde{Y}\subset\C^p$ be the complexification of $Y$ and $\widetilde{Z}\subset\C$ the complexification of $Z$. Observe that $\widetilde{Y},\widetilde{Z}$ are irreducible algebraic sets of (complex) dimension $d$. Consider the rational map $\widetilde{\varphi}:\C^p\dashrightarrow\C^m$ that extends $\varphi$. As $\varphi(Y)=M\subset Z\subset\widetilde{Z}$, the Zariski closure of $\varphi(Y)$ in $\C^m$ is contained in $\widetilde{Z}$. As $M$ has (real) dimension $d$ and $\widetilde{Z}$ is an irreducible algebraic set of $\C^n$ of (complex) dimension $d$, we deduce that $\widetilde{Z}$ is the Zariski closure of $\varphi(Y)$. As $\widetilde{\varphi}:\C^p\dashrightarrow\C^m$ is continuous for the Zariski topology, $\widetilde{Y}$ is the Zariski closure of $Y$ and $\widetilde{Z}$ is the Zariski closure of $\varphi(Y)=\widetilde{\varphi}(Y)$, we conclude $\widetilde{\varphi}(\widetilde{Y})\subset\widetilde{Z}$ and the Zariski closure of $\widetilde{\varphi}(\widetilde{Y})$ is $\widetilde{Z}$. Thus, $\widetilde{\varphi}|_{\widetilde{Y}}:\widetilde{Y}\dashrightarrow\widetilde{Z}$ is a dominant rational map. Denote with ${\mathcal R}(\widetilde{Y})$ the field of rational functions on $\widetilde{Y}$ and with ${\mathcal R}(\widetilde{Z})$ the field of rational functions on $\widetilde{Z}$. The map $\widetilde{\varphi}^*:{\mathcal R}(\widetilde{Z})\to{\mathcal R}(\widetilde{Y}),\ f\mapsto f\circ\widetilde{\varphi}$ is a homomorphism of fields of the same transcendence degree $d$ over $\C$. Consequently, ${\mathcal R}(\widetilde{Y})$ is an algebraic extension of ${\mathcal R}(\widetilde{Z})$ of finite degree $m$. By \cite[Prop.7.16]{ha} the number of points in a general fiber of $\widetilde{\varphi}$ is equal to $m$. As $M$ has (real) dimension $d$, there exists a point $p\in M$ such that the fiber $\widetilde{\varphi}^{-1}(p)$ has exactly $m$ points. As $Y$ is a (real) algebraic set, $\widetilde{Y}\cap\R^p=Y$. As $\varphi$ is a real rational map and $\varphi|_Y:Y\to M$ is bijective, we conclude that $m$ is odd, because if $z\in\widetilde{Y}\setminus Y$ and $\widetilde{\varphi}(z)=p$, then $\ol{z}\in\widetilde{Y}\setminus Y$ and $\widetilde{\varphi}(\ol{z})=p$. 

Suppose $Z$ has another connected component $M'$ of dimension $d$. Then there exists $q\in M'$ such that $\widetilde{\varphi}^{-1}(q)$ has exactly $m$ points. As $\widetilde{Y}\cap\R^p=Y$ and $\widetilde{\varphi}(Y)=M$, we conclude that $\widetilde{\varphi}^{-1}(q)\subset\widetilde{Y}\setminus Y$. As $q\in Z=\widetilde{Z}\cap\R^n$, we deduce that if $z\in\widetilde{\varphi}^{-1}(q)$, also $\ol{z}\in\widetilde{\varphi}^{-1}(q)$. Thus, $\widetilde{\varphi}^{-1}(q)$ consists of an even number of elements, which is a contradiction because $m$ is odd. Consequently, $M$ is the unique connected component of $Z$ of dimension $d$, as required.
\end{proof}

\begin{example}\label{nint2}
Let $X:=\{\y^2=-(\x-1)(\x-2)(\x+1)\}\subset\R^2$, which is an irreducible non-singular cubic with two connected components of dimension $1$, one is bounded (that we denote with $C_1$) and the other one is unbounded (that we denote with $C_2$). Consider the polynomial $\x$, which satisfies $X\cap\{\x>0\}=C_1$ and $X\cap\{\x<0\}=C_2$. Let $Y:=\{(x,y,z)\in X\times\R:\ xz^2-1=0\}\subset\R^3$, which has exactly two connected components $M_1:=Y\cap\{\z>0\}$ and $M_2:=Y\cap\{\z<0\}$ and both have dimension $1$. A priori we do not know if $Y$ is irreducible, so we cannot apply Lemma \ref{nint} directly to $Y$.
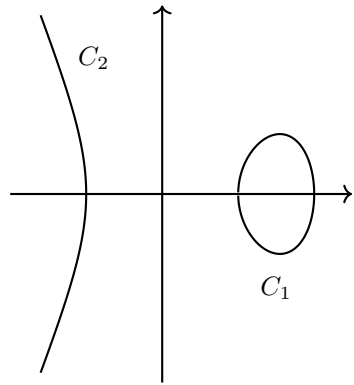
\begin{figure}[!ht]
\centering
\begin{tikzpicture}[scale=1]
\draw[->,thick] (-2, 0) -- (2.5, 0) ;
\draw[->,thick] (0, -2.5) -- (0,2.5) ;
\draw [thick, scale=1,domain=1:1.6, draw=black,samples=1000] plot ({-\x}, {sqrt((\x+1)*(\x+2)*(\x-1))});
\draw [thick, scale=1,domain=1:1.6, draw=black,samples=1000] plot ({-\x}, {-sqrt((\x+1)*(\x+2)*(\x-1))});
\draw (1.5,-1.25) node{\small $C_1$};
\draw [thick, scale=1,domain=-2:-1, draw=black,samples=1000] plot ({-\x}, {sqrt((\x-1)*(\x+2)*(\x+1))});
\draw [thick, scale=1,domain=-2:-1, draw=black,samples=1000] plot ({-\x}, {-sqrt((\x-1)*(\x+2)*(\x+1))});
\draw (-0.9,1.8) node{\small $C_2$};
\end{tikzpicture}
\caption{\small{The cubic curve $\y^2=-(\x-1)(\x-2)(\x+1)$.}}
\end{figure}

Suppose there exists an algebraic set $Z\subset\R^p$ of dimension $1$ and a polynomial map $\varphi:\R^p\to\R^3$ such that $\varphi|_Z:Z\to M_1$ is bijective. Let $\pi:\R^3\to\R^2,\ (x,y,z)\mapsto(x,y)$, which satisfies $\pi|_{M_1}:M_1\to C_1$ is bijective. Thus, the composition $\pi\circ f:\R^p\to\R^2$ is a polynomial map that satisfies $\pi|_{Z}:Z\to C_1$ is bijective, but this contradicts Lemma \ref{nint}. Consequently, there does not exist the couple $(\varphi,Z)$. 
\end{example}

The previous example suggests that in the statement of Corollary \ref{consq2}(i) two connected components Nash diffeomorphic to $\sph^d$ are needed.

\bibliographystyle{amsalpha}

\begin{thebibliography}{FFQU}

\bibitem[AG1]{ag1} C. Andradas, J.M. Gamboa: A note on projections of real algebraic varieties. {\em Pacific J. Math.} {\bf115} (1984), no. 1, 1--11.

\bibitem[AG2]{ag2} C. Andradas, J.M. Gamboa: On projections of real algebraic varieties. {\em Pacific J. Math.} {\bf121} (1986), no. 2, 281--291.

\bibitem[Be]{ber1} M. Berger: Geometry. I \& II. \em Universitext\em. Springer-Verlag, Berlin (1987).

\bibitem[BCR]{bcr} J. Bochnak, M. Coste, M.-F. Roy: Real algebraic geometry. {\em Ergeb. Math. } {\bf 36}, Springer-Verlag, Berlin (1998).


\bibitem[Co]{C} M. Coste: An Introduction to Semialgebraic Geometry. Dip. Mat. Univ. Pisa, \textit{Dottorato di Ricerca in Matematica}, Istituti Editoriali e Poligrafici Internazionali, Pisa (2000).

\bibitem[E]{ei} D. Eisenbud: Open Problems in Computational Algebraic Geometry and commutative Algebra, in {\it Computational Algebraic Geometry and Commutative Algebra, Cortona} (1991), (ed. D. Eisenbud and L. Robbiano) Cambridge University Press, Cambridge, England, (1993), 49--71. 

\bibitem[Fe1]{fe1} J.F. Fernando: On the one dimensional polynomial and regular images of $\R^n$. \em J. Pure Appl. Algebra \em {\bf 218} (2014), no. 9, 1745--1753.

\bibitem[Fe2]{fe3} J.F. Fernando: On Nash images of Euclidean spaces. {\em Adv. Math.} {\bf 331} (2018), 627--719.

\bibitem[Fe3]{fe4} J.F. Fernando: On a Nash curve selection lemma through finitely many points. {\em Preprint RAAG} (2023).

\bibitem[FFQU]{ffqu} J.F. Fernando, G. Fichou, R. Quarez, C. Ueno: On regulous and regular images of Euclidean spaces, {\em Q. J. Math.} {\bf 69} (2018), no. 4, 1327--1351.

\bibitem[FG1]{fg1} J.F. Fernando, J.M. Gamboa: Polynomial images of $R^n$. {\em J. Pure Appl. Algebra} {\bf179} (2003), no. 3, 241--254.

\bibitem[FG2]{fg2} J.F. Fernando, J.M. Gamboa: Polynomial and regular images of $\R^n$. {\em Israel J. Math.} {\bf153} (2006), 61--92.

\bibitem[FG3]{fg3} J.F. Fernando, J.M. Gamboa: On the irreducible components of a semialgebraic set. \em Internat. J. Math. \em {\bf23} (2012), no. 4, 1250031, 40 pp.

\bibitem[FGR]{fgr} J.F. Fernando, J.M. Gamboa, J.M. Ruiz: Finiteness problems on Nash manifolds and Nash sets. \em J. Eur. Math. Soc. \em (JEMS) {\bf16} (2014), no. 3, 537--570.

\bibitem[FGU1]{fgu1} J.F. Fernando, J.M. Gamboa, C. Ueno: On convex polyhedra as regular images of $\R^n$. {\em Proc. London Math. Soc.} (3) {\bf103} (2011), 847--878.

\bibitem[FGU2]{fgu2} J.F. Fernando, J.M. Gamboa, C. Ueno: Sobre las propiedades de la frontera exterior de las im\'agenes polin\'omicas y regulares de $\R^n$. {Contribuciones Matem\'aticas en homenaje a Juan Tarr\'es}, Marco Castrill\'on et. al. editores. 159--178, UCM, (2012).

\bibitem[FGU3]{fgu3} J.F. Fernando, J.M. Gamboa, C. Ueno: The open quadrant problem: a topological proof. {\em A mathematical tribute to Professor Jos\'e Mar\'\i a Montesinos Amilibia}, 337--350, Dep. Geom. Topol. Fac. Cien. Mat. UCM, Madrid (2016).

\bibitem[FGU4]{fgu4} J.F. Fernando, J.M. Gamboa, C. Ueno: Polynomial, regular and Nash images of Euclidean spaces. Ordered algebraic structures and related topics, 145--167, {\em Contemp. Math.}, {\bf697}, Amer. Math. Soc., Providence, RI (2017).

\bibitem[FGU5]{fgu5} J.F. Fernando, J.M. Gamboa, C. Ueno: Unbounded convex polyhedra as polynomial images of Euclidean spaces. {\em Ann. Sc. Norm. Super. Pisa Cl. Sci.} (5) {\bf 19} (2019), no. 2, 509--565.

\bibitem[FU1]{fu1} J.F. Fernando, C. Ueno: On the set of points at infinity of a polynomial image of $\R^n$. {\em Discrete Comput. Geom.} {\bf 52} (2014), no. 4, 583--611.

\bibitem[FU2]{fu2} J.F. Fernando, C. Ueno: On complements of convex polyhedra as polynomial and regular images of $\R^n$. {\em Int. Math. Res. Not.} IMRN {\bf 2014}, no. 18, 5084--5123.

\bibitem[FU3]{fu3} J.F. Fernando, C. Ueno: On the complements of $3$-dimensional convex polyhedra as polynomial images of $\R^3$. {\em Internat. J. Math.} {\bf 25} (2014), no. 7, 1450071 (18 pages).

\bibitem[FU4]{fu4} J.F. Fernando, C. Ueno: A short proof for the open quadrant problem. {\em J. Symbolic Comput.} {\bf79} (2017), no. 1, 57--64.

\bibitem[FU5]{fu5} J.F. Fernando, C. Ueno: On complements of convex polyhedra as polynomial images of $\R^n$. {\em Discrete Comput. Geom.} {\bf62} (2019), no. 2, 292--347.

\bibitem[FU6]{fu6} J.F. Fernando, C. Ueno: On polynomial images of a closed ball. {\em J. Math. Soc. Japan} {\bf75} (2023), no. 2, 679--733.

\bibitem[G]{g} J.M. Gamboa: Algebraic images of the real plane. {\em Reelle algebraische Geometrie}, June, $10^{\rm th}-16^{\rm th}$ (1990), Oberwolfach.

\bibitem[Hr]{hr} A. Harnack: \"Uber die Vielheiligkeit der ebenen algebraischen Kurven. {\em Math. Ann.} {\bf10} (1876), 189--198.

\bibitem[Ha]{ha} J. Harris: Algebraic geometry. A first course. Corrected reprint of the 1992 original. {\em Graduate Texts in Mathematics}, {\bf133}. Springer-Verlag, New York, (1995).

\bibitem[H]{hir3} M.W. Hirsch: Differential topology. \em Graduate Texts in Mathematics\em, {\bf 33}. Springer--Verlag, New York--Heidelberg--Berlin (1976).

\bibitem[Hi1]{hi} H. Hironaka: Resolution of singularities of an algebraic variety over a field of characteristic zero. I, II. \em Ann. of Math. \em (2) {\bf79} (1964), 109--203; ibid. (2) {\bf79} (1964) 205--326.

\bibitem[KPS]{kps} K. Kubjas, P.A. Parrilo, B. Sturmfels: How to flatten a soccer ball. {\em Homological and computational methods in commutative algebra}, 141--162, 
Springer INdAM Ser., {\bf20}, Springer, Cham (2017).

\bibitem[K]{k} K. Kurdyka: Ensembles semi-alg\'ebriques sym\'etriques par arcs. {\em Math. Ann.} {\bf282} (1988), no. 3, 445--462.

\bibitem[Mo]{m2} T. S. Motzkin: The real solution set of a system of algebraic inequalities is the projection of a hypersurface in one more dimension. 1970 {\em Inequalities, II} (Proc. Second Sympos., U.S. Air Force Acad., Colo., 1967) pp. 251--254 Academic Press, New York. 

\bibitem[P]{pe} D. Pecker: On the elimination of algebraic inequalities. {\em Pacific J. Math.} {\bf146} (1990), no. 2, 305--314. 

\bibitem[Sh]{sh} M. Shiota: Nash manifolds. \em Lecture Notes in Mathematics\em, {\bf 1269}. Springer-Verlag, Berlin (1987).

\bibitem[T]{ta} A. Tarski: A decisi\'on method for elementary algebra and geometry. Prepared for publication by J.C.C. Mac Kinsey, Berkeley (1951).

\bibitem[U1]{u1} C. Ueno: A note on boundaries of open polynomial images of $\R^2$. {\em Rev. Mat. Iberoam.} {\bf24} (2008), no. 3, 981--988.

\bibitem[U2]{u2} C. Ueno: On convex polygons and their complements as images of regular and polynomial maps of $\R^2$. {\em J. Pure Appl. Algebra} {\bf216} (2012), no. 11, 2436--2448.

\end{thebibliography}

\end{document}